\def\specialsection{\@startsection{section}{1}%
  \z@{\linespacing\@plus\linespacing}{.5\linespacing}%
%  {\normalfont\centering}}% DELETED
  {\normalfont}}% NEW
\def\section{\@startsection{section}{1}%
  \z@{.7\linespacing\@plus\linespacing}{.5\linespacing}%
%  {\normalfont\scshape\centering}}% DELETED
  {\normalfont\scshape}}% NEW
\patchcmd{\section}{\scshape}{\bfseries}{}{}
\renewcommand{\@secnumfont}{}
\def\subsection{\@startsection{subsection}{1}%
  \z@{.5\linespacing\@plus.7\linespacing}{-.5em}%
  {\normalfont\itshape}}
	\def\@sect#1#2#3#4#5#6[#7]#8{%
  	\edef\@toclevel{\ifnum#2=\@m 0\else\number#2\fi}%
  	\ifnum #2>\c@secnumdepth \let\@secnumber\@empty
  	\else \@xp\let\@xp\@secnumber\csname the#1\endcsname\fi
  	\@tempskipa #5\relax
  	\ifnum #2>\c@secnumdepth
  	  \let\@svsec\@empty
  	\else
  	  \refstepcounter{#1}%
    \edef\@secnumpunct{%
      \ifdim\@tempskipa>\z@ % not a run-in section heading
        \@ifnotempty{#8}{.\@nx\enspace}%
      \else
        \@ifempty{#8}{.}{.\@nx\enspace}%
      \fi
    }%
    \@ifempty{#8}{%
      \ifnum #2=\tw@ \def\@secnumfont{\bfseries}\fi}{}%
    \protected@edef\@svsec{%
      \ifnum#2<\@m
        \@ifundefined{#1name}{}{%
          \ignorespaces\csname #1name\endcsname\space
        }%
      \fi
      \@seccntformat{#1}%
    }%
  \fi
  \ifdim \@tempskipa>\z@ % then this is not a run-in section heading
    \begingroup #6\relax
    \@hangfrom{\hskip #3\relax\@svsec}{\interlinepenalty\@M #8\par}%
    \endgroup
    \ifnum#2>\@m \else \@tocwrite{#1}{#8}\fi
  \else
  \def\@svsechd{#6\hskip #3\@svsec
    \@ifnotempty{#8}{\ignorespaces#8\unskip
       %\@addpunct.
       }%
    \ifnum#2>\@m \else \@tocwrite{#1}{#8}\fi
  }%
  \fi
  \global\@nobreaktrue
  \@xsect{#5}}
\def\abstract#1{ \gdef\@abstract{#1}}
\def\@abstract{\@latex@error{No \noexpand\abstract given}\@ehc}
\def\keywords#1{ \gdef\@keywords{#1}}
\def\@keywords{\@latex@error{No \noexpand\keywords given}\@ehc}
\def\MSC#1{ \gdef\@MSC{#1}}
\def\@MSC{\@latex@error{No \noexpand\MSC given}\@ehc}
\def\abstract#1{ \gdef\@abstract{#1}}
\def\@abstract{\@latex@error{No \noexpand\abstract given}\@ehc}
\DeclareMathOperator*{\Id}{Id}
\newcommand{\Div}{\mathrm{div}}
\newcommand{\Pp}{\mathcal{P}}
\newcommand{\ee}{\varepsilon}
\newcommand{\Aa}{\mathcal{A}}
\newcommand{\Bb}{\mathcal{B}}
\newcommand{\dd}{\mathrm{d}}
\newcommand{\pre}{ \mathrm{p}}
\newcommand{\uu}{ \mathsf{u}}
\newcommand{\Y}{\mathfrak{Y}}
\newcommand{\I}{\mathcal{I}}
\newcommand{\FF}{\mathcal{F}}
\newcommand{\RR}{\mathbb{R}}
\newcommand{\ZZ}{\mathbb{Z}}
\newcommand{\NN}{\mathbb{N}}
\newcommand{\BB}{\dot{B}}
\newcommand{\Hh}{\dot{H}}
\newcommand{\Dd}{\dot{\Delta}}
\newcommand{\Sd}{\dot{S}}
\newtheorem{theorem}{Theorem}[section]
\newtheorem{prop}[theorem]{Proposition}
\newtheorem{lemma}[theorem]{Lemma}
\newtheorem{definition}[theorem]{Definition}
\newtheorem{remark}[theorem]{Remark}
\DeclareFontFamily{OT1}{rsfs}{}
\DeclareFontShape{OT1}{rsfs}{m}{n}{ <-7> rsfs5 <7-10> rsfs7 <10-> rsfs10}{}
\DeclareMathAlphabet{\mycal}{OT1}{rsfs}{m}{n}
\def\q{{\bf q}}
\def\m{{\mathbf{m}}}
\DeclareSymbolFont{letters}{OML}{ztmcm}{m}{it}
\begin{document}
 \begin{center}%
  \noindent\rule{185mm}{0.01cm}
  \let \footnote \thanks
  {\LARGE  {\sc The Fujita-Kato theorem for some Oldroyd-B model} \par}%
  \noindent\rule{185mm}{0.01cm}
  \vskip 1.em
  \today	
 \end{center}
 \vskip 1.em%
 {\let \footnote \thanks  Francesco De Anna \par}
 {\textsl{\footnotesize{Institut f\"ur Mathematik, Universit\"at W\"urzburg, Germany\\
			e-mail: francesco.deanna@mathematik.uni-wuerzburg.de} } }
 \vskip 0.1cm\hspace{-0.5cm}
 {\noindent \let \footnote \thanks Marius Paicu \par}
 {\noindent \textsl{\footnotesize{{Institut de Math{\'e}matiques de Bordeaux, Universit\'e de Bordeaux, France\\
 \noindent  e-mail: Marius.Paicu@math.u-bordeaux.fr} } }
 \par
 \vskip 1.em
 \noindent\rule{185mm}{0.01cm} 
 \vskip 1.em

{\small
  \begin{minipage}{0.3\linewidth}
  \noindent\hspace{-0.6cm} \textsc{article info}
  
  \noindent\hspace{-0.4cm}\rule{4.3cm}{0.01cm} 
  
  \end{minipage}
  \begin{minipage}{0.6\linewidth}
   \noindent \noindent \textsc{ abstract}%\lsstyle
   
   \noindent\rule{12.42cm}{0.01cm} 
  
  \end{minipage}
	
  \smallskip
  \hspace{-0.4cm}
  \begin{minipage}{0.29\linewidth}
  
  \noindent \hspace{-0.0cm}\textsl{Keywords:}
  Oldroyd-B model,  finite energy solutions, Lipschitz flow, critical regularities.
  
  \vspace{0.4cm}
  \noindent \hspace{-0.0cm}\textsl{MSC:}
  35Q35, 35B65, 76D05, 76N10.
  \end{minipage}
  \hspace{0.34cm}
  \begin{minipage}{0.6\linewidth}$\,$
  
  		\noindent In this paper, we investigate the Cauchy problem associated to a system of PDE's of Oldroyd type. The considered model describes the evolution of certain viscoelastic fluids within a corotational framework.  The non corotational setting is also addressed in dimension two.
      
      \noindent We show that some widespread results concerning the incompressible Navier-Stokes equations can be extended to the considered systems.  
      
      \noindent In particular we show the existence and uniqueness of global-in-time classical solutions for large data in dimension two. This result is supported  by suitable condition on the initial data to provide a global-in-time Lipschitz  regularity for the flow, which allows  to overcome specific challenging due to the non time decay of the main forcing terms. 
  							
  	  \noindent Secondly, we address the global-in-time well posedness in dimension $\dd\geq 3$. We prove the propagation 
  	  of Lipschitz regularities for the flow. For this result, we just assume the initial data to be sufficiently 
  	  small in a critical Lorentz space.
  
  		$\,$
  \end{minipage}
  
  \noindent\rule{185mm}{0.01cm} 
  } 	
\makeatother
\allowdisplaybreaks{} 
 
\section{Introduction}
\noindent The modeling and analysis of the hydrodynamics  of viscoelastic fluids has attracted much attention over the last decades \cite{BM, CM, LLZ, FCGO, M, LM}. Generally, the physical state of matter of a material  can be determined by the degree of freedom for the movement of its constitutive molecules. Increasing this degree of freedom, the most of materials evolves from a solid state to a liquid phase and eventually to a gas form. Nevertheless, there exist in nature several materials that present characteristics in the between of an isotropic fluid and a crystallized solid. These materials are usually classified as viscoelastic fluids, since they generally behave as a viscous fluids as well as they share some properties with elastic materials. The most common ones, for instance, are related to memory effects. We refer the reader to \cite{CaraGuillOrtega, MRenardy, BrisLel, Oswald, Saut} for an overview of the Physics behind the modeling of these complex fluids.

\noindent
This article is devoted to the analysis of the following evolutionary system of PDE's, describing the hydrodynamics of specific incompressible  viscoelastic fluids:
\begin{equation}\label{main_system}
\left\{\hspace{0.2cm}
	\begin{alignedat}{2}
		&\,\partial_t \tau\,+\,\uu\cdot \nabla \tau 	- \,\omega\, \tau \,+\,\tau\,\omega \,=\,\mu \mathbb{D}
		\hspace{3cm}
		&&\RR_+\times \RR^\dd, \vspace{0.1cm}	\\		
		&\,\partial_t \uu  + \uu\cdot \nabla  \uu\, - \, \nu \Delta  \uu  
		\,+\, \nabla \pre  =
		\Div\,\tau
									\hspace{3cm}									&& \RR_+\times \RR^\dd, \vspace{0.1cm}\\
		&\,\Div\,\uu\,=\,0			
		&&\RR_+\times \RR^\dd, \vspace{0.1cm}	\\			
		& (\uu,\,\tau)_{|t=0}	\,=\,(\uu_0,\,\tau_0)			
		&&\hspace{1.02cm} \RR^\dd.\vspace{0.1cm}																							
	\end{alignedat}
	\right.
\end{equation}
Here $\mu\geq 0$, the constant $\nu>0$ stands for the viscosity of the fluids, while  the state variables correspond to $\uu= \uu(t,x)$, the velocity field of a particle $x\in \RR^n$ at a time $t\in\RR$, and $\tau=(\tau(t,x)_{ij})_{i,j=1,\dots, \dd}$, the conformation tensor in $\RR^{\dd\times\dd}$ describing the internal elastic forces that the constitutive molecules exert on each other. To simplify our analysis, we assume our non-Newtonian fluid to occupy the entire  whole space $\RR^\dd$, with dimension $d\geq 2$. 
The evolutionary equation for the conformation tensor $\tau$ is then driven by the vorticity tensor $\omega= (\omega(t,x))_{i,j=1,\dots \dd}$, which stands for the skew-adjoint part of the deformation tensor $\nabla \uu$:
\begin{equation*}
	\omega \,=\, \frac{\nabla \uu\,-\,^t\nabla \uu}{2}.
\end{equation*}
The system can be seen as a simplified version of the more general Oldroyd-B model (cf. \cite{CM}):
\begin{equation}\label{Oldroyd}
\left\{\hspace{0.2cm}
	\begin{alignedat}{2}
		&\,\partial_t \tau\,+\,\uu\cdot \nabla \tau\,+\,a\,\tau\, 	+\,\mathcal{Q}(\uu,\,\tau) \,=\,\mu_2\, \mathbb{D}
		\hspace{3cm}
		&&\RR_+\times \RR^\dd, \vspace{0.1cm}	\\		
		&\,\partial_t \uu  + \uu\cdot \nabla  \uu\, - \, \nu \Delta  \uu  
		\,+\, \nabla \pre  =
		\mu_1\Div\,\tau
									\hspace{3cm}									&& \RR_+\times \RR^\dd, \vspace{0.1cm}\\
		&\,\Div\,\uu\,=\,0			
		&&\RR_+\times \RR^\dd, \vspace{0.1cm}	\\			
		& (\uu,\,\tau)_{|t=0}	\,=\,(\uu_0,\,\tau_0)			
		&&\hspace{1.02cm} \RR^\dd.\vspace{0.1cm}																							
	\end{alignedat}
	\right.
\end{equation}
The main parameters $\nu, \,a,\, \mu_1,\, \mu_2$ are assumed to be non-negative and they are specific to the characteristic of the considered material. In particular  \cite{LM} $\nu$, $a$ and $b$ correspond respectively to $\theta/{\rm Re}$, $1/{\rm We}$ and $2(1-\theta)/({\rm We}\, {\rm Re})$, where ${\rm Re}$ is the Reynolds number, ${ \theta}$ is the ratio between the so called relaxation and retardation times and ${\rm We}$ is the Weissenberg number (see also \cite{FeGuOr}, equations $(1.5)$--$(1.6)$). The bilinear term $\mathcal{Q}(\uu,\,\tau)$ reads as follows:
\begin{equation}\label{bilinear-term}
	\mathcal{Q}(\uu,\,\tau)\,=\,\tau\,\omega -\omega \tau\,+\,b (\mathbb{D}\,\tau + \tau\,\mathbb{D}),
\end{equation}
where the so-called slip parameter $b$ is a constant value between $[-1, 1]$ and $\mathbb{D}=(\nabla \uu + \nabla \uu^T)/2$ is the symmetric contribution of the deformation tensor $\nabla \uu$. The corotational term $\tau\,\omega -\omega \tau$ describes how molecules are twisted by the underlying flow, while the term depending on $b$ describes how molecules are stretched and deformed by the flow itself.

\noindent For the sake of our analysis, in this paper we mainly impose the following restriction on the main parameters of the Oldroyd-B model:
\begin{equation*}
	\mu_1 = 1\quad\quad \mu_2 = \mu\geq 0,\quad\quad b=0,
\end{equation*}
from which one obtain our main system \eqref{main_system}. The first condition is introduced just for the sake of a clear presentation, while the second and third conditions will play a major role in the analysis techniques we will perform in the forthcoming sections.  The case of $\mu_2=0$ %has a physical motivation and 
can roughly be interpreted as the case of the infinite Weissenberg number limit. Furhtermore, following \cite{CM,FeGuOr}, we determine that the limit model $\mu=0$ occurs when $\theta$, the ratio between the so called relaxation and retardation times, is converging to $1$. Indeed,  when $\mu_1=1$, we have $\mu_2=\frac{2\nu}{\lambda_1}(1-\theta)$ where $\lambda_1$ is the retardation time and $\theta=\frac{\lambda_2}{\lambda_1}$ with $\lambda_2$ being the relaxation time. We finally note that some different Oldroyd models with infinite Weissenberg number (and so $\mu_2=0$) have been studied in \cite{Huo}. We additionally assume that $a=0$, increasing the challenging of our model, since no damping effect is now assumed on the evolution of the conformation tensor $\tau$. We claim that all our results hold also for the damped case given by $a>0$, which is somehow much easier to treat.

\smallskip
\noindent
We also investigate the case of a non-corotational setting, i.e. when $b>0$ (cf. Theorem \ref{main_thm0b}.

\smallskip
\noindent
We present an overview of some recent results concerning systems \eqref{main_system} and \eqref{Oldroyd}. In \cite{GS}, the authors dealt with the existence and uniqueness of local strong solutions for system \eqref{Oldroyd} in Sobolev space $H^s(\Omega)$, for a sufficiently-smooth  bounded domain $\Omega$ and a sufficiently large regularity $s>0$.  The same authors in \cite{GS2} showed that these solutions are global if the initial data are sufficiently small as well it is small the coupling between the main terms of the constitutive equations.

\noindent Lions and Masmoudi \cite{LM} addressed the corotational case of system \eqref{Oldroyd}, given by $b=0$ and showed the existence and uniqueness of global-in-time weak solutions in a bidimensional setting.

\noindent Lei, Liu and Zhou \cite{LLZ1} proved existence and uniqueness of classical solutions near equilibrium of system \eqref{Oldroyd} for small initial data, assuming the  domain to be periodic or to be the whole space.

\noindent Bresh and Prange \cite{BP} analyze the low Weissenberg asymptotic limit of solutions for system \eqref{Oldroyd} in a corotational setting $b=0$. They focus on the specific formulation of the Oldroyd-B system in which the main parameter of \eqref{Oldroyd} are explicitly  defined in terms of the Weissenberg number {\rm We}, which compares the viscoelastic relaxation time to a time scale relevant to the fluid flow. The authors study the weak convergence towards the Navier--Stokes system, as {\rm We$\rightarrow 0$}. Furthermore, they take into account the presence of defect measures in the initial data and show that they do not perturb the Newtonian limit of the corotational system.

\noindent Chemin and Masmoudi \cite{CM} proved the existence and uniqueness of strong solutions of the Oldroyd-B model \eqref{Oldroyd}, within the framework of homogenous  Besov spaces with critical index of regularity. The authors particularly showed local and global-in-time existence of solutions for large and small initial data, respectively, under the assumption of a smallness condition on the coupling parameters of system \eqref{Oldroyd}. In \cite{ZiFaZh}, Zi, Fang and Zhand extended the mentioned result, relaxing this smallness condition. 
The main results of this manuscript should be seen as suitable improvements of \cite{CM, ZiFaZh}, within the setting of system \eqref{main_system}. In particular, we relax the assumption of  small initial data for global-in-time solution, considering small functions in weak Lebesgue spaces (cf. Theorem \ref{main_thm2}). 

\noindent Elgindi and Rouss\'et \cite{ER} addressed the well-posedness of a system related to \eqref{main_system}. On the one hand they introduced a dissipative and regularizing mechanism on the evolution of the deformation tensor $\tau$. On the other hand they considered the case of a null viscosity $\nu = 0$, i.e. of an Euler type equation for the flow $u$. Neglecting the bilinear term \eqref{bilinear-term}, they proved the global existence of classical solution for large initial data in dimension two. Furthermore, for a general bilinear term \eqref{bilinear-term}, they showed the existence of global-in-time classical solution for small initial data. These initial data belong to a somehow-similar space as the one introduced in our Theorem \ref{main_thm0}.

\smallskip
\noindent In this article, we aim to show that three of the most widespread results about the Navier-Stokes equations can be extended to the Oldroyd-B model \eqref{main_system}, under suitable condition on the initial data. More precisely:
\begin{itemize}
	\item Existence of global-in-time classical solutions in dimension two for large initial data,
	\item	Uniqueness in dimension two of strong solutions,
	\item Existence and uniqueness of global-in-time strong solution in dimension $\dd\geq 3$, with a Fujita-Kato \cite{FK} smallness condition for the initial 
	data.
\end{itemize}
The first problem we address in this article is the existence and uniqueness of global-in-time solutions when dealing with large initial data in $L^2(\RR^2)$. In the case of the classical Navier-Stokes system, existence of weak solutions \`a la Leray was proven in \cite{JLeray} while the uniqueness was showed by Lions and Prodi in \cite{LionsProdi}. In our contest, specific difficulties arise when transposing these results to the Oldroyd-B system. This difficulties should be recognized within the intrinsic structure of the model \eqref{main_system}:
\begin{itemize}
	\item 	the equation of the conformation tensor $\tau$ is of hyperbolic type, sharing the majority of the behaviours of a standard transport equation,
	\item 	the fluid equation is driven by a forcing term which behaves as the gradient of the conformation tensor, complicating the behavior of the flow $\uu$ 
			for large value of time.
\end{itemize}
When dealing with just the Navier-Stokes equations, it is rather common to build Leray-type solutions making use of compactness methods. However, the specific structure of the hyperbolic equation for $\tau$ adds complexity when transposing these techniques to our system. For instance, when dealing with nonlinearities such as 
\begin{equation*}
	-\omega \tau\,+\,\tau\omega\quad\quad\omega = \frac{\nabla \uu -\,^t\nabla \uu}{2},
\end{equation*}
in the $\tau$-equation, one should recognize the typical difficulty related to the product of two weakly convergent sequences, when passing to the limit of suitable approximate solutions. In order to overcome this issue, we assume some extra regularity on the initial tensor $\tau_0$ that will somehow allow to achieve suitable strong convergences of solutions. One can evince how the hyperbolic behavior of the conformation equation counteracts against the propagation of this regularity. Typically, this can be overcome when dealing with sufficiently regular flow, such as velocity field $\uu$ that are Lipschitz in space. This Lipschitz condition, however, is above the properties of Leray type solutions and this leads in considering an additional regularities also for the initial velocity field $\uu_0$. 
We can hence summarize our first result in the following statement:
\begin{theorem}\label{main_thm0}
	Consider system \eqref{main_system} within the bidimensional case $d\,=\, 2$, $\mu=0$, $b = 0$ and 
	$a\geq 0$. Let the initial data $\uu_0$ be a free divergence vector field in 
	$L^2(\RR^2)\cap \BB_{p, 1}^{2/p-1}$, and $\tau_0$ an element of 
	$L^2(\RR^2)\cap \BB_{p, 1}^{2/p}$, for an index $p\in (2, \infty)$. 
	When $\mu = 0$, then the system \eqref{main_system} admits a  global-in-time classical solution 
	$(\uu,\,\tau)$ within the functional framework
	\begin{equation*}
	\begin{alignedat}{16}
		\uu		\,	&\in	\,&& L^\infty_{\rm loc}(\RR_+;  \,L^2(\RR^2))&&&&\cap 
		L^2_{\rm loc}(\RR_+,\dot H^1(\RR^2)),\quad\quad
		&&&&&&&&\tau	\,	\in	\, L^\infty(\RR_+,\,L^2(\RR^2)),\\
		\uu		\,	&\in	\,&& L^\infty_{\rm loc}(\RR_+;  \,\BB_{p,1}^{\frac{2}{p}-1})
		&&&&\cap 
		L^1_{\rm loc}(\RR_+,\BB^{\frac{2}{p}+1}_{p,1}),\quad\quad
		&&&&&&&&\tau	\,	\in	\, L^\infty_{\rm loc}(\RR_+,\,\BB_{p,1}^{\frac{2}{p}}).
	\end{alignedat}
	\end{equation*}
	The local Lebesgue conditions in time can be replaced with global ones when $a>0$. 
	This solution is unique if $p\in [1, 4]$. 	
	%Furthermore, for any time $t\geq 0$, the $L^2$-energy is bounded by
	%\begin{equation*}
	%\begin{aligned}
	%	\|\,\tau(t)\,\|_{L^2(\RR^2)}& \leq \|\,\tau_0\,\|_{L^2(\RR^2)},\\
	%%	\| \,\uu(t)\, \|_{L^2(\RR^2)}^2 \,+\, \nu \int_0^t \|\,\nabla \uu(\tau)\|_{L^2(\RR^2)}^2\dd \tau
	%	\,&\leq\,
	%	C
	%	\left(
	%		\|\,\uu_0\,\|_{L^2(\RR^2)}
	%		\,+\,
	%		\nu^{-1}t \|\,\tau_0\,\|_{L^2(\RR^2)}
	%	\right)^2,
	%\end{aligned}
	%\end{equation*}
	%for a suitable positive constant $C$. 
	In addition, the Besov regularities satisfy
	\begin{equation*}
	\begin{aligned}
		\|\,\tau(t)\,\|_{\BB_{p,1}^{\frac{2}{p}}}
		\,&\leq\,
		e^{-at}
		\|\,\tau_0\,\|_{\BB_{p,1}^{\frac{2}{p}}}\exp\bigg\{\,C\nu^{-1}\,\Upsilon_{1,\nu}(t, \,\uu_0,\,\tau_0)\,\bigg\},\\
		\|\,\uu(t)\,\|_{\BB_{p,1}^{\frac{2}{p}-1}}
		\,+\,
		\nu\int_0^t
		\|\,\uu(s)\,\|_{\BB_{p,1}^{\frac{2}{p}+1}}
		\dd s
		\,
		\,&\leq\,
		\Big(
			\|\,\uu_0\,\|_{\BB_{p,1}^{\frac{2}{p}-1}}
			\,+\,
			\Theta_a(t)\|\,\tau_0\,\|_{\BB_{p,1}^{\frac{2}{p}}}
		\Big)\times \\&\hspace{2cm}\times 
		\exp\bigg\{C\nu^{-1}\Upsilon^1_\nu(t,\,\uu_0,\,\tau_0)\big(\nu^{-1}\Upsilon^2_\nu(t,\,\uu_0,\,\tau_0)+1\big)\bigg\}.
	\end{aligned}
	\end{equation*}
	where $\Theta_a(t) = t$ if $a=0$ while $\Theta_a(t) = (1-e^{-at})/a$ if $a>0$. Furthermore $\Upsilon^1_\nu(t,\,\uu_0,\,\tau_0)$ and $\Upsilon^2_\nu(t,\,\uu_0,\,\tau_0)$ are two smooth functions depending on the time $T$ and on the norms of $(\uu_0,\,\tau_0)$ in $L^2(\RR^2)\cap \BB_{\infty, 1}^{-1} \times L^2(\RR^2)\cap \BB_{\infty, 1}^0$ (cf. Definition \ref{def-Upsilon}). 
	
	\noindent The result is extended to the case of $\mu>0$, for a local in time classical solution defined on 				$[0,T_{\rm max})$, 
	whose lifespan $T_{\rm max}>0$ satisfies the lower bound
	\begin{equation*}
		T_{\rm max} \geq \sup \Big\{ 
		T>0 \quad \text{such that}\quad 
		C\frac{\mu}{\nu^{2}}T^2 \Psi_{2,\nu,\mu}(T,\uu_0,\tau_0)
		e^{
		2\frac{C}{\nu} 
		\Psi_{2,\nu,\mu}(T,\uu_0,\tau_0)
		\frac{1-e^{-aT}}{a}
		\|\tau_0 \|_{\BB_{\infty,1}^0}
		+
		2C\frac{\mu}{\nu}
		\Psi_{2,\nu,\mu}(T,\uu_0,\tau_0)
		T
		}
		< 1
		\Big\},
	\end{equation*}	 
	for a fixed positive constant $C$ and a suitable function $\Psi_{2,\nu,\mu}(T,\uu_0,\tau_0)$ 
	(cf. Definition \ref{def-Upsilon} below). 
\end{theorem}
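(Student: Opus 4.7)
The plan is to construct solutions via a Friedrichs-type approximation scheme (mollification of the data together with a spectral cutoff of the equations) and to close a three-layer hierarchy of \emph{a priori} bounds in which each layer feeds into the next. At the bottom layer I would exploit the corotational structure: testing the $\tau$-equation against $\tau$ and using $\Div\,\uu=0$ together with the elementary identity $\tr\bigl(\omega(\tau\tau^{T}+\tau^{T}\tau)\bigr)=0$ for any antisymmetric $\omega$, I obtain $\|\tau(t)\|_{L^2}=\e^{-at}\|\tau_0\|_{L^2}$ when $\mu=0$; the standard $L^2$-energy estimate on $\uu$ then yields $\uu\in L^\infty_{\mathrm{loc}}(L^2)\cap L^2_{\mathrm{loc}}(\Hh^1)$ after rewriting $\langle\Div\,\tau,\uu\rangle=-\langle\tau,\nabla\uu\rangle$ and absorbing half of the dissipation through Young's inequality.

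The second layer concerns scaling-invariant Besov norms: I would propagate $\uu$ in $L^\infty_T\BB_{\infty,1}^{-1}\cap L^1_T\BB_{\infty,1}^{1}$ (the latter encoding the crucial Lipschitz control of the flow) and $\tau$ in $L^\infty_T\BB_{\infty,1}^{0}$. Applying $\Dd_j$ to each equation, Bony's paraproduct decomposition together with the commutator estimate $\|[\Dd_j,\uu\cdot\nabla]\tau\|_{L^p}\lesssim c_j\,2^{-2j/p}\|\nabla\uu\|_{L^\infty}\|\tau\|_{\BB_{p,1}^{2/p}}$, combined with the $L^2$ bounds already gained, should produce the two quantities $\Upsilon^{1}_{\nu},\Upsilon^{2}_{\nu}$ appearing in the statement, depending only on the $L^2\cap\BB_{\infty,1}^{-1}\times L^2\cap\BB_{\infty,1}^{0}$ norms of the data. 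The third layer then propagates the stronger regularity $\BB_{p,1}^{2/p-1}$ for $\uu$ and $\BB_{p,1}^{2/p}$ for $\tau$: the standard transport estimate in Besov spaces, driven by the Lipschitz control of $\uu$ obtained in the second layer, produces the claimed bound on $\|\tau(t)\|_{\BB_{p,1}^{2/p}}$ via a Gronwall lemma, while the maximal regularity estimate for the Stokes semigroup applied to the velocity equation yields the bound on $\|\uu\|_{L^\infty_T\BB_{p,1}^{2/p-1}}+\nu\|\uu\|_{L^1_T\BB_{p,1}^{2/p+1}}$, with the factor $\Theta_a(T)$ arising from time-integrating the source $\Div\,\tau$. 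Convergence of the approximants follows by the usual compactness/Aubin--Lions argument.

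For \emph{uniqueness}, I would subtract two solutions and run an energy estimate on the difference $(\delta\uu,\delta\tau)$ at a strictly lower regularity (for instance $L^2$ or a negative Besov index), using the Lipschitz control of $\uu$ to handle the transport terms; the upper bound $p\le 4$ should appear when requiring $\nabla\uu$ to act as a multiplier on $\delta\tau$, an embedding condition that saturates in dimension two exactly at $p=4$. Finally, the case $\mu>0$ destroys the $L^2$ conservation of $\tau$ since the source $\mu\mathbb{D}$ is not antisymmetric under the corotational coupling, so I would switch to a fixed-point scheme on a finite window $[0,T]$; the explicit lower bound for $T_{\rm max}$ corresponds to the first time the Gronwall factor accumulated from the $\mu\mathbb{D}$ source exceeds a universal contraction constant, which produces precisely the nested exponential expression in the statement. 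The chief obstacle throughout is the absence of any dissipation or time decay for $\tau$: the growth factor $\Theta_a(T)$ is only linear when $a=0$, and closing the bootstrap requires that the second layer delivers $\|\nabla\uu\|_{L^1_T L^\infty_x}<\infty$ for every finite $T$, despite $\tau$ being merely bounded (not integrable) in time --- this is the subtle quantitative content of the three-layer estimate and is what forces the somewhat intricate definitions of the auxiliary functions $\Upsilon^{1}_\nu,\Upsilon^{2}_\nu,\Psi_{2,\nu,\mu}$.
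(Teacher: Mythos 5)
Your overall architecture (Friedrichs approximation, a bottom $L^2$-energy layer exploiting $\mathrm{tr}\big(\omega(\tau\tau^T+\tau^T\tau)\big)=0$, a middle Lipschitz layer in $\BB_{\infty,1}^{\pm 1}\times\BB_{\infty,1}^{0}$, a top layer propagating $\BB_{p,1}^{2/p\mp1}$, then Aubin--Lions compactness and a difference-energy argument for uniqueness) matches the paper's. However, your middle layer as written does not close, and this is not a stylistic quibble but the crux of the whole theorem.

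You propose to bound $\tau$ via the standard Besov transport commutator estimate
$\|[\Dd_j,\uu\cdot\nabla]\tau\|_{L^p}\lesssim c_j 2^{-2j/p}\|\nabla\uu\|_{L^\infty}\|\tau\|_{\BB_{p,1}^{2/p}}$, which by Gronwall gives
\begin{equation*}
\|\tau(t)\|_{\BB_{\infty,1}^{0}}\,\le\,\|\tau_0\|_{\BB_{\infty,1}^{0}}\exp\Big\{C\int_0^t\|\nabla\uu\|_{L^\infty}\Big\}.
\end{equation*}
Feeding this back into the Stokes mild formulation for $\uu$ produces a differential inequality of the schematic form $X'(t)\lesssim \exp\{CX(t)\}$ for $X(t)=\int_0^t\|\nabla\uu\|_{L^\infty}$, which blows up in finite time. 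When $a=0$ there is no damping of $\tau$ to rescue this, so the standard estimate cannot yield global Lipschitz control. The paper's escape route is a nonstandard refinement (its Lemma 3.2, \texttt{lemma:bound\_of\_tau-linear\_on\_nablau}): for a zero-regularity index, one can upgrade the exponential bound to a \emph{linear} one,
\begin{equation*}
\|\tau\|_{L^\infty(0,t;\BB_{\infty,1}^{0})}\,\le\,C\|\tau_0\|_{\BB_{\infty,1}^{0}}\Big(1+\int_0^t\|\nabla\uu\|_{\BB_{\infty,1}^{0}}\Big),
\end{equation*}
proved by decomposing $\tau=\sum_q\tau_q$ with $\tau_q$ the solution with data $\Dd_q\tau_0$, splitting $\sum_{j,q}\|\Dd_j\tau_q\|$ into near- and far-diagonal blocks, and optimizing the cutoff $N$ between the linear near-diagonal bound and the exponentially damped off-diagonal tail. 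Substituting this linear bound into the Stokes estimate produces $X'(t)\lesssim 1+X(t)$, hence a globally finite (if exponentially growing in $T$) Lipschitz norm. This is exactly why the paper insists on propagating $\tau$ in $\BB_{\infty,1}^{0}$ rather than $\dot H^{1}$ or $\BB_{p,1}^{2/p}$, and it is the ingredient missing from your sketch; you correctly flag the bootstrap obstruction in your last paragraph but do not supply the lemma that resolves it.

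Two secondary remarks: your control of the quadratic term $\Div(\uu\otimes\uu)$ in $L^1_T\BB_{\infty,1}^{-1}$ requires more than the $L^2$-energy bounds alone --- the paper first upgrades $\uu^n$ to $\tilde L^{8/3}_T\dot H^{3/4}$ by interpolation and then splits $\uu=\uu^L+\uu_1+\uu_2$ via the mild formulation, treating the nine pairings separately (Lemma 4.6 and Proposition 4.9). And for uniqueness at the endpoint $p=4$ the straightforward product estimate fails; the paper switches to $\BB_{4,\infty}$ spaces and invokes an Osgood-type logarithmic argument rather than a plain Gronwall one.
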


\noindent 
One of the main novelty of the theorem is the arbitrariness of the parameter $a\geq 0$. When $a>0$ then a damping mechanism insures that any classical solution is uniformly bounded in time. However, the theorem provides also the existence and uniqueness of global-in-time classical solutions when the damping mechanism is neglected, i.e. $a=0$. In this scenario the norms of the solution grow up exponentially in time, never blowing up.

\smallskip\noindent
When $\mu=\mu_2>0$ in \eqref{Oldroyd} and $b>0$ in \eqref{bilinear-term} are sufficiently small, we are further capable to provide a unique global-in-time classical solution under additionally regularity on the initial data. 
The smallness condition on $\mu$ and $b$ depends on the size of the initial data. We further require the damping coefficient $a>0$ to be strictly positive. 
\begin{theorem}\label{main_thm0b}
	Consider system \eqref{main_system} within the bidimensional case $d\,=\, 2$, $a>0$, $\mu>0$ and 
	$b>0$. There exists a constant $c>0$ depending on the parameter $a>0$ such that if  
	\begin{equation*}
		|b|
		+ 
		\mu
		\leq 
		\frac{c}{1+ 
		\exp\Big\{
		\| \tau_0 \|_{L^2(\mathbb{R}^d)}	+
		\| \tau_0 \|_{\BB_{p,1}^{\frac{\dd}{p}}\cap \BB_{p,1}^{\frac{\dd}{p}+1}}
		+
		\| \uu_0 \|_{L^2(\mathbb{R}^d)}
		+
		\| \uu_0 \|_{\BB_{p,1}^{\frac{\dd}{p}-1}\cap \BB_{p,1}^{\frac{\dd}{p}}}\Big\}}, 
	\end{equation*}	
	i.e. if $\mu$, $|b|$ are sufficiently small in relation to the initial data and the parameter $a$, 
	then there exists a global-in-time classical solution for problem \eqref{main_system}. 
\end{theorem}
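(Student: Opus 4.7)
The plan is to obtain the global classical solution by a bootstrap/continuation argument around the corotational ($b=0$) case already handled in Theorem~\ref{main_thm0}, using the strict positivity of the damping $a>0$ together with the smallness of $\mu, |b|$ to absorb the genuinely non-corotational contributions. First, one establishes a local-in-time classical solution on a maximal interval $[0,T_{\max})$ in the functional framework $\uu\in L^\infty_{\rm loc}(\BB_{p,1}^{\dd/p-1}\cap\BB_{p,1}^{\dd/p})$ and $\tau\in L^\infty_{\rm loc}(\BB_{p,1}^{\dd/p}\cap\BB_{p,1}^{\dd/p+1})$, by the same parabolic/hyperbolic fixed-point scheme that underlies the $\mu>0$ branch of Theorem~\ref{main_thm0}: the new term $b(\mathbb D\tau+\tau\mathbb D)$ is bilinear in $(\nabla\uu,\tau)$ and, at this regularity, behaves exactly like the corotational term $\omega\tau-\tau\omega$.

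The core step is to rewrite the $\tau$-equation with the damping on the left-hand side,
\begin{equation*}
(\partial_t+\uu\cdot\nabla+a)\tau=(\omega\tau-\tau\omega)-b(\mathbb D\,\tau+\tau\,\mathbb D)+\mu\mathbb D,
\end{equation*}
so that its principal part is a transport operator with exponential weight $e^{-at}$. Pairing with $\tau$ (and $\uu$ with its own equation) yields
\begin{equation*}
\tfrac{1}{2}\tfrac{d}{dt}\bigl(\|\uu\|_{L^2}^2+\|\tau\|_{L^2}^2\bigr)+\nu\|\nabla\uu\|_{L^2}^2+a\|\tau\|_{L^2}^2\leq(\mu+C|b|)\|\nabla\uu\|_{L^2}\|\tau\|_{L^2},
\end{equation*}
whose right-hand side is absorbed whenever $\mu+|b|$ is small compared with $\min(\nu,a)$. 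One then propagates the critical Besov norms by the transport-damping estimate
\begin{equation*}
\|\tau(t)\|_{\BB_{p,1}^{\sigma}}\leq e^{-at}\|\tau_0\|_{\BB_{p,1}^{\sigma}}+\int_0^t e^{-a(t-s)}\bigl[\|\nabla\uu\|_{L^\infty}\|\tau\|_{\BB_{p,1}^{\sigma}}+(\mu+|b|)\|\nabla\uu\|_{\BB_{p,1}^{\sigma}}\bigr]ds
\end{equation*}
for $\sigma\in\{\dd/p,\dd/p+1\}$, coupled with the heat maximal regularity for $\uu$ driven by $\Div\tau$.

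The global argument then proceeds by continuation. Define $\mathcal E(T)$ as the sum of all the above norms on $[0,T]$, set $M:=2M_0$ where $M_0$ is the bound from Theorem~\ref{main_thm0} at $b=\mu=0$ and the given initial data, and let $T^\ast:=\sup\{T<T_{\max}\colon\mathcal E(T)\leq M\}$. Gronwall applied to the Besov estimates produces an exponential factor whose argument is controlled by $M$ and by the initial data norms; the smallness hypothesis $|b|+\mu\lesssim \exp(-\|(\uu_0,\tau_0)\|_{\text{initial}})$ is precisely calibrated so that the perturbation terms contribute at most $M/2$ to $\mathcal E(T)$, strictly improving the bootstrap and forcing $T^\ast=T_{\max}$. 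The standard Lipschitz blow-up criterion, enforced through $\BB_{p,1}^{\dd/p+1}\hookrightarrow W^{1,\infty}$, then rules out $T_{\max}<\infty$.

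The hard part will be the term $b(\mathbb D\tau+\tau\mathbb D)$: unlike the corotational commutator it possesses no antisymmetry to exploit and appears at exactly the same regularity as $\omega\tau-\tau\omega$, so only genuine smallness of $b$ can tame it in the Besov estimates. A closely related subtlety is the source $\mu\mathbb D$: without damping it would force $\|\tau\|_{\BB_{p,1}^{\dd/p+1}}$ to grow like $\mu\int_0^t\|\uu\|_{\BB_{p,1}^{\dd/p+2}}\,ds$, a quantity strictly above the velocity regularity available from the heat maximal regularity of the $\uu$-equation; it is exactly the factor $e^{-at}$ coming from $a>0$, combined with the exponential-in-initial-data calibration of the constant $c$, that allows the two-way coupling to close and delivers the global-in-time bound.
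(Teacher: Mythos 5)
Your proposal has a genuine gap in the global continuation step. You propose a direct bootstrap on the full solution $(\uu,\tau)$, defining $\mathcal E(T)$ as the sum of its norms and positing $\mathcal E(T)\leq M:=2M_0$ with $M_0$ the bound from Theorem \ref{main_thm0} at $b=\mu=0$. But the Besov transport estimate you write for $\tau$ is the \emph{exponential-growth} variant of Lemma \ref{lemma:bound_of_tau-exp_on_nablau}: after Gronwall it produces the factor $\exp\{C\int_0^t\|\nabla\uu\|_{L^\infty}\}$. Feeding the bootstrap hypothesis $\mathcal E(T)\leq 2M_0$ into this factor yields $\exp\{2CM_0/\nu\}$, which is \emph{strictly larger} than the factor $\exp\{CM_0/\nu\}$ that was used to define $M_0$ itself. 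When this bound on $\|\tau\|_{\BB_{p,1}^{\dd/p}}$ is re-inserted into the momentum estimate via the forcing $\Div\,\tau$, one obtains $\mathcal E(T)\gtrsim \nu^{-1}\|\tau_0\|\exp\{2CM_0/\nu\}\gg M_0$: the bootstrap \emph{degrades} rather than closes. The smallness of $\mu+|b|$ helps only with the stretching and $\mu\mathbb D$ contributions; it does nothing against the corotational commutator $\omega\tau-\tau\omega$, which is $O(1)$ and drives the exponential. Even in the corotational case itself, closing the estimates required the linear-growth Lemma \ref{lemma:bound_of_tau-linear_on_nablau} in $\BB_{\infty,1}^0$ to avoid exactly this exponential self-feeding, and you do not invoke it; and the linear-growth structure is itself broken by $\mu\mathbb D$ and by $b(\mathbb D\tau+\tau\mathbb D)$.

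The paper circumvents this by a genuinely perturbative comparison: it first builds the global corotational solution $(v,\sigma)$ of Theorem \ref{main_thm0} from the same initial data (using $a>0$ and the extra regularity $\tau_0\in\BB_{p,1}^{\dd/p}\cap\BB_{p,1}^{\dd/p+1}$ to control $\sigma$ in $L^\infty(\RR_+,\BB_{p,1}^{\dd/p+1})$), then writes the system satisfied by the \emph{difference} $(\delta\uu,\delta\tau)=(\uu,\tau)-(v,\sigma)$, which has zero initial data. The bootstrap is then on the smallness of $\delta\uu$, not on the size of $\uu$: the Gronwall exponential in the $\delta\tau$ estimate involves $\int_0^t\|\nabla v\|_{\BB_{p,1}^{\dd/p}}$ — a \emph{fixed} finite constant from the corotational theorem — plus the small constant $\frac{1}{4\tilde C\nu}$ coming from the bootstrap hypothesis on $\delta\uu$; and every source term in the $\delta$-equations is either explicitly $O(\mu+|b|)$ or proportional to a $\delta$-quantity that starts from zero. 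This is what allows the estimate to strictly improve and the continuation to run, and it is precisely the mechanism that is absent from your direct approach.
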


\noindent 
As pointed out, the initial condition $(\uu_0,\,\tau_0)$ that belongs to $\BB_{p,1}^{2/p-1}\times \BB^{2/p}_{p,1}$ (cf. Section \ref{sec:Besov} for some details about these functional spaces) is the precursor of the Lipschitz behavior of the fluid $\uu$. Nevertheless, the real regularity which unlock the Lipschitz condition for $\uu$ is somehow hidden in the above statement. We overview some specific about that:
when considering the case $p=2$, that is  $(\uu_0,\,\tau_0)\in \BB_{2,1}^0\times \BB_{2,1}^1$, we are dealing with a strict subcase of the framework $(\uu_0,\,\tau_0)\in L^2(\RR^2)\times \dot H^1(\RR^2)$, where $\Hh^1(\RR^2)$ stands for the homogeneous Sobolev space. It is well known that just considering the simplified case of a transport equation
\begin{equation*}
	\partial_t \tau\,+\,\uu\cdot \nabla \tau = 0,
\end{equation*}
the Sobolev regularity $\Hh^1(\RR^2)$ is propagated by a Lipschitz flow with the following exponential growth:
\begin{equation*}
	\|\,\tau(t)\,\|_{\dot H^1(\RR^2)}\,\leq\,\|\,\tau_0\,\|_{\dot H^1(\RR^2)}\exp\left\{\int_0^t\|\,\uu\,\|_{\mathcal{L}ip}\right\}.
\end{equation*}
Coupling this inequality with the structure of system \eqref{main_system} would eventually lead to a bound for the Lipschitz regularity of the flow $\uu$ of the following type:
\begin{equation*}
	\frac{\dd}{\dd t}\|\,\uu\,\|_{\mathcal{L}ip}\,\leq\, C(\|\, \uu_0\,\|_{\mathcal{L}ip}, \|\,\tau_0\,\|_{\dot H^1(\RR^2)})\exp\left\{\int_0^t\|\,\uu\,\|_{\mathcal{L}ip}\right\},
\end{equation*}
for which no global-in-time bound is automatically determined. Hence, we will first propagate the norm of the initial data within a largest functional framework than the one specifically stated in Theorem \ref{main_thm0}, namely we will propagate the following regularities:
\begin{equation*}
		\uu_0\,\in\,\BB_{\infty, 1}^{-1}\quad\quad\text{and}\quad\quad \tau_0\in \BB_{\infty, 1}^0,
\end{equation*}
in which $\BB_{p,1}^{2/p-1}$ and $\BB_{p,1}^{2/p}$ are embedded, respectively. We will show that this particular choice is essential to achieve a linear growth of the Lipschitz regularity: 
\begin{equation*}
	\|\,\tau(t)\,\|_{\BB_{\infty, 1}^{0}}\,\leq\,\|\,\tau_0\,\|_{\BB_{\infty, 1}^{0}}\left(1 +\int_0^t\|\,\uu\,\|_{\mathcal{L}ip}\right)
	\;\Rightarrow\;
	\frac{\dd}{\dd t}\|\,\uu\,\|_{\mathcal{L}ip}\,\leq\, C(\|\,\uu_0\,\|_{\mathcal{L}ip}, \|\,\tau_0\,\|_{\dot H^1(\RR^2)})\left(1 +\int_0^t\|\,\uu\,\|_{\mathcal{L}ip}\right).
\end{equation*}
Thus a standard Gronwall inequality will unlock the propagation of Lipschitz regularity for $\uu$, which is our main tool to build global-in-time classical solutions.
\begin{definition}\label{def-Upsilon}
	In Theorem \ref{main_thm0} we have avoided to explicitly present the form of 
	$\Upsilon^1_\nu(T,\, \uu_0,\,\tau_0)$ and $\Upsilon^2_\nu(T,\, \uu_0,\,\tau_0)$ for the 
	sake of a compact formulation. We report here their exact expression. We first introduce the functional 
	\begin{equation*}
	\begin{aligned}
			\Gamma_{a,\nu}(T) &:=\sqrt{\frac{1-e^{-2aT}}{2a\nu}}\quad\text{when }a>0,\qquad 
			\Gamma_{a,\nu}(T):= \nu^{-\frac{1}{2}}T^\frac{1}{2}\quad\text{when }a=0\\
			\Phi_{\nu,\mu,a}(T, \uu_0,\tau_0)=
			&\begin{cases}
		\left(
			\| \uu_0  \|_{L^2(\RR^2)}
			+
			\nu^{-1}\| \,\uu_0 \, \|_{L^2(\RR^2)}^2
			+
			\| \tau_0  \|_{L^2(\RR^2)}
			\Gamma_{a,\nu}(T)
			+
			\nu^{-1}
			\| \tau_0  \|_{L^2(\RR^2)}^2
			\Gamma_{a,\nu}(T)^2
		\right)^2\; &\text{if}\quad \mu = 0,\\
		\Big\{
			\big(1+\Gamma_{a,\nu}(T)\mu^\frac{1}{2}\big)
			\| \uu_0  \|_{L^2(\RR^2)}
			+ 
			\nu^{-1}\| \uu_0  \|_{L^2(\RR^2)}^2		
			+ \\ \hspace{2.3cm}+
			\big(\mu^{-\frac{1}{2}}
			+
			\Gamma_{a,\nu}(T)
			\big)
			\| \tau_0  \|_{L^2(\RR^2)}
			+
			\nu^{-1}
			\| \tau_0  \|_{L^2(\RR^2)}^2
			\Gamma_{a,\nu}(T)^2
		\Big\}^2	
		 &\text{if}\quad \mu > 0,
			\end{cases}\\
			%---------------------------------
			%---------------------------------
			%---------------------------------
			\Psi_{1,\nu,\,\mu,\,a}(T,\,\uu_0,\,\tau_0)
			\,&=\,
			C
		\bigg\{\,
			\nu^{-\frac 32}
			\Phi_{\nu,\,\mu,\,a}(T, \,\uu_0, \,\tau_0\,)^2
			\,+\,
			\nu^{-\frac{5}{4}}
			\Phi_{\nu,\,\mu,\,a}(T, \,\uu_0, \,\tau_0\,)
			\|\,\uu_0\,\|_{L^2(\RR^2)}
		\bigg\},			\\
		\Psi_{2,\,\nu,\,\mu,\,a}(T,\,\uu_0,\,\tau_0)
		\,&=\,
		C
		\left\{
			\Gamma_{a,\nu}(T)
			\big(\mu\|\uu_0\|_{L^2(\RR^2)}^2+\|\,\tau_0\,\|_{L^2(\RR^2)}^2
			\big)^\frac{1}{2}
			\,+\,
			\nu^{-\frac{5}{4}}\Phi_{\nu,\,\mu,\,a}(T, \,\uu_0, \,\tau_0\,)
			\,+\,
			\nu^{-1}
			\|\,\uu_0\,\|_{L^2(\RR^2)}
		\right\},
	\end{aligned}
	\end{equation*}
	for a fixed and sufficiently large constant $C$. When $\mu=0$, the exact formulations of 
	$\Upsilon^1_{\nu,\mu}$ is given by 
	\begin{equation*}
		\begin{aligned}
			\Upsilon^1_{\nu,\mu,a}(T,\, \uu_0,\,\tau_0)\,:=\,
			\nu^{-1}
			\Big\{\,
			\|\,\uu_0\,\|_{\BB_{\infty,1}^{-1}}
			\,+\,
			\Psi_{1,\nu,\mu,a}(T,\,\uu_0,\,\tau_0)
			\,	+\,
			C
			&\Big(\,
			\Psi_{2,\nu,\mu,a}(T,\,\uu_0,\,\tau_0)
			\,+\,
			C
			\Big)
			\|\,\tau_0\,\|_{ \BB_{\infty,1}^0}
			\Theta_a(T)
			\Big\}{\scriptstyle\times}\\&{\scriptstyle\times}
			\exp
			\left\{
				C\nu^{-1}\Theta_a(T)
				\Psi_{2,\nu,\mu}(T,\,\uu_0,\,\tau_0)
			\right\},
		\end{aligned}
		\end{equation*}
		where $\Theta_a(T)=(1-e^{-at})/a$ when $a>0$, while $\Theta_a(T)=T$ when $a=0$.
		Furthermore, for $\mu>0$,
		\begin{equation*}
		\begin{aligned}
			\Upsilon^1_{\nu,\mu,a}(T,\uu_0,\tau_0):=
			\frac{\nu^{-1}
			\Big\{
			\|\uu_0\|_{\BB_{\infty,1}^{-1}}
			+
			\Psi_{1,\nu,\mu,a}(T,\uu_0,\tau_0)
				+
			C
			\Big(
			\Psi_{2,\nu,\mu,a}(T,\uu_0,\tau_0)
			+
			C
			\Big)
			\|\tau_0\|_{ \BB_{\infty,1}^0}
			\Theta_a(T)
			\Big\}}
			{
			1-
			C\frac{\mu}{\nu^{2}}T^2 \Psi_{2,\nu,\mu}(T,\uu_0,\tau_0)
		e^{
		2\frac{C}{\nu} 
		\Psi_{2,\nu,\mu,a}(T,\uu_0,\tau_0)
		\Theta_a(T)
		\|\tau_0 \|_{\BB_{\infty,1}^0}
		+
		2C\frac{\mu}{\nu}
		\Psi_{2,\nu,\mu,a}(T,\uu_0,\tau_0)
		T
		}
			}
			{\scriptstyle\times}\\{\scriptstyle\times}
			\exp\Big\{
		\frac{C}{\nu} 
		\Psi_{2,\nu,\mu,a}(T,\uu_0,\tau_0)
		\Theta_a(T)
		\|\tau_0 \|_{\BB_{\infty,1}^0}
		+
		2C\frac{\mu}{\nu}
		\Psi_{2,\nu,\mu}(T,\uu_0,\tau_0)
		T
		\Big\}.
		\end{aligned}
		\end{equation*}
		Finally
		\begin{equation*}
		\begin{aligned}
			\Upsilon^2_{\nu,\mu}&(T,\, \uu_0,\,\tau_0)\,:=\,
			\|\,\uu_0\,\|_{\BB_{\infty,1}^{-1}}
			\,+\,
			\Psi_{1,\nu,\mu}(T,\,\uu_0,\,\tau_0)
			\,	+\,
			C
			\Big(\,
			\Psi_{2,\,\nu}(T,\,\uu_0,\,\tau_0)
			\,+\,
			1
			\Big)
			\|\,\tau_0\,\|_{ \BB_{p,1}^0}
			T\,+\\
			\,&+\,
			C\nu
			\|\,\tau_0\,\|_{ \BB_{\infty,1}^0}
			\Big(\,
			\Psi_{2,\nu,\mu}(T,\,\uu_0,\,\tau_0)
			\,+\,
			1
			\Big)
			\int_0^T
			\Upsilon^1_{\nu,\mu}(t,\,\uu_0,\,\tau_0)
			\dd t+
			C\mu\nu
			\Psi_{2,\nu,\mu}(T,\,\uu_0,\,\tau_0)
			\int_0^T
			\Upsilon^1_{\nu,\mu}(t,\,\uu_0,\,\tau_0)
			\dd t
			.
		\end{aligned}
		\end{equation*}
\end{definition}
\begin{remark}
	Theorem \ref{main_thm0} should be seen as a further extension of the analysis started by Lions and Masmoudi 
	in \cite{LM}. The authors indeed achieved the following estimates on the velocity field $u$:
	\begin{itemize}
		\item $\nabla u \in L^p(0,T; L^q(\mathbb{R}^\dd))$, $2\leq q \leq 3$ and $1\leq p\leq \frac{q}{2q-3}$, when 
				$\dd\geq 3$,
		\item $\nabla u \in L^p(0,T; L^q(\mathbb{R}^\dd))$, $2\leq q < \infty$ and $p> \frac{q}{q-1}$,
		 when $\dd = 2$.
	\end{itemize}
	Both cases allowed  the existence of global-in-time weak solutions, however they did not provide 
	the Lipschitz regularity $\nabla u\in L^1(0,T;L^\infty(\mathbb{R}^\dd))$ that would 
	have unlocked the existence of global-in-time classical solutions. We achieve such a property in this manuscript.
\end{remark}

\smallskip
\noindent 
The last part of our manuscript is devoted to establish global-in-time strong solutions for small initial data in dimension $\dd\geq 3$. We intend to proceed similarly as in the result of Fujita-Kato \cite{FK} for the incompressible Navier-Stokes equations, as well as in the result of Paicu and Danchin in \cite{PD} for the so-called Boussinesq system. 

\noindent We recall that the Fujita-Kato Theorem is proven by reformulating the Navier-Stokes system as a fixed point problem about an operator which is built in terms of the Stokes semigroup. Under suitable smallness condition, the Picard fixed-point Theorem allows to achieve an unique strong solution, which is global in time. This approach is specifically effective when the considered functional space is critical under the scaling behavior of the Navier-Stokes equation. More precisely a functional space that preserves the norm of a solution $\uu(t,x)$ under the scaling 
\begin{equation*}
	\uu(t,x) \rightarrow \lambda \uu(\lambda^2 t,\,\lambda x),\quad\quad\text{with}\quad\lambda\geq 0.
\end{equation*}
Fujita and Kato showed that the Navier Stokes system is well posed for small initial data that belong to the homogeneous Sobolev Space $\dot H^{1/2}(\RR^3)$. 

\noindent A trivial computation shows that the Oldroyd-B system \eqref{main_system} is invariant under the following transformation:
\begin{equation*}
	(\uu(t,x),\,\tau(t,x))\rightarrow (\lambda \uu(\lambda^2t,\,\lambda x),\,\lambda^2 \tau(\lambda^2,\,\lambda x)).
\end{equation*}
Hence, the critical functional framework for the velocity field $\uu$ is the same as in the case of the Navier-Stokes equations, while we need to impose an additional derivative for the conformation tensor $\tau$.

\noindent We hence prove the following local result of solutions for system \eqref{main_system} within critical regularities:
\begin{theorem}\label{main_thm1}
	Let $\mu=0$. Consider a dimension $d\,\geq\, 3$, let the initial data $\uu_0$ be a free divergence vector field in $\BB_{p, 1}^{\frac{\dd}{p}-1}$, while $\tau_0$ belongs to $\BB_{p, 1}^{\frac{\dd}{p}}$, for a parameter $p\in [1, 2\dd)$. 
	Then there exists a time $T^*>0$ for which system \eqref{main_system} admits a unique local solution $(\uu,\,\tau)$ within
	\begin{equation*}
	\begin{alignedat}{4}
		\uu		\,	&\in	\,&& \mathcal{C}([0, T], \,\BB_{p, 1}^{\frac{\dd}{p}-1})\cap L^1(0, T,\BB_{p, 1}^{\frac{\dd}{p}+1}),\quad\quad
		\tau	\,	\in	\, \mathcal{C}([0, T], \,\BB_{p, 1}^{\frac{\dd}{p}}),
	\end{alignedat}
	\end{equation*}
	for any $T<T^*$. Furthermore if $\uu$ belongs to $ L^\infty(0,T^*,\BB_{p, 1}^{\frac{\dd}{p}-1},)\cap L^1(0, T^*,\BB_{p, 1}^{\frac{\dd}{p}+1})$ and $\tau$ belongs to
	$ L^\infty(0,T^*,\BB_{p, 1}^{\frac{\dd}{p}},)$, the solution can be extended in time with a life span larger than $T^*$.
\end{theorem}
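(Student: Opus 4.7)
The plan is to set up a Picard iteration in the critical Besov framework, exploiting the parabolic regularization on the velocity equation and Danchin's transport estimates on the conformation tensor, and then to pass to the limit after establishing uniform bounds and a Cauchy property in a weaker norm.

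First, I would construct an approximating sequence $(\uu^n,\tau^n)_{n\geq 0}$ by setting $\uu^0(t)=e^{\nu t\Delta}\uu_0$, $\tau^0\equiv \tau_0$, and defining $(\uu^{n+1},\tau^{n+1})$ as the solution of the linearized problem
\begin{equation*}
\left\{\begin{aligned}
&\partial_t \uu^{n+1}-\nu\Delta \uu^{n+1}+\nabla \pre^{n+1}= -\uu^n\cdot\nabla \uu^n+\Div\,\tau^n,\qquad \Div\,\uu^{n+1}=0,\\
&\partial_t \tau^{n+1}+\uu^n\cdot\nabla \tau^{n+1}=\omega^n\tau^n-\tau^n\omega^n,\\
&(\uu^{n+1},\tau^{n+1})_{|t=0}=(\uu_0,\tau_0).
\end{aligned}\right.
\end{equation*}
The heat maximal regularity in homogeneous Besov spaces gives
\begin{equation*}
\|\uu^{n+1}\|_{L^\infty_T \BB^{\frac{\dd}{p}-1}_{p,1}}+\nu \|\uu^{n+1}\|_{L^1_T \BB^{\frac{\dd}{p}+1}_{p,1}}\leq C\bigl(\|\uu_0\|_{\BB^{\frac{\dd}{p}-1}_{p,1}}+\|\uu^n\cdot\nabla \uu^n\|_{L^1_T \BB^{\frac{\dd}{p}-1}_{p,1}}+T\|\tau^n\|_{L^\infty_T \BB^{\frac{\dd}{p}}_{p,1}}\bigr),
\end{equation*}
while the Danchin transport estimate yields
\begin{equation*}
\|\tau^{n+1}\|_{L^\infty_T \BB^{\frac{\dd}{p}}_{p,1}}\leq e^{C\int_0^T \|\nabla \uu^n\|_{\BB^{\frac{\dd}{p}}_{p,1}}\,\dd s}\Bigl(\|\tau_0\|_{\BB^{\frac{\dd}{p}}_{p,1}}+C\int_0^T\|\uu^n\|_{\BB^{\frac{\dd}{p}+1}_{p,1}}\|\tau^n\|_{\BB^{\frac{\dd}{p}}_{p,1}}\,\dd s\Bigr).
\end{equation*}
The restriction $p\in[1,2\dd)$ is precisely what makes the paraproduct and the commutator estimates compatible with the critical indices $\dd/p-1$ and $\dd/p$, so that $\BB^{\dd/p}_{p,1}$ behaves as an algebra and $\|\uu\cdot\nabla \uu\|_{\BB^{\dd/p-1}_{p,1}}\lesssim \|\uu\|_{\BB^{\dd/p}_{p,1}}\|\uu\|_{\BB^{\dd/p+1}_{p,1}}$ can be closed by interpolation from $L^\infty_T$ and $L^1_T$ data.

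Second, I would fix $R:=2C(\|\uu_0\|_{\BB^{\dd/p-1}_{p,1}}+\|\tau_0\|_{\BB^{\dd/p}_{p,1}})$ and choose $T^*>0$ small enough so that the two estimates above, combined with the product/commutator bounds, leave the ball of radius $R$ in $(L^\infty_{T^*}\BB^{\dd/p-1}_{p,1}\cap L^1_{T^*}\BB^{\dd/p+1}_{p,1})\times L^\infty_{T^*}\BB^{\dd/p}_{p,1}$ invariant under the iteration; the key smallness conditions take the schematic form $CT^*R\ll 1$ and $C\int_0^{T^*}R\,\dd s\ll 1$, which are satisfied since the parabolic gain supplies a factor that vanishes with $T^*$ (for the $\Div\,\tau$ forcing one uses $\|\Div\,\tau^n\|_{L^1_{T^*}\BB^{\dd/p-1}_{p,1}}\leq T^*\|\tau^n\|_{L^\infty_{T^*}\BB^{\dd/p}_{p,1}}$).

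Third, I would prove the convergence of the Cauchy sequence $(\uu^n,\tau^n)$ in the slightly weaker norm $L^\infty_{T^*}\BB^{\dd/p-1}_{p,1}\cap L^1_{T^*}\BB^{\dd/p+1}_{p,1}$ for the velocity and $L^\infty_{T^*}\BB^{\dd/p-1}_{p,1}$ for the tensor, by writing the system satisfied by $(\delta \uu^n,\delta\tau^n):=(\uu^{n+1}-\uu^n,\tau^{n+1}-\tau^n)$ and applying the same linear estimates, now with small prefactors coming from the uniform bound $R$ and the smallness of $T^*$. Passing to the limit as $n\to\infty$ produces a solution in the required class, and uniqueness is obtained by the very same difference argument applied to two hypothetical solutions. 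Continuity in time with values in the critical Besov spaces follows from the parabolic/transport structure and a standard density argument.

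The main obstacle is the \emph{critical} coupling between the two equations: in order to absorb $\Div\,\tau$ as a forcing in the $\uu$-equation at the level $\BB^{\dd/p-1}_{p,1}$, one must pay a factor $T$, while the stretching term $\omega\tau-\tau\omega$ in the $\tau$-equation costs one full derivative on $\uu$ which can only be afforded via the $L^1_T\BB^{\dd/p+1}_{p,1}$ gain of the heat equation; matching these two losses forces the lifespan $T^*$ to depend nonlinearly on the initial data and is what prevents a free choice of $T^*$. For the final extension statement, if the solution a priori lies in $L^\infty_{T^*}\BB^{\dd/p-1}_{p,1}\cap L^1_{T^*}\BB^{\dd/p+1}_{p,1}$ and $\tau\in L^\infty_{T^*}\BB^{\dd/p}_{p,1}$, then the corresponding values at time $T^*$ belong again to $\BB^{\dd/p-1}_{p,1}\times\BB^{\dd/p}_{p,1}$ by time continuity, so the local existence argument can be restarted from $T^*$, yielding a solution past $T^*$ and concluding the proof.
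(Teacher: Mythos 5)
Your overall strategy is the same as the paper's (an iterative linearization estimated with heat/Stokes maximal regularity and transport propagation, uniform bounds on a small interval, contraction in a lower-regularity norm, uniqueness and extension by restarting), but there is a genuine gap in the step where you close the uniform estimates. In the critical framework the only information the invariant ball of radius $R\sim \|\uu_0\|_{\BB_{p,1}^{\frac{\dd}{p}-1}}+\|\tau_0\|_{\BB_{p,1}^{\frac{\dd}{p}}}$ gives on the highest norm is $\nu\|\uu^n\|_{L^1_{T}\BB_{p,1}^{\frac{\dd}{p}+1}}\leq R$, and this quantity carries no factor of $T$. Hence the quadratic term, bounded by $\|\uu^n\|_{L^\infty_T\BB_{p,1}^{\frac{\dd}{p}-1}}\|\uu^n\|_{L^1_T\BB_{p,1}^{\frac{\dd}{p}+1}}\lesssim R^2/\nu$, and the Gronwall exponential $\exp\{C\|\nabla\uu^n\|_{L^1_T\BB_{p,1}^{\frac{\dd}{p}}}\}\leq e^{CR/\nu}$ in the tensor estimate are \emph{not} small as $T^*\to 0$; your smallness conditions of the form $CT^*R\ll 1$ only control the $\Div\,\tau$ forcing. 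Moreover, since you treat $\omega^n\tau^n-\tau^n\omega^n$ explicitly, the ball invariance for the tensor would require $e^{CR/\nu}\big(\|\tau_0\|_{\BB_{p,1}^{\frac{\dd}{p}}}+CR^2/\nu\big)\leq R$, which fails for large data. The paper closes the scheme differently: it splits $\uu^n=\uu_L+\bar\uu^n$ with $\uu_L$ the free Stokes evolution of $\uu_0$ and $\bar\uu^n$ having zero initial data, keeps the rotation terms implicit in the $\tau^{n+1}$-equation (so the tensor bound is $\|\tau_0\|_{\BB_{p,1}^{\frac{\dd}{p}}}$ times a bounded exponential, with no $\tau^n$ source), confines the remainder $\bar\uu^n$ to a ball of \emph{small} radius $\nu\ee/(2C)$, and obtains the smallness in $T$ from $\|\uu_L\|_{L^2_T\BB_{p,1}^{\frac{\dd}{p}}}\to 0$ as $T\to 0$, which relies on the $\ell^1$ summability of the third index. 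Your remark that ``the parabolic gain supplies a factor that vanishes with $T^*$'' is precisely this mechanism, but without carrying the free-evolution splitting through the iteration the invariant-ball argument as you state it does not close for large initial data.

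A secondary inconsistency concerns the contraction norms. You propose to contract $\delta\uu$ in $L^\infty_{T^*}\BB_{p,1}^{\frac{\dd}{p}-1}\cap L^1_{T^*}\BB_{p,1}^{\frac{\dd}{p}+1}$ (the full existence regularity) while measuring $\delta\tau$ in $L^\infty_{T^*}\BB_{p,1}^{\frac{\dd}{p}-1}$; then the forcing $\Div\,\delta\tau$ only lies in $L^1_{T^*}\BB_{p,1}^{\frac{\dd}{p}-2}$, one derivative short of what the Stokes estimate at level $\frac{\dd}{p}-1$ requires, whereas estimating $\delta\tau$ at level $\frac{\dd}{p}$ would in turn need $\tau\in\BB_{p,1}^{\frac{\dd}{p}+1}$, which is not available. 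The paper resolves this by shifting the velocity difference one derivative down, working in $\tilde L^\infty_T\BB_{p,1}^{\frac{\dd}{p}-2}\cap\tilde L^1_T\BB_{p,1}^{\frac{\dd}{p}}$ for $\delta\uu$ and $\tilde L^\infty_T\BB_{p,1}^{\frac{\dd}{p}-1}$ for $\delta\tau$; it is in these shifted product laws (at regularity $\frac{\dd}{p}-1$) that the restriction $p<2\dd$ is genuinely used, not in making $\BB_{p,1}^{\frac{\dd}{p}}$ an algebra, which holds for every $p$. Your extension argument at the end is fine and matches the paper's.
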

\noindent The construction of global-in-time classical solution is more delicate than the above local result. Indeed, the lacking of damping term for the conformation tensor $\tau$ does not allow to use the classic fixed point approach, which couples the Picard scheme with standard estimates for the Stokes operator. We hence proceed as follows: 
\begin{itemize}
	\item  we determine a suitable functional setting for the initial data for which system \eqref{main_system} preserves the smallness condition of the initial data,
	\item  we hence use this specific small condition, coupled with the Picard fixed point, to show that certain critical regularities 
			are still propagated, globally in time.
\end{itemize}
We will see that the mentioned functional framework corresponds to the Lorentz spaces $\uu_0\in L^{\dd, \infty}(\RR^\dd)$ and  $\tau_0\in L^{\dd, \infty}(\RR^\dd)$. We mention that these spaces are critical under the considered scaling behavior.
We will thus prove the following global result:
\begin{theorem}\label{main_thm2}
Let $\mu=0$ and assume that the dimension $\dd \geq 3$, the initial data $\uu_0$ belongs to $\BB_{p,1}^{\dd/p-1}\cap L^{\dd, \infty}(\RR^\dd)$ while $\tau_0$ belongs to $\BB_{p,1}^{\dd/p}\cap L^{\frac{\dd}{2}, \infty}(\RR^\dd)$, with $p\in [1, +\infty)$. Then there exists a small positive constant $\ee$ depending on the dimension $\dd$ such that, whenever the following smallness condition holds true
	\begin{equation*}
	\begin{alignedat}{8}
		&\left\|\,\uu_0	\,\right\|_{L^{\dd, \infty}}\,+\,
		\frac{1}{\nu}
		\left\|\,\tau_0	\,\right\|_{L^{\frac{\dd}{2}, \infty}}
		\,&&\leq\,
		\frac{\ee}{\nu}
	\end{alignedat}
	\end{equation*}
	then the corotational Johnson-Segalman model \eqref{main_system} admits a global-in-time classical solution $(\uu,\, \tau)$, satisfying
	\begin{equation*}
	\begin{alignedat}{4}
		\uu		\,	&\in	\,&& \mathcal{C}\big(\,\RR_+, \,\BB_{p, 1}^{\frac{\dd}{p}-1}\cap  L^{\dd, \infty}(\RR^\dd)\,\big)\cap L^1_{\rm loc}(\RR_+,\BB_{p, 1}^{\frac{\dd}{p}+1}),\quad\quad
		\tau	\,	\in	\, \mathcal{C}(\,\RR_+, \,\BB_{p, 1}^{\frac{\dd}{p}}\cap L^{\frac{\dd}{2}, \infty}(\RR^\dd)\,).
	\end{alignedat}
	\end{equation*}
	If $p$ belongs to $[1, 2\dd]$ then the solution is unique. Furthermore, there exists a constant $C$ depending just on the dimension $\dd$ such that for any time $T\geq 0$ we have
	\begin{equation*}
	\begin{aligned}
			\| \tau(T)\|_{L^{\frac{\dd}{2}, \infty}} &\leq \|\,\tau_0\,\|_{L^{\frac{\dd}{2}, \infty}},
			\quad\quad 
			\|\,\uu(T)\,\|_{L^{\dd, \infty}}
			\,\leq\,	
			C\left(\, \|\,\uu_0\,\|_{L^{\dd, \infty}} \,+\, \nu^{-1}\|\,\tau_0\,\|_{L^{\frac{\dd}{2}, \infty}}\right),\\
			\|\,\uu\,\|_{L^\infty(0,T;\BB_{p,1}^{\frac{\dd}{p}-1})}
			\,+\,
			\nu
			\|\,\uu\,\|_{L^1(0,T;\BB_{p,1}^{\frac{\dd}{p}+1})}
			\,&\leq\,
			\Big\{
	\|\,\uu_0\,\|_{\BB_{p,1}^{\frac{\dd}{p}-1}}
	\,+\,
		CT
	\|\,\tau_0\,\|_{\BB_{p,1}^{\frac{\dd}{p}}}
	{\rm e}^{
	CT^*
	\Theta_\nu(\uu_0,\,\tau_0,\,T)
	}
	\Big\}
	{\rm e}^{C\nu^{-1}\Theta_\nu(\uu_0,\,\tau_0,\,T)},\\
	\|\,\tau\,\|_{L^\infty(0,T;\BB_{p,1}^\frac{\dd}{p})}
	\,&\leq\,
	\|\,\tau_0\,\|_{\BB_{p,1}^{\frac{\dd}{p}}}
	\exp
	\left\{
	CT
	\Theta_\nu(\uu_0,\,\tau_0,\,T)
	\right\},
	\end{aligned}
	\end{equation*}
	where the function $\Theta_\nu(\uu_0,\,\tau_0,\,T)$ is a smooth function depending on the time $T>0$ and on the norms of the initial data $(\uu_0,\,\tau_0)$ 
	in the functional framework $\BB_{\infty, 1}^{-1}\times \BB_{\infty, 1}^0$:
	\begin{equation*}
	\Theta_\nu(\uu_0,\,\tau_0,\,T):=
	C\|\,\uu_0\,\|_{\BB_{\infty, 1}^{-1}}
	\exp\left\{	CT\nu^{-1}\|\,\tau_0\,\|_{\BB_{\infty, 1}^0}	\right\}
	\,+\,
	\nu 
	\left(
		\exp\left\{	CT\nu^{-1}\|\,\tau_0\,\|_{\BB_{\infty, 1}^0}	\right\}
		\,-\,1
	\right).
\end{equation*}
\end{theorem}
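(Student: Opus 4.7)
The plan is to extend the Fujita--Kato strategy to the coupled system \eqref{main_system} by splitting the proof into three a priori pieces, starting from the local solution supplied by Theorem \ref{main_thm1}. The first piece is an unconditional conservation of $\tau$ in the critical Lorentz space $L^{\dd/2,\infty}(\RR^\dd)$. With $\mu=0$, the tensorial equation $\partial_t\tau+\uu\cdot\nabla\tau=\omega\tau-\tau\omega$, coupled with $\Div\uu=0$, has a measure-preserving characteristic flow, and along each characteristic $\tau$ undergoes an orthogonal conjugation by the rotation generated by the skew-symmetric $\omega$. As a consequence $|\tau(t,X(t,x))|=|\tau_0(x)|$ pointwise in the Frobenius norm, and the rearrangement-invariance of Lorentz spaces yields
\begin{equation*}
	\|\tau(t)\|_{L^{\dd/2,\infty}(\RR^\dd)}\leq\|\tau_0\|_{L^{\dd/2,\infty}(\RR^\dd)},\qquad\forall\,t>0.
\end{equation*}
This identity is first written formally for a Lipschitz flow and then justified in the working class by an approximation argument; it is the Oldroyd-B analogue of the persistence of critical norms for the temperature in the Boussinesq system studied in \cite{PD}.

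The second piece is a small-data Picard iteration for $\uu$ in the critical Lorentz space $L^{\dd,\infty}(\RR^\dd)$. Writing $\uu$ by Duhamel,
\begin{equation*}
	\uu(t)=e^{\nu t\Delta}\uu_0-\int_0^t e^{\nu(t-s)\Delta}\PP\Div(\uu\otimes\uu)(s)\,\dd s+\int_0^t e^{\nu(t-s)\Delta}\PP\Div\tau(s)\,\dd s,
\end{equation*}
I would invoke the refined heat-semigroup bounds on Lorentz spaces together with O'Neil's convolution inequality in order to run a contraction on a Kato-type space $\mcX$ constructed around $L^\infty(\RR_+;L^{\dd,\infty})$, supplemented by a Lorentz-in-time norm at a higher spatial integrability in the spirit of Meyer--Yamazaki. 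The bilinear term $(u,v)\mapsto\int_0^\cdot e^{\nu(t-s)\Delta}\PP\Div(u\otimes v)\,\dd s$ is bounded on $\mcX$ with operator norm $\lesssim\nu^{-1}$, whereas the linear forcing $\int_0^\cdot e^{\nu(t-s)\Delta}\PP\Div\tau\,\dd s$ is controlled in $\mcX$ by $C\nu^{-1}\|\tau_0\|_{L^{\dd/2,\infty}}$ thanks to the conservation from the first step. Under the smallness $\|\uu_0\|_{L^{\dd,\infty}}+\nu^{-1}\|\tau_0\|_{L^{\dd/2,\infty}}\leq\ee/\nu$, the contraction principle delivers a unique fixed point obeying $\|\uu(t)\|_{L^{\dd,\infty}}\leq C(\|\uu_0\|_{L^{\dd,\infty}}+\nu^{-1}\|\tau_0\|_{L^{\dd/2,\infty}})$.

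The third piece propagates the Besov regularities $\BB_{p,1}^{\dd/p-1}$ and $\BB_{p,1}^{\dd/p}$ globally in time. Standard Littlewood--Paley analysis on the $\tau$-equation produces the exponential factor $\exp\{C\int_0^t\|\nabla\uu\|_{L^\infty}\,\dd s\}$, while a paradifferential estimate on the Stokes problem for $\uu$ gives the Duhamel bound stated in the theorem. The critical quantity to control is $\int_0^t\|\nabla\uu\|_{L^\infty}\,\dd s$: as in the two-dimensional strategy of Theorem \ref{main_thm0}, I would first propagate $\uu$ in $\BB_{\infty,1}^{-1}$ and $\tau$ in $\BB_{\infty,1}^{0}$, but now the small $L^{\dd,\infty}$-bound on $\uu$ from Step 2 and the $L^{\dd/2,\infty}$-conservation of $\tau$ from Step 1 absorb the main destabilising contributions and produce precisely the auxiliary quantity $\Theta_\nu(\uu_0,\tau_0,T)$ appearing in the theorem. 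Uniqueness in the range $p\in[1,2\dd]$ then follows by writing the difference of two solutions and closing a Gronwall estimate via the critical product laws in $\BB_{p,1}^{\dd/p-1}\times\BB_{p,1}^{\dd/p}$ together with the associated Lorentz embeddings.

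The genuine obstacle is the linear forcing $\int_0^t e^{\nu(t-s)\Delta}\PP\Div\tau\,\dd s$: the gradient heat-bound produces the kernel $(\nu(t-s))^{-1}$, which is not Lebesgue-integrable in time, so a pointwise-in-time estimate cannot close. The decisive technical input is therefore the Yamazaki-style Lorentz-in-time convolution inequality, which exploits the weak-$L^1$ membership of the kernel together with the uniform conservation $\tau\in L^\infty_tL^{\dd/2,\infty}_x$ from Step 1; it is this step that dictates the precise choice of $\mcX$ and ensures that the forcing enters with a bound proportional to $\nu^{-1}\|\tau_0\|_{L^{\dd/2,\infty}}$ rather than diverging logarithmically in time. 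Once this is in place, the bilinear term is absorbed by the small parameter $\ee$ and the Besov propagation reduces to a technical but standard adaptation of the Chemin--Masmoudi paradifferential machinery.
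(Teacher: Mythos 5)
Your Steps 1--3 reproduce, in essence, the paper's own argument for $p\in[1,2\dd)$: conservation of $\|\tau(t)\|_{L^{\dd/2,\infty}}$ (the paper obtains it from Lemma \ref{lemma:L^pbound_of_tau}, using that the rotation terms are norm-preserving); a uniform bound $\|\uu\|_{L^\infty_t L^{\dd,\infty}}\leq \ee\nu$ obtained from the Duhamel formula together with a weak-type estimate for the non-integrable kernel $(\nu(t-s))^{-1}$ (this is exactly the paper's Lemma \ref{lemma_Lorentz_bound}, proved by splitting the time integral and real interpolation, i.e. the Yamazaki-type mechanism you invoke; the paper closes it as a bootstrap a priori estimate on the local Besov solution rather than as a contraction in a Kato space, which spares you from having to identify the fixed point with the solution of Theorem \ref{main_thm1}); then propagation of $\BB_{\infty,1}^{-1}\times\BB_{\infty,1}^{0}$ and of the critical $\BB_{p,1}$ norms, and the extension criterion. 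One point you should state precisely: the smallness of $\ee$ only absorbs the $\uu\cdot\nabla\uu$ contribution in $\BB_{\infty,1}^{-1}$; the forcing $\|\tau\|_{L^1(0,t;\BB_{\infty,1}^{0})}$ is \emph{not} controlled by the Lorentz conservation of $\tau$, and what prevents a doubly exponential bound (and produces the stated form of $\Theta_\nu$) is the linear-growth estimate of Lemma \ref{lemma:bound_of_tau-linear_on_nablau} for the zero-regularity Besov norm of $\tau$, not the smallness of the data.

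The genuine gap is the range $p\in[2\dd,\infty)$, which the theorem covers but your proposal does not. Your whole scheme starts ``from the local solution supplied by Theorem \ref{main_thm1}'', which requires $p<2\dd$; for $p\geq 2\dd$ the regularity $\dd/p-1\leq -1/2$ is too negative for the product laws used there, and the paper devotes Section \ref{sec:globl-sol2} to this case: it regularizes $(\uu_0,\tau_0)$, applies the already-proved result with $p=\dd$, derives bounds uniform in the regularization parameter, and passes to the limit by the Aubin--Lions lemma, with uniqueness asserted only for $p\leq 2\dd$. Relatedly, at the endpoint $p=2\dd$ the difference of two solutions cannot be closed by a plain Gronwall argument with the critical product laws (they fail at that endpoint); the paper uses a logarithmic interpolation inequality and concludes by the Osgood lemma. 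As written, your uniqueness paragraph only covers $p<2\dd$ and your existence argument only $p\in[1,2\dd)$, so both the high-$p$ existence and the endpoint uniqueness need the additional arguments indicated above.
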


\noindent The paper is structured as follows. In Section \ref{sec:Besov} we present the main functional settings in which we develop our main results. Section \ref{sec:tensor} is devoted to suitable inequalities related to the tensor equation $\tau$, that will play a major role in the mains proofs. Section \ref{sec:bidim} is devoted to the proof of Theorem \ref{main_thm0}, about the existence and uniqueness of global-in-time strong solutions for large initial data in dimension two. Section \ref{sec:non-cor} addresses the case of a positive $\mu>0$. Section \ref{sec:loc-sol}, Section \ref{sec:globl-sol} and Section \ref{sec:globl-sol2}  are devoted to Theorem \ref{main_thm1} and Theorem \ref{main_thm2}, respectively, namely to the existence and uniqueness of strong solutions in dimension $\dd\geq 3$.

%----HERE

\subsection{\bf Physical aspects of the corotational approximation}$\,$

\medskip
\noindent 
Before developing the proof for Theorems \ref{main_thm0}, \ref{main_thm1} and \ref{main_thm2}, we briefly overview certain physical aspects of our model.

\smallskip
\noindent
The considered equations \eqref{main_system} can be derived as a moment-closure of a suitable multiscale model for the evolution of viscoelastic fluids (cf. \cite{DuLiuYu,Masmoudi,LionsMasmoudi}). This multiscale approach couples the Navier-Stokes equation for the flow $u = u(t,x)$ on the macroscopic level, together with a Fokker-Plank equation describing the evolution of the probability density function $f = f(t,x,Q)$ for the molecular orientation $Q\in \mathbb{R}^d$ on the microscopic level:
\begin{equation*}
\left\{
	\begin{alignedat}{4}
		\partial_t \uu + \uu\cdot \nabla \uu - \nu \Delta \uu +\nabla \pre &=  \Div\, \tau 
		\qquad &&\mathbb{R}_+\times\mathbb{R}^d,\\
		\Div\,\uu &= 0\qquad &&\mathbb{R}_+\times\mathbb{R}^d,\\
		\partial_t f + \uu\cdot\nabla_x f + \Div_Q(\omega Q) &= \frac{2}{\zeta}\Delta_Q f +
		\frac{2}{\zeta}\Div_Q(f\nabla_Q\Psi(Q))\qquad &&\mathbb{R}_+\times\mathbb{R}^d\times\mathbb{R}^d.
	\end{alignedat}
\right.
\end{equation*}
In this framework, $\zeta$ is a friction coefficient, $\tau= \tau(t,x)$ represents the polymeric contribution to the stress
\begin{equation*}
	\tau(t,x): = \lambda \int_{\RR^\dd} \nabla_Q\Psi(Q) \otimes Q f(t,x,Q)\dd Q,
\end{equation*}
while $\Psi(Q)$ stands for a suitable potential depending on the molecular orientation. The considered Fokker-Plank equation retains a corotational simplification at the microscopic level. 

\smallskip
\noindent One of the simplest molecular potential is given by the Hookean law $\Psi(Q)=H |Q|^2/2$, where $H$ is the elasticity constant. This particular formulation allows to determine a constitutive equation for the tensor $\tau$. Indeed, using a moment-closure approximation, we can multiply the Fokker-Plank equation by $Q\otimes Q$ and perform integration by parts along $Q\in \mathbb{R}^\dd$, to gather an equation for $\tau$:
$$\partial_t \tau + \uu\cdot \nabla \tau - \omega \tau +\tau \omega = -\frac{2}{\zeta}\tau.$$
This is exactly our model that we tudy in \eqref{main_system} with $\mu=0$ and $a=\frac{2}{\zeta}\geq 0$.
%When the friction is sufficiently low to be negligible, the system finally reduces to our model \eqref{main_system} with $\mu = 0$. 

\smallskip
\noindent 
In addition, the more physical scenario given by $\mu>0$ can be derived by taking into account the complete Fokker-Plank equation
$$\partial_t f + \uu\cdot\nabla_x f + \Div_Q(\nabla u Q) = \frac{2}{\zeta}\Delta_Q f +
		\frac{2}{\zeta}\Div_Q(f\nabla_Q\Psi(Q)),$$
without corotational approximation. Indeed performing the moment-closure for the equation of $\tau$, one has
$$\partial_t \tau + \uu\cdot \nabla \tau - \nabla \uu \tau +\tau\,\nabla \uu^T = -\frac{2}{\zeta}\tau.$$
The model \eqref{main_system} with $\mu>0$ is then recovered, recasting the tensor $\tau$ as $\tau - \mu \Id$ and finally re-introducing the corotational approximation for the stretching terms.

\smallskip
\noindent
Therefore, we remark that the different levels in which we introduce the corotational approximation, i.e. the microscopic or the macroscopic levels, lead to the different cases of \eqref{main_system} given by $\mu=0$ or $\mu>0$. We are aware that considering these approximations decreases the physical applications of our model. Nonetheless, up to our knowledge, the existence of global-in-time classical solutions in the general case ($\mu\geq 0$) is still an open question. In this manuscript, we provide a positive answer in the case of the two dimensional corotational Oldroyd-B model with $\mu=0$. On the other-hand, we further address the case of $\mu>0$, refining the lifespan of classical solutions within the functional framework of Theorem \ref{main_thm0}.

\section{Functional spaces and toolbox of harmonic analysis}\label{sec:Besov}

\noindent We begin with recalling the definition of weak Lebesgue spaces $L^{p,\infty}(\RR^\dd)$.
\begin{definition}
	For any $p\in [1,\infty)$,
 the functional space $L^{p,\infty}(\RR^\dd)$ is composed by Lebesgue measurable function, for which the following norm is bounded:
 \begin{equation*}
 	\|\,f\,\|_{L^{p,\infty}}
 	\,=\,
 	\sup_{\lambda>0}\lambda\, \m\left(\left\{ x\in\RR^\dd	\quad\text{such that}\quad |f(x)|>\lambda	\right\}\right)^{\frac{1}{p}}
 	\,<\,\infty
 \end{equation*}
 where $\m$ stands for the Lebesgue measure on $\RR^\dd$.
\end{definition}

\begin{remark}\label{rmk:Lorentz_space}
	Whenever $1<p<\infty$, the functional space $L^{p, \infty} $ coincides with the Lorentz space defined by real interpolation by means of
	\begin{equation*}
		L^{p, \infty}\,=\,(\,L^1,\,L^\infty\,)_{\frac{1}{p},\,\infty}.
	\end{equation*}
	More precisely, any function $f$ of $L^{p, \infty} $ can be decomposed as $f= f_A\,+\,f^A$, for any positive real $A$, where
	\begin{equation*}
		\|\,f_A\,\|_{L^1(\RR^\dd)}\leq C\,A^{1-\frac{1}{p}}\quad\quad \|\,f^A\,\|_{L^\infty(\RR^\dd)}\,\leq\,C\,A^{-\frac{1}{p}}.
	\end{equation*}
	The optimal constant $C$ of the above inequality is an equivalent norm for the quantity defined above.
	
	\noindent In general, for any $1\leq p,\,q\leq \infty$ the Lorentz space $L^{p,q}$ can be defined by real interpolation as follows:
	\begin{equation*}
		L^{p,\,q}\,=\,(\,L^1,\,L^\infty\,)_{\frac{1}{p},\,q}.
	\end{equation*}

\end{remark}

\noindent We now briefly recall the definition of the Littlewood -- Paley decomposition as well as of the Besov spaces. 

\smallskip\noindent
The Littlewood -- Paley theory is defined making use of the so called dyadic partition of unity:  let $\chi\,=\,\chi(\xi)$ be a radial function depending on the frequencies $\xi\in\RR^\dd$ of class $\mathcal{C}^\infty(\RR^\dd)$ whose support is included in the ball $\{\xi\in \RR^\dd_\xi\,:\,|\xi|\leq 4/3\,\}$. We assume that $\chi$ is identically $1$ in the ball $\{\xi\in \RR^\dd\,:\,|\xi|\leq 3/4\,\}$ while the function $ r\rightarrow \chi(r\xi)$ is decreasing. We then denote
$\varphi(\xi)=\chi(\xi/2)-\chi(\xi)$, so that
\begin{equation}\label{LP-identiy1}
	\forall \xi\in\RR^\dd\setminus\{0\},\,\sum_{q\in\ZZ} \varphi\left(2^{-q}\xi\right)\,=\,1\quad\quad\text{and}\quad\quad
	\chi(\xi)\,=\,1\,-\,\sum_{q\in\NN}\varphi(2^{-q}\xi).
\end{equation}
We then define the homogeneous dyadic block $\Dd_q$ and the operator $\Sd_q$ localizing the frequencies $\xi$ as follows:
\begin{equation*}
	\Dd_q f\,=\,\mathfrak{F}^{-1}(\,\varphi(2^{-q}\xi)\hat f(\xi)\,),
	\quad
	\quad
	\quad
	\Sd_q f\,=\,\mathfrak{F}^{-1}(\chi(2^{-q}\xi)\hat f(\xi))
\end{equation*}
where $\mathfrak{F}$ stands for the standard Fourier transform. We remark that for any tempered distribution $\uu\in\mathcal{S}'(\RR^\dd)$, the functions $\Dd_q \uu$ and $\Sd_q \uu$ are analytic. Furthermore, if there exists a real $s$ for which $\uu\in H^s(\RR^\dd)$, then both  $\Dd_q \uu$ and $\Sd_q \uu$ belong to the space $H^\infty(\RR^\dd)\,=\,\cap_{\sigma\in \RR} H^\sigma(\RR^\dd)$.
 
\smallskip
\noindent We state that thanks to \eqref{LP-identiy1} the identity $\uu\,=\,\Sd_0 \uu\,+\,\sum_{q\in\NN}\Dd_q\uu$ holds in $\mathcal{S}'(\RR^\dd)$ while 
$\uu \,=\,\sum_{q\in \ZZ}\Dd_q \uu$ for any homogeneous temperate distribution $\uu\in \mathcal{S}_h'(\RR^\dd)$.

\smallskip
\noindent
We will frequently use the following orthogonal condition on the dyadic blocks $\Dd_q$:
\begin{equation*}
	\Dd_q \Dd_j \,\equiv\,0\quad\text{ if }\quad |\,q-\,j\,|\,\geq\,2
	\quad\text{ and }\quad \Dd_k\left(\Sd_{q-1}\uu \Dd_q v\,\right)\equiv 0
	\quad\text{ if }\quad |\,q-j\,|\geq 5.
\end{equation*}

\noindent We can now define the functional set of the homogeneous Besov space as follows:
\begin{definition}\label{Besov-def}
	Let $s\in\RR$, $(p,\,r)\in [1,\,\infty]^2$ and $\uu\in \mathcal{S}'(\RR^\dd)$. We denote by
	\begin{equation*}
	\|\,\uu\,\|_{\BB_{p,r}^s}\,:=\,
	\begin{cases}
		\left(
			\,
			\sum_{q\in\ZZ}2^{rqs} \|\,\Dd_q\uu\,\|_{L^p}^r
			\,
		\right)^\frac{1}{r}
		\hspace{1cm}	&
		\text{ if }
		r<+\infty,\\
		\;\sup_{q\in\ZZ} 2^{qs}\;\|\,\Dd_q\uu\,\|_{L^p}
		\hspace{1cm}	&
		\text{ if }
		r\,=\,+\infty.
	\end{cases}
	\end{equation*}
	Thus, we define the homogenous Besov space $\BB_{p,r}^s\,=\,\BB_{p,r}^s(\RR^\dd)$ by
	\begin{equation*}
		\BB_{p,r}^s\,:=\,
		\left\{
			\uu\,\in\,\mathcal{S}'(\RR^\dd)\,|\,\|\,\uu\,\|_{\BB_{p,r}^s}<+\infty
		\right\}
	\end{equation*}
	if $s<\dd/p$ or $s=\dd/p$ with $r=1$, and by
	\begin{equation*}
		\BB_{p,r}^s\,:=\,
		\left\{
			\uu\,\in\,\mathcal{S}'(\RR^\dd)\,|\quad\forall |\alpha|=k+1\quad\|\,\partial^\alpha \uu\,\|_{\BB_{p,r}^{s-k-1}}<+\infty
		\right\}
	\end{equation*}
	if $\dd/p+k\leq s <\dd/p+k+1$ or $s=\dd/p+k+1$ and $r=1$, for some $k\in\NN$.
\end{definition}

\begin{remark}
	The functional space $\BB_{p,r}^s$ is a Banach space if and only if $s<\dd/p$ or $s=\dd/p$ and $r=1$.
\end{remark}

\noindent
For the sake of completeness we recall also the definition of the non-homogeneous Besov spaces:
\begin{definition}
	Let $s\in\RR$, $(p,\,r)\in [1,\,\infty]^2$ and $\uu\in \mathcal{S}'(\RR^\dd)$. We denote by
	\begin{equation*}
	\|\,\uu\,\|_{\BB_{p,r}^s}\,:=\,
	\begin{cases}
		\left(
			\|\,\Sd_0\uu\,\|_{L^p}^r
			\,+\,
			\sum_{q\in\NN}2^{rqs} \|\,\Dd_q\uu\,\|_{L^p}^r
			\,
		\right)^\frac{1}{r}
		\hspace{1cm}	&
		\text{ if }
		r<+\infty,\\
		\max\left\{
		\|\,\Sd_0\uu\,\|_{L^p},
		\;\sup_{q\in\NN} 2^{qs}\;\|\,\Dd_q\uu\,\|_{L^p}\,
		\right\}
		\hspace{1cm}	&
		\text{ if }
		r\,=\,+\infty.
	\end{cases}
	\end{equation*}
	The non-homogeneous Besov space $B_{p,r}^s\,=\,B_{p,r}^s(\RR^\dd)$ is the set of temperate distributions for which $\|\,\uu\,\|_{B_{p,r}^s}$ is finite.
\end{definition}

\begin{remark}
	The Besov spaces $\BB_{2,2}^s$ and $B_{2,2}^s$ coincide with the Sobolev spaces $\Hh^s(\RR^\dd)$ and $H^s(\RR^\dd)$ respectively. Furthermore, if $s\in \RR_+\setminus \NN$, the Besov spaces $\BB_{\infty, \infty}^s$ and $B_{\infty, \infty}^s$ coincide with the H{\"o}lder spaces $\dot C^s$ and $C^s$.
\end{remark}

\noindent
The following estimates are known as Bernstein type inequalities, and they will be frequently used in our proofs.

\begin{lemma}
	Let $1\leq p\leq l\leq \infty$ and $\psi\in \mathcal{C}^\infty_c(\RR^\dd)$. We hence have
	\begin{equation*}
		c 2^{-q\left(\frac \dd l\,-\,\frac \dd p \right)}\|\,\Dd_q \uu \,\|_{L^{l}}
		\,\leq\,\|\,\Dd_q \uu \,\|_{L^{p}}
		\,\leq\,
		\,C 2^{q\left(\frac \dd l\,-\,\frac \dd p \right)}\|\,\Dd_q \uu \,\|_{L^{l}}
	\end{equation*}
	and
	\begin{equation*}
		\|\,\Sd_q \uu \,\|_{L^{p}}
		\,\leq\,
		\,C 2^{q\left(\frac \dd l\,-\,\frac \dd p \right)}\|\,\Sd_q \uu \,\|_{L^{l}}.
	\end{equation*}
\end{lemma}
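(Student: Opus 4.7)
The plan is to prove the classical Bernstein inequalities by the standard reproducing-kernel / Young-convolution argument: frequency-localized functions can be written as convolutions with smooth bandpass kernels at scale $2^q$, and a scaling computation then supplies the $2^{q(\dd/p-\dd/l)}$ factor.

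First I would fix an auxiliary bump $\tilde\varphi \in \mathcal{C}^\infty_c(\RR^\dd\setminus\{0\})$ with $\tilde\varphi \equiv 1$ on $\supp\varphi$. Since $\Dd_q u$ has Fourier support contained in the annulus $\{2^{q-1}\leq |\xi|\leq 2^{q+1}\}$, the Fourier-side identity $\tilde\varphi(2^{-q}\cdot)\,\varphi(2^{-q}\cdot) = \varphi(2^{-q}\cdot)$ translates into the reproducing identity
\[
\Dd_q u = h_q \ast \Dd_q u,\qquad h_q := \mathfrak{F}^{-1}\!\bigl[\tilde\varphi(2^{-q}\cdot)\bigr] = 2^{q\dd}\,h(2^q\,\cdot),\qquad h := \mathfrak{F}^{-1}\tilde\varphi \in \mathcal{S}(\RR^\dd).
\]
Since $h$ is Schwartz it belongs to every Lebesgue class, and a change of variables $y = 2^q x$ yields the scaling law
\[
\|h_q\|_{L^r(\RR^\dd)} \;=\; 2^{q\dd(1-1/r)}\,\|h\|_{L^r(\RR^\dd)}\qquad\text{for every }1\leq r\leq\infty.
\]

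Next I would apply Young's convolution inequality with indices $1+1/l = 1/r + 1/p$, i.e.\ $1-1/r = 1/p - 1/l \geq 0$, which is admissible precisely because $p\leq l$. This gives
\[
\|\Dd_q u\|_{L^l} \;\leq\; \|h_q\|_{L^r}\,\|\Dd_q u\|_{L^p} \;\leq\; C_{\tilde\varphi}\,2^{q(\dd/p - \dd/l)}\,\|\Dd_q u\|_{L^p},
\]
with $C_{\tilde\varphi}$ depending only on $\tilde\varphi$ and the dimension $\dd$. The two-sided form in the statement is just this single estimate rewritten from either end (with $C = C_{\tilde\varphi}$ in one direction and $c = 1/C_{\tilde\varphi}$ in the other).

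Finally, the inequality for $\Sd_q u$ is obtained verbatim: one replaces $\varphi$ by $\chi$ and chooses $\tilde\chi \in \mathcal{C}^\infty_c(\RR^\dd)$ equal to $1$ on $\{|\xi|\leq 4/3\}\supset \supp\chi$, so that $\Sd_q u = \tilde h_q \ast \Sd_q u$ with $\tilde h_q = 2^{q\dd}\mathfrak{F}^{-1}\tilde\chi(2^q\,\cdot)$, and the same Young--plus--scaling computation applies unchanged. I do not expect any genuine obstacle; the only point that warrants a line of care is justifying the reproducing identity at the level of tempered distributions, which is immediate from the Fourier-side product relations $\tilde\varphi\,\varphi = \varphi$ and $\tilde\chi\,\chi = \chi$ and the fact that both $\Dd_q u$ and $\Sd_q u$ belong to $\mathcal{S}'(\RR^\dd)$.
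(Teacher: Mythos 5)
Your Young-convolution argument is the standard proof of the Bernstein inequality, and it is essentially the whole content here: the paper states this lemma without proof, as a known fact (it is Lemma 2.1 in \cite{BCD}), so there is no in-paper argument to compare against. Your reproducing-kernel identity $\Dd_q \uu = h_q * \Dd_q \uu$, the scaling law $\|h_q\|_{L^r} = 2^{q\dd(1-1/r)}\|h\|_{L^r}$, and Young's inequality with $1+1/l = 1/r + 1/p$ give the genuine estimate $\|\Dd_q\uu\|_{L^l} \le C\,2^{q\dd(1/p-1/l)}\|\Dd_q\uu\|_{L^p}$, and the same scheme works for $\Sd_q$ with $\chi$ in place of $\varphi$; up to that point the proof is correct and is the expected route.

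The flaw is your last step, where you claim that ``the two-sided form in the statement is just this single estimate rewritten from either end.'' Rearranging the one inequality you proved yields exactly one bound, namely $c\,2^{q(\dd/l-\dd/p)}\|\Dd_q\uu\|_{L^l} \le \|\Dd_q\uu\|_{L^p}$ (note: without the minus sign appearing in the printed left-hand inequality); it does not yield the printed right-hand bound $\|\Dd_q\uu\|_{L^p} \le C\,2^{q(\dd/l-\dd/p)}\|\Dd_q\uu\|_{L^l}$, nor the printed $\Sd_q$ bound, both of which control a lower Lebesgue exponent by a higher one. Those reversed bounds are in fact false on $\RR^\dd$: taking $F_N=\sum_{j=1}^N f(\cdot-x_j)$ with $f$ a fixed annulus-localized profile and widely separated $x_j$, one has $\|F_N\|_{L^p}/\|F_N\|_{L^l}\sim N^{1/p-1/l}\to\infty$ uniformly in $q$, so no proof of them can exist. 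The correct reading is that the printed statement carries sign/orientation misprints relative to the classical Bernstein lemma (whose conclusions under $p\le l$ are exactly the estimates you proved, for both $\Dd_q$ and $\Sd_q$; the function $\psi$ in the hypothesis is likewise a leftover and plays no role). You should have flagged this misprint and stated precisely which inequality your argument delivers, rather than asserting that both printed directions follow from a single estimate.
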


\noindent
As a consequence of the Bernstein type inequality and the definition of Besov Spaces $\BB_{p,r}^s$, we have the following proposition:
\begin{prop}
	$(i)$ There exists a constant $c>0$ such that
	\begin{equation*}
		\frac 1 c \|\,\uu\,\|_{\BB_{p,r}^s}
		\,\leq\,
		\|\,\nabla \uu\,\|_{\BB_{p,r}^{s-1}}
		\,\leq\,
		c  \|\,\uu\,\|_{\BB_{p,r}^s}.
	\end{equation*}
	$(ii)$ 		For $1\,\leq\,p_1\,\leq p_2\leq \infty$ and $1\leq r_1\,\leq r_2 \leq \infty$, 
				we gather $\BB_{p_1, r_1}^s\hookrightarrow \BB_{p_2, r_2}^{s-\dd\left({1}/{p_1}\,-\,{1}/{p_2}\right)}$.
	
	\noindent
	$(iii)$ 	If $p\in [1,\infty]$ then $\BB_{p,1}^{\dd/p}\hookrightarrow \BB_{p,\infty}^{\dd/p}\cap L^\infty$. Furthermore, for any $p\in [1, \infty]$, 
				$\BB_{p,1}^{\dd/p}$ is an algebra embedded in $L^\infty(\RR^\dd)$.
				
	\noindent
	$(iv)$ 		The real interpolation $(\BB_{p,r}^{s_1},\,\BB_{p,r}^{s_2})_{\theta, \tilde r}$, for a parameter $\vartheta\in (0,1)$, is isomorphic to 
				$\BB_{p,\tilde r}^{\vartheta s_1+(1-\vartheta)s_2}$.
\end{prop}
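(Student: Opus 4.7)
The plan is to prove each of the four items using the Littlewood--Paley machinery and Bernstein-type inequalities recalled immediately above the statement.

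For $(i)$, the idea is to work blockwise: since $\Dd_q u$ has Fourier support in an annulus of size $2^q$, the Fourier multiplier giving the inverse of $\nabla$ is smooth on that annulus, so $\|\Dd_q u\|_{L^p}\simeq 2^{-q}\|\nabla \Dd_q u\|_{L^p}=2^{-q}\|\Dd_q \nabla u\|_{L^p}$, with constants independent of $q$ (this is the standard consequence of Bernstein for smooth Fourier multipliers homogeneous of degree $-1$). Multiplying by $2^{qs}$ and taking $\ell^r(\ZZ)$ norms gives the announced equivalence.

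For $(ii)$, I would apply the Bernstein inequality stated in the lemma above directly to each dyadic block: $\|\Dd_q u\|_{L^{p_2}}\le C\,2^{qd(1/p_1-1/p_2)}\|\Dd_q u\|_{L^{p_1}}$. Multiplying by $2^{q(s-d(1/p_1-1/p_2))}$ gives exactly $2^{qs}\|\Dd_q u\|_{L^{p_1}}$, and the conclusion follows from the elementary embedding $\ell^{r_1}(\ZZ)\hookrightarrow\ell^{r_2}(\ZZ)$ when $r_1\le r_2$.

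For $(iii)$, the embedding $\BB_{p,1}^{d/p}\hookrightarrow L^\infty$ is immediate from $\|u\|_{L^\infty}\le \sum_{q\in\ZZ}\|\Dd_q u\|_{L^\infty}\le C\sum_{q\in\ZZ}2^{qd/p}\|\Dd_q u\|_{L^p}=C\|u\|_{\BB_{p,1}^{d/p}}$ by Bernstein, and the embedding into $\BB_{p,\infty}^{d/p}$ is the $\ell^1\hookrightarrow\ell^\infty$ estimate. For the algebra property, I would use the Bony paraproduct decomposition
\begin{equation*}
uv \,=\, T_u v \,+\, T_v u \,+\, R(u,v), \qquad T_u v:=\sum_{q}\Sd_{q-1}u\,\Dd_q v, \qquad R(u,v):=\sum_{|q-q'|\le 1}\Dd_q u\,\Dd_{q'} v,
\end{equation*}
and estimate the three pieces. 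For the paraproducts, the almost-orthogonality $\Dd_k(\Sd_{q-1}u\,\Dd_q v)=0$ for $|q-k|\ge 5$ together with Hölder gives $\|\Dd_k(T_u v)\|_{L^p}\lesssim \|u\|_{L^\infty}\sum_{|q-k|\le 4}\|\Dd_q v\|_{L^p}$, which summed with weights $2^{kd/p}$ in $\ell^1$ yields $\|T_u v\|_{\BB_{p,1}^{d/p}}\lesssim \|u\|_{L^\infty}\|v\|_{\BB_{p,1}^{d/p}}$, and similarly for $T_v u$. For the remainder, I would use that $\Dd_k R(u,v)$ only involves indices $q\ge k-N_0$, so Bernstein gives $\|\Dd_k R(u,v)\|_{L^p}\lesssim 2^{kd/p}\sum_{q\ge k-N_0}\|\Dd_q u\|_{L^p}\|\Dd_q v\|_{L^\infty}$; summation by Young's inequality on $\ell^1(\ZZ)$ together with the already-proven $\BB_{p,1}^{d/p}\hookrightarrow L^\infty$ concludes. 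The main subtlety here is the homogeneous setting: the partial sum $\sum_{q\ge -N}\Dd_q u$ must converge in $\mathcal{S}_h'$, which is guaranteed by the convention on $\BB_{p,1}^{d/p}$ being contained in $\mathcal{S}_h'(\RR^d)$ with the standard low-frequency condition on $p$ recalled in Definition \ref{Besov-def}.

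For $(iv)$, I would use the fact that the Littlewood--Paley characterization identifies $\BB_{p,r}^s$ with a weighted $\ell^r(\ZZ; L^p)$ through the isometry $u\mapsto (2^{qs}\Dd_q u)_{q\in\ZZ}$. Real interpolation of weighted sequence spaces is classical: $(\ell^r(2^{qs_1}), \ell^r(2^{qs_2}))_{\theta,\tilde r}\cong \ell^{\tilde r}(2^{q(\theta s_1+(1-\theta)s_2)})$ (see e.g.\ Bergh--Löfström), and this transfers to Besov spaces through the projection/reconstruction operators provided by the Littlewood--Paley decomposition, which form a retract of the corresponding sequence space. The main care is again the homogeneous framework: one must check that the retract structure respects the quotient by polynomials, but this is standard. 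I expect the algebra property in $(iii)$, with its careful handling of the remainder term via the embedding into $L^\infty$, to be the most delicate step, while the other items are direct consequences of Bernstein and basic functional-analytic tools.
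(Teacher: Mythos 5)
The paper offers no proof of this proposition --- it is presented as a standard consequence of the Bernstein inequality and the definition of Besov spaces --- so there is nothing of the paper's to compare against and I assess your argument on its own merits. Items $(i)$, $(ii)$, and $(iv)$ are handled correctly by the standard means you describe.

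The remainder estimate in item $(iii)$ is written inconsistently. You state
$\|\Dd_k \dot R(u,v)\|_{L^p}\lesssim 2^{k\dd/p}\sum_{q\ge k-N_0}\|\Dd_q u\|_{L^p}\|\Dd_q v\|_{L^\infty}$,
which conflates two distinct correct bounds. If you use H\"older with the $v$-factor in $L^\infty$, no Bernstein gain occurs --- the convolution kernel of $\Dd_k$ has $L^1$ norm independent of $k$, so $\Dd_k$ is uniformly bounded on $L^p$ --- and the $2^{k\dd/p}$ prefactor should not appear. If instead you obtain $2^{k\dd/p}$ by Bernstein from $L^{p/2}$ to $L^p$ on the block $\Dd_k$, then both factors $\Dd_q u$ and $\Dd_q v$ should be measured in $L^p$. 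Keeping both the prefactor and the $L^\infty$ norm, then weighting by $2^{k\dd/p}$ and summing, produces a term of size $\sum_q 2^{2q\dd/p}\|\Dd_q v\|_{L^p}$, which is the (unavailable) $\BB_{p,1}^{2\dd/p}$ norm. After fixing this --- drop the prefactor, or take both factors in $L^p$ --- the Young convolution argument closes with kernel $\left(2^{j\dd/p}\mathbf{1}_{(-\infty,\,N_0]}(j)\right)_{j\in\ZZ}$, which belongs to $\ell^1(\ZZ)$ only when $p<\infty$. Your proof therefore does not reach the endpoint $p=\infty$, and in fact it cannot: at total regularity zero the homogeneous remainder only lands in $\BB_{\infty,\infty}^0$, as Proposition \ref{prop:reminder-und-paraproduct} of the paper itself records. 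You should at least flag the restriction to $p<\infty$ in the algebra claim rather than present the remainder estimate as if it covered the full range $[1,\infty]$.
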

\noindent
We recall further the following results about inclusions between Lorentz and Besov spaces.
\begin{lemma}
	For any $1< p<q\leq \infty$, we have
	\begin{equation*}
		L^{p,\,\infty}	\hookrightarrow \BB_{q, \,\infty}^{\frac{\dd}{q}\,-\,\frac{\dd}{p}}.
	\end{equation*}
\end{lemma}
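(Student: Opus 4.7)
The plan is to estimate, for each dyadic block $\Dd_j f$ with $j\in\ZZ$, the quantity $2^{j(\dd/q-\dd/p)}\|\Dd_j f\|_{L^q}$ by $\|f\|_{L^{p,\infty}}$ uniformly in $j$; taking the supremum in $j$ then yields the embedding directly from Definition \ref{Besov-def}. The key input is the real interpolation description of $L^{p,\infty}$ recalled in Remark \ref{rmk:Lorentz_space}: for every $A>0$ I split $f=f_A+f^A$ with
\begin{equation*}
\|f_A\|_{L^1}\leq C\,A^{1-\frac{1}{p}}\|f\|_{L^{p,\infty}},\qquad \|f^A\|_{L^\infty}\leq C\,A^{-\frac{1}{p}}\|f\|_{L^{p,\infty}}.
\end{equation*}
Concretely I take $f_A=f\,\mathbf{1}_{\{|f|>\lambda\}}$ with truncation level $\lambda=A^{-1/p}\|f\|_{L^{p,\infty}}$, and the two bounds above follow by layer-cake integration against the weak-type control $\m(\{|f|>s\})\leq s^{-p}\|f\|_{L^{p,\infty}}^{p}$.

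For the low-type piece $\Dd_j f_A$, the Bernstein-type inequality from the preceding lemma transports $L^1$ into $L^q$, giving $\|\Dd_j f_A\|_{L^q}\leq C\,2^{j\dd(1-1/q)}\|f_A\|_{L^1}\leq C\,2^{j\dd(1-1/q)}A^{1-1/p}\|f\|_{L^{p,\infty}}$. For the high-type piece $\Dd_j f^A$ one cannot afford to use only the $L^\infty$ bound on $f^A$, since $\Dd_j$ does not map $L^\infty$ into $L^q$ when $q<\infty$. Instead I upgrade the $L^q$ bound on $f^A$ itself: because $|f^A|\leq\lambda$, a second application of layer-cake gives
\begin{equation*}
\|f^A\|_{L^q}^{q}\leq q\int_0^\lambda s^{q-1}\,\m(\{|f|>s\})\,\dd s\leq q\,\|f\|_{L^{p,\infty}}^{p}\int_0^\lambda s^{q-1-p}\,\dd s\;\lesssim\;\lambda^{q-p}\|f\|_{L^{p,\infty}}^{p},
\end{equation*}
where the assumption $q>p$ is essential for integrability near $0$, yielding $\|f^A\|_{L^q}\lesssim A^{1/q-1/p}\|f\|_{L^{p,\infty}}$. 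Since $\Dd_j$ is uniformly bounded on $L^q$ (as convolution with $h_j=2^{j\dd}h(2^j\cdot)$ whose $L^1$ norm is $j$-independent), we deduce $\|\Dd_j f^A\|_{L^q}\lesssim A^{1/q-1/p}\|f\|_{L^{p,\infty}}$. The endpoint $q=\infty$ is easier and handled directly by $\|\Dd_j f^A\|_{L^\infty}\lesssim\|f^A\|_{L^\infty}$.

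Balancing the two contributions forces the choice $A=2^{-j\dd}$ (up to a factor of $\|f\|_{L^{p,\infty}}$), and both then collapse to the common order $2^{j\dd(1/p-1/q)}\|f\|_{L^{p,\infty}}$; consequently $\|\Dd_j f\|_{L^q}\lesssim 2^{j\dd(1/p-1/q)}\|f\|_{L^{p,\infty}}$. Multiplying by $2^{j(\dd/q-\dd/p)}$ and taking the supremum over $j\in\ZZ$ produces $\|f\|_{\BB_{q,\infty}^{\dd/q-\dd/p}}\lesssim\|f\|_{L^{p,\infty}}$, i.e. the announced inclusion. The delicate step is the treatment of the high-type part: the bare $L^\infty$ control on $f^A$ does not suffice, and one must exploit that $\lambda=A^{-1/p}\|f\|_{L^{p,\infty}}$ is precisely the optimal K-functional truncation level for the couple $(L^1,L^\infty)$, which when fed back into the distribution function of $f$ restores the $L^q$-integrability that is invisible in the raw interpolation description.
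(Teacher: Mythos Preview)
Your proof is correct. You estimate each dyadic block by splitting $f=f_A+f^A$ at a truncation level chosen so that the $L^1$-piece (handled via Bernstein $L^1\to L^q$) and the $L^q$-piece (handled via layer-cake using $q>p$) balance at $A=2^{-j\dd}$; the delicate observation that the bare $L^\infty$ bound on $f^A$ is not enough when $q<\infty$, and must be replaced by a direct $L^q$ estimate, is well spotted.

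The paper's proof is different and shorter: it writes $\Dd_j f = h_j * f$ with $h_j = 2^{j\dd}h(2^j\cdot)$ and invokes the Young-type convolution inequality in Lorentz spaces,
\[
\|\Dd_j f\|_{L^q}\leq \|h_j\|_{L^{r,1}}\|f\|_{L^{p,\infty}},\qquad \frac{1}{r}+\frac{1}{p}=1+\frac{1}{q},
\]
then computes $\|h_j\|_{L^{r,1}}=2^{j\dd(1-1/r)}\|h\|_{L^{r,1}}=2^{j\dd(1/p-1/q)}\|h\|_{L^{r,1}}$ by scaling. This is a one-line application of a known inequality (O'Neil's convolution inequality for Lorentz spaces), whereas your argument is in effect a self-contained proof of that inequality in the particular case at hand. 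Your route is more elementary in that it avoids quoting the Lorentz--Young inequality, at the price of a longer computation; the paper's route is slicker but relies on that external result. Both reach the same conclusion with the same $j$-dependence.
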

\begin{proof}
	Denoting by $h_j\,=\,2^{jN}h(2^j\cdot)$ with $h\,=\,\mathfrak{F}^{-1}\varphi$, we recast that the dyadic block $\Dd_j$ as a convolution operator
	\begin{equation*}
		\Dd_j \uu\,=\,h_j* \uu.
	\end{equation*}
	Hence, making use of the following convolution inequalities between Lorentz spaces
	\begin{equation*}
		\|\,\Dd_j\uu\,\|_{L^q}\,\leq \|\,h_j\,\|_{L^{r,1}}\|\,\uu\,\|_{L^{p,\infty}}
		\quad\text{with}\quad
		\frac{1}{p}\,+\,\frac{1}{r}\,=\,1+\frac{1}{q}
	\end{equation*}
	and observing that by change of variables
	\begin{equation*}
		\|\,h_j\,\|_{L^{r,1}}\,=\,2^{j\dd\left(1-\frac{1}{r}\right)}\|\,h\,\|_{L^r},
	\end{equation*}
	we eventually gather that
	\begin{equation*}
		\sup_{j\in \ZZ}2^{j\left(\frac{\dd}{q}-\frac{\dd}{p}\right)}\|\,\Dd_j\uu\,\|_{L^q}
		\,\leq\,\|\,h\,\|_{L^{1,r}}\|\,\uu\,\|_{L^{p,\infty}}
	\end{equation*}
	which concludes the proof of the lemma.
\end{proof}

\noindent We now consider several a-priori estimates in the functional framework of Besov spaces for the heat semigroup (cf. \cite{BCD}, Lemma $2.4$).
\begin{lemma}
	There exists two constant $c$ and $C$ such that for any $\tau\geq 0$, $q\in\ZZ$ and $p\in [1,\infty]$, we get
	\begin{equation*}
		\left\|\,e^{\tau\Delta}\Dd_q \uu\,\right\|_{L^p}\,\leq \,Ce^{-c\tau 2^{2q}} \left\|\,\Dd_q\uu\,\right\|_{L^p}.
	\end{equation*}	 
\end{lemma}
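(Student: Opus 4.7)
The plan is to exploit that $\Dd_q \uu$ has Fourier transform supported in the dyadic annulus $\mathcal{C}_q = \{\xi\in \RR^\dd : c_1 2^q \leq |\xi| \leq c_2 2^q\}$, so that $e^{\tau\Delta}\Dd_q \uu$ can be written as $g_{\tau,q} \ast \Dd_q \uu$ for a suitable smoothing kernel $g_{\tau,q}$; I then estimate $\|g_{\tau,q}\|_{L^1}$ uniformly in $q$ and invoke Young's convolution inequality.

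First I would pick a cut-off $\tilde\varphi\in \mathcal{C}_c^\infty(\RR^\dd\setminus\{0\})$ supported in a slightly larger annulus and equal to $1$ on $\supp \varphi$, and write
\begin{equation*}
e^{\tau\Delta}\Dd_q \uu = \mathfrak{F}^{-1}\!\bigl(e^{-\tau|\xi|^2}\tilde\varphi(2^{-q}\xi)\bigr) \ast \Dd_q \uu =: g_{\tau,q}\ast\Dd_q \uu,
\end{equation*}
since multiplication by $\tilde\varphi(2^{-q}\cdot)$ is the identity on the Fourier support of $\Dd_q\uu$. The change of variables $\xi=2^q\eta$ then gives $g_{\tau,q}(x) = 2^{q\dd}h_{\tau 2^{2q}}(2^q x)$ with $h_\lambda := \mathfrak{F}^{-1}(e^{-\lambda|\cdot|^2}\tilde\varphi)$, and hence $\|g_{\tau,q}\|_{L^1(\RR^\dd)} = \|h_{\tau 2^{2q}}\|_{L^1(\RR^\dd)}$. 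The lemma thus reduces to the scale-free estimate $\|h_\lambda\|_{L^1} \leq C e^{-c\lambda}$, uniform in $\lambda\geq 0$.

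Proving this kernel bound is the main technical step. For $\lambda$ in any bounded range it is immediate from the Schwartz character of $\tilde\varphi$, so it suffices to treat $\lambda$ large. Since $|\eta|^2\geq c_0>0$ on $\supp \tilde\varphi$, Leibniz's rule yields $|\partial_\eta^\alpha(e^{-\lambda|\eta|^2}\tilde\varphi(\eta))| \leq C_\alpha(1+\lambda)^{|\alpha|}e^{-c_0\lambda}$. Integrating by parts $\dd+1$ times in the inverse Fourier representation of $h_\lambda(x)$ then gives the pointwise decay
\begin{equation*}
|h_\lambda(x)| \leq C(1+\lambda)^{\dd+1}(1+|x|)^{-\dd-1}e^{-c_0\lambda},
\end{equation*}
which is integrable in $x$; absorbing the polynomial factor into the exponential produces $\|h_\lambda\|_{L^1(\RR^\dd)} \leq Ce^{-c\lambda}$ for any $c\in(0,c_0)$ upon suitably enlarging $C$.

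Combining the two steps, Young's convolution inequality finally yields
\begin{equation*}
\|e^{\tau\Delta}\Dd_q\uu\|_{L^p} \leq \|g_{\tau,q}\|_{L^1}\|\Dd_q\uu\|_{L^p} \leq Ce^{-c\tau 2^{2q}}\|\Dd_q\uu\|_{L^p},
\end{equation*}
for every $p\in[1,\infty]$ and uniformly in $q\in\ZZ$, $\tau\geq 0$. The only delicate ingredient is the annulus-uniform estimate on $h_\lambda$; the remainder is a standard Fourier-scaling argument.
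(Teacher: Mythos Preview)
Your argument is correct and is essentially the standard proof of this estimate. Note that the paper does not supply its own proof of this lemma: it simply states the result and refers to \cite{BCD}, Lemma~2.4, where precisely the approach you outline (write $e^{\tau\Delta}\Dd_q\uu$ as a convolution with a rescaled kernel, reduce by scaling to a uniform $L^1$ bound on $h_\lambda=\mathfrak{F}^{-1}(e^{-\lambda|\cdot|^2}\tilde\varphi)$, and obtain the exponential decay via integration by parts on the annulus) is carried out.
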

\noindent
From the above Lemma we can then deduce the following result (cf. \cite{PD}, Proposition $3.11$)
\begin{prop}	\label{prop-heat-eq}
Let $s\in \RR$, $1\leq p,\,r,\,\rho_1\,\leq\, \infty$. Let $\uu_0$ be in $\BB_{p,r}^s$ and $f$ be in $\tilde L^{\rho_1}(0,T;\BB_{p,r}^{s-2+2/\rho_1})$ for some positive time $T$ (possibly $T=\infty$). Then the heat equation
	\begin{equation*}
	\begin{cases}
		\partial_t \uu\, -\,\nu\Delta \uu\,=\, f &\hspace{2cm} [0,T)\times \RR^\dd, \\
		\uu_{|t=0}=\uu_0					 &\hspace{3.365cm} \RR^\dd,			
	\end{cases}
	\end{equation*}
	admits an unique strong solution in $\tilde L^\infty(0,T;\BB_{p,r}^s)\cap \tilde L^{\rho_1}(0,T;\BB_{p,r}^{s-2+2/\rho_1})$. Moreover there exist a constant 
	$C$ depending just on the dimension $\dd$ such that the following estimate holds true for any time $t\in [0,T]$ and $\rho\geq \rho_1$:
	\begin{equation}\label{heat-bound-sol}
		\nu^{\frac{1}{\rho}}\|\,\uu\,\|_{\tilde L^\rho(0,\,t, \BB_{p,r}^{s+\frac{2}{\rho}})}
		\,\leq\,
		C
		\left(
			\|\,\uu_0\,\|_{\BB_{p,r}^s}\,+\,\nu^{\frac{1}{\rho_1}-1}\|\,f\,\|_{\tilde L^{\rho_1}(\RR_+,\,\BB_{p,r}^{s-2+\frac{2}{\rho_1}})}
		\right).
	\end{equation}
\end{prop}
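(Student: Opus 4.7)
The plan is to invoke Duhamel's formula and split $\uu = \uu_1 + \uu_2$, where $\uu_1(t) = e^{\nu t \Delta}\uu_0$ solves the homogeneous heat equation and $\uu_2(t) = \int_0^t e^{\nu(t-s)\Delta}f(s)\,\dd s$ carries the forcing. I would then estimate each piece in the Chemin--Lerner space $\tilde L^\rho(0,T;\BB_{p,r}^{s+2/\rho})$ by localising in frequency with the dyadic block $\Dd_q$ and exploiting the spectral decay of the heat semigroup recorded in the preceding lemma.

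For the homogeneous part, $\Dd_q$ commutes with $e^{\nu t \Delta}$, so the lemma gives $\|\Dd_q \uu_1(t)\|_{L^p}\leq C e^{-c\nu t 2^{2q}}\|\Dd_q \uu_0\|_{L^p}$. A direct computation yields $\|e^{-c\nu\,\cdot\,2^{2q}}\|_{L^\rho(\RR_+)} = C(\nu 2^{2q})^{-1/\rho}$, and multiplying by $\nu^{1/\rho} 2^{q(s+2/\rho)}$ cancels the weight exactly, leaving $C\,2^{qs}\|\Dd_q \uu_0\|_{L^p}$. Taking an $\ell^r$ norm in $q\in\ZZ$ recovers precisely the Chemin--Lerner norm on the left, producing the contribution $C\|\uu_0\|_{\BB_{p,r}^s}$ of \eqref{heat-bound-sol}.

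For the inhomogeneous part, the same argument yields
\begin{equation*}
\|\Dd_q \uu_2(t)\|_{L^p}\leq C\int_0^t e^{-c\nu(t-s)2^{2q}}\|\Dd_q f(s)\|_{L^p}\,\dd s,
\end{equation*}
which is a convolution in time. I would apply Young's inequality $L^\alpha_t * L^{\rho_1}_t \hookrightarrow L^\rho_t$ with $1+1/\rho = 1/\alpha + 1/\rho_1$; the hypothesis $\rho \geq \rho_1$ is exactly what guarantees $\alpha\in [1,\infty]$. Using $\|e^{-c\nu\,\cdot\,2^{2q}}\|_{L^\alpha} = C(\nu 2^{2q})^{-1/\alpha}$ together with $1/\rho - 1/\alpha = 1/\rho_1 - 1$, one obtains, after multiplying by $\nu^{1/\rho} 2^{q(s+2/\rho)}$,
\begin{equation*}
\nu^{1/\rho}\, 2^{q(s+2/\rho)}\|\Dd_q \uu_2\|_{L^\rho_t L^p}\leq C\,\nu^{1/\rho_1 - 1}\,2^{q(s-2+2/\rho_1)}\|\Dd_q f\|_{L^{\rho_1}_t L^p}.
\end{equation*}
An $\ell^r$ summation in $q$ then produces the forcing contribution of \eqref{heat-bound-sol}, and adding the two estimates closes the desired inequality. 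Existence follows by regularising $(\uu_0, f)$ by smooth approximations: the linear estimate above turns the corresponding sequence of classical solutions into a Cauchy sequence in $\tilde L^\infty(0,T;\BB_{p,r}^s)\cap \tilde L^{\rho_1}(0,T;\BB_{p,r}^{s-2+2/\rho_1})$, and its limit is the sought strong solution. Uniqueness is immediate: the difference of two solutions with the same $(\uu_0,f)$ solves the homogeneous equation with zero data, whose Chemin--Lerner norm vanishes by the same estimate applied with $\uu_0 = 0$ and $f = 0$.

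The main technical point is the exponent bookkeeping in the Young inequality: one must verify that $\rho\geq\rho_1$ is precisely the condition forcing $1/\alpha = 1+1/\rho - 1/\rho_1\in[0,1]$, and that the $\nu$-powers assemble exactly into $\nu^{1/\rho_1 - 1}$. Everything else is a direct consequence of the spectral-localisation decay of the heat kernel already recorded in the preceding lemma, together with the definition of the tilde spaces where the $\ell^r$ summation in frequency is performed \emph{after} the time integration.
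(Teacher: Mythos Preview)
Your argument is correct and is exactly the standard proof of this smoothing estimate. The paper does not actually prove this proposition; it merely cites it as Proposition~3.11 of \cite{PD}, and the argument there (as well as in \cite{BCD}) is precisely the one you outline: Duhamel splitting, the frequency-localised semigroup decay from the preceding lemma, and Young's inequality in time with the exponent relation $1/\alpha = 1 + 1/\rho - 1/\rho_1$.
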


\noindent In the previous Proposition we introduce the functional space $\tilde L^\rho(0,T;\BB_{p,r}^s)$, which is known as a Chemin-Lerner space. This is defined similarly as in Definition \eqref{Besov-def}, imposing 
\begin{equation*}
	\|\,\uu\,\|_{\tilde L^\rho(0,T;\BB_{p,r}^s)} \,:=
	\left\|\,\left(2^{qs} \|\,\Dd_q \uu\,\|_{L^\rho(0,T;L^p_x)}\right)_{q\in\ZZ}\,\right\|_{\ell^r(\ZZ)}.
\end{equation*}
We remark that thanks to the Minkowski inequality
\begin{equation*}
	\|\,\uu\,\|_{\tilde  L^\rho(0,T;\BB_{p,r}^s)}
	\,\leq\,
	C\,\|\,\uu\,\|_{		 L^\rho(0,T;\BB_{p,r}^s)}\quad\quad \text{for}\quad \quad \rho \geq r,
\end{equation*}
while the opposite inequality holds when $\rho \leq r$. We hence denote
\begin{equation*}
	\tilde{\mathcal{C}}([0,T], \BB_{p,r}^s) \,:=\,\tilde L^\infty(0,T;\BB_{p,r}^s)\cap \mathcal{C}([0,T], \BB_{p,r}^s)
	\quad\quad\text{and by}\quad\quad
	\tilde L^\rho_{\rm loc}(\RR_+, \BB_{p,r}^s)\,:=\,\cap_{T>0} \tilde L^{\rho}(0,T;\BB_{p,r}^s).
\end{equation*}
Similarly, one can define the non-homogeneous Chemin-Lerner spaces $\tilde L^\rho(0,T;B_{p,r}^s)$. In the particular case of $p=r=2$ we will use the notation $\tilde L^\rho(0,T;\dot H^s)$ or $\tilde L^\rho(0,T; H^s)$.

\begin{remark}\label{rmk:Stokes-bound}
	Thanks to Proposition \ref{prop-heat-eq}, and using the fact that the projector $\Pp$ on the free divergence vector fields is an homogeneous Fourier 
	mutiplier of degree $0$, namely it is continuous from $\BB_{p,r}^s$ to itself, we can easily solve the non-stationary Stokes problem
	\begin{equation*}
		\begin{cases}
			\,\partial_t \uu\, -\,\nu\Delta \uu\,+\,\nabla \pre\,=\, f 	&\hspace{2cm} [0,T)\times \RR^\dd, \\
			\,\Div\,\uu\,=\,0												&\hspace{2cm} [0,T)\times \RR^\dd, \\
			\,\uu_{|t=0}=\uu_0					 						&\hspace{3.365cm} \RR^\dd,			
		\end{cases}
	\end{equation*}
	with initial data $\uu_0\in\BB_{p,r}^s$ with null divergence and a source term $f$ in $\tilde L^1(0,T; \BB_{p,r}^s)$. We hence achieve a unique solution 
	$\uu$ in the class affinity
	\begin{equation*}
		\uu \in  \tilde L^\infty(0,T;\BB_{p,r}^s)\cap \tilde L^1(0,T;\BB_{p,r}^{s+2}),
		\quad\quad
		\nabla \pre \in \tilde L^1( 0,T;\BB_{p,r}^{s}),
	\end{equation*}
	with $\uu$ satisfying
	\begin{equation*}
		\nu^{\frac{1}{\rho}}\|\,\uu\,\|_{\tilde L^\rho(0,T;\BB_{p,r}^{s+\frac{2}{\rho}})}
		\,\leq\,
		C
		\left(
			\|\,\uu_0\,\|_{\BB_{p,r}^s}
			\,+\,
			\|\,\Pp f\,\|_{L^1(0,T;\BB_{p,r}^s)}
		\right).
	\end{equation*}
	Furthermore, if $r<\infty$ the solution $\uu$ belongs to $\mathcal{C}([0,T];\BB_{p,r}^s)$.
\end{remark}

\begin{remark}
	We can introduce also the non-homogeneous version of the Proposition \ref{prop-heat-eq}, for some initial data $\uu_0$ in $B_{p,r}^s$ and 
	$f$ in $\tilde L^{\rho_1}(0,T;B_{p,r}^{s-2+2/\rho_1})$.  The existence and uniqueness of a solution still holds, nevertheless for a constant $C$ in 
	\eqref{heat-bound-sol} that this time depends linearly on the time $T$.
\end{remark}

\noindent The proof of a-priori estimates for certain nonlinear terms is mainly handled through the use of the paradifferential calculus, in particular of the so called Bony type decomposition:
\begin{equation}\label{Bony-dec}
	f\,g\,= \dot T_f g\,+\,\dot T_g f\,+\,\dot R(f,\,g),
\end{equation}
where the paraproduct $\dot T$ and the homogenous reminder $\dot R$ are defined by
\begin{equation*}
	\dot T_f g\,:=\,\sum_{q\in \ZZ}\Sd_{q-1}g \Dd_q f\quad\quad\text{and}\quad\quad
	\dot R(f,\,g)\,=\,\sum_{q\in\ZZ}\Dd_q f \Big(\sum_{|j-q|\leq 1}\Dd_j g\Big).
\end{equation*}
We then state some results of continuity of these operators that we will often use in our proof.
\begin{prop}\label{prop:reminder-und-paraproduct}
	Let $1\leq p,\,p_1,\,p_2,\,r,\,r_1,\,r_2\,\leq \infty$ satisfying $1/p\,=\,1/p_1+1/p_2$ and $1/r = 1/r_1\,+\,1/r_2$. The the homogeneous paraproduct $\dot T$ 
	is continuous 
	\begin{itemize}
		\item from $L^\infty \times \BB_{p,r}^t$ into $\BB_{p,r}^t$ for any real $t\in \RR$,
		\item from $\BB_{p_1,r_1}^{-s}\times \BB_{p_2,r_2}^s$ into $\BB_{p,r}^{t-s}$, for any $t\in \RR$ and $s>0$.
	\end{itemize}
	The homogeneous reminder $R$ is continuous
	\begin{itemize}
		\item from $\BB_{p_1,r_1}^s\times \BB_{p_2,r_2}^t$ into $\BB_{p,r}^{s+t}$ for any $(s,t)\in \RR^2$, such that $s+t>0$,
		\item from $\BB_{p_1,r_1}^s\times \BB_{p_2,r_2}^{-s}$ into $\BB_{p,\infty}^0$ if $s\in \RR$ and $1/r_1\,+\,1/r_2 \geq 1$.
	\end{itemize}
\end{prop}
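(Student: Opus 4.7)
The plan is to handle both the paraproduct $\Th$ and the remainder $\Rd$ through the same three-step dyadic scheme: (i) identify which $q$-blocks of the output are activated by a given $\Dd_k$ by Fourier-support analysis, (ii) estimate the surviving summands by H\"older's inequality (with $1/p = 1/p_1 + 1/p_2$) together with Bernstein, and (iii) close in the $\ell^r$-norm indexed by $k$ via discrete Young's inequality, making crucial use of the relation $1/r = 1/r_1 + 1/r_2$.

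For the paraproduct $\Th_f g = \sum_q \Sd_{q-1}g\,\Dd_q f$, each summand has spectrum in an annulus of size $\sim 2^q$, so $\Dd_k \Th_f g$ picks up only those $q$ with $|q-k|\leq N_0$ for some universal $N_0$. H\"older then yields
\[
\|\Dd_k \Th_f g\|_{L^p} \,\lesssim\, \sum_{|q-k|\leq N_0} \|\Sd_{q-1}g\|_{L^{p_1}}\|\Dd_q f\|_{L^{p_2}}.
\]
For the first bullet, one factor lies in $L^\infty$ and Bernstein's continuity $\|\Sd_{q-1}\cdot\|_{L^\infty}\leq C\|\cdot\|_{L^\infty}$ absorbs the low-frequency sum uniformly in $q$, leaving the $\BB_{p,r}^t$-norm of the other factor. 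For the second bullet, the hypothesis $s>0$ is exactly what makes the geometric series
\[
\|\Sd_{q-1}\varphi\|_{L^{p_i}} \,\leq\, \sum_{j\leq q-2}\|\Dd_j\varphi\|_{L^{p_i}} \,\lesssim\, 2^{qs}\|\varphi\|_{\BB_{p_i,\infty}^{-s}}
\]
converge, producing a gain factor $2^{qs}$ that cancels the $2^{-qs}$ in the $\BB^{-s}$-norm of the low-frequency factor; the resulting $\ell^r$ bookkeeping then closes the estimate in the stated target space.

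For the remainder $\Rd(f,g) = \sum_q \Dd_q f\sum_{|j-q|\leq 1}\Dd_j g$, the structural change is that each summand is spectrally supported in a \emph{ball} of radius $\sim 2^q$, so $\Dd_k\Rd(f,g)$ vanishes only when $q < k - N_0$, leading to a tail summation $q\geq k - N_0$. Writing $\|\Dd_q f\|_{L^{p_1}} = 2^{-qs}c_q^1$ and $\|\Dd_q g\|_{L^{p_2}} = 2^{-qt}c_q^2$, with $c^i \in \ell^{r_i}$ of respective norms $\|f\|_{\BB_{p_1,r_1}^s}$ and $\|g\|_{\BB_{p_2,r_2}^t}$, H\"older in the spatial variable yields
\[
2^{k(s+t)}\|\Dd_k \Rd(f,g)\|_{L^p} \,\lesssim\, \sum_{q\geq k-N_0} 2^{(k-q)(s+t)}\,c_q^1 c_q^2.
\]
When $s+t>0$ the kernel $2^{-j(s+t)}\mathbf{1}_{j\leq N_0}$ belongs to $\ell^1$, so discrete Young closes the argument in $\BB_{p,r}^{s+t}$. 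In the borderline case $s+t=0$ this absolute summability is lost and only an $\ell^\infty_k$ bound survives, which is precisely why the target drops to $\BB_{p,\infty}^0$; the condition $1/r_1+1/r_2 \geq 1$ then ensures by H\"older in $q$ that $c_q^1 c_q^2$ is still a summable sequence.

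The only real obstacle is bookkeeping: one must track the indices $p_i$, $r_i$, and the signs of $s,t$ so that every geometric series is summed in the correct direction, and one must be careful with the paper's convention $\Th_f g = \sum \Sd_{q-1}g\,\Dd_q f$, which swaps the roles of the two arguments with respect to several standard references. Beyond this, there is no genuine analytic difficulty: everything reduces to H\"older, Bernstein, and discrete Young, which are all available in the toolbox developed in Section \ref{sec:Besov}.
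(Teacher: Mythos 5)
The paper never proves this proposition: it belongs to the toolbox of Section \ref{sec:Besov} and is quoted as a standard continuity statement for the Bony decomposition, as in \cite{BCD}, Chapter 2, so there is no internal proof to compare against; your sketch is precisely the classical argument and its key mechanisms are sound (spectral localization in annuli for $\dot T$ and in balls for $\dot R$, H\"older with $1/p=1/p_1+1/p_2$, geometric summation of $\Sd_{q-1}$ in negative regularity requiring $s>0$, an $\ell^1$ convolution kernel for the remainder requiring $s+t>0$, and in the borderline case $s+t=0$ only an $\ell^\infty_k$ bound with $1/r_1+1/r_2\geq 1$ guaranteeing summability of $c^1_qc^2_q$). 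Two bookkeeping points would need repair in a written-out version. First, in the remainder estimate the kernel in the variable $m=k-q$ is $2^{m(s+t)}\mathbf{1}_{m\leq N_0}$, equivalently $2^{-j(s+t)}\mathbf{1}_{j\geq -N_0}$ in $j=q-k$; as you wrote it, with $\mathbf{1}_{j\leq N_0}$, the kernel is not summable when $s+t>0$, so the indicator points the wrong way (a slip, since your displayed sum over $q\geq k-N_0$ is correct). Second, for the second paraproduct bullet, bounding $\|\Sd_{q-1}\varphi\|_{L^{p_1}}$ by $2^{qs}\|\varphi\|_{\BB^{-s}_{p_1,\infty}}$ discards the $\ell^{r_1}$ information and only yields continuity into $\BB^{t-s}_{p,r_2}$, which is strictly weaker than the stated target $\BB^{t-s}_{p,r}$ with $1/r=1/r_1+1/r_2$ when $r_1<\infty$; to get the full statement write $\|\Dd_j\varphi\|_{L^{p_1}}=2^{js}d_j$ with $(d_j)\in\ell^{r_1}$, apply discrete Young ($\ell^1\ast\ell^{r_1}\subset\ell^{r_1}$) to the partial sums, and only then H\"older in $q$ against the $\ell^{r_2}$ sequence of the high-frequency factor, exactly as you already do for the remainder. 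Finally, you rightly flag the paper's convention $\dot T_f g=\sum_q\Sd_{q-1}g\,\Dd_q f$; to make the first two bullets true for all $t\in\RR$ the $L^\infty$ (respectively $\BB^{-s}$) factor must be the one sitting under $\Sd_{q-1}$, which is how your prose treats it, so just make the displayed H\"older line consistent with that reading.
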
 
\noindent
The above proposition allows to determine almost any continuity results for the product of two distributions that belong to two Besov spaces.  
Further extensions of the above result can be achieved assuming some additional regularity of the distributions:
\begin{lemma}\label{lemma-product-f^2}
	Let $f$ be a function in $L^2(\RR^2)\cap  \BB_{\infty, 1}^1$, then $f^2$ belongs to $  \BB_{\infty, 1}^0$ and satisfies 
	\begin{equation*}
		\|\,f^2 \,\|_{ \BB_{\infty, 1}^0}
		\,\leq\,
		C
		\|\,f \,\|_{L^2(\RR^2)}
		\|\,f \,\|_{\BB_{\infty, 1}^1}
	\end{equation*}
\end{lemma}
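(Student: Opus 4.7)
The plan is to apply the Bony decomposition $f^2 = 2\,\dot T_f f + \dot R(f,f)$ and to estimate each piece dyadic-block-wise, using the extra integrability $f\in L^2(\RR^2)$ to obtain a $2^q$ factor from Bernstein's inequality in dimension two (which makes the sum over low frequencies converge), while using $\BB_{\infty,1}^1$ to control the factor that remains in $L^\infty$.

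First I would deal with the paraproduct. Thanks to the spectral localization property $\Dd_k(\Sd_{q-1}f\,\Dd_q f)\equiv 0$ for $|k-q|\geq 5$, one has
\begin{equation*}
\|\Dd_k(\dot T_f f)\|_{L^\infty}\,\leq\, \sum_{|q-k|\leq 4}\|\Sd_{q-1}f\|_{L^\infty}\,\|\Dd_q f\|_{L^\infty}.
\end{equation*}
The Bernstein inequality in dimension two yields $\|\Sd_{q-1}f\|_{L^\infty}\leq C\,2^{q}\|\Sd_{q-1}f\|_{L^2}\leq C\,2^{q}\|f\|_{L^2}$, hence
\begin{equation*}
\sum_{k\in\ZZ}\|\Dd_k(\dot T_f f)\|_{L^\infty}
\,\leq\, C\|f\|_{L^2}\sum_{k\in\ZZ}\sum_{|q-k|\leq 4} 2^q\|\Dd_q f\|_{L^\infty}
\,\leq\, C\|f\|_{L^2}\|f\|_{\BB_{\infty,1}^1}.
\end{equation*}

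The delicate part is the remainder $\dot R(f,f)=\sum_{q}\Dd_q f\,\widetilde{\Dd}_q f$, where $\widetilde{\Dd}_q:=\sum_{|j-q|\leq 1}\Dd_j$; indeed each summand has frequencies supported in a ball of radius $\sim 2^q$, so all high frequencies of $f$ potentially contribute to $\Dd_k\dot R(f,f)$ for every $k$, and one must recover convergence at low $k$. Here the factor $f\in L^2$ is crucial: set $g_q:=\Dd_q f\,\widetilde{\Dd}_q f$, estimate
\begin{equation*}
\|g_q\|_{L^2}\,\leq\,\|\Dd_q f\|_{L^2}\|\widetilde{\Dd}_q f\|_{L^\infty}\,\leq\,\|f\|_{L^2}\|\widetilde{\Dd}_q f\|_{L^\infty},
\end{equation*}
and apply Bernstein to the outer block in dimension two, $\|\Dd_k g_q\|_{L^\infty}\leq C\,2^{k}\|g_q\|_{L^2}$. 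Since $\Dd_k g_q\equiv 0$ for $q<k-N_0$ for some fixed $N_0$, summation over $q$ and then $k$ gives
\begin{equation*}
\sum_{k\in\ZZ}\|\Dd_k \dot R(f,f)\|_{L^\infty}
\,\leq\, C\|f\|_{L^2}\sum_{k\in\ZZ} 2^k \sum_{q\geq k-N_0}\|\widetilde{\Dd}_q f\|_{L^\infty}
\,\leq\, C\|f\|_{L^2}\sum_{q\in\ZZ} 2^{q}\|\widetilde{\Dd}_q f\|_{L^\infty},
\end{equation*}
where the inner geometric sum $\sum_{k\leq q+N_0}2^k\lesssim 2^q$ is exactly what produces the $\BB_{\infty,1}^1$-norm on the right-hand side. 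Combining both bounds yields the announced inequality.

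The main obstacle is the remainder term: a naive $L^\infty\cdot L^\infty$ estimate of $g_q$ would produce a divergent sum over low frequencies $k$, and the $L^2$ hypothesis on $f$ combined with Bernstein in dimension two is exactly what is needed to close the argument.
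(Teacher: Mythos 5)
Your proposal is correct and follows essentially the same route as the paper: Bony decomposition, Bernstein in dimension two to trade the $L^2$ factor for a $2^q$ gain, and a geometric summation over low outer frequencies (your Fubini sum-swap is just the paper's Young convolution inequality for sequences in disguise). No gaps.
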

\begin{proof}
	We begin with localizing the frequencies  of $f^2$ through the standard Bony decomposition \eqref{Bony-dec}
	\begin{equation*}
		f^2 \,=\, 2\dot T_f f\, +\,\dot R(f, f).
	\end{equation*}
	Thus, the triangular inequality implies that
	\begin{equation*}
	\begin{aligned}
		\|\, f^2\,\|_{\BB_{\infty, 1}^0}\, 
		&\leq \,
		2\|\,\dot T_f f\,\|_{\BB_{\infty, 1}^0}\,+\,
		\|\,\dot R(f, f)\,\|_{\BB_{\infty, 1}^0},\\
		&\leq
		2\sum_{q\in \ZZ}
		\|\, \Dd_q \dot T_f f \,\|_{L^\infty_x}
		\,+\,
		\|\, \Dd_q \dot R(f, f) \,\|_{L^\infty_x}.
	\end{aligned}
	\end{equation*}
	We first remark that for any integer $q\in \ZZ$ 
	\begin{equation*}
	\begin{aligned}
		\|\,\Dd_q \dot T_f f\,\|_{L^\infty} \,&\leq \,
		\sum_{| q-j|\leq 5}
		\| \,\dot S_{j-1} f\,\|_{L^\infty_x}
		\| \,\Dd_{j} f\,\|_{L^\infty_x}\\
		&\leq\,
		\sum_{| q-j|\leq 5}
		\| \,\dot S_{j-1} f\,\|_{L^2_x}
		2^{j}
		\| \,\Dd_j f\,\|_{L^\infty_x}
		\,\leq\,
		\| \, f\,\|_{L^2_x}
		\sum_{| q-j|\leq 5}
		2^{j}
		\| \,\Dd_j f\,\|_{L^\infty_x},
	\end{aligned}
	\end{equation*}
	hence, the homogeneous paraproduct is bounded by
	\begin{equation*}
		\|\,\dot T_f f\,\|_{\BB_{\infty, 1}^0}
		\,\leq \,
		\|\,f \,\|_{ L^2_x}
		\|\,f \,\|_{ \BB_{\infty, 1}^1}.
	\end{equation*}
	We now take into account the homogeneous reminder. By definition, we gather that
	\begin{equation*}
	\begin{aligned}
			\|\,\Dd_q \dot R(f, f) \,\|_{L^\infty_x}
			\,&	\leq \,
			\sum_{\substack{j-q \geq-5\\ |\nu| \leq 1}}  
			\|\, \Dd_q (	 \Dd_{j+\nu } f \Dd_j f)\,\|_{L^\infty_x}
			\,\leq \,
			\sum_{\substack{j \geq q-5\\ |\nu| \leq 1}}  
			2^q\|\, \Dd_q (	 \Dd_{j+\nu } f \Dd_j f)\,\|_{L^2_x}\\
			&\leq\,
			\sum_{\substack{j \geq q-5\\ |\nu| \leq 1}}  
			2^{q-j}\|\, \Dd_{ j + \nu } f \|_{L^2_x} 2^j \|\, \Dd_j f \,\|_{L^\infty_x}
			\,\leq\,
			\|\,f\, \|_{L^2_x}
			\sum_{\substack{j \in\ZZ}}  
			2^{q-j}{\bf 1}_{(-\infty, 5]}(q-j) 2^j \|\, \Dd_j f \,\|_{L^\infty_x}.
	\end{aligned}
	\end{equation*}
	Defining $a_j =2^{j}{\bf 1}_{(-\infty, 5]}(j)$ for any $j\in \ZZ$, we can recast the last term in convolution form, namely
	\begin{equation*}
			\sum_{\substack{j \in\ZZ}}  
			2^{q-j}{\bf 1}_{(-\infty, 5]}(q-j) 2^j \|\, \Dd_j f \,\|_{L^\infty_x}
			\,=\, 
			\left(
				 (a_j)_{j\in\ZZ}* (2^j \|\, \Dd_j f \,\|_{L^\infty_x}
			\right)_q,
	\end{equation*}
	for any $q\in\ZZ$. Hence, applying the Young inequality we deduce that
	\begin{equation*}
			\left\| \, (a_j)_{j\in\ZZ}* (2^j \|\, \Dd_j f \,\|_{L^\infty_x})_{j\in\ZZ}
			\right\|_{\ell^1}
			\,\leq\,
			\left\| 
			 (2^j\|\, \Dd_j f \,\|_{L^\infty_x})_{j\in\ZZ}
			\right\|_{\ell^1}
			\,=\, \|\,f \,\|_{\BB_{\infty, 1}^1},
	\end{equation*}
	from which
	\begin{equation*}
			\sum_{\substack{q \in\ZZ}}  
			\|\,\Dd_q \dot R(f, f) \,\|_{L^\infty_x}
			\,\leq\,
			\|\,f \,\|_{L^2_x}
			\|\,f \,\|_{\BB_{\infty, 1}^1},
	\end{equation*}
	which concludes the proof of the Lemma.
\end{proof}

\subsection{\bf Estimates for the conformation tensor}$\,$

\noindent
In this section we perform several a priori estimates for the following equation that governs the evolution of the conformation tensor $\tau(t,x)$:
\begin{equation}\label{a-priori-tau-eq}
	\left\{\hspace{0.2cm}
	\begin{alignedat}{2}
		&\,\partial_t \tau\,+\,\uu\cdot \nabla \tau+a\,\tau 	- \,\omega \tau \,+\,\tau\omega \,=\,f
		\hspace{3cm}
		&&\RR_+\times \RR^\dd, \vspace{0.1cm}\\
		&\,
		\tau_{|t=0}	\,=\,\tau_0			
		&&\hspace{1.02cm} \RR^\dd.\vspace{0.1cm}																							
	\end{alignedat}
	\right.
\end{equation} 
We begin with a standard bound for Lebesgue and Lorentz norms. 
\begin{lemma}\label{lemma:L^pbound_of_tau}
	For any $p\in[1, \infty]$, the following estimate in Lebesgue spaces holds true:
	\begin{equation*}
		\|\,\tau(t)\,\|_{L^p_x}\,\leq\,\|\,\tau_0\,\|_{L^p_x}e^{-at}\,+\,\int_0^te^{a(s-t)} \|\, f(s)\, \|_{L^p_x}\dd s.
	\end{equation*}
	More in general, one has
	\begin{equation*}
		\|\,\tau(t)\,\|_{L^{p, \infty}}\,\leq\,\|\,\tau_0\,\|_{L^{p, \infty}}e^{-at}\,+\,\int_0^te^{a(s-t)}\|\, f(s)\, \|_{L^{p, \infty}}\dd s
	\end{equation*}
	and the inequality reduces to an equality whenever $f$ is identically null.
\end{lemma}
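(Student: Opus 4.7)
The plan is to use the method of characteristics, exploiting the fact that $\Div \uu = 0$ makes the flow of $\uu$ measure-preserving, and that the antisymmetric combination $-\omega\tau + \tau\omega$ is conservative for matrix norms since $\omega$ is skew-adjoint. The forcing $a\tau$ only contributes an integrating factor $\mathrm{e}^{at}$.

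First, assuming enough regularity to give sense to all the computations, I would introduce the flow $X(t,y)$ generated by $\uu$, solving $\partial_t X(t,y) = \uu(t,X(t,y))$ with $X(0,y) = y$. Along the characteristics, set $g(t,y) := \tau(t,X(t,y))$, $\tilde\omega(t,y) := \omega(t,X(t,y))$ and $\tilde f(t,y) := f(t,X(t,y))$. Equation \eqref{a-priori-tau-eq} then reduces to the matrix-valued ODE
\begin{equation*}
\dot g + a g - \tilde\omega\, g + g\,\tilde\omega = \tilde f,\qquad g(0,y) = \tau_0(y).
\end{equation*}

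Next, I would absorb the antisymmetric terms by a rotation. Let $R(t,y)$ solve $\dot R = \tilde\omega R$ with $R(0,y) = I$. Since $\tilde\omega$ is skew-adjoint, $\frac{d}{dt}(R^T R) = 0$ and therefore $R(t,y)\in SO(\dd)$ for every $(t,y)$. Setting $h(t,y) := R(t,y)^T g(t,y) R(t,y)$, a direct differentiation gives the decoupled scalar ODE
\begin{equation*}
\dot h + a\, h = R^T \tilde f\, R,
\end{equation*}
whose solution is
\begin{equation*}
h(t,y) = \mathrm{e}^{-at}\tau_0(y) + \int_0^t \mathrm{e}^{a(s-t)}\, R(s,y)^T \tilde f(s,y)\, R(s,y)\, \dd s.
\end{equation*}
Conjugating back, and using that for any unitarily invariant matrix norm (in particular the Frobenius norm used to define Lebesgue/Lorentz norms of matrix-valued functions) one has $|R A R^T| = |A|$, I obtain the pointwise bound
\begin{equation*}
|\tau(t,X(t,y))| \leq \mathrm{e}^{-at}|\tau_0(y)| + \int_0^t \mathrm{e}^{a(s-t)} |f(s,X(s,y))|\, \dd s.
\end{equation*}

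Finally, since $\Div \uu = 0$, the map $y\mapsto X(t,y)$ is a measure-preserving diffeomorphism of $\RR^\dd$ for every $t\geq 0$. Consequently, rearranging the pointwise inequality in the variable $x = X(t,y)$, taking $L^p$ (respectively Lorentz $L^{p,\infty}$) norms in $x$, and invoking Minkowski's integral inequality together with the measure-preserving character of $X(t,\cdot)$ and of $X(s,\cdot)\circ X(t,\cdot)^{-1}$ yields both stated inequalities, with equality when $f \equiv 0$ because then both sides equal $\mathrm{e}^{-at}\|\tau_0\|_{L^p_x}$ (the pointwise bound becomes an equality). The only mildly delicate point is the Lorentz-space case, where Minkowski's integral inequality should be applied via the Banach equivalent norm on $L^{p,\infty}$ for $p>1$; for $p=1$ or $p=\infty$ the inequality is immediate.
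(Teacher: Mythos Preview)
Your proof is correct and takes a genuinely different route from the paper's. The paper argues by an energy method: for $p\in[1,\infty)$ it multiplies the equation by $\tau|\tau|^{p-2}$, integrates in $x$, and uses the algebraic cancellation
\[
-\int_{\RR^\dd} \omega\tau:\tau|\tau|^{p-2}\,+\,\int_{\RR^\dd}\tau\omega:\tau|\tau|^{p-2}\,=\,0
\]
to arrive at the differential inequality $\frac{\dd}{\dd t}\|\tau(t)\|_{L^p}+a\|\tau(t)\|_{L^p}\leq\|f\|_{L^p}$, then passes to the limit for $p=\infty$. By contrast, you solve the equation pointwise along characteristics: the orthogonal conjugation by $R(t,y)$ removes the $\omega$-terms exactly, yielding the pointwise bound $|\tau(t,X(t,y))|\leq \mathrm{e}^{-at}|\tau_0(y)|+\int_0^t\mathrm{e}^{a(s-t)}|f(s,X(s,y))|\,\dd s$, and the incompressibility of the flow transfers this to any rearrangement-invariant norm. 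Your approach is slightly longer to set up but buys more: it makes the equality statement for $f\equiv 0$ immediate (the pointwise inequality is an identity in that case), and it handles the Lorentz estimate on the same footing as the Lebesgue one, whereas the paper's energy argument does not literally cover either of these two points.
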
 
\begin{proof}
	Considering $p\in [1, \infty)$, we take the matrix inner product between the $\tau$-equation and $\tau |\tau\,|^{p-2}$. Hence, integrating in spatial domain, we first observe that
	\begin{equation*}
		-\int_{\RR^\dd} \omega \tau : \tau |\tau|^{p-2}
		\,+\,
		\int_{\RR^\dd} \tau \omega : \tau |\tau|^{p-2}
		\,=\,
		0
	\end{equation*}
	from which we deduce the following $L^p$-bound of $\tau$
	\begin{equation*}
		\frac{1}{p}\frac{\dd}{\dd t} \|\, \tau(t)\,\|_{L^p_x}^p+a  \|\, \tau(t)\,\|_{L^p_x}^p
		\,\leq \,
		\| \,f \,\|_{L^p}
		\| \,\tau \,\|_{L^p}^{p-1}
		\quad\quad\Rightarrow\quad\quad
		\frac{\dd}{\dd t} \|\, \tau(t)\,\|_{L^p_x}+a \|\, \tau(t)\,\|_{L^p_x}
		\,\leq \,
		\| \,f \,\|_{L^p}.
	\end{equation*}
	The case of $p=+\infty$ can be achieved as the limit case of the previous inequalities.
\end{proof}

\section{Some a-priori estimates for the conformation tensor}\label{sec:tensor}
\noindent In this section we present some a-priori estimates for the $\tau$ equation. We begin with the following lemma about the propagation of Besov regularities.
\begin{lemma}\label{lemma:bound_of_tau-exp_on_nablau}
	Let $(p,\,r)\in [1, \infty]^2$ and $(s,\sigma)\in (-1,1)\in (-1,\infty)$. Assume that $\uu$ is a free divergence vector field whose coefficients belong to $L^1(0, T; \BB_{\infty,1}^{1})$, that the source term $f$ belongs to 
	$\tilde L^1(0, T; \BB_{p,r}^{s}\cap \BB_{p,r}^{\sigma})$ and that the initial data $\tau_0$ is in $\BB_{p,r}^s\cap \BB_{p,r}^\sigma$. Then system \eqref{a-priori-tau-eq} admits a unique solution $\tau$ in the class affinity
	\begin{equation*}
		\tau \,\in\, L^\infty(0,T;\BB_{p,r}^s\cap \BB_{p,r}^\sigma)
	\end{equation*}
	which fulfils the following estimate for any time $t\in [0, T]$
	\begin{equation*}
		\| \, \tau \,\|_{\tilde L^\infty(0,t;\,\BB_{p,r}^s\cap \BB_{p,r}^\sigma)}
		\,\leq\,
		\left(
		\,
			\|\,  \tau_0 \,\|_{\BB_{p,r}^s\cap \BB_{p,r}^\sigma}e^{-at}
			\,+\,
			\| e^{a(\cdot - t)} f(\cdot)\, \|_{\tilde L^1(0,t; \,\BB_{p,r}^s\cap \BB_{p,r}^\sigma)}
		\,\right)
		\exp
		\left\{
		\,
		C
		\int_0^t
		\| \,\nabla \uu \,\|_{\BB_{\infty,1}^{0}}
		\,
		\right\},
	\end{equation*}
	for a suitable positive constant $C>0$.	
\end{lemma}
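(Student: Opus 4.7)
The plan is to run a standard frequency-localization argument for transport-type equations in Besov spaces: I will apply the dyadic blocks $\Dd_q$ to the equation, derive an $L^p$ estimate at each level, control the commutator and stretching contributions by $\|\nabla \uu\|_{\BB_{\infty,1}^0}\|\tau\|_{\BB_{p,r}^s}$, and close the bound by Gronwall's lemma after taking $\ell^r$-norms in the frequency index. Existence and uniqueness will follow once the a priori estimate is in place, by a smoothing procedure and by a direct estimate for the difference of two solutions, respectively.

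More precisely, applying $\Dd_q$ to \eqref{a-priori-tau-eq} yields
\begin{equation*}
\partial_t \Dd_q \tau + \uu \cdot \nabla \Dd_q \tau + a\, \Dd_q \tau = \Dd_q f + [\uu \cdot \nabla,\,\Dd_q]\tau + \Dd_q(\omega\tau - \tau\omega).
\end{equation*}
Testing against $|\Dd_q\tau|^{p-2}\Dd_q\tau$ and using $\Div\,\uu = 0$ to kill the transport term, I would obtain
\begin{equation*}
\tfrac{\dd}{\dd t}\|\Dd_q\tau\|_{L^p} + a\|\Dd_q\tau\|_{L^p} \leq \|\Dd_q f\|_{L^p} + \|[\uu\cdot\nabla,\,\Dd_q]\tau\|_{L^p} + \|\Dd_q(\omega\tau - \tau\omega)\|_{L^p}.
\end{equation*}

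The key analytic step is to dominate the two extra contributions uniformly by $\|\nabla \uu\|_{\BB_{\infty,1}^0}\|\tau\|_{\BB_{p,r}^s}$. For the transport commutator the classical lemma (in the spirit of Bahouri--Chemin--Danchin) gives, in the admissible range of $s$,
\begin{equation*}
2^{qs}\big\|[\uu\cdot\nabla,\,\Dd_q]\tau\big\|_{L^p} \leq c_q(t)\,\|\nabla \uu\|_{\BB_{\infty,1}^0}\,\|\tau\|_{\BB_{p,r}^s},
\end{equation*}
with $(c_q)_q$ of unit $\ell^r$-norm. For the stretching terms, Bony's decomposition \eqref{Bony-dec} together with the continuity statements of Proposition \ref{prop:reminder-und-paraproduct}, and the embedding $\BB_{\infty,1}^0\hookrightarrow L^\infty$, will yield
\begin{equation*}
\|\omega\tau - \tau\omega\|_{\BB_{p,r}^s} \leq C\,\|\nabla \uu\|_{\BB_{\infty,1}^0}\,\|\tau\|_{\BB_{p,r}^s}.
\end{equation*}
Multiplying the localized bound by $e^{at}$ and $2^{qs}$, integrating in time, taking the $\ell^r_q$-norm, and invoking Gronwall then produces the announced estimate in $\BB_{p,r}^s$; the same reasoning with $\sigma$ in place of $s$ closes the bound for the intersection norm. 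Existence would be obtained by smoothing $(\uu,\tau_0, f)$ via $\Sd_n$ and solving the regularized problem along the characteristics of $\uu$ (well-defined since $\uu \in L^1(0,T;\BB_{\infty,1}^1)$ is Lipschitz in space), extracting a limit thanks to the uniform bounds; uniqueness follows by applying the a priori estimate to the difference of two solutions with vanishing data and source.

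The hardest point will be controlling the stretching bilinear term $\omega\tau-\tau\omega$ using only the $\BB_{\infty,1}^0$-norm of $\nabla \uu$: both the high-high (remainder) interaction and the cross paraproduct $\dot T_\tau\,\omega$ sit right at the edge of Proposition \ref{prop:reminder-und-paraproduct}, and one must exploit Bony's decomposition carefully, invoking precisely the constraint $s>-1$ (resp.\ $\sigma>-1$) so that the remainder lands in the correct target space. The upper range of $s$ in the statement reflects, analogously, the sharpness of the commutator estimate at low regularity, so no slack remains to propagate stronger norms without strengthening the hypothesis on $\uu$.
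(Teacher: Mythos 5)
Your overall scheme---dyadic localization, an $L^p$ estimate at each frequency using $\Div\,\uu=0$, a commutator bound, Gronwall, then smoothing for existence and a difference estimate for uniqueness---is exactly the route the paper has in mind (the paper itself gives no proof and refers to Proposition 4.7 of \cite{PD}, which however concerns the transport equation \emph{without} the stretching term). The transport commutator step, with the range $s\in(-1,1)$ coming from the fact that $\uu\in \BB_{\infty,1}^{1}$ carries one more derivative than $\nabla\uu$, is fine. The genuine gap is the bilinear estimate you rely on for the stretching term: with only $\omega\in\BB_{\infty,1}^{0}$ and $\tau\in\BB_{p,r}^{s}$, the inequality $\|\omega\tau-\tau\omega\|_{\BB_{p,r}^{s}}\lesssim\|\nabla\uu\|_{\BB_{\infty,1}^{0}}\|\tau\|_{\BB_{p,r}^{s}}$ does not follow from Proposition \ref{prop:reminder-und-paraproduct} for \emph{any} single $s$, and the constraint $s>-1$ you invoke is the one relevant to the transport commutator, not to this product. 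In Bony's decomposition, $\dot T_\omega\tau$ is harmless for all $s$; but $\dot R(\omega,\tau)$ lands in $\BB_{p,r}^{s}$ only when $0+s>0$, i.e. $s>0$ (the kernel $2^{(q-j)s}{\bf 1}_{\{q-j\le N\}}$ is not summable otherwise), while $\dot T_\tau\omega$ requires $s<0$, because the only way to produce an $L^p$ bound is through $\|\Sd_{j-1}\tau\|_{L^p}\lesssim 2^{-js}c_j\|\tau\|_{\BB_{p,r}^{s}}$, which fails for $s\ge 0$, and one cannot instead pair $\|\Sd_{j-1}\tau\|_{L^\infty}\|\Dd_j\omega\|_{L^p}$ since $\omega$ is not assumed in any $L^p$-based space. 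So the key product bound, as you state it, fails both for $s\le 0$ and for $s>0$.

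To close the argument you need additional structure. One classical device is the antisymmetry of $\omega$: writing $\Dd_q(\omega\tau-\tau\omega)=(\omega\Dd_q\tau-\Dd_q\tau\,\omega)+\big(\Dd_q(\omega\tau)-\omega\Dd_q\tau\big)-\big(\Dd_q(\tau\omega)-\Dd_q\tau\,\omega\big)$, the first bracket is pointwise orthogonal to $\Dd_q\tau$ (since $(\omega A-A\omega):A=0$, exactly the cancellation used in Lemma \ref{lemma:L^pbound_of_tau}) and disappears from the $L^p$ energy identity; however, when you expand the remaining commutators by Bony, the pieces $\Dd_q\dot T_\tau\omega$ and $\Dd_q\dot R(\omega,\tau)$ reappear, so this alone does not remove the obstruction. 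The effective fix is to strengthen the norm of $\nabla\uu$ to an $L^p$-based Besov norm, e.g. $\nabla\uu\in\BB_{p,1}^{\dd/p}$ (which is what is actually available in every application of the lemma in this paper, since there $\uu\in L^1_T(\BB_{p,1}^{\dd/p+1})$): then $\dot T_\tau\omega$ and $\dot R(\omega,\tau)$ are controlled by $\|\tau\|_{L^\infty}\|\nabla\uu\|_{\BB_{p,1}^{\dd/p}}$ and by the remainder law with $\dd/p+s>0$, and Gronwall closes with the exponential factor $\exp\{C\int_0^t\|\nabla\uu\|_{\BB_{p,1}^{\dd/p}}\}$ rather than merely $\|\nabla\uu\|_{\BB_{\infty,1}^{0}}$. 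Alternatively one must exploit the intersection $\BB_{p,r}^{s}\cap\BB_{p,r}^{\sigma}$ with indices of opposite sign to split low and high frequencies of $\tau$; but as written, with a single index and only $\|\nabla\uu\|_{\BB_{\infty,1}^{0}}$ in the exponential, your estimate for the stretching term---and hence the Gronwall closure---is not justified.
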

\noindent
The proof of Lemma \ref{lemma:bound_of_tau-exp_on_nablau} is equivalent to the one of Proposition $4.7$ in \cite{PD}. The presence of an exponential term in the a-priori estimate of Lemma \ref{lemma:bound_of_tau-linear_on_nablau} produces intrinsic difficulties when dealing with the existence of global-in-time solutions. Nevertheless, we can refine such an inequality taking into account Besov spaces with null index of regularity:
\begin{lemma}\label{lemma:bound_of_tau-linear_on_nablau}
	Let $\tau$ be a solution of \eqref{a-priori-tau-eq} in $L^\infty(0, T; \BB_{p,1}^0)$ with $f$ in $ L^1(0, T;\BB_{p,1}^0)$ and also 
	$\nabla \uu$ in $L^1(0, T; \BB_{\infty, 1}^0)$, for some $p\in [1,\infty]$. Then, the following bound holds true for any time $t\in [0, T]$:
	\begin{equation*}
		\|\,\tau\,\|_{L^\infty(0,\, t\,;\, \BB_{p,1}^0)}
		\,\leq\,
		C
		\left(
		\,
			\|\,\tau_0\,\|_{ \BB_{p,1}^0}
			e^{-at}
			\,+\,
			\int_0^t 
			e^{a(s-t)}
			\|\,f(s)\,\|_{\BB_{p, 1}^0}
			\dd s
		\,
		\right)
		\left(
		\,
			1\,+\,\int_0^t \|\,\nabla \uu\,\|_{\BB_{\infty, 1}^0}\dd s
		\,
		\right)
	\end{equation*}
\end{lemma}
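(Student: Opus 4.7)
The plan is to apply the homogeneous Littlewood-Paley block $\Dd_q$ to equation \eqref{a-priori-tau-eq}, producing the dyadically localized equation
\begin{equation*}
\partial_t \Dd_q\tau + \uu\cdot\nabla \Dd_q\tau + a\Dd_q\tau = \Dd_q f + [\Dd_q,\uu\cdot\nabla]\tau + \Dd_q(\omega\tau - \tau\omega).
\end{equation*}
Since $\uu$ is divergence-free, a Lagrangian $L^p$-estimate along the flow combined with the integrating factor $e^{at}$ (as in Lemma \ref{lemma:L^pbound_of_tau}) yields the block-wise bound
\begin{equation*}
\|\Dd_q\tau(t)\|_{L^p} \leq e^{-at}\|\Dd_q\tau_0\|_{L^p} + \int_0^t e^{a(s-t)}\bigl(\|\Dd_q f\|_{L^p} + \|[\Dd_q,\uu\cdot\nabla]\tau\|_{L^p} + \|\Dd_q(\omega\tau - \tau\omega)\|_{L^p}\bigr)\,ds.
\end{equation*}
Summing in $\ell^1_q$ recovers the first two contributions announced in the lemma; for the remaining commutator and rotational sums, Bony's decomposition together with Proposition \ref{prop:reminder-und-paraproduct} produces a bound of the form $\int_0^t e^{a(s-t)}\|\nabla\uu(s)\|_{\BB_{\infty,1}^0}\|\tau(s)\|_{\BB_{p,1}^0}\,ds$.

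The main obstacle is that a direct application of Gronwall at this stage would reproduce exactly the exponential dependence on $\int_0^t\|\nabla\uu\|_{\BB_{\infty,1}^0}\,ds$ of Lemma \ref{lemma:bound_of_tau-exp_on_nablau}. The upgrade to a strictly \emph{linear} dependence is a phenomenon specific to the borderline Besov index $s=0$ and relies on a Vishik-type truncation argument. Concretely, I would introduce an integer parameter $N$ and split
\begin{equation*}
\|\tau(t)\|_{\BB_{p,1}^0} \leq \sum_{|q|\leq N}\|\Dd_q\tau(t)\|_{L^p} + \sum_{|q|>N}\|\Dd_q\tau(t)\|_{L^p}.
\end{equation*}
On the low-frequency part the dyadic estimates are summed over $O(N)$ indices \emph{without} ever invoking the full $\BB_{p,1}^0$ norm on the right-hand side, producing a contribution proportional to $N$ times a term linear in $\int_0^t\|\nabla\uu\|_{\BB_{\infty,1}^0}\|\tau\|_{L^\infty_t\BB_{p,1}^0}\,ds$. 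On the high-frequency tail one combines Lemma \ref{lemma:bound_of_tau-exp_on_nablau} applied at an auxiliary regularity $\sigma>0$ (controlled by interpolation between the $\BB_{p,1}^0$ and a lower Besov norm of $\tau_0$) with the Bernstein inequality to produce a prefactor $2^{-N\sigma}\exp\bigl(C\int_0^t\|\nabla\uu\|_{\BB_{\infty,1}^0}\,ds\bigr)$.

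The final step is to balance these two pieces by optimizing in $N$: the choice $N\sim \sigma^{-1}\log\bigl(e + \int_0^t\|\nabla\uu\|_{\BB_{\infty,1}^0}\,ds\bigr)$ converts the exponential factor on the high-frequency tail into a polynomial, and elementary rearrangement then yields the announced linear bound $1 + \int_0^t\|\nabla\uu\|_{\BB_{\infty,1}^0}\,ds$. The damping parameter $a\geq 0$ plays no role in the truncation argument itself and enters throughout only via the integrating factor $e^{at}$, producing the $e^{-at}$ and $e^{a(s-t)}$ factors in the final estimate without further complication.
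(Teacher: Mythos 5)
There is a genuine gap: your truncation is applied to the wrong object, and the closing optimization does not work. You split the solution at time $t$ into absolute-frequency bands $|q|\leq N$ versus $|q|>N$. For the high-frequency tail you then need some regularity of $\tau$ other than $\BB_{p,1}^0$, and the proposed ``auxiliary regularity $\sigma>0$ obtained by interpolation between $\BB_{p,1}^0$ and a lower Besov norm of $\tau_0$'' does not exist: interpolating $\BB_{p,1}^0$ with a lower-regularity space can only produce non-positive indices, and in the homogeneous setting the band $q<-N$ would require \emph{negative} auxiliary regularity anyway. The paper avoids this by decomposing the \emph{data} rather than the solution: by linearity $\tau=\sum_q\tau_q$, where $\tau_q$ solves \eqref{a-priori-tau-eq} with datum $\Dd_q\tau_0$ and source $\Dd_q f$. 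Because $\Dd_q\tau_0$ is spectrally localized, its $\BB_{p,1}^{\pm\ee}$ norms are comparable to $2^{\pm q\ee}\|\Dd_q\tau_0\|_{L^p}$, so Lemma \ref{lemma:bound_of_tau-exp_on_nablau} applied to each $\tau_q$ gives the off-diagonal decay $\|\Dd_j\tau_q(t)\|_{L^p}\lesssim 2^{-|j-q|\ee}a_j(t)\big(\|\Dd_q\tau_0\|_{L^p}+\int_0^t\|\Dd_q f\|_{L^p}\big)\exp\{C\int_0^t\|\nabla\uu\|_{\BB_{\infty,1}^0}\}$, and the truncation is performed in the distance $|j-q|$, not in the absolute frequency. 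Moreover, on the diagonal part $|j-q|\leq N$ the paper uses only the exact $L^p$ estimate of Lemma \ref{lemma:L^pbound_of_tau} for each $\tau_q$ (transport by a divergence-free field plus the rotation terms preserve $L^p$), so neither $\nabla\uu$ nor the unknown $\|\tau\|_{\BB_{p,1}^0}$ appears there and no Gronwall step is needed. In your scheme the commutator $[\Dd_q,\uu\cdot\nabla]\tau$ keeps the full norm $\|\tau\|_{\BB_{p,1}^0}$ on the right-hand side multiplied by $\|\nabla\uu\|_{\BB_{\infty,1}^0}$; any closure of such an inequality goes through Gronwall and reproduces exactly the exponential bound you are trying to beat.

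The final optimization is also incorrect. Writing $V=\int_0^t\|\nabla\uu\|_{\BB_{\infty,1}^0}$, the tail carries the prefactor $2^{-N\sigma}e^{CV}$; with your choice $N\sim\sigma^{-1}\log(e+V)$ this is of size $(e+V)^{-c}e^{CV}$, which is still exponentially large in $V$ — a logarithmic $N$ cannot ``convert the exponential into a polynomial''. One must take $N$ proportional to $V$ itself, as the paper does with $N=\frac{C}{\ee\ln 2}V$, which makes $2^{-N\ee}e^{CV}=1$; it is then precisely the $O(N)$ count of diagonal terms, each bounded by the data alone, that produces the announced linear factor $1+\int_0^t\|\nabla\uu\|_{\BB_{\infty,1}^0}$. (Your treatment of the damping $a\geq 0$ via the weight $e^{at}$ is fine and matches the paper's reduction to $a=0$.)
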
	
\begin{proof}
	We prove the Lemma for $a=0$. Indeed, the general case $a>0$ can be dealt with considering the weighted tensor $e^{at}\tau$ instead of 
	$\tau$. 
	We decompose the solution $\tau\,=\,\sum_{q\in\ZZ} \tau_q$, where $\tau_q$ is solution of the following system of PDE's:
	\begin{equation*}
	\left\{\hspace{0.2cm}
	\begin{alignedat}{2}
		&\,\partial_t \tau_q\,+\,\uu\cdot \nabla \tau_q +	- \,\omega \tau_q \,+\,\tau_q\omega \,=\,f_q
		\hspace{3cm}
		&&\RR_+\times \RR^\dd, \vspace{0.1cm}\\
		&\,
		\tau_{|t=0}	\,=\,\Dd_q \tau_0			
		&&\hspace{1.02cm} \RR^\dd.\vspace{0.1cm}																							
	\end{alignedat}
	\right.
	\end{equation*}
	We first remark that, for any fixed positive $N>0$
	\begin{equation*}
		\|\,\tau(t)\,\|_{\BB_{p,1}^0}
		\leq 
		\sum_{j,\,q\in \ZZ}\|\,\Dd_j\tau_q(t)\,\|_{L^p_x}
		\leq 
		\sum_{\substack{ j,\,q\in \ZZ\\ |j-q|\leq N}}\|\,\Dd_j \tau_q(t)\,\|_{L^p_x}
		\,+\,
		\sum_{\substack{ j,\,q\in \ZZ\\ |j-q|> N}}\|\,\Dd_j \tau_q(t)\,\|_{L^p_x}
		\,=:\,\mathcal{I}_N\,+\,\I\I_N.
	\end{equation*}
	Hence, thanks to Lemma \ref{lemma:L^pbound_of_tau}, we gather
	\begin{equation*}
	\begin{aligned}
			\I_N\,=\,
			\sum_{\substack{ j,\,q\in \ZZ\\ |j-q|\leq N}}\|\,\Dd_j \tau_q(t)\,\|_{L^p_x}
			\, &\lesssim \,
			\sum_{\substack{ j,\,q\in \ZZ\\ |j-q|\leq N}}\|\,\tau_q(t)\,\|_{L^p_x}
			\, \lesssim \,
			N \sum_{\substack{ q\in \ZZ}}\|\,\tau_q(t)\,\|_{L^p_x}\\
			\,&\lesssim
			N \sum_{\substack{ q\in \ZZ}}
			\left(
			\,
				\|\,\Dd_q \tau_0\,\|_{L^p_x}
				\,+\,
				\int_0^t
				\,\|\, f_q(s)\,\|_{L^p_x}
				\dd\, s
			\,
			\right)\\
			\,&\lesssim
			N
			\left(
			\,
				\|\, \tau_0\,\|_{\BB_{p,1}^0}
				\,+\,
				\int_0^t
				\,\|\, f(s)\,\|_{\BB_{p,1}^0}
				\dd\, s
			\,
			\right).
	\end{aligned}
	\end{equation*}
	In order to handle $\I\I_N$, we make use of Lemma \ref{lemma:bound_of_tau-exp_on_nablau} where the initial data $\Dd_q\tau_0$ is assumed in $\BB_{p,1}^{\pm \ee}$ for a small parameter $\ee\in (0,1)$ and 
	a source term $f$ in $L^1(0,T;\BB_{p,1}^s)$:
	\begin{equation*}
		\| \,\tau_q(t) \,\|_{\BB_{p,1}^{\pm \ee}}
		\,\leq\,
		\left(
		\,
			\| \,\Dd_q \tau_0 \,\|_{\BB_{p,1}^{\pm \ee}}\,+\,
		   \int_0^t 
		   \| \,\Dd_q f\,\|_{\BB_{p,1}^{\pm \ee}}
		\,
		\right)
		\exp
		\left\{
		\,
		C
		\int_0^t
		\| \,\nabla \uu \,\|_{\BB_{\infty,1}^0}
		\,
		\right\}.
	\end{equation*}
	We thus recast the above inequality in the following form
	\begin{equation*}
		\| \,\Dd_j \tau_q (t) \,\|_{L^p_x}
		\,\lesssim\,
		2^{-|j-q|\ee}a_j(t)
		\left(
		\,
			\|\,\Dd_q \tau_0 \|_{L^p_x}
			\,+\,
			\int_0^t
			\|\,\Dd_q f(s)\,\|_{L^p_x}
			\dd s
		\,
		\right)
		\exp
		\left\{
		\,
		C
		\int_0^t
		\| \,\nabla \uu \,\|_{\BB_{\infty,1}^0}
		\,
		\right\}
	\end{equation*}
	where $(a_j(t))_{j\in\ZZ}$ belongs to $\ell^1(\ZZ)$ with norm $\|\,(a_j(t))\,\|_{\ell^1(\ZZ)}=1$. We deduce that $\I\I_N$ is bounded by
	\begin{equation*}
		\begin{aligned}
			\I\I_N
			\,=\,
				\sum_{\substack{ j,\,q\in \ZZ\\ |j-q|> N}}\|\,\Dd_j \tau_q(t)\,\|_{L^p_x}
			\,\leq \,
			2^{-N\ee}	
			\left(
		\,
			\|\,\tau_0 \|_{\BB_{p,1}^0}
			\,+\,
			\int_0^t
			\|\,f(s)\,\|_{\BB_{p,1}^0}
			\dd s
		\,
		\right)
		\exp
		\left\{
		\,
		C
		\int_0^t
		\| \,\nabla \uu(s) \,\|_{\BB_{\infty,1}^0} \dd s
		\,
		\right\}.
		\end{aligned}
	\end{equation*}
	Imposing the following relation between $N$, $\ee$ and $\uu$
	\begin{equation*}
		 N\,=\,\frac{C}{\ee \ln 2}  \int_0^t 	
		\| \,\nabla \uu(s) \,\|_{\BB_{\infty,1}^0}\dd s,
	\end{equation*}
	we finally achieve the statement of the Lemma.
\end{proof}

%%%%%%%%%%

\smallskip
\noindent
We now prove some a-priori estimates within the functional framework of Lorentz norms, for the non-stationary Stokes system. The following Lemma allows us to control the term arising from the combination of the conformation equation within the Navier-Stokes system.
\begin{lemma}\label{lemma_Lorentz_bound}
	For any time $t\geq 0$ and viscosity $\nu>0$, the following estimate holds true
	\begin{equation*}
		\left\|\,\int_0^t \mathcal{P} e^{\nu(t-s)\Delta} \nabla f(s) \dd s \right\|_{L^{\dd, \infty}_x} \,\leq\,
		\begin{cases}
		 C\| \,f\,\|_{L^\infty(0, t; L^1_x)}\quad & \text{if }\dd = 2,\\
		 C \| \,f\,\|_{L^\infty(0, t; L^{\frac{\dd}{2}, \infty}_x)}\quad & \text{if }\dd > 2.\\
		\end{cases}
	\end{equation*}
\end{lemma}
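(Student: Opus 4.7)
The plan is to reduce the estimate to a kernel-level analysis and then to invoke convolution inequalities in weak Lebesgue spaces. Denote by $K_\nu(\tau,z)$ the convolution kernel of the operator $\mathcal{P}\,e^{\nu\tau\Delta}\nabla$, so that the quantity to be estimated takes the form
\[
\int_0^t\!\!\int_{\RR^d} K_\nu(t-s,x-y)\,f(s,y)\,\dd y\,\dd s.
\]
The first ingredient is the classical Oseen kernel pointwise bound
\[
|K_\nu(\tau,z)|\,\leq\, C\,\bigl(\sqrt{\nu\tau}+|z|\bigr)^{-(d+1)},
\]
which follows from the Gaussian bound on $\nabla G_{\nu\tau}$ together with the $L^p$-continuity ($1<p<\infty$) of the Leray projector $\mathcal{P}$; alternatively, writing $\mathcal{P}=\mathrm{Id}+\nabla(-\Delta)^{-1}\mathrm{div}$ and using the explicit decay of the associated kernels yields the same estimate.

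The second ingredient is a time-integrated kernel bound. Substituting $r^2=|z|^2/(\nu\sigma)$ in the integral $\int_0^\infty(\sqrt{\nu\sigma}+|z|)^{-(d+1)}\,\dd\sigma$ produces the scale-invariant identity
\[
\int_0^\infty |K_\nu(\sigma,z)|\,\dd\sigma \,\leq\, C\,\nu^{-1}\,|z|^{\,1-d}.
\]
The profile $z\mapsto|z|^{\,1-d}$ belongs to the weak-Lebesgue space $L^{d/(d-1),\infty}(\RR^d)$; in the two-dimensional case this is precisely $L^{2,\infty}(\RR^2)$.

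Combining these two bounds by Fubini yields, once the time-dependence of $f$ is handled, a pointwise comparison of the form
\[
\Bigl|\int_0^t\mathcal{P}\,e^{\nu(t-s)\Delta}\nabla f(s)\,\dd s\Bigr|(x)\,\leq\, C\,\nu^{-1}\bigl(|\cdot|^{\,1-d}\star F\bigr)(x),
\]
where $F$ is controlled by the $L^\infty$-in-time profile of $f$. The conclusion of the lemma then follows from the classical convolution inequalities in Lorentz spaces: when $d\ge 3$, this is O'Neil's inequality
\[
L^{d/(d-1),\infty}\star L^{d/2,\infty}\hookrightarrow L^{d,\infty}
\]
(the exponent relation $1+1/d=(d-1)/d+2/d$ is satisfied), while for $d=2$ it is the endpoint Young inequality $L^{2,\infty}\star L^1\hookrightarrow L^{2,\infty}$.

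The main technical point is the passage from a naive pointwise-in-$y$ supremum in time to the $L^\infty(0,t;L^{d/2,\infty})$ (respectively $L^\infty(0,t;L^1)$) norm that appears in the statement, since a pointwise supremum generally inflates weak-Lebesgue norms. The cleanest way to close this gap is via duality: testing against $g\in L^{d',1}$ (resp.~$L^\infty$), interchanging the time and space integrations, and applying the Lorentz Hardy-Littlewood-Sobolev inequality to the integrated adjoint kernel $\int_0^t|K_\nu(t-s,x-\cdot)|\,\dd s$, whose $|{\cdot}|^{\,1-d}$ profile supplies exactly the one-derivative gain needed to reach the target space. This duality-plus-Lorentz-convolution step is the delicate part of the proof, where both the critical scaling and the $\nu^{-1}$ factor furnished by the time integral of the Oseen kernel are used decisively.
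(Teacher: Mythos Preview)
Your route via the time-integrated Oseen kernel and O'Neil's inequality is attractive but does not close, and the ``delicate part'' you flag is in fact a genuine obstruction rather than a technicality. After testing against $g\in L^{d/(d-1),1}$ and applying H\"older in the spatial variable, you are left with
\[
\int_0^t \big\|\,|K_\nu(\sigma,\cdot)|*|g|\,\big\|_{L^{d/(d-2),1}}\,\dd\sigma,
\]
and to invoke the integrated-kernel profile $|z|^{1-d}$ you would need to move the $\sigma$-integral \emph{inside} the Lorentz norm---but that is the wrong direction of Minkowski. If instead you estimate slice by slice, the relevant kernel norm is $\|K_\nu(\sigma,\cdot)\|_{L^{d/(d-1)}}\sim(\nu\sigma)^{-1}$, which is not integrable near $\sigma=0$. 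The same $\sigma^{-1}$ divergence appears if you skip duality and use the (normable) triangle inequality in $L^{d,\infty}$ directly, since $\|e^{\nu\sigma\Delta}\nabla\|_{\mathcal{L}(L^{d/2,\infty},L^{d,\infty})}\sim(\nu\sigma)^{-1}$ as well. In short, any argument that treats all time scales uniformly sees this logarithmic divergence; your integrated kernel bound is only usable after a pointwise-in-$y$ supremum over $s$, which is not controlled by the $L^\infty_t L^{d/2,\infty}_x$ norm.

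The paper avoids this by working directly with the $K$-functional characterisation of $L^{d,\infty}=(L^{d/2,\infty},L^\infty)_{1/2,\infty}$ (respectively $(L^1,L^\infty)_{1/2,\infty}$ when $\dd=2$): split the time integral at $t-\varepsilon$, estimate the short-time piece $\int_{t-\varepsilon}^t$ in $L^{d/2,\infty}$ using $\|e^{\nu\sigma\Delta}\nabla\|_{\mathcal{L}(L^{d/2,\infty},L^{d/2,\infty})}\lesssim\sigma^{-1/2}$ (which integrates to $\sqrt{\varepsilon}$), and the long-time piece $\int_0^{t-\varepsilon}$ in $L^\infty$ using $\|e^{\nu\sigma\Delta}\nabla\|_{\mathcal{L}(L^{d/2,\infty},L^\infty)}\lesssim\sigma^{-3/2}$ (which integrates to $\varepsilon^{-1/2}$). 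The balanced powers of $\varepsilon$ are precisely what the real-interpolation definition of $L^{d,\infty}$ requires. This splitting is the missing idea in your proposal.
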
 
\begin{proof}
	Without loss of generality, we recast ourselves to the case of $\nu=1$. We begin remarking that the projector $\mathcal{P}$ is a bounded operator from 
	$L^p(\RR^\dd)$ to itself, for any $1>p<\infty$. Hence, by real interpolation we get that $\Pp$ is a bounded linear operator within Lorentz spaces
	\begin{equation*}
		\Pp \in \mathcal{L}(L^{\frac{\dd}{2},\,\infty}(\RR^\dd),\,L^{\frac{\dd}{2},\,\infty}(\RR^\dd)).
	\end{equation*}
	This allows us to cancel the presence of the project $\Pp$ in any inequality we aim to prove and limit the proof to the case of the heat kernel operator.
	We hence decouple the integral we aim to bound 
	\begin{equation*}
		\int_0^t e^{(t-s)\Delta}\nabla f(s)\dd s
	\end{equation*}
	by $J_\ee$ and $J^\ee$ defined as follows: fixing a small positive parameter $\ee$ 
	\begin{equation*}
		J_\ee\,:=\,\int_0^{t-\ee} e^{(t-s)\Delta}\nabla f(s)\dd s
			\quad\quad\text{and}\quad\quad
		J^\ee\,:=\,\int_{t-\ee}^t e^{(t-s)\Delta}\nabla f(s)\dd s
	\end{equation*}
	For any $\tau\geq 0$ and $1\leq p\leq p\leq \infty$ we have
	\begin{equation*}
		\left\|\,e^{\tau \Delta}\nabla \,\right\|_{\mathcal{L}(L^p_x,\,L^q_x)} 
		\,=\,
		\left\|\,\mathfrak{F}^{-1}(e^{-\tau|\xi|^2}i\xi) \,\right\|_{L^r_x} 
		\,=\,
		\frac{1}{\sqrt{\tau}}
		\left\|\,\mathfrak{F}^{-1}(e^{-|\sqrt{\tau}\xi|^2}i\sqrt{\tau} \xi) \,\right\|_{L^r_x}
		\frac{1}{\tau^{\frac{\dd+1}{2}}}
		\,=\,
		\left\|\,\mathfrak{F}^{-1}(e^{-|\xi|^2}i\xi)\left(\frac{x}{\sqrt{\tau}}\right) \,\right\|_{L^r_x}
	\end{equation*}
	hence, by a change of variable, we finally get that there exists a constant $C$ for which
	\begin{equation*}
		\left\|\,e^{\tau \Delta}\nabla \,\right\|_{\mathcal{L}(L^p_x,\,L^q_x)} 
		\,\leq\,
		\frac{C}{\tau^{\frac{\dd}{2}\frac{1}{r'}+\frac{1}{2}}}
		\quad\quad\text{where}\quad\quad
		\frac{1}{r}\,+\,\frac{1}{p}\,=\,\frac{1}{q}\,+\,1,
		\quad\quad\text{namely}\quad\quad
		\frac{1}{r'}\,=\,\frac{1}{p}\,-\,\frac{1}{q}.
	\end{equation*}
	We first assume that $\dd \,=\,2$. We hence get
	\begin{equation*}
	\begin{aligned}
		\left\|\, J^\ee\,\right\|_{L^1_x}
		\,&\leq\,
		\int_{t-\ee}^t \|\,e^{(t-s)\Delta}\nabla f(s)\,\|_{L^1_x}\dd s\\
		\,&\leq\,
		\int_{t-\ee}^t \|\,e^{(t-s)\Delta}\nabla\, \|_{\mathcal{L}(L^1_x,\,L^1_x)}\|\,f(s)\,\|_{L^1_x}\dd s\\
		\,&\leq\,
		C\int_{t-\ee}^t\frac{\|\,f(s)\,\|_{L^1_x}}{|\,t\,-\,s\,|^{\frac{1}{2}}}\dd s\\
		\,&\leq\,
		C\|\,f(s)\,\|_{L^\infty(0,t;L^1_x)}\int_0^\ee\frac{1}{s^{\frac{1}{2}}}\dd s\,\leq\, C\sqrt{\ee}\|\,f(s)\,\|_{L^\infty(0,t;L^1_x)},
	\end{aligned}
	\end{equation*}
	while similarly
	\begin{equation*}
	\begin{aligned}
		\left\|\, J_\ee\,\right\|_{L^\infty_x}
		\,&\leq\,
		\int_{0}^{t-\ee} \|\,e^{(t-s)\Delta}\nabla f(s)\,\|_{L^\infty_x}\dd s\\
		\,&\leq\,
		\int_{0}^{t-\ee} \|\,e^{(t-s)\Delta}\nabla\, \|_{\mathcal{L}(L^1_x,\,L^\infty_x)}\|\,f(s)\,\|_{L^1_x}\dd s\\
		\,&\leq\,
		C\int^{t-\ee}_0\frac{\,\,\|\,f(s)\,\|_{L^1_x}}{|\,t\,-\,s\,|^{\frac{3}{2}}}\dd s\\
		\,&\leq\,
		C\|\,f(s)\,\|_{L^\infty(0,t;L^1_x)}\int_\ee^\infty\frac{1}{s^{\frac{3}{2}}}\dd s\,\leq\, \frac{C}{\sqrt{\ee}}\|\,f(s)\,\|_{L^\infty(0,t;L^1_x)}.
	\end{aligned}
	\end{equation*}
	In virtue of Remark \ref{rmk:Lorentz_space} about the real interpolation $L^{2,\infty}(\RR^2)\,=\,(L^1,\,L^\infty)_{1/2 , \infty}$, we hence conclude that 
	in dimension $\dd\,=\,2$
	\begin{equation*}
		\left\|\,\int_0^t \mathcal{P} e^{\nu(t-s)\Delta} \nabla f(s) \dd s \right\|_{L^{2, \infty}_x}
		\,\leq\,
		C\|\,f(s)\,\|_{L^\infty(0,t;L^1_x)}.
	\end{equation*}
	We now address the case of a dimension $\dd>2$. With a similar technique as the one used above, we remark that
	\begin{equation*}
	\begin{aligned}
		\left\|\, J^\ee\,\right\|_{L^{\frac{\dd}{2}, \infty}_x}
		\,&\leq\,
		\int_{t-\ee}^t \|\,e^{(t-s)\Delta}\nabla f(s)\,\|_{L^{\frac{\dd}{2}, \infty}_x}\dd s\\
		\,&\leq\,
		\int_{t-\ee}^t \|\,e^{(t-s)\Delta}\nabla\, \|_{\mathcal{L}(L^{\frac{\dd}{2}, \infty}_x,\,L^{\frac{\dd}{2},\infty}_x)}\|\,f(s)\,\|_{L^{\frac{\dd}{2},\infty}_x}\dd s\\
		\,&\leq\,
		C\int_{t-\ee}^t\frac{\|\,f(s)\,\|_{L^{\frac{\dd}{2},\infty}_x}}{|\,t\,-\,s\,|^{\frac{1}{2}}}\dd s\\
		\,&\leq\, C\sqrt{\ee}\|\,f(s)\,\|_{L^\infty(0,t;L^{\frac{\dd}{2},\infty}_x)},
	\end{aligned}
	\end{equation*}
	while
	\begin{equation*}
	\begin{aligned}
		\left\|\, J_\ee\,\right\|_{L^\infty_x}
		\,&\leq\,
		\int_{0}^{t-\ee} \|\,e^{(t-s)\Delta}\nabla f(s)\,\|_{L^\infty_x}\dd s\\
		\,&\leq\,
		\int_{0}^{t-\ee} \|\,e^{(t-s)\Delta}\nabla\, \|_{\mathcal{L}(L^{\frac{\dd}{2},\infty }_x,\,L^\infty_x)}\|\,f(s)\,\|_{L^{\frac{\dd}{2},\infty}_x}\dd s\\
		\,&\leq\,
		C\int^{t-\ee}_0\frac{\,\,\|\,f(s)\,\|_{L^{\frac{\dd}{2},\infty}_x}}{|\,t\,-\,s\,|^{\frac{3}{2}}}\dd s\\
		\,&\leq\, \frac{C}{\sqrt{\ee}}\|\,f(s)\,\|_{L^\infty(0,t;L^{\frac{\dd}{2},\infty}_x)}.
	\end{aligned}
	\end{equation*}
	We recall again Remark \ref{rmk:Lorentz_space} concerning this time the real interpolation $L^{\dd,\infty}(\RR^2)\,=\,
	(L^{{\dd}/{2}, \infty},\,L^\infty)_{1/2 , \infty}$, we hence conclude that 
	in dimension $\dd\,\geq\,3$
	\begin{equation*}
		\left\|\,\int_0^t \mathcal{P} e^{\nu(t-s)\Delta} \nabla f(s) \dd s \right\|_{L^{\dd,\infty}_x}
		\,\leq\,
		C\|\,f(s)\,\|_{L^\infty(0,t;L^{\frac{\dd}{2},\infty}_x)}.
	\end{equation*}
\end{proof}

\noindent We conclude this section stating a lemma which will allow to estimate the lifespan of our classical solution for problem \eqref{main_system}. Indeed, we will prove in section 4 that the flow $\uu$ of our solution satisfies an inequality of the following type:
\begin{equation}\label{ineq-lemma:lifespan}
	f(t)  \leq g_1(t) + \int_0^t g_2(s) f(s) \dd s + g_3(t)\int_0^t  f(s)^2 \dd s,
\end{equation}
where $f(t) = \| \uu \|_{L^1(0,t; \BB_{\infty,1}^1)}$, and $g_1,\,g_2$ and $g_3$ are polynomials in $t$ with positive coefficients. In case of $\mu=0$, $g_2$ disappears in the above inequality, reducing to a classical Gronwall form. In the general case of $\mu\geq 0$ we still have however the following statement.
\begin{lemma}
	Let $f$ be a continuous positive scalar function in $\mathcal{C}([0,T_{max}))$ with $f(0) = 0$. Assume further that inequality \eqref{ineq-lemma:lifespan} is satisfied for some polynomials $g_1,\,g_2$ and $g_3$ with positive coefficients. Then $T_{max}$ is such that 
\begin{equation*}
	T_{\max} = \sup\left\{ T\in [0,\infty) \quad\text{such that}\quad 
	\int_0^{T} tg_3(t)\exp\Big\{2\int_0^t g_2(s)\dd s\Big\}\dd t < 1 \right\}
\end{equation*}
and the following inequality is satisfied
\begin{equation*}
	f(t) \leq \frac{g_1(t)}{1-\int_0^t  tg_3(t)\exp\Big\{2\int_0^t g_2(s)\dd s\Big\}\dd s}\exp\Big\{\int_0^t g_2(s)\dd s\Big\},
	\qquad \text{for any}\quad 
	t\in [0,T_{\rm max})
\end{equation*}
\end{lemma}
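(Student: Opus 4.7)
The plan is to combine a classical linear Gronwall step with a bootstrap/continuity argument in order to capture the nonlinear feedback coming from the quadratic term $\int_0^t f(s)^2 \dd s$. As a first move, I would observe that the forcing $h(t) := g_1(t) + g_3(t) \int_0^t f(s)^2 \dd s$ is nondecreasing in $t$, since $g_1$ and $g_3$ are polynomials with non-negative coefficients and $f(s)^2 \geq 0$. The classical linear Gronwall lemma applied to \eqref{ineq-lemma:lifespan} then yields
\begin{equation*}
    f(t) \,\leq\, \Big( g_1(t) + g_3(t) \int_0^t f(s)^2 \dd s \Big) \exp\Big(\int_0^t g_2(s) \dd s\Big),
\end{equation*}
which places us in a Riccati-type regime with effective coefficients $A(t) := g_1(t) e^{G_2(t)}$ and $B(t) := g_3(t) e^{G_2(t)}$, where $G_2(t) := \int_0^t g_2(s) \dd s$.

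Secondly, I would set up a continuity/bootstrap argument in order to close the estimate. Denoting the right-hand side of the claimed bound by $M(t)$ and introducing
\begin{equation*}
    T^\star \,:=\, \sup\Big\{\, t \in [0, T_{\max}) \,:\, f(s) \leq M(s) \text{ for all } s \in [0, t] \,\Big\},
\end{equation*}
the continuity of $f$ and $M$ together with $f(0) = 0 \leq M(0)$ guarantee $T^\star > 0$. To show $T^\star = T_{\max}$, I would argue by contradiction: assuming $T^\star < T_{\max}$, the bootstrap hypothesis $f \leq M$ on $[0,T^\star]$ can be used to bound $\int_0^t f(s)^2 \dd s$, and substituting the outcome back into the Gronwall-refined inequality from the first step should produce the \emph{strict} improvement $f(T^\star) < M(T^\star)$, which by continuity extends the bootstrap past $T^\star$ and contradicts the maximality of $T^\star$.

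The technical crux lies in the second step, namely in controlling $\int_0^t f(s)^2 \dd s$ in a form that reconstructs the denominator $1 - D(t)$ of $M(t)$, where $D(t) := \int_0^t s\, g_3(s) e^{2G_2(s)} \dd s$. The natural tool is an integration by parts based on the algebraic identity
\begin{equation*}
    \frac{d}{ds}\Big[\frac{1}{1 - D(s)}\Big] \,=\, \frac{D'(s)}{(1 - D(s))^2},
\end{equation*}
combined with the monotonicity of $g_1$ and $g_3$ coming from their positive coefficients; the boundary terms at $s = 0$ are harmless thanks to $D(0) = 0$ and $f(0) = 0$. This is the main obstacle of the proof, since the integration by parts must be arranged so that only the monotonicity inherited from the polynomial-positivity of the $g_i$ is invoked, and so that the resulting bound on $g_3(t) \int_0^t f^2$ produces exactly the factor $g_1(t) D(t) e^{G_2(t)}/(1 - D(t))$ required to close the bootstrap. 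Once this algebraic reduction is in place, the continuity argument extends the bound to the entire interval $[0, T_{\max})$ characterised by $D(T_{\max}) = 1$, concluding the proof.
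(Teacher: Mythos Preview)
The paper does not provide a proof of this lemma: it is stated at the very end of Section~\ref{sec:tensor} and the text moves immediately to Section~\ref{sec:bidim} without any argument. There is therefore nothing to compare your proposal against.

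On its own merits, your two-step plan (linear Gronwall to absorb the $\int g_2 f$ term, then a continuity/bootstrap argument to handle the quadratic $\int f^2$) is the natural route and Step~1 is carried out correctly, including the observation that the monotonicity of $h(t)=g_1(t)+g_3(t)\int_0^t f^2$ is what allows the simplified Gronwall bound $f\le h\,e^{G_2}$. The bootstrap setup in Step~2 is also fine.

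The part that is genuinely incomplete is the ``technical crux'' in Step~3. You assert that an integration by parts based on $\frac{d}{ds}\big[\tfrac{1}{1-D(s)}\big]=\tfrac{D'(s)}{(1-D(s))^2}$ together with the monotonicity of the $g_i$ will produce exactly the factor $g_1(t)D(t)e^{G_2(t)}/(1-D(t))$, but you do not show this, and a direct attempt runs into trouble. Under the bootstrap hypothesis one has
\[
g_3(t)\int_0^t M(s)^2\,\dd s
= g_3(t)\int_0^t \frac{g_1(s)^2 e^{2G_2(s)}}{(1-D(s))^2}\,\dd s,
\]
and to close one needs this to be bounded by $g_1(t)\int_0^t \frac{D'(s)}{(1-D(s))^2}\,\dd s$. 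Since $D'(s)=s\,g_3(s)e^{2G_2(s)}$, this amounts to a pointwise comparison $g_3(t)\,g_1(s)^2 \le g_1(t)\, s\, g_3(s)$ for $s\in[0,t]$, which fails near $s=0$ regardless of monotonicity. So the algebra does not close as cleanly as your sketch suggests; some additional rearrangement (for instance a cruder bound of the type $\int_0^t M^2\le t\,M(t)^2$, or a slightly different definition of $D$) is needed, and you should make that step explicit. Note also that the bootstrap needs a \emph{strict} improvement $f(T^\star)<M(T^\star)$ to extend past $T^\star$; you should indicate where the strict inequality comes from.
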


\section{Global-in-time solutions in dimension two}\label{sec:bidim}
\noindent
This section is devoted to the proof of Theorem \ref{main_thm0}. We begin with introducing the following Friedrichs-type approximation of the System \eqref{main_system}: 
\begin{equation}\label{main_system_appx1}
\left\{\hspace{0.2cm}
	\begin{alignedat}{2}
		&\,\partial_t \tau^{n}\,+\,\uu^n\cdot \nabla \tau^{n}	- \,J_n\omega^n \tau^{n} \,+\,\tau^{n}J_n \omega^n +a \tau^n\,=\,\mu \mathbb{D}^n
		\hspace{3cm}
		&&\RR_+\times \RR^2, \vspace{0.1cm}	\\		
		&\,\partial_t \uu^{n}   - \, \nu \Delta \uu^{n}  
		\,+\, \nabla  \pre^{n}  = -J_n (\uu^{n}\cdot \nabla \uu^{n}\,)
		\,+\,
		\Div\,J_n\tau^{n}
									\hspace{3cm}									&& \RR_+\times \RR^2,\vspace{0.1cm}\\
		&\,\Div\, \uu^{n}\,=\,0			
		&&\RR_+\times \RR^\dd, \vspace{0.1cm}	\\			
		& ( \uu^{n},\,\tau^{n})_{|t=0}	\,=\,(J_n \uu_0,\,J_n \tau_0)			
		&&\hspace{1.02cm} \RR^2.\vspace{0.1cm}																							
	\end{alignedat}
	\right.
\end{equation}
Denoting by $\mathbf{1}_{A}$ the characteristic function of a set $A$, for any $n\in \NN$ we introduce the regularizing operator $J^n$ by the formula
\begin{equation*}
	\FF(\,J^n g\,)(\xi) := \mathbf{1}_{\mycal{C}_n}(\xi) \hat{g}(\xi)
\end{equation*}
which localizes the Fourier transform of a suitable function $g$ into the annulus  $\mycal{C}_n = \{ \xi\in\RR^2,\, |\xi|\in [1/n,\,n]\,\}$. Hence, we claim that an approach coupling the Friedrich's scheme together with the Schaefer fixed point theorem, allows us to construct a sequence of approximate solutions $(\uu^n,\,\tau^n)_{n\in \NN}$ satisfying the following class affinity:
\begin{equation*}
\begin{aligned}
	\uu^n &\in L^\infty_{\rm loc}(\RR_+,\,L^2(\RR^2)\cap \BB_{p,1}^{\frac{2}{p}-1})
						\cap 
						L^2_{\rm loc}(\RR_+,\, \dot H^1(\RR^2))
						\cap 
						L^1_{\rm loc}(\RR_+,\,\BB_{p,1}^{\frac{2}{p}+1})),\\
	\tau^n &\in  L^\infty_{\rm loc}(\RR_+,\,L^2(\RR^2)\cap \BB_{p,1}^{\frac{2}{p}}).
\end{aligned}
\end{equation*}
We refer the reader to \cite{Dea} for some details about this procedure, where the first author showed a similar result for a different system of PDE's. The purpose of the next sections is to reveal the above regularities of the approximate solutions  $(\uu^n,\,\tau^n)_{n\in \NN}$. This result is achieved into two main steps:
\begin{itemize}
	\item[(i)] {\it Propagating the Lipschitz regularity of the velocity field $\uu^n$}: the initial data $(\uu_0,\,\tau_0)$ belongs to $\BB_{p,1}^{2/p-1}\times \BB_{p,1}^{2/p}$ which is embedded 
						into $\BB_{\infty,1}^{-1}\times \BB_{\infty,1}^{0}$. This last regularity will be hence propagated in time, allowing to control $\uu^n$ into the functional framework given by
						\begin{equation*}
								\nabla \uu^n \in L^1_{\rm loc}(\RR_+,\,\BB_{\infty, 1}^0)\,\hookrightarrow \, L^1_{\rm loc}(\RR_+,\,L^\infty(\RR^2)),
						\end{equation*}
						from which we will deduce that $\uu^n$ is Lipschitz, globally in time.
	\item[(ii)] {\it Propagating higher regularities of solutions}: we will propagate the specific regularity of the initial data $(\uu_0,\,\tau_0)$ in $\BB_{p,1}^{2/p-1}\times \BB_{p,1}^{2/p}$, making use of the 
						Lipschitz condition achieved in point $(i)$.
\end{itemize}
Last, we will estimate the mentioned norms with a bound independent on the index $n\in\NN$. This will allow us to pass to the limit and construct a classical solution of system \eqref{main_system} within the functional framework of Theorem \ref{main_thm0}.

\subsection{\bf Lipschitz regularity of the velocity field}$\,$

\noindent 
In this section we show some mathematical properties of solutions for the approximate system \eqref{main_system_appx1}. The main goal is to establish the propagation of Lipschitz regularity for the velocity field $\uu^n$, namely to show that $\nabla \uu^n$ belongs to the functional space
\begin{equation*}
	L^1_{\rm loc}([0,T_{\rm max}),\,\BB_{\infty, 1}^0)
	\,\hookrightarrow\,
	L^1_{\rm loc}([0,T_{\rm max}),\,L^\infty(\RR^2)),
\end{equation*} 
for some suitable positive time $T_{\rm max}>0$.  We also aim in controlling this regularity with a bound which is independent on the index $n\in\NN$, in order to keep this property also when passing to the limit. 

\noindent
We collect in the following statement the result we aim to prove.
\begin{theorem}\label{thm:Lipschitz-prop-u^n}
	Assume that the initial data $\uu_0$ and $\tau_0$ belongs to $L^2(\RR^2)\cap \BB_{\infty, 1}^{-1}$ and $L^2(\RR^2)\cap \BB_{\infty, 1}^0$, respectively. Then, 
	the solutions $(\uu^n,\,\tau^n)$ of the system \eqref{main_system_appx}, belongs to the following functional framework
	\begin{equation*}	
	\begin{aligned}
			\uu^n\,\in\,L^\infty_{\rm loc}([0,T_{\rm max}),\,\BB_{\infty,1}^{-1})
						\cap
						L^1_{\rm loc}([0,T),\,\BB_{\infty, 1}^1)
			\quad\quad\text{and}\quad\quad
			\tau^n\,\in\,L^\infty_{\rm loc}([0,T_{\rm max}),\,\BB_{\infty, 1}^{0}),
	\end{aligned}
	\end{equation*}
	with $T_{\rm max}=+\infty$ when $\mu=0$. Furthermore, there exists two smooth functions $\Upsilon_{1,\nu}(T,\,\uu_0,\,\tau_0)$ and $\Upsilon_{2,\nu}(T,\,\uu_0,\,\tau_0)$, $0\leq T<T_{\rm max}$ for which the following 
	inequalities hold true:
	\begin{equation*}
		\begin{aligned}
			\nu
			\|\,\uu^n\,\|_{L^1(0, T;\BB_{\infty, 1}^{1} )}
			\,&\leq\,
			\Upsilon^1_\nu(T,\, \uu_0,\,\tau_0)
			\quad\quad
			\text{and}\quad\quad
			\|\,\uu^n\,\|_{L^\infty(0, T;\BB_{\infty, 1}^{-1} )}
			\,&\leq\,
			\Upsilon^2_\nu(T,\, \uu_0,\,\tau_0),
		\end{aligned}
		\end{equation*}
		with also
		\begin{equation*}
			\|\,\tau^n\,\|_{L^\infty(0,T;\BB_{\infty, 1}^0)}
			\,\leq\,
			C\|\,\tau_0\,\|_{\BB_{\infty, 1}^0}
			\left(
				\,1\,+\,\nu^{-1}\Upsilon^1_\nu(T,\, \uu_0,\,\tau_0)
			\right).
		\end{equation*}
	Both functions $\Upsilon_{1,\nu}$ and $\Upsilon_{2,\nu}$ vanish when $T = 0$, they are increasing in time $T>0$ and they depend uniquely on the 
	norms $\|\,\uu_0\,\|_{L^2(\RR^2)\cap \BB_{\infty,1}^{-1}}$ and  $\|\,\tau_0\,\|_{L^2(\RR^2)\cap \BB_{\infty,1}^{0}}$.	The exact formulation of $\Upsilon_{1,\nu}$ and $\Upsilon_{2,\nu}$ is stated in Remark \eqref{def-Upsilon}.
\end{theorem}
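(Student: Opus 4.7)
The proof proceeds in three coupled stages that mirror the structure hinted at by the definitions of $\Phi_{\nu,\mu,a}$, $\Psi_{i,\nu,\mu,a}$ and $\Upsilon^i_{\nu,\mu,a}$: first a plain $L^2$-energy analysis that yields $\Phi$, then Besov estimates in the scaling-critical spaces $\BB_{\infty,1}^{-1} \times \BB_{\infty,1}^0$ that produce the $\Psi$'s, and finally a closure argument using the crucial linear-in-$\nabla\uu$ bound of Lemma \ref{lemma:bound_of_tau-linear_on_nablau}. The uniformity in the regularization index $n$ is automatic because $J_n$ is a spectral projector, of norm $\leq 1$ on every $L^p$, Lorentz and Besov space appearing below.

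\textbf{Step 1: Energy estimate.} Testing the tensor equation with $\tau^n$ in $L^2$, the rotation terms $-J_n\omega^n\tau^n+\tau^n J_n\omega^n$ cancel by skew-symmetry of $J_n\omega^n$, and the transport term vanishes by incompressibility. Testing the velocity equation with $\uu^n$ gives the usual viscous dissipation plus the coupling $-\int J_n\tau^n:\nabla\uu^n$. When $\mu=0$ the latter is absorbed into $\nu\|\nabla\uu^n\|_{L^2}^2$ after Young's inequality together with the Gronwall-type control of $\|\tau^n\|_{L^2}$ from Lemma \ref{lemma:L^pbound_of_tau}; when $\mu>0$ the weighted functional $\|\uu^n\|_{L^2}^2 + \mu^{-1}\|\tau^n\|_{L^2}^2$ makes the couplings $\mu\mathbb{D}^n\!:\!\tau^n$ and $-\tau^n\!:\!\nabla\uu^n$ cancel exactly. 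In either case one recovers $\uu^n\in L^\infty_t L^2 \cap L^2_t\dot H^1$ and $\tau^n\in L^\infty_t L^2$ uniformly in $n$, with bounds that assemble into the functional $\Phi_{\nu,\mu,a}(T,\uu_0,\tau_0)$ of Definition \ref{def-Upsilon}.

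\textbf{Step 2: Besov estimates and closure.} Applying Remark \ref{rmk:Stokes-bound} to the Duhamel formulation of the $\uu^n$-equation yields
\begin{equation*}
\|\uu^n\|_{\tilde L^\infty_T\BB_{\infty,1}^{-1}} + \nu\|\uu^n\|_{\tilde L^1_T\BB_{\infty,1}^{1}} \,\leq\, C\Bigl(\|\uu_0\|_{\BB_{\infty,1}^{-1}} + \|\uu^n\cdot\nabla\uu^n\|_{\tilde L^1_T\BB_{\infty,1}^{-1}} + \|\tau^n\|_{\tilde L^1_T\BB_{\infty,1}^{0}}\Bigr).
\end{equation*}
The convective term is decomposed into low-frequency and high-frequency parts via the Bony decomposition \eqref{Bony-dec}: the low frequencies are dominated by $\|\uu^n\otimes\uu^n\|_{L^2_t L^2_x}$ thanks to the Ladyzhenskaya-type interpolation $\|\uu^n\|_{L^4_x}^2\leq C\|\uu^n\|_{L^2_x}\|\nabla\uu^n\|_{L^2_x}$ specific to dimension two, giving a $\Phi$-dependent contribution that matches $\Psi_{1,\nu,\mu,a}$; the high frequencies are absorbed by the diffusive gain on the left-hand side using the Bernstein inequality and continuity of the paraproduct and remainder operators from Proposition \ref{prop:reminder-und-paraproduct}. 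The term $\|\tau^n\|_{\tilde L^1_T\BB_{\infty,1}^{0}}$ is then estimated via Lemma \ref{lemma:bound_of_tau-linear_on_nablau} applied to the tensor equation with source $f=\mu\mathbb{D}^n$, which is crucially \emph{linear} in $\int_0^t\|\nabla\uu^n\|_{\BB_{\infty,1}^0}\,ds$ rather than exponential. Setting $f(t):=\nu\|\uu^n\|_{L^1(0,t;\BB_{\infty,1}^{1})}$, the combined bounds take precisely the form
\begin{equation*}
f(t) \,\leq\, g_1(t) \,+\, \int_0^t g_2(s) f(s)\,\mathrm{d}s \,+\, g_3(t)\int_0^t f(s)^2\,\mathrm{d}s,
\end{equation*}
where $g_1, g_2, g_3$ are explicit polynomials in $t$ whose coefficients depend on $\|\uu_0\|_{L^2\cap\BB_{\infty,1}^{-1}}$, $\|\tau_0\|_{L^2\cap\BB_{\infty,1}^{0}}$, $\nu$, $\mu$, $a$, and with $g_3\propto\mu$ vanishing in the corotational case $\mu=0$. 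The final lemma of Section \ref{sec:tensor} then delivers both the $\tilde L^1_T\BB_{\infty,1}^{1}$-bound with the stated lifespan $T_{\mathrm{max}}$ and, after feeding the result back into the Duhamel estimate and Lemma \ref{lemma:bound_of_tau-linear_on_nablau}, the claimed bounds for $\uu^n$ in $L^\infty_t\BB_{\infty,1}^{-1}$ and $\tau^n$ in $L^\infty_t\BB_{\infty,1}^{0}$. When $\mu=0$ the quadratic term is absent and a plain Gronwall argument gives $T_{\mathrm{max}}=+\infty$.

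\textbf{Main obstacle.} The delicate point is the estimate of $\uu^n\cdot\nabla\uu^n$ in $\BB_{\infty,1}^{-1}$: the space being scaling-critical, no direct Besov product inequality closes because $\uu^n$ is not a priori in $L^\infty_x$. Correctly splitting into low frequencies (handled by the 2D $L^2$-energy via Ladyzhenskaya) and high frequencies (absorbed by the viscous gain in $\BB_{\infty,1}^{1}$) while tracking all constants to match the stated $\Psi$'s is the technical heart of the proof. Equally important is the replacement of the exponential Lemma \ref{lemma:bound_of_tau-exp_on_nablau} by the linear Lemma \ref{lemma:bound_of_tau-linear_on_nablau}: a Lipschitz Gronwall bound on $\int_0^t\|\nabla\uu^n\|_{\BB_{\infty,1}^{0}}$ propagated through an exponential would blow up in finite time, whereas the linear dependence makes the 2D closure truly global when $\mu=0$ and yields the polynomial lifespan formula when $\mu>0$.
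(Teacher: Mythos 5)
Your skeleton (energy estimates, a Duhamel estimate in $\BB_{\infty,1}^{-1}$, the linear Lemma \ref{lemma:bound_of_tau-linear_on_nablau} for $\tau^n$, then a Gronwall/lifespan closure) matches the paper's, but the treatment of the convective term --- which you yourself single out as the technical heart --- has a genuine gap. At the regularity actually available, the only product estimate compatible with $\uu^n\in L^\infty_T L^2$ is of the type of Lemma \ref{lemma-product-f^2}, namely $\|\uu^n\otimes\uu^n\|_{L^1_T\BB_{\infty,1}^0}\lesssim\|\uu^n\|_{L^\infty_T L^2}\,\|\uu^n\|_{L^1_T\BB_{\infty,1}^1}$ (or, by interpolation, $\lesssim\|\uu^n\|_{L^\infty_T\BB_{\infty,1}^{-1}}\|\uu^n\|_{L^1_T\BB_{\infty,1}^1}$). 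Your ``high frequencies absorbed by the diffusive gain'' therefore amounts to hiding a term $C\|\uu^n\|_{L^\infty_T L^2}\,\|\uu^n\|_{L^1_T\BB_{\infty,1}^1}$ inside $\nu\|\uu^n\|_{L^1_T\BB_{\infty,1}^1}$, which requires $C\|\uu^n\|_{L^\infty_T L^2}\leq\nu/2$, i.e.\ a smallness condition that the theorem does not assume: the data are large. Moreover this term is of the form $C\,\|\uu^n\|_{L^\infty_T L^2}\,f(T)$, not of the time-integrated form $\int_0^T g_2(s)f(s)\,\dd s$ that your inequality $f\leq g_1+\int g_2 f+g_3\int f^2$ requires, so the Gronwall/lifespan lemma cannot be invoked; with your convective estimate the argument does not close.

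The paper avoids this by never estimating $\Div(\uu^n\otimes\uu^n)$ against the full norm $\|\uu^n\|_{L^1_T\BB_{\infty,1}^1}$. It splits $\uu^n=\uu^L+\uu^n_1+\uu^n_2$ through the mild formulation (heat flow of the data, Navier--Stokes bilinear part, tensor-driven part), shows by interpolating the energy bounds that $\uu^n\otimes\uu^n\in\tilde L^{4/3}(0,T;\BB_{2,1}^{1/2})$ and hence that $\uu^n_1\in\tilde L^\rho(0,T;\BB_{2,1}^{2/\rho})$ with bounds depending only on the initial data (Lemma \ref{lemma:un-un} and Remark \ref{bid-rmk}), and controls $\uu^n_2$ linearly by $\|\tau^n\|_{L^1(0,T;\BB_{\infty,1}^0)}$ (Lemma \ref{control-of-u2}, plus Lemma \ref{lemma-product-f^2} applied to $\uu^n_2$ rather than to $\uu^n$). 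The nine resulting products in Proposition \ref{prop-almost-there} are then each either pure-data ($\Psi_{1,\nu,\mu}$) or linear in $\|\tau^n\|_{L^1\BB_{\infty,1}^0}$ ($\Psi_{2,\nu,\mu}$); only at that stage does Lemma \ref{lemma:bound_of_tau-linear_on_nablau} convert the latter into terms linear (and, when $\mu>0$, quadratic with a factor $\mu$) in $\|\uu^n\|_{L^1(0,t;\BB_{\infty,1}^1)}$ integrated in time, so that Gronwall closes for large data and the $\mu>0$ lifespan formula follows. This decomposition is the missing idea in your Step 2; the rest of your outline (energy step, use of the linear rather than exponential tensor estimate, the structure of the final inequality) is consistent with the paper.
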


\noindent 
The proof of Theorem \ref{thm:Lipschitz-prop-u^n} requires to proceed into several fundamental steps. We will first begin with determining the standard energy inequalities for system \eqref{main_system_appx} (cf. Proposition \ref{prop:bid1}). These inequalities will allow us then to unlock some delicate semi-group estimates related in-primis to the mild formulation of the velocity field $\uu^{n}(t)$:
\begin{equation}\label{mild-form-bid}
	\uu^n(t)\,:=\,
	\underbrace{{\rm e}^{\nu t \Delta}J_n\uu_0}_{\uu^L(t)}
	\,+\,
	\underbrace{\int_0^t \mathcal{P}{\rm e}^{(t-s)\nu\Delta} \Div\,J_n(\uu^n\otimes\uu^n)(s)\dd s}_{\uu^n_1(t)}
	\,+\,
	\underbrace{\int_0^t \mathcal{P}{\rm e}^{(t-s)\nu\Delta} \Div\,J_n\tau^n(s)\dd s}_{\uu^n_2(t)}.
\end{equation}
Here $\Pp$ is the Leray projector into the space of free-divergence vector fields, while the operator ${\rm e}^{\nu t\Delta}$ stands for the heat semigroup in the whole space.
We recognize in the above identity three distinct terms $\uu^L,\,\uu^n_1,\,\uu^n_2$, the first one related to the linear contribution of system \ref{main_system_appx}, the second one tackling the non-linearity due to the Navier-Stokes contribution to the system and the last one specifically correlated to the evolution of the conformation tensor $\tau^n$. Due to the different structures of these terms, each of them will be separately handled, with appropriate estimates in Chemin-Lerner Besov spaces. We will then proceed as follows:
\begin{itemize}
	\item[(i)]	we will first establish some standard energy estimate of our system 
					(cf. Proposition \ref{prop:bid1} and Proposition \ref{prop:bid2}),
	\item[(ii)]  we will then propagate suitable regularities in order to control the second term $\uu^n_1(t)$ (cf. Lemma \ref{lemma:un-un}, Remark \ref{bid-rmk} 					and Proposition \ref{prop-almost-there}),
	\item[(iii)] we will hence analyze the remaining term $\uu^n_2(t)$ (cf. Lemma \ref{control-of-u2}),
	\item[(iv)]  we will summarize our previous estimates at the end of the section, proving Theorem \ref{thm:Lipschitz-prop-u^n} and propagating the 
					Lipschitz-in-space regularity of $\uu^n$.
\end{itemize}
Thanks to a classical energy approach we begin with stating the following proposition:
\begin{prop}\label{prop:bid1}
	For any $n\in \NN$, $(\uu^n,\,\tau^n)$ belongs to 
	$\mathcal{C}_{\rm loc}(\RR_+, L^2(\RR^2))$ and $\nabla u^n$ belongs to $L^2(\RR_+, L^2(\RR^2))$. When $\mu=0$,  
	 $(\uu^n,\,\tau^n)$ satisfies
	\begin{equation}\label{energy-estimate}
		\|\tau^n(t)\|_{L^2(\RR^2)}=\|\tau_0\|_{L^2(\RR^2)}e^{-at},
		\quad
		\|\uu^n \|_{L^\infty(0, t;\, L^2(\RR^2))}^2+\nu \|\nabla \uu^n \|_{L^2(0, t;\, L^2(\RR^2))}^2
		\,\leq\,
		\| \uu_0\|_{L^2(\RR^2)}^2+ \| \tau_0\|_{L^2(\RR^2)}^2
		\frac{1-e^{-2at}}{2a\nu},
	\end{equation}
	for any time $t>0$. Otherwise, when $\mu >0$ then
	\begin{equation}\label{energy-estimate_mu}
	\begin{aligned}		
		\mu \|\uu^n \|_{L^\infty(0, T;\, L^2(\RR^2))}^2+
		 \|\tau^n \|_{L^\infty(0, T;\, L^2(\RR^2))}^2
		 &+a\|\tau^n \|_{L^2(0, T;\, L^2(\RR^2))}^2+\\&+
		\nu \mu \|\nabla \uu^n \|_{L^2(0, T;\, L^2(\RR^2))}^2
		\leq 
		\mu \| \uu_0\|_{L^2(\RR^2)}^2+ \| \tau_0\|_{L^2(\RR^2)}^2.
	\end{aligned}
	\end{equation}
\end{prop}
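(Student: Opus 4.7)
The plan is to argue by a standard energy method on the truncated system \eqref{main_system_appx1}, exploiting two structural cancellations: the divergence-free condition $\Div\,\uu^n = 0$ eliminates the pure transport contribution in the $\tau$-equation, while the skew-symmetry of $J_n\omega^n$ combined with the pointwise symmetry of the tensors $\tau^n(\tau^n)^{\trc}$ and $(\tau^n)^{\trc}\tau^n$ forces the corotational term $-J_n\omega^n\tau^n + \tau^n J_n\omega^n$ to vanish pointwise when contracted with $\tau^n$. The continuity in time with values in $L^2(\RR^2)$ comes for free from the Friedrichs cut-off: all nonlinear terms are frequency-localized in the annulus $\mycal{C}_n$, so the system becomes a locally Lipschitz ODE on the finite-energy space $L^2_{\mycal{C}_n} := \{f\in L^2(\RR^2) : \supp\hat f \subset \mycal{C}_n\}$, and the bounds derived below continue the Cauchy--Lipschitz solution to the whole of $\RR_+$.

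For the case $\mu = 0$ I would first test the $\tau$-equation against $\tau^n$ in $L^2(\RR^2)$: the two cancellations above yield the clean ODE $\tfrac12\tfrac{\dd}{\dd t}\|\tau^n\|_{L^2}^2 + a\|\tau^n\|_{L^2}^2 = 0$, integrating to $\|\tau^n(t)\|_{L^2} = \|\tau_0\|_{L^2}\,e^{-at}$. Next I test the velocity equation against $\uu^n$: since $\widehat{\uu^n}$ is supported in $\mycal{C}_n$ one has $J_n\uu^n = \uu^n$, so self-adjointness of $J_n$ together with $\Div\,\uu^n = 0$ removes the convective contribution, and an integration by parts on the stress term leaves $\tfrac12\tfrac{\dd}{\dd t}\|\uu^n\|_{L^2}^2 + \nu\|\nabla\uu^n\|_{L^2}^2 = -\int \tau^n:\nabla\uu^n$. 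A Young inequality then absorbs half of the dissipation on the left and leaves $\tfrac{1}{2\nu}\|\tau^n\|_{L^2}^2 = \tfrac{1}{2\nu}\|\tau_0\|_{L^2}^2\,e^{-2as}$ on the right, whose time integral $\int_0^t e^{-2as}\dd s = (1-e^{-2at})/(2a)$ produces precisely the factor in \eqref{energy-estimate} (to be read as $t$ when $a = 0$).

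For the case $\mu > 0$ the Young inequality is replaced by an exact cancellation of the coupling. I first verify that the symmetry of $\tau_0$ is propagated by the truncated flow: taking the transpose of the $\tau^n$-equation, the antisymmetric part $\sigma^n := \tau^n - (\tau^n)^{\trc}$ satisfies a closed, linear transport-type equation with zero source term, whose unique $L^2$ solution from zero initial data is $\sigma^n \equiv 0$. Consequently $\tau^n:\nabla\uu^n = \tau^n:\mathbb{D}^n$, and multiplying the velocity identity by $\mu$ and adding it to the $\tau^n$ identity makes the two coupling terms $\pm\mu\int\tau^n:\mathbb{D}^n$ cancel, giving
\begin{equation*}
	\tfrac12\tfrac{\dd}{\dd t}\bigl(\mu\|\uu^n\|_{L^2}^2 + \|\tau^n\|_{L^2}^2\bigr) + \mu\nu\|\nabla\uu^n\|_{L^2}^2 + a\|\tau^n\|_{L^2}^2 = 0,
\end{equation*}
from which \eqref{energy-estimate_mu} follows by integration (with slack constants compared with the sharp $2\mu\nu$ and $2a$ produced by the calculation). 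The only genuinely non-trivial point in the argument is this symmetry-propagation step for $\mu > 0$; everything else is a careful bookkeeping of the three elementary cancellations (transport, corotational, and coupling) that underpin the Oldroyd-B energy identity.
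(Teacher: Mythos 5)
Your energy computations are correct and are exactly the ``classical energy approach'' the paper invokes without writing out: the corotational term is annihilated when tested against $\tau^n$ (skew $J_n\omega^n$ against the symmetric matrices $\tau^n(\tau^n)^{\mathrm{t}}$, $(\tau^n)^{\mathrm{t}}\tau^n$), the convective terms drop by $\Div\,\uu^n=0$ and $J_n\uu^n=\uu^n$, Young's inequality gives \eqref{energy-estimate} for $\mu=0$, and the $\mu$-weighted combination with the propagated symmetry of $\tau^n$ gives \eqref{energy-estimate_mu} (with even sharper constants $2a$, $2\nu\mu$). Two small caveats: the symmetry-propagation step tacitly uses that $\tau_0$ is a symmetric tensor, which is implicit in the model but not stated in the hypotheses; and your justification of time-continuity overstates the truncation — in system \eqref{main_system_appx1} only the velocity equation is frequency-localized (the terms $\uu^n\cdot\nabla\tau^n$, $J_n\omega^n\tau^n$, $\tau^nJ_n\omega^n$ carry no outer $J_n$), so $\tau^n$ does not remain spectrally supported in $\mycal{C}_n$ and the system is not an ODE on $L^2_{\mycal{C}_n}$ in the $\tau$-component. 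Continuity of $\tau^n$ in $L^2$ should instead be obtained from the transport--rotation structure of its equation with the smooth, Lipschitz (band-limited) velocity $\uu^n$, e.g.\ by the method of characteristics or by the $L^p$ estimates of Lemma \ref{lemma:L^pbound_of_tau}; with that repair, the global continuation argument goes through as you describe.
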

\noindent Furthermore, one can remark that $\uu^n$ belongs to a more refined functional space, namely:
\begin{prop}\label{prop:bid2}
	For any integer $n\in \NN$ the solution  $\uu^n$ belongs to $\tilde L^\infty_{\rm loc}(\RR_+, L^2(\RR^2))$. 
	Furthermore, if $\mu=0$ then
	\begin{equation*}
	\begin{aligned}
		\|\uu^n \|_{\tilde L^\infty(0, T; L^2(\RR^2))}
		\leq 
		\| \uu_0  \|_{L^2(\RR^2)}
			+
		C\left(
			\nu^{-1}\| \uu_0  \|_{L^2(\RR^2)}^2
			+
			\| \tau_0  \|_{L^2(\RR^2)}
			\sqrt{\frac{1-e^{-2aT}}{2a\nu }}
			+
			\nu^{-1}
			\| \tau_0  \|_{L^2(\RR^2)}^2
			\frac{1-e^{-2aT}}{2a\nu}
		\right)
		\end{aligned}
	\end{equation*}
	otherwise, when $\mu>0$,
	\begin{equation*}
	\begin{aligned}
		\|\uu^n \|_{\tilde L^\infty(0, T; L^2(\RR^2))}
		\leq 
		\| \uu_0  \|_{L^2(\RR^2)}
			+
		C\bigg\{
			\nu^{-1}\| \uu_0  \|_{L^2(\RR^2)}^2
			&+
			\mu^{-1} \nu^{-1}
			\| \tau_0 \|_{L^2(\RR^2)}^2
			+\\&+
			\Big(
				\| \tau_0  \|_{L^2(\RR^2)}^2+
				\mu \| \uu_0  \|_{L^2(\RR^2)}^2
			\Big)^\frac{1}{2}
			\sqrt{\frac{1-e^{-2aT}}{2a\nu }}
		\bigg\}.
	\end{aligned}
	\end{equation*}
\end{prop}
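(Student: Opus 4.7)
The plan is to exploit the mild formulation \eqref{mild-form-bid} and estimate the three contributions $\uu^L$, $\uu^n_1$, $\uu^n_2$ to $\uu^n$ separately in the Chemin--Lerner space $\tilde L^\infty(0,T;L^2(\RR^2))\simeq\tilde L^\infty(0,T;\BB_{2,2}^0)$, which amounts to bounding $\sup_{t\in[0,T]}\|\Dd_q\uu^n(t)\|_{L^2}$ in $\ell^2(\ZZ)$. The key technical ingredient is the localized heat-kernel estimate $\|e^{\nu t\Delta}\Dd_q f\|_{L^2}\lesssim e^{-c\nu t 2^{2q}}\|\Dd_q f\|_{L^2}$, which, combined with a Bernstein inequality to absorb the divergence in the source terms $\Div J_n(\uu^n\otimes\uu^n)$ and $\Div J_n\tau^n$ and a Cauchy--Schwarz in time, delivers the $\nu^{-1/2}$--smoothing that allows the bilinear term and the stress term to be measured in $L^2_T L^2_x$.

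\smallskip
\noindent
For the linear part, $\sup_{t\in[0,T]}\|\Dd_q e^{\nu t\Delta}J_n\uu_0\|_{L^2}\leq\|\Dd_q\uu_0\|_{L^2}$ and $\ell^2$--summation in $q$ yield directly the contribution $\|\uu_0\|_{L^2}$. For the Navier--Stokes piece, the scheme above gives $\|\uu^n_1\|_{\tilde L^\infty_T L^2}\lesssim \nu^{-1/2}\|\uu^n\otimes\uu^n\|_{L^2_T L^2_x}$, and the planar Gagliardo--Nirenberg estimate $\|\uu^n\|_{L^4}^2\leq C\|\uu^n\|_{L^2}\|\nabla\uu^n\|_{L^2}$ reduces this to $C\nu^{-1/2}\|\uu^n\|_{L^\infty_T L^2}\|\nabla\uu^n\|_{L^2_T L^2}$; both factors are then controlled by Proposition \ref{prop:bid1}, producing the bound $C\nu^{-1}(\|\uu_0\|_{L^2}^2+\|\tau_0\|_{L^2}^2\Gamma_{a,\nu}(T)^2)$ when $\mu=0$ and $C\nu^{-1}\|\uu_0\|_{L^2}^2+C\mu^{-1}\nu^{-1}\|\tau_0\|_{L^2}^2$ when $\mu>0$. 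Symmetrically, the stress piece satisfies $\|\uu^n_2\|_{\tilde L^\infty_T L^2}\lesssim\nu^{-1/2}\|\tau^n\|_{L^2_T L^2}$: when $\mu=0$, Lemma \ref{lemma:L^pbound_of_tau} gives $\|\tau^n(t)\|_{L^2}=\|\tau_0\|_{L^2}e^{-at}$, hence exactly $C\|\tau_0\|_{L^2}\Gamma_{a,\nu}(T)$, while when $\mu>0$ one plugs the Duhamel inequality
\[
\|\tau^n(t)\|_{L^2}\leq\|\tau_0\|_{L^2}e^{-at}+\mu\int_0^t e^{a(s-t)}\|\mathbb{D}^n(s)\|_{L^2}\,\dd s
\]
of Lemma \ref{lemma:L^pbound_of_tau} into the $L^2_T$--norm and applies Young's convolution $L^1_T\ast L^2_T\to L^2_T$ together with the dissipative bound $\nu\mu\|\nabla\uu^n\|_{L^2_T L^2}^2\leq \mu\|\uu_0\|_{L^2}^2+\|\tau_0\|_{L^2}^2$ of Proposition \ref{prop:bid1}; after regrouping this produces the claimed $C(\mu\|\uu_0\|_{L^2}^2+\|\tau_0\|_{L^2}^2)^{1/2}\Gamma_{a,\nu}(T)$ term, the residual cross term being absorbed into $C\mu^{-1}\nu^{-1}\|\tau_0\|_{L^2}^2$.

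\smallskip
\noindent
The main obstacle is twofold. First, one must work in the Chemin--Lerner scale $\tilde L^\infty_T L^2$ rather than the coarser $L^\infty_T L^2$: only the frequency-localized smoothing of the heat semigroup provides the $\nu^{-1/2}$--gain needed to absorb the derivative carried by $\Div$ in both forcing terms, and simultaneously gives bounds independent of $n\in\NN$ suitable for the subsequent compactness argument. Second, in the $\mu>0$ case the coupling of $\uu^n$ and $\tau^n$ through $\mu\mathbb{D}^n$ prevents a closed-form bound on $\|\tau^n\|_{L^2_T L^2}$, so one must balance the Duhamel formula against the viscous dissipation carefully enough to recover both the $\mu\to 0$ limit and an estimate that remains finite as $a\to 0$, which is precisely the role played by the weight $\Gamma_{a,\nu}(T)$.
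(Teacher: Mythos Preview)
Your proposal is correct and follows essentially the same route as the paper: the mild decomposition $\uu^n=\uu^L+\uu^n_1+\uu^n_2$, the localized heat-kernel bound combined with Bernstein to absorb the divergence, the Gagliardo--Nirenberg inequality $\|\uu^n\|_{L^4}^2\lesssim\|\uu^n\|_{L^2}\|\nabla\uu^n\|_{L^2}$ for the bilinear piece, and the reduction of $\uu^n_2$ to $\nu^{-1/2}\|\tau^n\|_{L^2_TL^2}$ are exactly what the paper does.

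The one place you deviate is the $\mu>0$ bound on $\|\tau^n\|_{L^2_TL^2}$: the paper reads this off directly from the combined energy inequality \eqref{energy-estimate_mu} of Proposition~\ref{prop:bid1} (which already controls $\|\tau^n\|_{L^\infty_TL^2}$ and $a\|\tau^n\|_{L^2_TL^2}^2$ by $\mu\|\uu_0\|_{L^2}^2+\|\tau_0\|_{L^2}^2$), whereas you go through the Duhamel formula of Lemma~\ref{lemma:L^pbound_of_tau} and Young's convolution. Your route is slightly more circuitous here: the $L^1_T\ast L^2_T$ step produces a factor $(1-e^{-aT})/a$ rather than $\sqrt{(1-e^{-2aT})/(2a)}$, so the ``regrouping/absorption'' you allude to does not land exactly on the stated form without an additional argument. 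Invoking the energy estimate directly, as the paper does, avoids this bookkeeping.
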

\begin{proof}
	Thanks to the mild formulation \eqref{mild-form-bid}, the velocity field $\uu^{n}$ can be decomposed into three terms 
	$\uu^{n} =\uu^L\,+\, \uu_1^{n} \,+\,\uu^{n}_2$.
	The heat kernel allows to estimate the initial term $\uu^L$ as follows:
	\begin{equation*}
		\|\,\uu^L\,\|_{\tilde L^\infty(0, T, L^2(\RR^2))}
		\,\leq\, \|\,\uu_0\,\|_{L^2(\RR^2)}.
	\end{equation*}
	Next, we address $\uu^{n}_1$ in $\tilde L^\infty(0, T; L^2(\RR^2))$. We apply the dyadic block $\Dd_q$ to $\uu^n_1$, for a fixed integer $q\in \ZZ$. We 
	first remark that
	\begin{equation*}
		\|\,\Dd_q \uu_1^{n}\,\|_{L^2(\RR^2)}
		\,\lesssim\,
		\int_0^t 2^q {\rm e}^{-c(t-s)\nu 2^{2q}} \|\,\Dd_q (\uu^n(s) \otimes \uu^n (s))\,\|_{L^2(\RR^2)} \dd s,
	\end{equation*}
	so that the Young inequality applied to the last convolution leads to
	\begin{equation*}
	\begin{aligned}
		\|\,\Dd_q \uu_1^{n}\,\|_{L^\infty(0, T;  L^2(\RR^2))}
		\,&\leq\,
		C\nu^{-\frac{1}{2}}
		\|\,\Dd_q (\uu^n \otimes \uu^n )\,\|_{L^2(0, T; L^2(\RR^2))}\\
		&\leq\,	
		C	\nu^{-\frac 12}\big\| \, \|\,\uu^n(t) \,\|_{L^4(\RR^2)} ^2 \big\|_{L^2(0, T)} \\
		&\leq\,
		C
		\|\,\uu^n \,\|_{L^\infty(0, T; L^2(\RR^2))}\nu^{-\frac 12}\|\,\nabla \uu^n \,\|_{L^2(0, T; L^2(\RR^2))}
		\\&\leq\,
		\frac{1}{\nu}
		\Big(
		\|\,\uu^n \,\|_{L^\infty(0, T; L^2(\RR^2))}^2
		+
		\nu\|\,\nabla \uu^n \,\|_{L^2(0, T; L^2(\RR^2))}^2
		\Big).
	\end{aligned}
	\end{equation*}
	Taking the supremum with respect to the parameter $q\in \ZZ$, we eventually deduce that
	\begin{equation*}
	\begin{alignedat}{4}
		 \|\, \uu_1^{n}\,\|_{\tilde L^\infty(0, T;  L^2(\RR^2))}
		\,&\leq\,
		C\nu^{-1}
		\left(
			\,\|\, \uu_0\,\|_{L^2(\RR^2)}^2\,+\, \|\, \tau_0\,\|_{L^2(\RR^2)}^2
			\frac{1-e^{-2aT}}{2a\nu}
		\right)
		\qquad&&\text{if}\quad \mu = 0,\\
		\|\, \uu_1^{n}\,\|_{\tilde L^\infty(0, T;  L^2(\RR^2))}
		\,&\leq\,
		C\nu^{-1}
		\left(
			\,\|\, \uu_0\,\|_{L^2(\RR^2)}^2\,+\,\mu^{-1} \|\, \tau_0\,\|_{L^2(\RR^2)}^2
		\right)
		\qquad&&\text{if}\quad \mu > 0,
	\end{alignedat}
	\end{equation*}	
	for any integer $n\in \NN$ and for a suitable positive constant $C$. Next we deal with $\uu^{n}_2$ and we remark that
	\begin{equation*}
	\begin{aligned}
		\|\, \Dd_q \uu^{n}_2 \,\|_{L^2(\RR^2)}
		\,&\lesssim\,
		\int_0^t  2^q {\rm e}^{-c\nu (t-s)2^{2q}} \| \Dd_q \tau^{n}(s) \|_{L^2(\RR^2)}\dd s, \\
		&\lesssim\, 
		\nu^{-\frac 12}\| \,\Dd_q \tau^{n} \,\|_{L^2(0, T; L^2(\RR^2))}\,\leq\,C\nu^{-\frac 12} \| \,\tau^{n} \,\|_{L^2(0, T; L^2(\RR^2))},
	\end{aligned}
	\end{equation*}
	therefore
	\begin{equation}\label{ineq_LinftyL2_of_u2}
	\begin{alignedat}{4}
		\|\, \uu^{n}_2 \,\|_{\tilde L^\infty(0, T; L^2(\RR^2))}
		\,&\leq\,
		C\|\,\tau_0\,\|_{L^2(\RR^2)}\sqrt{\frac{1-e^{-2aT}}{2a\nu }}
		&&\qquad\text{if}\quad \mu = 0,\\
		\|\, \uu^{n}_2 \,\|_{\tilde L^\infty(0, T; L^2(\RR^2))}
		\,&\leq\,
		C
		\Big(\mu\|\uu_0\|_{L^2(\RR^2)}^2+\|\,\tau_0\,\|_{L^2(\RR^2)}^2
		\Big)^\frac{1}{2}
		\sqrt{\frac{1-e^{-2aT}}{2a\nu }}
		&&\qquad\text{if}\quad \mu > 0,
		\end{alignedat}
	\end{equation}
	and this concludes the proof of the Proposition.
\end{proof}

\begin{lemma}\label{lemma:un-un}
	For any positive integer $n\in\NN$ and for any positive time $T>0$, the velocity field $\uu^n$ satisfies the class affinity
	\begin{equation*}
			\uu^n\otimes \uu^n  \in \tilde L^\frac{4}{3}(0, T;\BB_{2, 1}^{\frac{1}{2}}).
	\end{equation*}
	Furthermore, when $\mu=0$, the following bound holds true
	\begin{equation*}
		\left\|\, \uu^n\otimes \uu^n \,\right\|_{ \tilde L^\frac{4}{3}(0, T;\,\BB_{2, 1}^{\frac{1}{2}})}
		\,\leq\,
		C\nu^{-\frac 34}
		\Big(
			\| \uu_0  \|_{L^2(\RR^2)}
			+
			\nu^{-1}\| \uu_0  \|_{L^2(\RR^2)}^2
			+
			\| \tau_0  \|_{L^2(\RR^2)}
			\sqrt{
			\frac{1-e^{-2aT}}{2a\nu}
			}
			+
			\nu^{-1}
			\| \tau_0  \|_{L^2(\RR^2)}^2
			\frac{1-e^{-2aT}}{2a\nu}
		\Big)^2,
	\end{equation*}
	while as $\mu>0$
	\begin{equation*}
	\begin{aligned}
		\left\|\uu^n\otimes \uu^n \right\|_{ \tilde L^\frac{4}{3}(0,T;\,\BB_{2, 1}^{\frac{1}{2}})}
		\leq
		C\nu^{-\frac 34}
		\Big\{
			\Big(1&+\mu^{\frac{1}{2}}\sqrt{\frac{1-e^{-2aT}}{2a\nu}}\Big)
			\| \uu_0  \|_{L^2(\RR^2)}
			+
			\nu^{-1}\| \uu_0  \|_{L^2(\RR^2)}^2
			+\\
			&+
			\Big(\mu^{-\frac{1}{2}}
			+
			\sqrt{\frac{1-e^{-2aT}}{2a\nu}}
			\Big)
			\| \tau_0  \|_{L^2(\RR^2)}
			+
			\nu^{-1}
			\| \tau_0  \|_{L^2(\RR^2)}^2
			\frac{1-e^{-2aT}}{2a\nu}
		\Big\}^2,
	\end{aligned}
	\end{equation*}
	for a positive constant $C$ that does not depend on the index $n\in \NN$.
\end{lemma}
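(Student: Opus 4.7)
The proof is a paradifferential estimate for the quadratic Navier--Stokes nonlinearity, combining Bony's decomposition with the energy bounds already established in Propositions \ref{prop:bid1} and \ref{prop:bid2}, together with two-dimensional Ladyzhenskaya-type interpolation. My plan is to reduce the statement to the single dimensionless inequality
\begin{equation*}
    \|\uu^n\otimes\uu^n\|_{\tilde L^{4/3}(0,T;\BB_{2,1}^{1/2})}
    \;\leq\; C\,\|\uu^n\|_{L^\infty(0,T;L^2)}^{1/2}\,\|\nabla\uu^n\|_{L^2(0,T;L^2)}^{3/2},
\end{equation*}
which is scale--invariant under the Navier--Stokes scaling $\uu(t,x)\mapsto \lambda\uu(\lambda^2 t,\lambda x)$, and then to plug in the quantitative controls of Propositions \ref{prop:bid1}--\ref{prop:bid2}. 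Observe that the latter give $\|\nabla\uu^n\|_{L^2L^2}\leq \nu^{-1/2}(\|\uu_0\|_{L^2}+\|\tau_0\|_{L^2}\Gamma_{a,\nu}(T))$ and that the quantity $\Phi_{\nu,\mu,a}(T,\uu_0,\tau_0)$ appearing on the right of the lemma is exactly (up to the square) the bound provided by Proposition \ref{prop:bid2} for $\|\uu^n\|_{\tilde L^\infty L^2}$; a short computation then yields the claimed $\nu^{-3/4}\Phi^2$ estimate. The two regimes $\mu=0$ and $\mu>0$ differ only through the two corresponding branches of Proposition \ref{prop:bid2} and require no further work.

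For the dimensionless inequality, I would apply the homogeneous Bony decomposition $\uu^n\otimes\uu^n = 2\dot T_{\uu^n}\uu^n+\dot R(\uu^n,\uu^n)$. The remainder is the cleanest piece: writing $\|\Dd_k\dot R(\uu,\uu)\|_{L^2}\leq 2^k\sum_{j\geq k-5}\|\Dd_j\uu\|_{L^2}\|\widetilde\Dd_j\uu\|_{L^2}$ (Bernstein from $L^1$ to $L^2$ in dimension two) and summing in $k$ via the convergent geometric series $\sum_{k\leq j+5}2^{3(k-j)/2}$, the $\ell^1$-summability of the target space is recovered automatically. Cauchy--Schwarz on the resulting single index gives $\sum_j 2^{3j/2}\|\Dd_j\uu\|_{L^2}^2\leq \|\uu\|_{\dot H^{1/2}}\|\uu\|_{\dot H^1}$, hence
\begin{equation*}
    \|\dot R(\uu,\uu)(t)\|_{\BB_{2,1}^{1/2}}\leq C\,\|\uu(t)\|_{\dot H^{1/2}}\,\|\uu(t)\|_{\dot H^1}.
\end{equation*}
Integrating in time with H\"older $3/4=1/4+1/2$, and using the 2D Gagliardo--Nirenberg interpolation $\|\uu\|_{L^4_T\dot H^{1/2}}\leq \|\uu\|_{L^\infty_T L^2}^{1/2}\|\nabla \uu\|_{L^2_T L^2}^{1/2}$, gives the desired bound on the remainder.

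For the paraproduct I would keep the time integration inside the dyadic sum (i.e.\ work directly in the Chemin--Lerner norm). Using H\"older $1/(4/3)=1/4+1/2$ in time and $1/2=1/4+1/4$ in space together with $\|S_{j-1}\uu\|_{L^4_TL^4}\leq C\|\uu\|_{L^4_TL^4}$ (boundedness of Littlewood--Paley projectors on $L^4$) and the Bernstein relation $\|\Dd_j\uu\|_{L^2_TL^4}\lesssim 2^{j/2}\|\Dd_j\uu\|_{L^2_TL^2}\lesssim 2^{-j/2}\|\Dd_j\nabla\uu\|_{L^2_TL^2}$, one obtains
\begin{equation*}
    2^{k/2}\|\Dd_k\dot T_{\uu^n}\uu^n\|_{L^{4/3}_TL^2}\leq C\,\|\uu^n\|_{L^4_TL^4}\sum_{|k-j|\leq 5}2^{(k-j)/2}\|\Dd_j\nabla\uu^n\|_{L^2_TL^2},
\end{equation*}
and a Young-type convolution on $k$ collapses the right-hand side. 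Ladyzhenskaya reduces $\|\uu^n\|_{L^4_TL^4}$ to $\|\uu^n\|_{L^\infty_TL^2}^{1/2}\|\nabla\uu^n\|_{L^2_TL^2}^{1/2}$, finishing the paraproduct. Finally Minkowski gives $L^{4/3}(\BB_{2,1}^{1/2})\hookrightarrow \tilde L^{4/3}(\BB_{2,1}^{1/2})$ since $4/3\geq 1$, so both pieces end up in the right Chemin--Lerner space.

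The main technical obstacle, and the point where the estimate is genuinely delicate, is the summability index $1$ on the target Besov space, because the natural energy regularity of $\uu^n$ is only $\ell^2$-summable in frequency (through $\dot H^1$). The remainder handles this for free by the geometric factor $2^{3(k-j)/2}$; the paraproduct does not, and this is why one must keep the time--frequency sum in the Chemin--Lerner form and exploit the low-frequency $\tilde L^\infty_TL^2$ control of Proposition \ref{prop:bid2} together with the Bernstein loss $2^{-j/2}$ coming from the $L^2\to L^4$ embedding in two dimensions. Matching the exponents $1/(4/3)=1/4+1/2$ in time and $1/2=1/4+1/4$ in space is precisely what makes these two pieces close, explaining both the appearance of the target space $\tilde L^{4/3}(\BB_{2,1}^{1/2})$ and the $\nu^{-3/4}$ prefactor on the right-hand side.
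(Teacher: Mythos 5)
Your overall reduction (Bony decomposition plus the energy bounds of Propositions \ref{prop:bid1}--\ref{prop:bid2}) is the right idea, and your treatment of the remainder $\dot R(\uu^n,\uu^n)$ is correct and essentially equivalent to the paper's. The gap is in the paraproduct step. After your H\"older/Bernstein manipulations you arrive at
\begin{equation*}
2^{k/2}\,\|\Dd_k \dot T_{\uu^n}\uu^n\|_{L^{4/3}_T L^2}
\;\lesssim\;
\|\uu^n\|_{L^4_T L^4}\sum_{|k-j|\leq 5}2^{(k-j)/2}\,\|\Dd_j\nabla\uu^n\|_{L^2_T L^2},
\end{equation*}
and claim that "a Young-type convolution on $k$ collapses the right-hand side". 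It does not: the kernel $2^{m/2}\mathbf{1}_{|m|\leq 5}$ only has fixed finite mass, so summing over $k\in\ZZ$ produces $C\sum_j\|\Dd_j\nabla\uu^n\|_{L^2_T L^2}$, i.e.\ the $\tilde L^2_T\BB^{1}_{2,1}$ norm of $\uu^n$. The energy estimates control only the $\ell^2$ norm of the sequence $\big(\|\Dd_j\nabla\uu^n\|_{L^2_T L^2}\big)_j$ (which is $\|\nabla\uu^n\|_{L^2_T L^2}$), not its $\ell^1$ norm; Young $\ell^1\ast\ell^2\to\ell^2$ would only give a $\BB^{1/2}_{2,2}$-type conclusion, so the third index $1$ of the target space — which you yourself identify as the crux — is lost precisely here. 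The reason your proposed mechanism cannot rescue this is structural: once the low-frequency factor $\Sd_{j-1}\uu^n$ is measured in the non-localized norm $L^4_T L^4$, there is no second $\ell^2$ sequence left to pair against $\big(\|\Dd_j\nabla\uu^n\|_{L^2_T L^2}\big)_j$ by Cauchy--Schwarz, and no geometric gain in $|k-j|$ to compensate.

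The fix is to keep \emph{both} factors frequency-localized. Pointwise in time, bound $\|\Sd_{j-1}\uu^n(t)\|_{L^\infty}\lesssim\sum_{k\leq j-2}2^{k}\|\Dd_k\uu^n(t)\|_{L^2}\leq 2^{j/2}c_j(t)\,\|\uu^n(t)\|_{\dot H^{1/2}}$, where $(c_j(t))_j$ is the convolution of an $\ell^1$ kernel with the $\ell^2$ sequence $\big(2^{k/2}\|\Dd_k\uu^n(t)\|_{L^2}/\|\uu^n(t)\|_{\dot H^{1/2}}\big)_k$, hence lies in $\ell^2$ with norm $\lesssim 1$; pairing this against $\|\Dd_j\uu^n(t)\|_{L^2}=2^{-j}e_j(t)\|\uu^n(t)\|_{\dot H^1}$ and applying Cauchy--Schwarz over the near-diagonal indices yields $\|\dot T_{\uu^n}\uu^n(t)\|_{\BB^{1/2}_{2,1}}\lesssim\|\uu^n(t)\|_{\dot H^{1/2}}\|\uu^n(t)\|_{\dot H^1}$, after which your time integration ($3/4=1/4+1/2$) and interpolation close the argument exactly as for the remainder, and your claimed bound in terms of $\|\uu^n\|_{L^\infty_T L^2}^{1/2}\|\nabla\uu^n\|_{L^2_T L^2}^{3/2}$ does follow. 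Note that the paper takes a slightly different route to the same end: it first establishes $\uu^n\in\tilde L^{8/3}(0,T;\dot H^{3/4})$ by interpolating the Chemin--Lerner bound $\tilde L^\infty_T L^2$ of Proposition \ref{prop:bid2} with $L^2_T\dot H^1$ (this is where that proposition, rather than the plain energy supremum, is genuinely used), and then shows that the product maps $\tilde L^{8/3}(0,T;\dot H^{3/4})\times\tilde L^{8/3}(0,T;\dot H^{3/4})$ into $\tilde L^{4/3}(0,T;\BB^{1/2}_{2,1})$, the $\ell^1$ summability again coming from a Cauchy--Schwarz pairing of two $\ell^2$ frequency sequences — the very device missing from your paraproduct estimate.
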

\begin{proof}
	We first claim that for any integer $n\in\NN$ the approximate velocity field $\uu^n$ belongs to $\tilde L^\frac{8}{3}(0, T; \dot H^\frac{4}{3})$, for any time $T>0$. Thanks to Propositions \ref{prop:bid1} and \ref{prop:bid2},  $\uu^n\in \tilde L^\infty(0, T; L^2(\RR^2))$, with $\nabla \uu^n\in L^2(0, T; L^2(\RR^2))$. Furthermore, a standard interpolation yields
	\begin{equation*}
		2^{\frac{3}{4}q}\| \,\Dd_q\uu^n\,\|_{L^\frac{8}{3}(0, T; L^2(\RR^2))}
		\,\lesssim\,
		\| \,\Dd_q\uu^n\,\|_{L^\infty(0, T; L^2(\RR^2))}^\frac{1}{4}
		\left(
		2^{q}\| \,\Dd_q\uu^n\,\|_{L^2(0, T; L^2(\RR^2))}
		\right)^\frac{3}{4},
	\end{equation*}
	for any dyadic block of index $q\in\ZZ$. Taking the square of the above identity,  applying a Cauchy-Schwartz inequality and taking the sum for $q\in \ZZ$, we hence deduce that
	\begin{equation}\label{lemma-bid-8/3-ineq1}
	\begin{aligned}
		\|\,\uu^n\,\|_{\tilde L^\frac{8}{3}(0, T, \dot H^\frac{3}{4}(\RR^2))}^2\,
		&=\,
		\sum_{\q\in\ZZ}
		2^{\frac{3}{2}q}\| \,\Dd_q\uu^n\,\|_{L^\frac{8}{3}(0, T, L^2(\RR^2))}^2\\
		&\lesssim\,
		\nu^{ - \frac 38 }\sum_{\q\in\ZZ}
		\| \,\Dd_q\uu^n\,\|_{L^\infty(0, T, L^2(\RR^2))}^\frac{1}{4}
		\left(
		\nu^{\frac 12} 2^{q}\| \,\Dd_q\uu^n\,\|_{L^2(0, T, L^2(\RR^2))}
		\right)^\frac{3}{4}\\
		&\lesssim\,
		\nu^{-\frac 38}
		\left(
		 \|\,\uu^n\,\|_{\tilde L^\infty(0, T, L^2(\RR^2))}^2
		\,+\,
		\nu \|\,\nabla \uu^n\,\|_{L^2(0, T, L^2(\RR^2))}^2 
		\right),
	\end{aligned}
	\end{equation}
	therefore, applying Proposition \ref{prop:bid1} and Proposition \ref{prop:bid2}
	\begin{equation*}
	\begin{aligned}
		\|\uu^n&\|_{\tilde L^\frac{8}{3}(0, T, \dot H^\frac{3}{4}(\RR^2))}
		\leq\\
		&\leq
		\left\{
		\begin{alignedat}{4}
		&C\nu^{-\frac{3}{8}}
		\Big(
			\| \uu_0  \|_{L^2(\RR^2)}
			+
			\nu^{-1}\| \uu_0  \|_{L^2(\RR^2)}^2
			+
			\| \tau_0  \|_{L^2(\RR^2)}
			\sqrt{
			\frac{1-e^{-2aT}}{2a\nu}
			}
			+
			\nu^{-1}
			\| \tau_0  \|_{L^2(\RR^2)}^2
			\frac{1-e^{-2aT}}{2a\nu}
		\Big)\quad
		&&\text{if}\quad \mu = 0,\\
		&C\nu^{-\frac{3}{8}}
		\Big(
			\Big(1+\mu^{\frac{1}{2}}\sqrt{\frac{1-e^{-2aT}}{2a\nu}}\Big)
			\| \uu_0  \|_{L^2(\RR^2)}
			+
			\nu^{-1}\| \uu_0  \|_{L^2(\RR^2)}^2
			+\\
			&\hspace{2cm}		
			+
			\Big(\mu^{-\frac{1}{2}}
			+
			\sqrt{\frac{1-e^{-2aT}}{2a\nu}}
			\Big)
			\| \tau_0  \|_{L^2(\RR^2)}
			+
			\nu^{-1}
			\| \tau_0  \|_{L^2(\RR^2)}^2
			\frac{1-e^{-2aT}}{2a\nu}
		\Big)\quad
		&&\text{if}\quad \mu > 0.
	\end{alignedat}
	\right.
	\end{aligned}
	\end{equation*}
	We hence claim that $\uu^n\otimes \uu^n$ belongs to $\tilde L^\frac{4}{3}(0, T; \BB_{2,1}^\frac{1}{2})$. Indeed, 
	making use of the Bony decomposition
	\begin{equation}\label{lemma-bid-8/3-ineq2}
	\begin{aligned}
		\|\, \uu^n \otimes\uu^n\|_{\tilde L^\frac{4}{3}(0, T; \BB_{2,1}^\frac{1}{2})}
		\,&=\,
		\sum_q 2^{\frac q 2} \|\, \Dd_q ( \uu^n \otimes\uu^n)\,\|_{ L^\frac{4}{3}(0, T;L^2(\RR^2))} \\
		&\leq
		2\sum_q 2^{\frac q 2 }
		\underbrace{ \|\, \Dd_q ( \dot T_{\uu^n \otimes} \uu^n)\,\|_{ L^\frac{4}{3}(0, T;L^2(\RR^2))}}_{\Aa_q}
		\,+\,
		\sum_q
		2^{\frac 1 2 q} 
		\underbrace{ \|\, \Dd_q ( \dot R({\uu^n \otimes} ,\,\uu^n)\,\|_{ L^\frac{4}{3}(0, T;L^2(\RR^2))}}_{\Bb_q}.
	\end{aligned}
	\end{equation}
	We then control the first term by
	\begin{equation*}
		\begin{aligned}
			2^{\frac q2 }\Aa_q
			& \lesssim \sum_{| j - q | \leq 5}2^{\frac q 2 }\| \,\Sd_{j-1} \uu^n \|_{L^\frac{8}{3}(0, T; L^\infty(\RR^2))}
			\| \,\Dd_j \uu^n \|_{L^\frac{8}{3}(0, T; L^2(\RR^2))}\\
			&\lesssim \,
			\sum_{| j - q | \leq 5}2^{\frac q 2 }\| \,\Sd_{j-1} \uu^n \|_{L^\frac{8}{3}(0, T; L^\infty(\RR^2))}
			\| \,\Dd_j \uu^n \|_{L^\frac{8}{3}(0, T; L^2(\RR^2))}\\
			&\lesssim \,
			\sum_{| j - q | \leq 5}2^{-\frac j 4 }\| \,\Sd_{j-1} \uu^n \|_{L^\frac{8}{3}(0, T; L^\infty(\RR^2))}
			2^{\frac 34 j}\| \,\Dd_j \uu^n \|_{L^\frac{8}{3}(0, T; L^2(\RR^2))},
		\end{aligned}
	\end{equation*}
	Hence, taking the sum as $q\in \ZZ$ we gather that
	\begin{equation*}
	\begin{aligned}
		\sum_{q\in\ZZ}	2^{\frac q2 }\Aa_q
		&\lesssim\,
		\left(
			\sum_{j\in\ZZ}
				2^{- \frac j 2 }\| \,\Sd_{j-1} \uu^n \|_{L^\frac{8}{3}(0, T; L^\infty(\RR^2))}^2
		\right)^{\frac{1}{2}}
		\|\, \uu^n\,\|_{\tilde L^\frac{8}{3}(0, T; \dot H^\frac{3}{4}(\RR^2))}\\
		&\lesssim
		\left(
			\sum_{j\in\ZZ}
			\left|\,
			\sum_{\substack{k\in \ZZ\\ k\leq j-2}}
				2^{ \frac{k- j}{4} }2^{\frac{3}{4}k}\| \,\Dd_{k} \uu^n \|_{L^\frac{8}{3}(0, T; L^2(\RR^2))}
		\right|^2
		\right)^{\frac{1}{2}}
		\|\, \uu^n\,\|_{\tilde L^\frac{8}{3}(0, T; \dot H^\frac{3}{4}(\RR^2))},
	\end{aligned}
	\end{equation*}
	and  the Young inequality eventually leads to
	\begin{equation}\label{lemma-bid-8/3-ineq3}
		\sum_{q\in\ZZ}	2^{\frac q2 }\Aa_q
		\,\lesssim\,
		\|\, \uu^n\,\|_{\tilde L^\frac{8}{3}(0, T; \dot H^\frac{3}{4}(\RR^2))}^2.
	\end{equation}
	It remains to handle the term $\Bb_q$ of the homogeneous reminder. We proceed as follows:
	\begin{equation*}
		\begin{aligned}
			2^{\frac q2 }\Bb_q
			\,&\lesssim \,
			2^\frac{q}{2} \sum_{\substack{ j \geq q-5\\ |\eta|<1}}
			2^q
			\|\,\Dd_q( \Dd_j \uu^n\otimes\Dd_{j+\eta}\uu^n)\,\|_{L^\frac{4}{3}(0, T; L^1(\RR^2))}\\
			&\lesssim\,
			 \sum_{\substack{ j \geq q-5\\ |\eta|<1}}
			 2^{\frac{3}{2}(q-j)}
			2^{\frac{3}{4}j}\|\,\Dd_j \uu^n\,\|_{L^\frac{8}{3}(0, T; L^2(\RR^2))}
			2^{\frac{3}{4}(j+\eta)}\|\,\Dd_{j+\eta}\uu^n\,\|_{L^\frac{8}{3}(0, T; L^2(\RR^2))}.
		\end{aligned}
	\end{equation*}
	Applying again the Young inequality we finally deduce that
	\begin{equation}\label{lemma-bid-8/3-ineq4}
		\sum_{q\in\ZZ}2^{\frac q2 }\Bb_q
		\,\lesssim\,
		\|\, \uu^n\,\|_{\tilde L^\frac{8}{3}(0, T; \dot H^\frac{3}{4}(\RR^2))}^2.
	\end{equation}
	The lemma is then proven plugging inequalities \eqref{lemma-bid-8/3-ineq4} and \eqref{lemma-bid-8/3-ineq3}, 
	into \eqref{lemma-bid-8/3-ineq1} together with \eqref{lemma-bid-8/3-ineq2}.	
\end{proof}
\begin{remark}\label{bid-rmk}
	Recalling that  $\uu^{n}_1$ satisfies the following mild formulation
	\begin{equation*}
		\uu^{n}_1(t) := \int_0^t \Div\Pp {\rm e}^{\nu(t-s)\Delta}J_n(\uu^n(s)\otimes \uu^n(s))\dd s,
		\qquad 
		\text{with}\quad 
		\uu^n\otimes \uu^n \in \tilde L^\frac{4}{3}(0,T;\BB_{2,1}^\frac{1}{2})
	\end{equation*}
	we deduce that $\uu^{n}_1$ belongs also to 
	$\tilde L^\infty(0, T; \BB_{2, 1}^0)\cap \tilde L^\rho(0, T; \BB_{2, 1}^{2/\rho})$, for any 
	integer $n\in \NN$ and any $\rho \in [4/3,\infty)$. Furthermore, the following estimates holds true 
	\begin{equation*}
	\begin{aligned}
		\nu^{\frac 1 4}\|\,\uu^{n}_1\,&\|_{ \tilde L^\infty(0, T; \BB_{2, 1}^0)}
		\,+\, 	
		\nu^{\frac{1}{4}+\frac{1}{\rho}}
		\|\,\uu^{n}_1\,\|_{ \tilde L^\rho(0, T; \BB_{2, 1}^\frac{2}{\rho})} 		
		\,\lesssim\,
		\left\|\, \Div(\,\uu^n\otimes \uu^n\,) \,\right\|_{ \tilde L^\frac{4}{3}(0, T;  \BB_{2, 1}^{-\frac{1}{2}})}
		\\
		&\lesssim
		\nu^{-\frac 34}
		\left(
			\| \,\uu_0 \, \|_{L^2(\RR^2)}
			\,+\,
			\nu^{-1}\| \,\uu_0 \, \|_{L^2(\RR^2)}^2
			\,+\,
			\sqrt{\frac{1-e^{-2aT}}{2a\nu}}
			\| \,\tau_0 \, \|_{L^2(\RR^2)}
			\,+\,
			\nu^{-1}\frac{1-e^{-2aT}}{2a\nu}
			\| \,\tau_0 \, \|_{L^2(\RR^2)}^2
		\right)^2,
	\end{aligned}
	\end{equation*}
	for $\mu=0$, while 
	\begin{equation*}
	\begin{aligned}
		\nu^{\frac 1 4}\|\,\uu^{n}_1\,\|_{ \tilde L^\infty(0, T; \BB_{2, 1}^0)}
		\,&+\, 	
		\nu^{\frac{1}{4}+\frac{1}{\rho}}
		\|\,\uu^{n}_1\,\|_{ \tilde L^\rho(0, T; \BB_{2, 1}^\frac{2}{\rho})}
		\leq
		C\nu^{-\frac 34}
		\Big\{
			(1+\frac{\frac{1-e^{-2aT}}{2a\nu}}\mu^\frac{1}{2})
			\| \uu_0  \|_{L^2(\RR^2)}
			+\\ &+ 
			\nu^{-1}\| \uu_0  \|_{L^2(\RR^2)}^2		
		+
			(\mu^{-\frac{1}{2}}
			+
			+\frac{\frac{1-e^{-2aT}}{2a\nu}}
			\| \tau_0  \|_{L^2(\RR^2)}
			+
			\nu^{-1}
			+\frac{1-e^{-2aT}}{2a\nu}
			\| \tau_0  \|_{L^2(\RR^2)}^2
		\Big\}^2,
	\end{aligned}
	\end{equation*}
	for $\mu>0$.
\end{remark}
\noindent We now perform some suitable bounds for the component of the velocity field $\uu^{n}_2(t)$, which is defined as
\begin{equation*}
		\uu_2^{n}(t) =  \int_0^t \Div\,\Pp \,{\rm e}^{\nu (t-s) \Delta }J_n \tau^n(s)  \dd s.
\end{equation*}
\begin{lemma}\label{control-of-u2} For any positive integer $n\in\NN$ the following class affinity holds true
\begin{equation*}
\begin{alignedat}{16}
		&\uu^n_2 \,\in \,\tilde L^\infty(0, T; \BB_{\infty, 2}^0)&&\cap \tilde L^1(0, T; \BB_{\infty, 2}^2), \\
		&\uu^n_2 \,\in \,\tilde L^\infty(0, T; \BB_{2, 1}^0)&&\cap \tilde L^\frac{4}{3}(0, T; \BB_{2, 1}^\frac{1}{2}).
\end{alignedat}
\end{equation*}
Furthermore
\begin{equation*} 
\begin{alignedat}{16}
			&\|\,\uu_2^{n}\,\|_{\tilde L^\infty(0, T; \BB_{\infty, 2}^0)}
			\,&&+\,
			\nu 
			&&&&\|\,\uu_2^{n}\,\|_{\tilde L^1(0, T; \BB_{\infty, 2}^2)}
			\,&&&&&&&&\leq\,
			C\| \,\tau^n\,\|_{L^1(0, T; L^2(\RR^2))},\\
			&\|\,\uu_2^{n}\,\|_{\tilde L^\infty(0, T; \BB_{2, 1}^0)}
			\,&&+\,
			\nu^{\frac{3}{4}}
			&&&&\|\, \uu^n_2\, \|_{\tilde L^\frac{4}{3}(0, T; \BB_{2, 1}^\frac{3}{2})}
			\,&&&&&&&&\leq\,
			C\|\, \tau^n \,\|_{L^1(0, T; \BB_{\infty, 1}^0)},
\end{alignedat}
\end{equation*}
for a suitable positive constant $C$.
\end{lemma}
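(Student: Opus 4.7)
I would recognize $u_2^n$ as the unique mild solution of the linear Stokes problem
\[
\partial_t u_2^n - \nu\Delta u_2^n + \nabla p_2^n = \Div J_n\tau^n, \qquad \Div u_2^n = 0, \qquad u_2^n|_{t=0}=0,
\]
so that both claims become instances of Proposition \ref{prop-heat-eq} combined with Remark \ref{rmk:Stokes-bound}, for two different choices of the parameters $(s,p,r,\rho_1,\rho)$ adapted to each target norm. The continuity of the Leray projector $\Pp$ on every homogeneous Besov space (it is a zero-order Fourier multiplier) and the uniform-in-$n$ boundedness of the spectral cut-off $J_n$ on each $\BB^s_{p,r}$ then reduce the problem to a purely spatial bound on the source $\Div J_n\tau^n$ in the appropriate Chemin--Lerner scale.

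\medskip
For the first inequality, the essential input is the two-dimensional Bernstein embedding $L^2(\R^2) \hookrightarrow \BB^{-1}_{\infty,2}(\R^2)$: indeed $\|\Dd_q f\|_{L^\infty}\leq C 2^q\|\Dd_q f\|_{L^2}$ in $\R^2$, and summing in $\ell^2$ together with the almost-orthogonality $\sum_q \|\Dd_q f\|_{L^2}^2\leq C\|f\|_{L^2}^2$ gives this embedding. Composing with $\Div$ (a shift by $-1$ in the Besov index) then places the source in an $L^1$-in-time Chemin--Lerner homogeneous Besov space, and Proposition \ref{prop-heat-eq} with $(p,r)=(\infty,2)$ and $\rho_1=1$ extracts the two-derivative gain from the parabolic smoothing, delivering a bound proportional to $\|\tau^n\|_{L^1(L^2)}$.

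\medskip
For the second inequality, the exponent $\rho=4/3$ and the prefactor $\nu^{3/4}=\nu^{1/\rho}$ on the left-hand side are exactly the signature of Proposition \ref{prop-heat-eq} applied with $(s,p,r)=(0,2,1)$, $\rho_1=1$, $\rho=4/3$ (the regularity gain being $2/\rho=3/2$). The remaining task is the purely spatial continuity estimate $\|\Pp\Div J_n\tau^n\|_{L^1(0,T;\BB^0_{2,1})} \leq C \|\tau^n\|_{L^1(0,T;\BB^0_{\infty,1})}$, which I would treat dyadic block by dyadic block using Bernstein's inequality in $\R^2$ to balance the derivative loss against the integrability shift between the $L^\infty$ and $L^2$ scales, and summing in $\ell^1$ over the frequency parameter.

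\medskip
\textbf{Main obstacle.} The delicate point is to ensure that all constants are independent of the regularization index $n\in\NN$, so that both bounds survive the passage to the limit in the Friedrichs scheme of Section \ref{sec:bidim}; this forces one to work throughout with scale-invariant (homogeneous) Chemin--Lerner norms. The genuine technical hurdle is correctly balancing the single derivative lost to $\Div$ against the two derivatives gained from the $L^1$-in-time parabolic smoothing while preserving the precise powers of $\nu$ displayed in each estimate.
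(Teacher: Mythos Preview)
Your overall plan---recognize $\uu_2^n$ as the mild Stokes solution with source $\Div J_n\tau^n$ and invoke the parabolic smoothing of Proposition~\ref{prop-heat-eq}---is exactly the mechanism the paper uses, only packaged differently: the paper unpacks the proposition and works block-by-block with the localized heat estimate $\|e^{\nu t\Delta}\Dd_q\|_{L^p\to L^p}\le Ce^{-c\nu t2^{2q}}$ together with Bernstein, rather than citing it as a black box.

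There is, however, a genuine gap in your spatial estimates. For the second inequality you propose
\[
\|\Pp\,\Div J_n\tau^n\|_{\BB^0_{2,1}}\le C\,\|\tau^n\|_{\BB^0_{\infty,1}}
\]
``using Bernstein's inequality in $\R^2$ to balance the derivative loss against the integrability shift between the $L^\infty$ and $L^2$ scales.'' But Bernstein in $\R^2$ reads $\|\Dd_q f\|_{L^\infty}\le C\,2^q\|\Dd_q f\|_{L^2}$: it lets you \emph{spend} a derivative to pass from $L^2$ to $L^\infty$, not the reverse. There is no bound $2^q\|\Dd_q f\|_{L^2}\le C\|\Dd_q f\|_{L^\infty}$ on $\R^2$ (take a frequency-$2^q$ plane wave truncated to a ball of radius $R$: its $L^2$ norm grows like $R$ while its $L^\infty$ norm stays $O(1)$). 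So the source estimate you need fails, and with it the route through Proposition~\ref{prop-heat-eq} at $s=0$. A parallel mismatch occurs in your first inequality: the (correct) embedding $L^2\hookrightarrow\BB^{-1}_{\infty,2}$ followed by $\Div$ places the source only in $\tilde L^1\BB^{-2}_{\infty,2}$, so Proposition~\ref{prop-heat-eq} yields $\uu_2^n\in\tilde L^\infty\BB^{-2}_{\infty,2}\cap\tilde L^1\BB^{0}_{\infty,2}$, two full indices below the claimed $\tilde L^\infty\BB^{0}_{\infty,2}\cap\tilde L^1\BB^{2}_{\infty,2}$. The two-derivative parabolic gain carries the \emph{source} regularity to the $L^1_t$ regularity of the solution, not to its $L^\infty_t$ regularity. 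It is worth noting that the paper's own displayed step ``$2^q\|\Dd_q\tau^n\|_{L^\infty}\lesssim\|\Dd_q\tau^n\|_{L^2}$'' is equally unexplained and runs into the same Bernstein-direction obstruction, so the difficulty here is intrinsic to the stated indices rather than an artifact of your packaging.
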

\begin{proof}
	We restrict ourselves in proving the first statement, that is $\uu_2^{n}$ belongs to 	$\tilde L^\infty(0, T; \BB_{\infty, 2}^0)\cap \tilde L^1(0, T; \BB_{\infty, 2}^2)$ and satisfies
	\begin{equation*}
		\|\,\uu_2^{n}(t)\,\|_{\tilde L^\infty(0, T; \BB_{\infty, 2}^0)}
		\,+\,
		\nu 
		\|\,\uu_2^{n}\,\|_{\tilde L^1(0, T; \BB_{\infty, 2}^2)}
		\lesssim
		\| \,\tau^n\,\|_{L^1(0, T; L^2(\RR^2))}.
	\end{equation*}
	The second part of the Lemma can indeed be achieved with a similar procedure. Applying the dyadic bloc $\Dd_q$ on $\uu^{n}_2(t)$ and taking the $L^\infty(\RR^2)$ norm, one has
	\begin{equation*}
	\begin{aligned}
		\|\,\Dd_q u_2^{n}\,\|_{L^\infty(0, T; L^\infty(\RR^2))}
		\,\lesssim\,
			\int_0^T e^{-c(t-s)2^{2q}}2^q \| \,\Dd_q \tau^n(s)\,\|_{L^\infty(\RR^2)}\dd s \lesssim
			\int_0^T  \| \,\Dd_q \tau^n(s)\,\|_{L^2(\RR^2)}\dd s
	\end{aligned}
	\end{equation*}
	from which
	\begin{equation*}
	\begin{aligned}
		\|\,\uu_2^{n}\,\|_{\tilde L^\infty(0, T; \BB_{\infty, 2}^0)}
		\,=\,
		\left(\sum_{q\in \ZZ}  \| \,\Dd_q\uu_2^{n}(t)\,\|_{L^\infty(0, T; L^2(\RR^2))}^2\right)	^\frac{1}{2}
		\,\lesssim\,
		\|\,\tau^{n}\,\|_{\tilde L^1(0, T; \BB_{2, 2}^0)}
		\,\lesssim\,
		\|\,\tau^{n}\,\|_{L^1(0, T; L^2(\RR^2))}.
	\end{aligned}
	\end{equation*}
	Furthermore
	\begin{equation*}
	\begin{aligned}
		2^{2q}\|\,\Dd_q \uu_2^{n}\,\|_{L^\infty(\RR^2)}
		\,\lesssim\,
		\int_0^t 2^{2q} e^{-c\nu(t-s)2^{2q}}2^q \| \,\Dd_q \tau^n(s)\,\|_{L^\infty(\RR^2)}\dd s
		\,\lesssim \,
		\int_0^t 2^{2q} e^{-c\nu(t-s)2^{2q}}\| \,\Dd_q \tau^n(s)\,\|_{L^2(\RR^2)}\dd s,		
	\end{aligned}
	\end{equation*}
	thus applying the Young inequality we gather that
	\begin{equation*}
		2^{2q}\|\,\Dd_q \uu_2^{n}\,\|_{L^1(0, T; L^\infty(\RR^2))}
		\lesssim
		\nu^{-1}
		\| \,\Dd_q \tau^n \|_{L^1(0, T; L^2(\RR^2))}
	\end{equation*}
	and taking the sum as $q\in \ZZ$
	\begin{equation*}
		\nu \|\,\uu_2^{n}\,\|_{\tilde L^1(0, T; \BB^2_{\infty, 2})}
		\lesssim 
		\|\, \tau^n \|_{L^1(0, T; L^2(\RR^2))}.
	\end{equation*}
\end{proof}

\begin{prop}\label{prop-almost-there}
	For any positive integer $n\in\NN$ and for any positive time $T>0$, the velocity field $\uu^n$ satisfies the class affinity
	\begin{equation*}
		\Div\,(\,\uu^n\otimes \uu^n\,)  \in L^1(0, T; \BB_{\infty, 1}^{-1}).
	\end{equation*}
	Furthermore, the following bound holds true
	\begin{equation*}
	\begin{aligned}
		\|\, \Div(\,\uu^n\otimes \uu^n \,)\,\|_{ L^1(0, T; \BB_{\infty, 1}^{-1})}
		\,\leq\,
		\Psi_{1,\nu,\,\mu}(T,\,\uu_0,\,\tau_0)
		\,	+\,
		\Psi_{2,\,\nu\,\mu}(T,\,\uu_0,\,\tau_0)
		\left\|
		\,
			\tau^n
		\,
		\right\|_{L^1(0, T; \BB_{\infty, 1}^0)},
		\end{aligned}
	\end{equation*}
	for a suitable positive constant $C$, where the smooth-in-time functions $\Psi_{1,\nu}(T,\,\uu_0,\,\tau_0)$ and $\Psi_{2,\nu}(T,\,\uu_0,\,\tau_0)$ are defined by
	\begin{equation*}
	\begin{aligned}
			\Phi_{\nu,\mu}(T, \uu_0,\tau_0)&= \\
			&\hspace{-2cm}=
			\begin{cases}
		\left(
			\| \,\uu_0 \, \|_{L^2(\RR^2)}
			\,+\,
			\nu^{-1}\| \,\uu_0 \, \|_{L^2(\RR^2)}^2
			\,+\,
			\| \,\tau_0 \, \|_{L^2(\RR^2)}
			\sqrt{\frac{1-e^{-2aT}}{2a\nu}}
			\,+\,
			\nu^{-1}
			\| \,\tau_0 \, \|_{L^2(\RR^2)}^2
			\frac{1-e^{-2aT}}{2a\nu}
		\right)^2\quad &\text{if}\quad \mu = 0,\\
		\Big\{
			\big(1+\sqrt{\frac{1-e^{-2aT}}{2a\nu}}\mu^\frac{1}{2}\big)
			\| \uu_0  \|_{L^2(\RR^2)}
			+ 
			\nu^{-1}\| \uu_0  \|_{L^2(\RR^2)}^2		
			+ \\ \hspace{2.3cm}+
			\big(\mu^{-\frac{1}{2}}
			+
			\sqrt{\frac{1-e^{-2aT}}{2a\nu}}
			\big)
			\| \tau_0  \|_{L^2(\RR^2)}
			+
			\nu^{-1}
			\| \tau_0  \|_{L^2(\RR^2)}^2
			\frac{1-e^{-2aT}}{2a\nu}
		\Big\}^2	
		 &\text{if}\quad \mu > 0,
			\end{cases}\\
			%---------------------------------
			%---------------------------------
			%---------------------------------
			\Psi_{1,\nu,\,\mu}(T,\,\uu_0,\,\tau_0)
			\,&=\,
			C
		\bigg\{\,
			\nu^{-\frac 32}
			\Phi_\nu(T, \,\uu_0, \,\tau_0\,)^2
			\,+\,
			\nu^{-\frac{5}{4}}
			\Phi_\nu(T, \,\uu_0, \,\tau_0\,)
			\|\,\uu_0\,\|_{L^2(\RR^2)}
		\bigg\},			\\
		\Psi_{2,\,\nu,\,\mu}(T,\,\uu_0,\,\tau_0)
		\,&=\,
		C
		\left\{
			\sqrt{\frac{1-e^{-2aT}}{2a\nu}}
			\big(\mu\|\uu_0\|_{L^2(\RR^2)}^2+\|\,\tau_0\,\|_{L^2(\RR^2)}^2
			\big)^\frac{1}{2}
			\,+\,
			\nu^{-\frac{5}{4}}\Phi_{\nu,\mu}(T, \,\uu_0, \,\tau_0\,)
			\,+\,
			\nu^{-1}
			\|\,\uu_0\,\|_{L^2(\RR^2)}
		\right\},
	\end{aligned}
	\end{equation*}
	for a suitable positive constant $C$, when $\mu = 0$.
\end{prop}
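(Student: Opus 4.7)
The plan is to exploit the mild formulation \eqref{mild-form-bid}, writing $\uu^n = \uu^L + \uu^n_1 + \uu^n_2$, and expanding
\[
\uu^n \otimes \uu^n \,=\, \sum_{i,j \in \{L,1,2\}} \uu^n_i \otimes \uu^n_j.
\]
Since $\Div$ acts continuously from $\BB_{\infty,1}^0$ into $\BB_{\infty,1}^{-1}$, and since in dimension two $\BB_{2,1}^1 \hookrightarrow \BB_{\infty,1}^0$, the task reduces to bounding each $\|\uu^n_i \otimes \uu^n_j\|_{L^1(0,T;\BB_{\infty,1}^0)}$, possibly after reading it as $\|\uu^n_i \otimes \uu^n_j\|_{L^1(0,T;\BB_{2,1}^1)}$ by the embedding. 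I will handle the nine resulting tensor products in three groups.

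For pairs $(i,j)$ with $i,j \in \{L,1\}$ the inputs depend only on the initial data. I would combine the Chemin--Lerner bound of Proposition \ref{prop:bid2} on $\uu^L$ and Remark \ref{bid-rmk} on $\uu^n_1$ (controls in $\tilde L^\infty(L^2)$ and $\tilde L^{4/3}(\BB_{2,1}^{1/2})$ through the heat regularization) with the same Bony splitting already used in Lemma \ref{lemma:un-un}. This produces a contribution of size $\nu^{-3/2}\Phi_{\nu,\mu}(T,\uu_0,\tau_0)^2$, reconstituting the first term of $\Psi_{1,\nu,\mu}$; the mixed pair $\uu^L \otimes \uu^n_1$ additionally yields the lower-order term $\nu^{-5/4}\Phi_{\nu,\mu}(T,\uu_0,\tau_0)\|\uu_0\|_{L^2(\RR^2)}$ after using $\|\uu^L\|_{\tilde L^\infty(0,T;L^2)} \le \|\uu_0\|_{L^2(\RR^2)}$ against the Chemin--Lerner bound for $\uu^n_1$.

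For the pairs with at least one index equal to $2$, the decisive input is Lemma \ref{control-of-u2}, which provides a \emph{linear} control of $\uu^n_2$ in the spaces $\tilde L^\infty(\BB_{2,1}^0) \cap \tilde L^{4/3}(\BB_{2,1}^{3/2})$ and $\tilde L^\infty(\BB_{\infty,2}^0) \cap \tilde L^1(\BB_{\infty,2}^2)$ by $\|\tau^n\|_{L^1(0,T;\BB_{\infty,1}^0)}$ (or by $\|\tau^n\|_{L^1(0,T;L^2)}$, which by Lemma \ref{lemma:L^pbound_of_tau} is itself bounded by the weight $\sqrt{(1-e^{-2aT})/(2a\nu)}\,\|\tau_0\|_{L^2(\RR^2)}$). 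For the mixed products $\uu^L \otimes \uu^n_2$, $\uu^n_1 \otimes \uu^n_2$ and their transposes, I would use the Bony decomposition so that the factor belonging to $\{\uu^L,\uu^n_1\}$ is placed in an $L^\infty$-type time--space norm (governed by initial data via Propositions \ref{prop:bid1}--\ref{prop:bid2} and Remark \ref{bid-rmk}) while $\uu^n_2$ is placed in the $L^1$-in-time slot (linear in $\|\tau^n\|_{L^1(0,T;\BB_{\infty,1}^0)}$). Applying Proposition \ref{prop:reminder-und-paraproduct} to each such paraproduct/remainder term reproduces the middle contribution $\nu^{-5/4}\Phi_{\nu,\mu}(T,\uu_0,\tau_0)$ appearing in $\Psi_{2,\nu,\mu}$. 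The diagonal term $\uu^n_2 \otimes \uu^n_2$ requires special care because the output must remain linear in $\|\tau^n\|_{L^1(0,T;\BB_{\infty,1}^0)}$: one of the two copies of $\uu^n_2$ must therefore be absorbed into $\tilde L^\infty(0,T;L^2)$ via \eqref{ineq_LinftyL2_of_u2}, which contributes the prefactor $\sqrt{(1-e^{-2aT})/(2a\nu)}\,\|\tau_0\|_{L^2(\RR^2)}$ that opens $\Psi_{2,\nu,\mu}$, while the other copy provides the $\|\tau^n\|_{L^1(0,T;\BB_{\infty,1}^0)}$-dependence through Lemma \ref{control-of-u2}.

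The main obstacle will be the bookkeeping: for each of the nine pieces one has to choose the Bony pairing so that (i) the output lies in $L^1(0,T;\BB_{\infty,1}^0)$, (ii) the dependence on $\tau^n$ is strictly linear (which is what eventually enables Lemma \ref{lemma:bound_of_tau-linear_on_nablau} to close the Lipschitz estimate by Gronwall, instead of producing an exponential in $\|\uu\|_{\mathcal{L}ip}$), and (iii) the powers of $\nu$ and the temporal weight $\sqrt{(1-e^{-2aT})/(2a\nu)}$ collect exactly into the stated $\Phi_{\nu,\mu},\Psi_{1,\nu,\mu},\Psi_{2,\nu,\mu}$. Once the correct pairings are fixed, each individual estimate is routine, being a direct application of Proposition \ref{prop:reminder-und-paraproduct}, Remark \ref{bid-rmk} and Lemma \ref{control-of-u2}, and summing the nine contributions yields the claimed inequality.
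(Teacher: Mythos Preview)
Your proposal is essentially the same approach as the paper's: the same decomposition $\uu^n=\uu^L+\uu^n_1+\uu^n_2$, the same embedding $\BB_{2,1}^0\hookrightarrow\BB_{\infty,1}^{-1}$ to work with $L^2$-based product estimates, and the same splitting of the nine cross terms into those depending only on initial data (collected into $\Psi_{1,\nu,\mu}$) and those carrying a single factor of $\|\tau^n\|_{L^1(0,T;\BB_{\infty,1}^0)}$ via Lemma~\ref{control-of-u2} (collected into $\Psi_{2,\nu,\mu}$). The paper handles the diagonal term $\uu^n_2\otimes\uu^n_2$ exactly as you outline, invoking Lemma~\ref{lemma-product-f^2} to place one copy in $\tilde L^\infty L^2$ through \eqref{ineq_LinftyL2_of_u2} and the other in $L^1\BB_{\infty,1}^1$, so that the dependence on $\tau^n$ stays linear.
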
	
\begin{proof}
	 We are now in the position to deal with $\Div(\,\uu^n\otimes\uu^n\,)$ in the 
	functional space $L^1(0, T; \BB_{\infty, 1}^{-1})$. We keep on using the standard decomposition 
	$\uu^n= \uu^n_1\,+\,\uu^n_2\,+\, \uu^L$, thus our estimate reduces to	
	\begin{equation*}
	\begin{aligned}
		\left\| 
			\,
			\Div(\,\uu^n\otimes\uu^n\,)
			\,
		\right\|_{L^1(0, T; \BB_{\infty, 1}^{-1})}
		\leq
		\left\| 
			\,
			\Div(\,\uu^n_1\otimes\uu^n_1\,)
			\,
		\right\|_{L^1(0, T; \BB_{\infty, 1}^{-1})}
		+
		\left\| 
			\,
			\uu^L\cdot \nabla \uu^n_1
			\,
		\right\|_{L^1(0, T; \BB_{\infty, 1}^{-1})}
		+
		\left\| 
			\,
			\uu^n_1\cdot \nabla \uu^L
			\,
		\right\|_{L^1(0, T; \BB_{\infty, 1}^{-1})} + \\		
		+\,
			\left\| 
			\,
			\uu^n_1\cdot \nabla \uu^n_2
			\,
		\right\|_{L^1(0, T; \BB_{\infty, 1}^{-1})}
		\,+\,
			\left\| 
			\,
			\uu^n_2\cdot \nabla \uu^n_1\,
			\,
		\right\|_{L^1(0, T; \BB_{\infty, 1}^{-1})}
		\,+\,
		\left\| 
			\,
			\Div(\,\uu^n_2\otimes\uu^n_2\,)
			\,
		\right\|_{L^1(0, T; \BB_{\infty, 1}^{-1})}
		\,+\\
		+\,
		\left\| 
			\,
			\uu^L\cdot \nabla \uu^n_2
			\,
		\right\|_{L^1(0, T; \BB_{\infty, 1}^{-1})}
		\,+\,
		\left\| 
			\,
			\uu^n_2\cdot \nabla \uu^L
			\,
		\right\|_{L^1(0, T; \BB_{\infty, 1}^{-1})}
		\,+\,
		\left\| 
			\,
			\uu^L\cdot \nabla \uu^L
			\,
		\right\|_{L^1(0, T; \BB_{\infty, 1}^{-1})}.
	\end{aligned}
	\end{equation*}
	We hence control any term on the right-hand side of the above inequality. Thanks to the embedding 
	$\BB_{2,1}^0\hookrightarrow \BB_{\infty,1}^{-1}$, we first remark that
	\begin{equation*}
		\left\| 
			\,
			\Div(\,\uu^n_1\otimes\uu^n_1\,)
			\,
		\right\|_{L^1(0, T; \BB_{\infty, 1}^{-1})}
		\,\lesssim\,
		\left\| 
			\,
			\Div(\,\uu^n_1\otimes\uu^n_1\,)
			\,
		\right\|_{L^1(0, T; \BB_{2, 1}^{0})}
		\,\lesssim\,
		\left\| 
			\,
			\uu^n_1\otimes\uu^n_1
			\,
		\right\|_{L^1(0, T; \BB_{2, 1}^{1})}
		\,\lesssim\,
		\left\| 
			\,
			\uu^n_1
			\,
		\right\|_{L^2(0, T; \BB_{2, 1}^{1})}^2,
	\end{equation*}
	hence thanks to the Remark \ref{bid-rmk}, we obtain that for $\mu = 0$
	\begin{equation*}
			\left\| 			
			\Div(\,\uu^n_1\otimes\uu^n_1\,)			
		\right\|_{L^1(0, T; \BB_{\infty, 1}^{-1})}
		\lesssim
		\nu^{-\frac 32}
		\Big(
			\| \uu_0  \|_{L^2(\RR^2)}
			+
			\nu^{-1}\| \uu_0  \|_{L^2(\RR^2)}^2
			+\sqrt{\frac{1-e^{-2aT}}{2a\nu}}
			\| \,\tau_0 \, \|_{L^2(\RR^2)}
			+
			\frac{1-e^{-2aT}}{2a\nu^2}
			\| \tau_0  \|_{L^2(\RR^2)}^2
		\Big)^4
	\end{equation*}
	while for $\mu>0$
	\begin{equation*}
	\begin{aligned}
		\left\| 
			\,
			\Div(\,\uu^n_1\otimes\uu^n_1\,)
			\,
		\right\|_{L^1(0, T; \BB_{\infty, 1}^{-1})}&
		\,\lesssim\,
		\nu^{-\frac 32}
		\Big\{
			(1+\sqrt{\frac{1-e^{-2aT}}{2a\nu}}\mu^\frac{1}{2})
			\| \uu_0  \|_{L^2(\RR^2)}
			+\\ &+ 
			\nu^{-1}\| \uu_0  \|_{L^2(\RR^2)}^2		
		+
			(\mu^{-\frac{1}{2}}
			+
			\sqrt{\frac{1-e^{-2aT}}{2a\nu}})
			\| \tau_0  \|_{L^2(\RR^2)}
			+
			\nu^{-1}\frac{1-e^{-2aT}}{2a\nu}
			\| \tau_0  \|_{L^2(\RR^2)}^2
		\Big\}^4.
	\end{aligned}
	\end{equation*}
	Now, recalling the embedding $\BB_{2, 1}^{0}\hookrightarrow \BB_{\infty, 1}^{-1}$ in dimension two, 
	together with the continuity of the product within 
	\begin{equation}
		\tilde L^4(0, T; \BB_{2, 2}^{\frac{1}{2}})
		\,\times\,
		\tilde L^\frac{4}{3}(0, T; \BB_{2, 2}^{\frac{1}{2}})
		\,\rightarrow\,
		L^1(0, T; \BB_{2, 1}^{0}),
	\end{equation}	 
	we gather for $\mu = 0$ that
	\begin{equation*}
		\begin{aligned}
			\Big\|
			\uu^n_1\cdot \nabla \uu^L
		&\Big\|_{L^1(0, T; \BB_{\infty, 1}^{-1})} 
		\lesssim
		\left\| 
			\uu^n_1\cdot \nabla \uu^L
		\right\|_{L^1(0, T; \BB_{2, 1}^{0})} 
		\,\lesssim\,
		\|\uu_1^n\|_{\tilde L^4(0, T; \BB_{2, 2}^{\frac{1}{2}})}
		\|\nabla \uu^L\|_{\tilde L^\frac{4}{3}(0, T; \BB_{2, 2}^{\frac{1}{2}})}
		\lesssim
		\|\uu_1^n\|_{\tilde L^4(0, T; \BB_{2, 1}^{\frac{1}{2}})}
		\|\uu^L\|_{\tilde L^\frac{4}{3}(0, T; \BB_{2, 2}^{\frac{3}{2}})}\\
		&\lesssim\,
		\nu^{-\frac 54}
		\left(
			\|\uu_0 \|_{L^2(\RR^2)}
			+
			\nu^{-1}\| \uu_0  \|_{L^2(\RR^2)}^2
			+
			\sqrt{\frac{1-e^{-2aT}}{2a\nu}}
			\| \tau_0  \|_{L^2(\RR^2)}
			+
			\nu^{-1}\frac{1-e^{-2aT}}{2a\nu}
			\| \tau_0  \|_{L^2(\RR^2)}^2
		\right)^2
		\|\uu_0\|_{L^2(\RR^2)},
		\end{aligned}
	\end{equation*}
	while for $\mu>0$
	\begin{equation*}
		\begin{aligned}
			\left\|
			\uu^n_1\cdot \nabla \uu^L
		\right\|_{L^1(0, T; \BB_{\infty, 1}^{-1})} 
		&\lesssim\,
		\nu^{-\frac 54}
		\Big\{
			(1+\sqrt{\frac{1-e^{-2aT}}{2a\nu}}\mu^\frac{1}{2})
			\| \uu_0  \|_{L^2(\RR^2)}
			+\\ &+ 
			\nu^{-1}\| \uu_0  \|_{L^2(\RR^2)}^2		
		+
			(\mu^{-\frac{1}{2}}
			+
			\sqrt{\frac{1-e^{-2aT}}{2a\nu}}
			\| \tau_0  \|_{L^2(\RR^2)}
			+
			\nu^{-1}
			\| \tau_0  \|_{L^2(\RR^2)}^2
			\frac{1-e^{-2aT}}{2a\nu}
		\Big\}^2
		\|\uu_0\|_{L^2(\RR^2)}.
		\end{aligned}
	\end{equation*}
	Similarly, the following bound holds true for $\mu = 0$
	\begin{equation*}
		\begin{aligned}
			\Big\| 
			\,
			&\uu^L\cdot \nabla \uu^n_1
			\,
		\Big\|_{L^1(0, T; \BB_{\infty, 1}^{-1})}
		\,\lesssim\,
		\left\| 
			\,
			\uu^L\cdot \nabla \uu^n_1
			\,
		\right\|_{L^1(0, T; \BB_{2, 1}^{0})}
		\,\lesssim\,
		\| \, \uu^L\,\|_{\tilde L^4(0, T; \BB_{2, 2}^\frac{1}{2})}
		\|\, \nabla \uu^n_1\,\|_{\tilde L^\frac{4}{3}(0, T; \BB_{2, 2}^\frac{1}{2})}\\
		\,&\lesssim\,
		\| \, \uu^L\,\|_{\tilde L^4(0, T; \BB_{2, 1}^\frac{1}{2})}
		\|\, \nabla \uu^n_1\,\|_{\tilde L^\frac{4}{3}(0, T; \BB_{2, 1}^\frac{1}{2})}\\
		&\lesssim\,
		\nu^{-\frac{5}{4}}\|\,\uu_0\,\|_{L^2(\RR^2)}
		\left(
			\|\uu_0 \|_{L^2(\RR^2)}
			+
			\nu^{-1}\| \uu_0  \|_{L^2(\RR^2)}^2
			+
			\sqrt{\frac{1-e^{-2aT}}{2a\nu}}
			\| \tau_0  \|_{L^2(\RR^2)}
			+
			\nu^{-1}\frac{1-e^{-2aT}}{2a\nu}
			\| \tau_0  \|_{L^2(\RR^2)}^2
		\right)^2,
		\end{aligned}
	\end{equation*}
	while for $\mu>0$
	\begin{equation*}
		\begin{aligned}
			\left\| 
			\,
			\uu^L\cdot \nabla \uu^n_1
			\,
		\right\|_{L^1(0, T; \BB_{\infty, 1}^{-1})}
		&\lesssim\,
		\nu^{-\frac{5}{4}}\|\,\uu_0\,\|_{L^2(\RR^2)}
		\Big\{
			(1+\sqrt{\frac{1-e^{-2aT}}{2a\nu}}\mu^\frac{1}{2})
			\| \uu_0  \|_{L^2(\RR^2)}
			+\\ &+ 
			\nu^{-1}\| \uu_0  \|_{L^2(\RR^2)}^2		
		+
			(\mu^{-\frac{1}{2}}
			+
			\sqrt{\frac{1-e^{-2aT}}{2a\nu}}
			\| \tau_0  \|_{L^2(\RR^2)}
			+
			\nu^{-1}
			\| \tau_0  \|_{L^2(\RR^2)}^2
			\frac{1-e^{-2aT}}{2a\nu}
		\Big\}^2,
		\end{aligned}
	\end{equation*}
	We now take into account 
	\begin{equation*}
		\|\,\Div(\uu^n_2\otimes\uu^n_2)\,\|_{L^1(0, T; \BB^{-1}_{\infty, 1})}
		\,\lesssim\,
		\|\,\uu^n_2\otimes\uu^n_2\,\|_{L^1(0, T; \BB^{0}_{\infty, 1})},
	\end{equation*}
	hence, applying Lemma \ref{lemma-product-f^2}, together with inequality \eqref{ineq_LinftyL2_of_u2},  we gather
	\begin{equation*}
	\begin{aligned}
		\|\Div(\uu^n_2\otimes\uu^n_2)\|_{L^1(0, T; \BB^{-1}_{\infty, 1})}
		&\lesssim 
		\|\uu^n_2\|_{\tilde L^\infty (0, T; L^2(\RR^2))} \| \uu^n_2 \|_{L^1(0, T; \BB_{\infty, 1}^1)}\\ 		
		&\lesssim 
		\begin{cases}
		\sqrt{\frac{1-e^{-2aT}}{2a\nu}} \|\,\tau_0\,\|_{L^2(\RR^2)}\| \tau^n \|_{L^1(0, T; \BB_{\infty, 1}^0)}\qquad
		&\mu = 0,
		\\
		\sqrt{\frac{1-e^{-2aT}}{2a\nu}}
		\big(\mu\|\uu_0\|_{L^2(\RR^2)}^2+\|\,\tau_0\,\|_{L^2(\RR^2)}^2
		\big)^\frac{1}{2}
		\| \tau^n \|_{L^1(0, T; \BB_{\infty, 1}^0)}
		&\mu > 0.
		\end{cases}
	\end{aligned}
	\end{equation*}
	Next, we estimate the following term
	\begin{equation*}
	\begin{aligned}
	\left\| 
			\uu^n_1\cdot \nabla \uu^n_2
	\right\|_{L^1(0, T; \BB_{\infty, 1}^{-1})}
	\,\lesssim\,
	\left\| 
			\,
			\uu^n_1\cdot \nabla \uu^n_2
			\,
	\right\|_{L^1(0, T; \BB_{2, 1}^{0})}
	\,\lesssim\,
	\|\,\uu^n_1\,\|_{\tilde L^4(0, T; \BB_{2, 2}^{\frac{1}{2}})}
	\|\,\nabla \uu^n_2\,\|_{\tilde L^\frac{4}{3}(0, T; \BB_{2, 2}^{\frac{1}{2}})}	
	\end{aligned}
	\end{equation*}
	Hence, applying Lemma \ref{control-of-u2} and Remark \ref{bid-rmk}
	\begin{equation*}		
	\begin{aligned}
	\| 
			\,
			\uu^n_1\cdot \nabla& \uu^n_2
			\,
	\|_{L^1(0, T; \BB_{\infty, 1}^{-1})}
	\,\lesssim\,
	\nu^{-\frac{5}{4}}
	\Big(
			\| \,\uu_0 \, \|_{L^2(\RR^2)}
			\,+\,\\
			&+
			\nu^{-1}\| \,\uu_0 \, \|_{L^2(\RR^2)}^2
			\,+\,
			\sqrt{\frac{1-e^{-2aT}}{2a\nu}}
			\| \,\tau_0 \, \|_{L^2(\RR^2)}
			\,+\,
			\nu^{-1}
			\frac{1-e^{-2aT}}{2a\nu}
			\| \,\tau_0 \, \|_{L^2(\RR^2)}^2
		\Big)^2
	\|\, \tau^n \,\|_{L^1(0, T; \BB_{\infty, 1}^0)},
	\end{aligned}
	\end{equation*}
	for $\mu = 0$, while 
	\begin{equation*}		
	\begin{aligned}
	\| 
			\,
			\uu^n_1\cdot \nabla& \uu^n_2
			\,
	\|_{L^1(0, T; \BB_{\infty, 1}^{-1})}
	\lesssim\,
	\nu^{-\frac{5}{4}}
	\Big\{
			(1+\sqrt{\frac{1-e^{-2aT}}{2a\nu}}\mu^\frac{1}{2})
			\| \uu_0  \|_{L^2(\RR^2)}
			+\\ &+ 
			\nu^{-1}\| \uu_0  \|_{L^2(\RR^2)}^2		
		+
			(\mu^{-\frac{1}{2}}
			+
			\sqrt{\frac{1-e^{-2aT}}{2a\nu}}
			)
			\| \tau_0  \|_{L^2(\RR^2)}
			+
			\nu^{-1}
			\frac{1-e^{-2aT}}{2a\nu}
			\| \tau_0  \|_{L^2(\RR^2)}^2
		\Big\}^2
	\|\, \tau^n \,\|_{L^1(0, T; \BB_{\infty, 1}^0)},
	\end{aligned}
	\end{equation*}
	for $\mu>0$. A similar estimate is also valid for $\uu^n_2\cdot \nabla \uu^n_1$. 
	%\begin{equation*}		
	%\begin{aligned}
	%\| 
	%		\,
	%		\uu^n_2\cdot \nabla &\uu^n_1
	%		\,
	%\|_{L^1(0, T; \BB_{\infty, 1}^{-1})}
	%	\,\lesssim\,
	%	\left\| 
	%		\,
	%		\uu^n_2\cdot \nabla \uu^n_1
	%		\,
	%	\right\|_{L^1(0, T; \BB_{2, 1}^{0})}
	%	\,\lesssim\,
	%	\| \, \uu^n_2\,\|_{\tilde L^4(0, T; \BB_{2, 1}^\frac{1}{2})}
	%	\|\, \nabla \uu^n_1\,\|_{\tilde L^\frac{4}{3}(0, T; \BB_{2, 2}^\frac{1}{2})}\\
	%	&\lesssim\,
	%	\nu^{-\frac {13}8}
	%	\left(
	%		\| \,\uu_0 \, \|_{L^2(\RR^2)}
	%		\,+\,
	%		\| \,\uu_0 \, \|_{L^2(\RR^2)}^2
	%		\,+\,
	%		\nu^{-\frac 12}
	%		T^{\frac 12}
	%		\| \,\tau_0 \, \|_{L^2(\RR^2)}
	%		\,+\,
	%		\nu^{-1}
	%%		T
	%		\| \,\tau_0 \, \|_{L^2(\RR^2)}^2
	%	\right)
	%	\|\, \tau^n \,\|_{L^1(0, T; \BB_{\infty, 1}^0)}.
	%\end{aligned}
	%\end{equation*}
	We now estimate the following term
	\begin{equation*}
	\begin{aligned}
	\left\| 
			\,
			\uu^L\cdot \nabla \uu^n_2
			\,
	\right\|_{L^1(0, T; \BB_{\infty, 1}^{-1})}
	\,\lesssim\,
	\left\| 
			\,
			\uu^L\cdot \nabla \uu^n_2
			\,
	\right\|_{L^1(0, T; \BB_{2, 1}^{0})}
	\,\lesssim\,
	\|\,\uu^L\,\|_{\tilde L^4(0, T; \BB_{2, 2}^{\frac{1}{2}})}
	\|\,\nabla \uu^n_2\,\|_{\tilde L^\frac{4}{3}(0, T; \BB_{2, 2}^{\frac{1}{2}})}
	\end{aligned}
	\end{equation*}
	thus, applying Lemma \ref{control-of-u2}
	\begin{equation*}
	\left\| 
			\,
			\uu^L\cdot \nabla \uu^n_2
			\,
	\right\|_{L^1(0, T; \BB_{\infty, 1}^{-1})}
	\,\lesssim\,
	\nu^{-1}\|\,\uu_0\,\|_{L^2(\RR^2)}
	\|\, \tau^n \,\|_{L^1(0, T; \BB_{\infty, 1}^0)}.
	\end{equation*}
	Similarly
	\begin{equation*}
	\begin{aligned}
		\left\| 
			\,
			\uu^n_2\cdot \nabla \uu^L
			\,
	\right\|_{L^1(0, T; \BB_{\infty, 1}^{-1})}
	\,&\lesssim\,
	\left\| 
			\,
			\uu^n_2\cdot \nabla \uu^L
			\,
		\right\|_{L^1(0, T; \BB_{2, 1}^{0})}\\
		\,&\lesssim\,
		\| \, \uu^n_2\,\|_{\tilde L^4(0, T; \BB_{2, 1}^\frac{1}{2})}
		\|\, \nabla \uu^L\,\|_{\tilde L^\frac{4}{3}(0, T; \BB_{2, 2}^\frac{1}{2})}\\
		&\lesssim
		\,\nu^{-1}
		\|\,\uu_0\,\|_{L^2(\RR^2)}
		\|\, \tau^n \,\|_{L^1(0, T; \BB_{\infty, 1}^0)}
	\end{aligned}
	\end{equation*}
	Finally, we have that
	\begin{equation*}
			\left\| 
			\,
			\uu^L\cdot \nabla \uu^L
			\,
			\right\|_{L^1(0, T; \BB_{\infty, 1}^{-1})}
		\,\lesssim\,	
		\| \, \uu^L\,\|_{\tilde L^4(0, T; \BB_{2, 1}^\frac{1}{2})}
		\|\, \nabla \uu^L\,\|_{\tilde L^\frac{4}{3}(0, T; \BB_{2, 2}^\frac{1}{2})}
		\,\lesssim\,
		\nu^{-1}\|\,\uu_0\,\|_{L^2}^2.
	\end{equation*}
	Summarizing the above inequalities, we can then conclude the proof of the proposition.
\end{proof}
\noindent
We are now in the condition to prove that the velocity field is Lipschitz in space.
\begin{theorem}\label{thm-Idontknow}
		For any index $n\in\NN$ the velocity field $\uu^n$ belongs to the functional space $L^\infty(0, T; \BB_{\infty, 1}^{-1})\cap L^1(0, T; \BB_{\infty, 1}^1)$, for any time $T>0$. Furthermore the following inequalities hold true
		\begin{equation*}
		\begin{aligned}
			\nu
			\|\,\uu^n\,\|_{L^1(0, T;\BB_{\infty, 1}^{1} )}
			\,&\leq\,
			\Upsilon^1_{\nu,\mu}(T,\, \uu_0,\,\tau_0),\\
			\|\,\uu^n\,\|_{L^\infty(0, T;\BB_{\infty, 1}^{-1} )}
			\,&\leq\,
			\Upsilon^2_{\nu,\mu}(T,\, \uu_0,\,\tau_0).
		\end{aligned}
		\end{equation*}
\end{theorem}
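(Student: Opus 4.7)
The strategy is to apply the Stokes estimate of Remark \ref{rmk:Stokes-bound} to the $\uu^n$--equation in the Chemin--Lerner scale based on $\BB_{\infty,1}^{-1}$, treating both $\Div(\uu^n\otimes\uu^n)$ and $\Div\,\tau^n$ as forcing terms. Since $J_n$ is uniformly bounded on every Besov space, the Leray projector is a homogeneous Fourier multiplier of order zero, and $\|\Div\,\tau^n\|_{\BB_{\infty,1}^{-1}}\lesssim\|\tau^n\|_{\BB_{\infty,1}^{0}}$, this produces
\begin{equation*}
\|\uu^n\|_{\tilde L^\infty(0,T;\BB_{\infty,1}^{-1})}+\nu\|\uu^n\|_{\tilde L^1(0,T;\BB_{\infty,1}^{1})}\leq C\Big(\|\uu_0\|_{\BB_{\infty,1}^{-1}}+\|\Div(\uu^n\otimes\uu^n)\|_{L^1(0,T;\BB_{\infty,1}^{-1})}+\|\tau^n\|_{L^1(0,T;\BB_{\infty,1}^{0})}\Big).
\end{equation*}
The convective contribution is controlled by Proposition \ref{prop-almost-there} in terms of $\Psi_{1,\nu,\mu,a}$ and $\Psi_{2,\nu,\mu,a}\|\tau^n\|_{L^1(0,T;\BB_{\infty,1}^0)}$ from Definition \ref{def-Upsilon}.

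The decisive input is Lemma \ref{lemma:bound_of_tau-linear_on_nablau}, whose \emph{linear} dependence on $\|\nabla\uu^n\|_{L^1(0,t;\BB_{\infty,1}^0)}$ at the borderline index zero is precisely what avoids the exponential blow-up that the general Lemma \ref{lemma:bound_of_tau-exp_on_nablau} would yield. Applied to the $\tau^n$--equation with source $\mu\mathbb{D}^n$, and using the embedding $\|\mathbb{D}^n\|_{\BB_{\infty,1}^0}\lesssim\|\uu^n\|_{\BB_{\infty,1}^1}$, it gives
\begin{equation*}
\|\tau^n(t)\|_{\BB_{\infty,1}^0}\leq C\Big(\|\tau_0\|_{\BB_{\infty,1}^0}e^{-at}+\mu\int_0^t e^{a(s-t)}\|\uu^n(s)\|_{\BB_{\infty,1}^1}\dd s\Big)\Big(1+\|\uu^n\|_{L^1(0,t;\BB_{\infty,1}^1)}\Big).
\end{equation*}
After time-integration of the prefactor the function $\Theta_a(T)$ appears naturally; moreover the $\mu$--source contributes a self-feedback which is quadratic in $\|\uu^n\|_{L^1(0,T;\BB_{\infty,1}^1)}$.

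Setting $X(T):=\nu\|\uu^n\|_{L^1(0,T;\BB_{\infty,1}^{1})}$ and substituting the bound for $\|\tau^n\|_{L^1(0,T;\BB_{\infty,1}^0)}$ into the Stokes inequality (applied for each $t\in[0,T]$ and then integrated in time), one arrives at a closed scalar inequality matching \eqref{ineq-lemma:lifespan}:
\begin{equation*}
X(T)\leq g_1(T)+\int_0^T g_2(s)\,X(s)\,\dd s+g_3(T)\int_0^T X(s)^2\,\dd s,
\end{equation*}
where $g_1(T)$ collects the pure data terms built from $\|\uu_0\|_{\BB_{\infty,1}^{-1}}$, $\Psi_{1,\nu,\mu,a}$ and $(\Psi_{2,\nu,\mu,a}+C)\|\tau_0\|_{\BB_{\infty,1}^0}\Theta_a(T)$, the kernel $g_2$ encodes the linear $X$-feedback proportional to $\|\tau_0\|_{\BB_{\infty,1}^0}$, and $g_3(T)\sim C\mu T/\nu^2$ arises from the stretching term in the $\tau$--equation. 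When $\mu=0$ the quadratic term disappears and a classical Gronwall argument produces $X(T)\leq\Upsilon^1_{\nu,\mu}(T,\uu_0,\tau_0)$ with exactly the exponential factor of Definition \ref{def-Upsilon}; reinjecting this back into Lemma \ref{lemma:bound_of_tau-linear_on_nablau} yields the $\|\tau^n\|_{L^\infty(0,T;\BB_{\infty,1}^0)}$ estimate, and substituting once more into the Stokes inequality provides $\Upsilon^2_{\nu,\mu}$. For $\mu>0$ the quadratic feedback forces us to invoke the lifespan lemma at the end of Section \ref{sec:tensor}: the maximal time is determined by the implicit integral condition, and the denominator factor appearing in the second formulation of $\Upsilon^1_{\nu,\mu,a}$ in Definition \ref{def-Upsilon} is exactly the one produced by that lemma. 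The main obstacle lies in this last case: one has to check that all combinations of $\nu$, $\mu$, $a$, $\Gamma_{a,\nu}(T)$ and $\Theta_a(T)$ emerging from the Gronwall--lifespan manipulation reproduce the explicit formulas of Definition \ref{def-Upsilon} without losing the sharp control coming from Propositions \ref{prop:bid1}--\ref{prop:bid2} and from the linear-in-$\|\nabla\uu\|$ propagation at the borderline Besov index.
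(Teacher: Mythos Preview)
Your proposal is correct and follows essentially the same route as the paper: Stokes estimate in $\BB_{\infty,1}^{-1}$, Proposition \ref{prop-almost-there} for the convective term, the linear-in-$\|\nabla\uu\|$ bound of Lemma \ref{lemma:bound_of_tau-linear_on_nablau} for $\tau^n$, and then a Gronwall argument (classical for $\mu=0$, the lifespan lemma for $\mu>0$). The paper's own proof is slightly terser but structurally identical; your identification of where the quadratic self-feedback in $X$ arises and why the denominator in $\Upsilon^1_{\nu,\mu,a}$ appears is exactly right.
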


\begin{proof}
	Recalling the mild formulation of the velocity field $\uu^{n}(t)$
	\begin{equation*}
			\uu^{n}(t)\,=\,{\rm e}^{\nu t \Delta}J_n\uu_0
			\,+\,
			\int_0^t \mathcal{P}J_n{\rm e}^{\nu(t-s)\Delta}
						\Div(\,\uu^n(s)\otimes\uu^n(s)\,)\dd s
			\,+\,
			\int_0^t \mathcal{P}J_n{\rm e}^{\nu(t-s)\Delta}
						\Div\,\tau^{n}(s)\dd s,
	\end{equation*}		
	hence
	\begin{equation*}
	\begin{aligned}
		\nu  \|\,\uu^{n}\,\|_{L^1(0,T; \BB_{\infty, 1}^{1})}
		\,\leq\,
		\|\,\uu_0\,\|_{\BB_{\infty,1}^{-1}}
		\,+\,
		C
		\|\,\Div(\,\uu^n\otimes \uu^n )\,\|_{L^1(0, T; \BB_{\infty, 1}^{-1})}
		\,+\,
		C
		\|\,\Div\, \tau^n \,\|_{L^1(0, T; \BB_{\infty, 1}^{-1})}.
	\end{aligned}
	\end{equation*}
	Next, applying Proposition \ref{prop-almost-there} we deduce that
	\begin{equation*}
	\begin{aligned}
		\|\,\uu^{n}\,\|_{L^\infty(0,T; \BB_{\infty, 1}^{-1})}
		+
		\nu
		\|\,\uu^{n}\,\|_{L^1(0,T; \BB_{\infty, 1}^{1})}
		\,\leq\,
		\|\,\uu_0\,\|_{\BB_{\infty,1}^{-1}}
		+
		\Psi_{1,\nu,\,\mu}(T,\,\uu_0,\,\tau_0)
		+
		\left(\,
		\Psi_{2,\,\nu\,\mu}(T,\,\uu_0,\,\tau_0)
		+
		C
		\right)
		\left\|
		\,
			\tau^n
		\,
		\right\|_{L^1(0, T; \BB_{\infty, 1}^0)}.
	\end{aligned}
	\end{equation*}
	Hence, Lemma \ref{lemma:bound_of_tau-linear_on_nablau} allows us to gather
	\begin{equation*}	
	\begin{aligned}
		\|\uu^{n}\|_{L^\infty(0,T; \BB_{\infty, 1}^{-1})}
		+
		\nu
		\|\uu^{n}\|_{L^1(0,T; \BB_{\infty, 1}^{1})}
		\,\leq\,
		\|\uu_0\|_{\BB_{\infty,1}^{-1}}
		+
		\Psi_{1,\nu,\mu}(T,\uu_0,\tau_0)
		+
		C
		\Big(
		\Psi_{2,\,\nu,\,\mu}(T,\uu_0,\tau_0)
		+
		1
		\Big)
		\|\tau_0\|_{ \BB_{\infty,1}^0}
		\frac{1-e^{-aT}}{a}+\\
		+
		C
		\Psi_{2,\,\nu,\,\mu}(T,\,\uu_0,\,\tau_0)
		\int_0^T
		\big(\mu + 
		\|\tau_0\|_{ \BB_{\infty,1}^0}e^{-at}
		\big)
		\|\uu\|_{L^1(0, t; \BB_{\infty, 1}^{1})}
		\dd t+
		C\mu 
		\Psi_{2,\,\nu,\,\mu}(T,\,\uu_0,\,\tau_0)
		\int_0^T
		\|\uu\|_{L^1(0, t; \BB_{\infty, 1}^{1})}^2
		\dd t.
	\end{aligned}
	\end{equation*}
	Applying the Gronwall inequality we hence deduce that
	\begin{equation*}
		\nu
		\|\,\uu^{n}\,\|_{L^1(0,T; \BB_{\infty, 1}^{1})}
		\,\leq\,
		\Upsilon_{\nu}^1(T,\, \uu_0,\,\tau_0),\qquad 
		\text{for any}\quad T\in [0,T_{\rm max}),
	\end{equation*}
	where the smooth function $\Upsilon_{\nu}^1(T,\, \uu_0,\,\tau_0)$ is defined in Definition \eqref{def-Upsilon} together with $T_{\rm max}$. The lifespan $T_{max}$ is $+\infty$ when $\mu = 0$.
	
\end{proof}

\subsection{\bf Propagation of $\BB_{p,1}^{\frac{2}{p}-1}\times \BB_{p,1}^{\frac{2}{p}}$-regularity with index $1\leq p< 2$}.

\noindent
In the previous section we establish the propagation of a Lipschitz-in-space regularity for any approximate velocity fields $\uu^n$. We now use this criterion to propagate higher regularity given by the initial condition
\begin{equation}\label{initial-condtiion-Bp1}
	\uu_0\in \BB_{p,1}^{\frac{2}{p}-1}\quad\quad\text{and}\quad\quad \tau_0\in\BB_{p,1}^{\frac{2}{p}},
\end{equation}
where $p$ is assumed in $[1, \infty)$.

\begin{theorem}
	For any index $n\in \NN$ and for any time $t\in [0,T)$ the following estimate for the conformation tensor $\tau^n$ holds true
	\begin{equation}\label{ineq-tauBp}
			\|\,\tau^n(t)\,\|_{\BB_{p,1}^{\frac{2}{p}}}
			\,\leq\,
			\|\,\tau_0\,\|_{\BB_{p,1}^{\frac{2}{p}}}\exp\bigg\{\,C\nu^{-1}\,\Upsilon_{1,\nu}(T, \,\uu_0,\,\tau_0)\,\bigg\},
	\end{equation}		
	for a suitable positive constant $C>0$. Furthermore, the approximate velocity field $\uu^n$ satisfies 
	\begin{equation}\label{ineq-uuBp}
		\|\,\uu^n(t)\,\|_{\BB_{p,1}^{\frac{2}{p}-1}}
		\,+\,
		\int_0^t
		\|\,\uu^n(s)\,\|_{\BB_{p,1}^{\frac{2}{p}+1}}
		\dd s
		\,
		\,\leq\,
		\|\,\uu_0\,\|_{\BB_{p,1}^{\frac{2}{p}-1}}
		\exp\bigg\{C\nu^{-2}\Upsilon^1_\nu(T,\,\uu_0,\,\tau_0)\Upsilon^2_\nu(T,\,\uu_0,\,\tau_0)\bigg\}.
	\end{equation}
\end{theorem}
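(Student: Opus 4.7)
The plan is to derive the two inequalities sequentially, exploiting the fact that the Lipschitz bound on $\uu^n$ provided by Theorem \ref{thm-Idontknow} was obtained without any control in the $\BB_{p,1}^{2/p-1}\times\BB_{p,1}^{2/p}$ framework. The conformation equation can therefore first be treated as a linear transport-type problem driven by a Lipschitz velocity, after which the velocity equation is analyzed through heat-kernel estimates with $\Div\,\tau^n$ playing the role of a now-controlled source term.

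\textbf{Step 1: bound for $\tau^n$.} Since $\mu=0$ in this statement, the equation for $\tau^n$ in \eqref{main_system_appx1} fits the template \eqref{a-priori-tau-eq} with vanishing source term, up to the Fourier cutoff $J_n$ which preserves all Besov norms. Applying Lemma \ref{lemma:bound_of_tau-exp_on_nablau} at the regularity index $s=\sigma=2/p$ (extending it through a standard derivative-and-commutator argument when $2/p\geq 1$, i.e.\ when $p\leq 2$) yields
\begin{equation*}
	\|\,\tau^n(t)\,\|_{\BB_{p,1}^{2/p}} \leq \|\,\tau_0\,\|_{\BB_{p,1}^{2/p}} \exp\Big\{ C\int_0^t \|\,\nabla \uu^n(s)\,\|_{\BB_{\infty,1}^0}\,\dd s\Big\}.
\end{equation*}
Using $\|\nabla\uu^n\|_{\BB_{\infty,1}^0}\lesssim \|\uu^n\|_{\BB_{\infty,1}^1}$ and Theorem \ref{thm-Idontknow}, the integral in the exponent is bounded by $C\nu^{-1}\Upsilon^1_\nu(T,\uu_0,\tau_0)$, producing \eqref{ineq-tauBp}.

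\textbf{Step 2: bound for $\uu^n$.} Applying Remark \ref{rmk:Stokes-bound} to the velocity equation in the critical space $\BB_{p,1}^{2/p-1}$ and setting $Y(t):=\|\uu^n\|_{\tilde L^\infty(0,t;\BB_{p,1}^{2/p-1})}+\nu\|\uu^n\|_{L^1(0,t;\BB_{p,1}^{2/p+1})}$, I get, after rewriting $\uu^n\cdot\nabla\uu^n=\Div(\uu^n\otimes\uu^n)$,
\begin{equation*}
	Y(t) \lesssim \|\,\uu_0\,\|_{\BB_{p,1}^{2/p-1}}+\int_0^t\|\,\uu^n\otimes\uu^n\,\|_{\BB_{p,1}^{2/p}}\dd s+\int_0^t\|\,\tau^n\,\|_{\BB_{p,1}^{2/p}}\dd s.
\end{equation*}
The last integral is controlled through \eqref{ineq-tauBp}, yielding a contribution of order $t\|\tau_0\|_{\BB_{p,1}^{2/p}}\exp(C\nu^{-1}\Upsilon^1_\nu)$. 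For the nonlinearity, the algebra property of $\BB_{p,1}^{2/p}$ in dimension two gives $\|\uu^n\otimes\uu^n\|_{\BB_{p,1}^{2/p}}\lesssim \|\uu^n\|_{L^\infty}\|\uu^n\|_{\BB_{p,1}^{2/p}}$; combined with the interpolations $\|\uu^n\|_{L^\infty}\lesssim\|\uu^n\|_{\BB_{\infty,1}^{-1}}^{1/2}\|\uu^n\|_{\BB_{\infty,1}^1}^{1/2}$ and $\|\uu^n\|_{\BB_{p,1}^{2/p}}\lesssim\|\uu^n\|_{\BB_{p,1}^{2/p-1}}^{1/2}\|\uu^n\|_{\BB_{p,1}^{2/p+1}}^{1/2}$ and H\"older in time, this produces the estimate
\begin{equation*}
	\int_0^t\|\,\uu^n\otimes\uu^n\,\|_{\BB_{p,1}^{2/p}}\dd s \lesssim \nu^{-1}\big(\Upsilon^1_\nu(t)\,\Upsilon^2_\nu(t)\big)^{1/2}\, Y(t).
\end{equation*}

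\textbf{Main obstacle.} The factor $\nu^{-1}(\Upsilon^1_\nu\Upsilon^2_\nu)^{1/2}$ in front of $Y(t)$ cannot be absorbed on the left-hand side over an arbitrary time interval. To overcome this I would partition $[0,T]$ into consecutive sub-intervals $[T_i,T_{i+1}]$ on each of which $\nu^{-1}(\Upsilon^1_\nu\Upsilon^2_\nu)^{1/2}\leq 1/2$; since $\Upsilon^1_\nu$ is increasing and vanishes at the origin, such a partition exists with cardinality at most $C\nu^{-2}\Upsilon^1_\nu(T)\,\Upsilon^2_\nu(T)$. On each sub-interval the absorption bound gives a geometric doubling of $Y$, and after $N$ iterations one obtains $Y(T)\lesssim 2^N\|\uu_0\|_{\BB_{p,1}^{2/p-1}}\lesssim \exp\{C\nu^{-2}\Upsilon^1_\nu\Upsilon^2_\nu\}\|\uu_0\|_{\BB_{p,1}^{2/p-1}}$, which is \eqref{ineq-uuBp}. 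All ingredients (the Lipschitz bound, the heat-kernel estimates and the interpolation inequalities) being uniform in $n$, the estimates pass intact to each approximate solution, paving the way for the compactness argument that will yield the limit solution in Theorem \ref{main_thm0}.
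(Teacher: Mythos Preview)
Your argument is correct and uses the same core ingredients as the paper: Lemma \ref{lemma:bound_of_tau-exp_on_nablau} for $\tau^n$, the Stokes estimate of Remark \ref{rmk:Stokes-bound} for $\uu^n$, and the same product/interpolation bound $\|\uu^n\otimes\uu^n\|_{\BB_{p,1}^{2/p}}\lesssim \|\uu^n\|_{\BB_{\infty,1}^{-1}}^{1/2}\|\uu^n\|_{\BB_{\infty,1}^1}^{1/2}\|\uu^n\|_{\BB_{p,1}^{2/p-1}}^{1/2}\|\uu^n\|_{\BB_{p,1}^{2/p+1}}^{1/2}$. The divergence is only in how Step 2 is closed. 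The paper does not partition the time interval; instead it applies Young's inequality pointwise in time to split the product bound as
\[
\|\uu^n\otimes\uu^n\|_{\BB_{p,1}^{2/p}}\leq \tfrac{\nu}{2}\|\uu^n\|_{\BB_{p,1}^{2/p+1}}+C\nu^{-1}\|\uu^n\|_{\BB_{\infty,1}^{-1}}\|\uu^n\|_{\BB_{\infty,1}^1}\|\uu^n\|_{\BB_{p,1}^{2/p-1}},
\]
absorbs the first piece directly into the left-hand side, and then applies Gronwall to the remaining linear-in-$\|\uu^n\|_{\BB_{p,1}^{2/p-1}}$ term with integrable coefficient $\|\uu^n\|_{\BB_{\infty,1}^{-1}}\|\uu^n\|_{\BB_{\infty,1}^1}$. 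This yields the same exponential factor $\exp\{C\nu^{-2}\Upsilon^1_\nu\Upsilon^2_\nu\}$ in one stroke, without iteration. Your partition argument is a legitimate substitute and the subinterval count $N\lesssim\nu^{-2}\Upsilon^1_\nu\Upsilon^2_\nu$ is right, but the Young-plus-Gronwall route is shorter and avoids tracking the $\tau^n$ source through the iteration (note that both proofs actually produce an additive $T\|\tau_0\|_{\BB_{p,1}^{2/p}}$ term inside the exponential prefactor, which the theorem statement suppresses).
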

\begin{proof}
	We recall that $\tau^n$ satisfies the equation
	\begin{equation*}
		\partial_t \tau^n\,+\,\uu^n\cdot \nabla \tau^n - \omega^n\tau^n\,+\,\tau^n\omega^n\,+a\tau^n =\,0,
	\end{equation*}
	hence, applying Lemma \ref{lemma:bound_of_tau-exp_on_nablau}, we deduce that
	\begin{equation*}
		e^{a t}\|\,\tau^n(t)\,\|_{\BB_{p,1}^{\frac{2}{p}}}
		\,\leq\,
		\|\,\tau_0\,\|_{\BB_{p,1}^{\frac{2}{p}}}\exp\bigg\{ C\|\,\uu\,\|_{L^1(0,t;\BB_{\infty,1}^{1})}\bigg\}.
	\end{equation*}
	We hence apply the first inequality of Theorem \ref{thm-Idontknow}, to eventually gather inequality \eqref{ineq-tauBp}.
	
	\noindent Similarly, we remark that $\uu^n$ satisfies the Stokes equation
	\begin{equation*}
		\begin{cases}
			\partial_t \uu^n\,-\nu \Delta \uu^n\,+\,\nabla \pre^n \,=\,-J^n(\uu^n\cdot \nabla \uu^n)\,+\,\Div J^n \tau^n,\\
			\Div\,\uu^n\,=\,0,
		\end{cases}
	\end{equation*}
	from which we gather that
	\begin{equation}\label{ineq118b}
		\|\,\uu^n(t)\,\|_{\BB_{p,1}^{\frac{2}{p}-1}}
		\,+\,
		\nu\int_0^t\|\,\uu^n(s)\,\|_{\BB_{p,1}^{\frac{2}{p}+1}}\dd s
		\,\leq\,
		\int_0^t\|\,\uu^n(s)\otimes \uu^n(s)\,\|_{\BB_{p,1}^{\frac{2}{p}}}\dd s
		\,+\,
		\int_0^t\|\,\tau^n(s)\,\|_{\BB_{p,1}^{\frac{2}{p}}}\dd s.
	\end{equation}
	Now, applying Proposition \ref{prop:reminder-und-paraproduct}, we remark that $\uu^n\otimes\uu^n$ can be recast as follows
	\begin{equation*}
	\begin{aligned}
		\|\,\uu^n\otimes\uu^n\,\|_{\BB_{p,1}^\frac{\dd}{p}}
		\,&\leq\,
		2\|\dot T_{\uu^n\otimes}\uu^n\,\|_{\BB_{p,1}^\frac{\dd}{p}}
		\,+\,
		\|\,R(\uu^n\otimes,\,\uu^n)\,\|_{\BB_{p,1}^\frac{\dd}{p}}\\
		\,&\leq\,
		C\|\,\uu^n\,\|_{L^\infty}\|\,\uu^n\,\|_{\BB_{p,1}^{\frac{\dd}{p}}}\,+\,\|\,\uu^n\,\|_{\BB_{\infty, \infty}^0}\|\,\uu^n\,\|_{\BB_{p,1}^{\frac{\dd}{p}}}
		\,\leq\,
		C\|\,\uu^n\,\|_{\BB_{p,1}^{\frac{2}{p}}}\|\,\uu^n\,\|_{\BB_{\infty, 1}^0}.
	\end{aligned}
	\end{equation*}
	Hence, by interpolation we get
	\begin{equation*}
	\begin{aligned}
		\|\,\uu^n\otimes \uu^n\,\|_{\BB_{p,1}^{\frac{2}{p}}}
		\,\leq\,C\|\,\uu^n\,\|_{\BB_{p,1}^{\frac{2}{p}}}\|\,\uu^n\,\|_{\BB_{\infty, 1}^0}
		\,\leq\,C\|\,\uu^n\,\|_{\BB_{p,1}^{\frac{2}{p}-1}}^{\frac{1}{2}}\|\,\uu^n\,\|_{\BB_{p,1}^{\frac{2}{p}+1}}^{\frac{1}{2}}
		\|\,\uu^n\,\|_{\BB_{\infty, 1}^{-1}}^{\frac{1}{2}}
		\|\,\uu^n\,\|_{\BB_{\infty, 1}^1}^{\frac{1}{2}}\\
		\leq\,
		\frac{\nu}{2}
		\|\,\uu^n\,\|_{\BB_{p,1}^{\frac{2}{p}+1}}
		\,+\,
		C\nu^{-1}
		\|\,\uu^n\,\|_{\BB_{\infty, 1}^{-1}}
		\|\,\uu^n\,\|_{\BB_{\infty, 1}^1}
		\|\,\uu^n\,\|_{\BB_{p,1}^{\frac{2}{p}-1}}.
	\end{aligned}
	\end{equation*}
	Replacing the above inequality into \eqref{ineq118b}, we can then apply the Gronwall inequality to eventually gather that
	\begin{equation*}
	\begin{aligned}
		\|\,\uu^n(t)\,&\|_{\BB_{p,1}^{\frac{2}{p}-1}}
		\,+\,
		\int_0^t
		\|\,\uu^n(s)\,\|_{\BB_{p,1}^{\frac{2}{p}+1}}
		\dd s
		\,
		\leq \\
		\,&\leq\,
		\Big(
			\|\,\uu_0\,\|_{\BB_{p,1}^{\frac{2}{p}-1}}
			\,+\,
			\frac{1-e^{-aT}}{a}\|\,\tau_0\,\|_{\BB_{p,1}^{\frac{2}{p}-1}}
		\Big)
		\exp\bigg\{C(\nu^{-1}\|\,\uu^n\,\|_{L^\infty(0,T;\BB_{\infty, 1}^{-1})}+1)
		\|\,\uu^n\,\|_{L^1(0,T;\BB_{\infty, 1})}\bigg\}
		\\
		\,&\leq\,
	\Big(
			\|\,\uu_0\,\|_{\BB_{p,1}^{\frac{2}{p}-1}}
			\,+\,
			\frac{1-e^{-aT}}{a}\|\,\tau_0\,\|_{\BB_{p,1}^{\frac{2}{p}}}
		\Big)
		\exp\bigg\{C\nu^{-1}\Upsilon^1_\nu(T,\,\uu_0,\,\tau_0)\big(\nu^{-1}\Upsilon^2_\nu(T,\,\uu_0,\,\tau_0)+1\big)\bigg\}.
	\end{aligned}
	\end{equation*}
\end{proof}

\subsection{\bf Passage to the limit}\label{sec:limits-pas}$\,$

\noindent To conclude the proof of Theorem \ref{main_thm0}, it remains to pass to the limit in system \eqref{main_system_appx1}, as $n$ goes to infinity. We first recall that $(\uu^n)_\NN$ is uniformly bounded in 
\begin{equation}\label{cond-on-u}
		L^\infty_{\rm loc}(\RR_+,\,\BB_{p,1}^{\frac{2}{p}-1})\cap L^1_{\rm loc}(\RR_+,\,\BB_{p,1}^{\frac{2}{p}+1}) 
		\quad\quad\text{and also}\quad\quad
		L^\infty_{\rm loc}(\RR_+,L^2(\RR^2))\cap L^2_{\rm loc}(\RR_+,\,\dot H^1(\RR^2)),
\end{equation}
while the sequence $(\tau^n)_\NN$ is uniformly bounded into
\begin{equation*}
		L^\infty_{\rm loc}(\RR_+,\,\BB_{p,1}^{\frac{2}{p}})\cap
		L^\infty_{\rm loc}(\RR_+,L^2(\RR^2)).
\end{equation*}
We hence proceed by extrapolating a convergent subsequence within a suitable functional space. To this end we consider the classical Aubin-Lions Lemma.
\begin{lemma}\label{Aubin-Lions-Lemma}
	Consider three Banach Spaces $X_0$, $X$ and $X_1$ such that $X_0\subseteq X \subseteq X_1$. Assume that $X_0$ is compactly embedded in $X$ and that $X$ is continuously embedded in $X_1$. For $1\leq s,r\leq \infty$, let 
	$W$ be defined by
	\begin{equation*}
		W\, =\, \left\{\,v  \in L^s(0,T; X_0)\quad\text{such that}\quad \partial_t v \in L^r(0,T;X_1)\,\right\}.
	\end{equation*}
	Then if $s<+\infty$, the embedding of $W$ into $L^s(0,T; X)$ is compact, while if $s=+\infty$ and $r>1$ then the embedding of $W$ into $\mathcal{C}([0, T],\,X)$ is compact.
\end{lemma}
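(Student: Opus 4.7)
The plan is to prove this by the standard Aubin--Lions--Simon strategy: an interpolation-type inequality (often attributed to Ehrling) that trades $X_0$-regularity against $X_1$-smallness, combined with a time-equicontinuity estimate coming from the bound on $\partial_t$, and finally a Kolmogorov--Riesz (for $s<\infty$) or Arzel\`a--Ascoli (for $s=\infty$) compactness criterion.

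First I would establish the Ehrling-type interpolation inequality: for every $\eta>0$ there exists $C_\eta>0$ such that
\begin{equation*}
\|v\|_{X}\,\leq\, \eta\,\|v\|_{X_{0}}\,+\,C_\eta\,\|v\|_{X_{1}}\qquad \forall v\in X_0.
\end{equation*}
This is proven by contradiction: if the inequality failed for some $\eta_0>0$, one would find $(v_n)\subset X_0$ with $\|v_n\|_{X}=1$, $\|v_n\|_{X_0}\leq 1/\eta_0$, and $\|v_n\|_{X_1}\to 0$. The compact embedding $X_0\hookrightarrow X$ extracts a subsequence converging in $X$, but the continuous embedding $X\hookrightarrow X_1$ forces the limit to vanish, contradicting $\|v_n\|_{X}=1$. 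Next, from the representation $v_n(t+h)-v_n(t)=\int_t^{t+h}\partial_s v_n(s)\,\mathrm{d}s$ together with H\"older in the time variable, I derive the uniform equicontinuity
\begin{equation*}
\|v_n(t+h)-v_n(t)\|_{X_1}\,\leq\, h^{1-1/r}\|\partial_t v_n\|_{L^r(0,T;X_1)}\,\leq\, C\,h^{1-1/r},
\end{equation*}
with the obvious modification (absolute continuity of the integral) when $r=1$.

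For the case $s<\infty$, combining these two facts via
\begin{equation*}
\|v_n-v_m\|_{L^s(0,T;X)}\,\leq\, \eta\,\|v_n-v_m\|_{L^s(0,T;X_0)}\,+\,C_\eta\,\|v_n-v_m\|_{L^s(0,T;X_1)},
\end{equation*}
together with the uniform boundedness of the first right-hand side, reduces the problem to proving relative compactness of $(v_n)$ in $L^s(0,T;X_1)$. The Kolmogorov--Riesz criterion then applies: the $L^s$-boundedness comes from $X_0\hookrightarrow X_1$, and the uniform translation estimate $\|v_n(\cdot+h)-v_n(\cdot)\|_{L^s(0,T-h;X_1)}\to 0$ follows by integrating Step 2 in time. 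For the case $s=\infty$ with $r>1$, I would instead invoke Arzel\`a--Ascoli in $\mathcal C([0,T];X)$: the Ehrling inequality and Step 2 together yield
\begin{equation*}
\|v_n(t+h)-v_n(t)\|_X\,\leq\, 2\eta\,\|v_n\|_{L^\infty(0,T;X_0)}\,+\,C_\eta\,C\,h^{1-1/r},
\end{equation*}
which gives uniform equicontinuity of $\{v_n\}$ in $\mathcal C([0,T];X)$, while pointwise relative compactness of $\{v_n(t)\}_n$ in $X$ at each fixed $t$ follows from boundedness in $X_0$ and the compact embedding $X_0\hookrightarrow X$.

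The main technical obstacle lies in the first case: the Kolmogorov--Riesz argument requires a genuine translation estimate on all of $\mathbb{R}$, so one has to extend $v_n$ by zero (or by a reflection) outside $[0,T]$ and separately control the contribution of the endpoint strips $[0,h]\cup[T-h,T]$, which does not decay automatically but vanishes as $h\to 0$ thanks to the $L^s$-boundedness. A secondary subtlety is the role of the hypothesis $r>1$ in the $s=\infty$ case: it is essential to obtain a strictly positive H\"older exponent $1-1/r$ in the time-equicontinuity bound, which is why $s=\infty,\,r=1$ must be excluded.
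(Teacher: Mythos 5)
The paper itself offers no proof of this lemma: it is quoted as the classical Aubin--Lions--Simon compactness theorem (Simon's ``Compact sets in the space $L^p(0,T;B)$''), so your attempt can only be measured against the standard argument, whose architecture you reproduce: an Ehrling inequality, a translation (equicontinuity) estimate in the weakest space coming from the bound on $\partial_t v$, and a compactness criterion. Your treatment of the case $s=\infty$, $r>1$ via Arzel\`a--Ascoli is correct, up to the routine remark that the $L^\infty(0,T;X_0)$ bound holds only a.e.\ and is transferred to every $t$ through the continuous $X_1$-valued representative (limits of time averages lying in a fixed ball of $X_0$, whose closure is compact in $X$ and closed in $X_1$).

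There is, however, a genuine gap in the case $s<\infty$. After the Ehrling reduction you assert that boundedness in $L^s(0,T;X_1)$ together with the uniform translation estimate gives relative compactness in $L^s(0,T;X_1)$ ``by Kolmogorov--Riesz''. In a Bochner space with infinite-dimensional target this criterion is false as stated: the constant-in-time sequence $v_n(t)\equiv e_n$, with $(e_n)$ bounded in $X_1$ and without convergent subsequence, is bounded and exactly translation invariant, yet not relatively compact. The vector-valued criterion (Simon, Theorem~1) requires in addition that the sets of time averages $\bigl\{\int_{t_1}^{t_2} v_n(t)\,\dd t \,:\, n\in\NN\bigr\}$ be relatively compact in the target space; this is exactly where the compact embedding must be used a second time, and it does hold in your situation because these averages are bounded in $X_0$ by H\"older and $X_0$ embeds compactly in $X_1$ --- but you never state or use this, and the step fails as written. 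A second, smaller defect concerns $r=1$, which the lemma allows when $s<\infty$: your pointwise bound degenerates ($h^{1-1/r}=1$), and ``absolute continuity of the integral'' is not uniform in $n$; the uniform translation estimate must instead be derived in time-integrated form, e.g. $\|v_n(\cdot+h)-v_n\|_{L^s(0,T-h;X_1)}\le h^{1/s}\,\|\partial_t v_n\|_{L^1(0,T;X_1)}$ by H\"older and Fubini. By contrast, the extension-by-zero/endpoint-strip issue you single out as the main obstacle is immaterial if one uses the criterion with translations restricted to $(0,T-h)$, as in Simon's formulation.
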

\noindent The lemma being stated, we can focalize ourselves to the choice of the functional spaces $X_0$, $X$ and $X_1$: we fix a compact set $K$ in $\RR^2$ and we define
\begin{equation*}
				X_0\,:=\, B_{q, 1}^{\frac{2}{q}-1}(K),\quad\quad X\,=\,X_1\,:=B_{q, 1}^{\frac{2}{q}-2}(K),
\end{equation*}
where $q\geq p $ and $q>2$. The compactness embedding $X_0\hookrightarrow\hookrightarrow X$ of Lemma \ref{Aubin-Lions-Lemma} is satisfied thanks to the following Proposition (cf. \cite{BCD}, Corollary $2.96$).
\begin{prop}
	For any $(s',\,s)$ in $\RR^2$ such that $s'<s$ and any compact set $K$ of $\RR^\dd$, the space $B_{q,\infty}^s(K)$ is compactly embedded in $B_{q,1}^{s'}(K)$.
\end{prop}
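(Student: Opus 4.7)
The plan is to combine a high-frequency tail estimate (exploiting $s > s'$) with a low-frequency compactness argument based on Rellich--Kondrachov, and to conclude by a standard diagonal extraction. This is a classical strategy for proving compact embeddings of Besov spaces on compact sets.

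Consider a bounded sequence $(f_n)_{n\in\NN}$ in $B_{q,\infty}^s(K)$, with $\|f_n\|_{B_{q,\infty}^s}\leq M$ and $\supp f_n \subset K$ for every $n$. I would first control the high-frequency tail: for any integer $N\geq 0$, since $s' < s$,
\begin{equation*}
\| f_n - S_N f_n \|_{B_{q,1}^{s'}}
\,\leq\, \sum_{k\geq N} 2^{ks'}\|\Delta_k f_n\|_{L^q}
\,\leq\, M \sum_{k\geq N} 2^{-k(s-s')},
\end{equation*}
which tends to $0$ as $N\to \infty$, uniformly in $n$.

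For the low-frequency part, I would show that, for every fixed $k$, the sequence $(\Delta_k f_n)_n$ is relatively compact in $L^q(\RR^\dd)$. Indeed, $\Delta_k f_n$ is the convolution of $f_n$ with a fixed Schwartz kernel, so Bernstein's inequalities yield uniform $L^q$-control of arbitrarily many derivatives of $\Delta_k f_n$. Furthermore, since $f_n$ is supported in the fixed compact set $K$ and the convolution kernel is Schwartz, the functions $\Delta_k f_n$ are uniformly tight at infinity (their mass outside any sufficiently large ball decays uniformly in $n$). A classical Rellich--Kondrachov argument on a large ball, combined with the uniform tightness, then extracts a subsequence of $(\Delta_k f_n)_n$ which converges in $L^q(\RR^\dd)$; the same holds for $(S_0 f_n)_n$.

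A diagonal procedure over $k\in\NN$ then produces a subsequence $(f_{n_j})_j$ such that $\Delta_k f_{n_j}$ converges in $L^q$ for every $k$, hence $S_N f_{n_j}$ converges in $B_{q,1}^{s'}$ for every $N$ (only finitely many blocks are involved). Combined with the tail estimate, $(f_{n_j})_j$ is Cauchy in $B_{q,1}^{s'}$ and therefore converges, the limit being automatically supported in $K$. The main technical obstacle is the uniform tightness at infinity in the middle step: although $f_n$ is compactly supported, $\Delta_k f_n$ is not, so one has to quantify carefully how the Schwartz decay of the kernel, combined with the uniform $L^q$-bound on $f_n$ given by the Besov norm, forces the tails of $\Delta_k f_n$ outside a large ball to be small independently of $n$.
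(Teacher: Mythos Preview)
The paper does not supply its own proof of this proposition; it simply cites \cite{BCD}, Corollary~2.96. Your argument is correct and is essentially the classical proof found in that reference: split into a high-frequency tail controlled by the geometric series $\sum_{k\geq N}2^{-k(s-s')}$, extract convergent subsequences block by block in $L^q$ using the fixed compact support and smoothing properties of the Littlewood--Paley kernels, and conclude by diagonal extraction. The only minor remark is that in the low-frequency step you do not really need Rellich--Kondrachov in its Sobolev form: since each $\Dd_k f_n$ has Fourier transform supported in a fixed compact annulus, Bernstein gives uniform $C^1$ bounds on any ball, and Ascoli--Arzel\`a combined with the uniform tightness you describe already yields $L^q$-compactness; but your route works just as well.
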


\noindent Since $H^1(\RR^2)$ is continuously embedded into $L^q(\RR^2)$, we gather that $(\uu^n)_\NN$ is uniformly bounded into $L^2_{\rm loc}(\RR_+,\,B_{q,1}^{2/q})$, where  $B_{q,1}^{2/q}$ stands for the {\it non-homogeneous} Besov space. Furthermore, since $\BB_{q,1}^{2/q}$ is continuously embedded into $L^\infty(\RR^2)$, the sequence $(\tau^n)_\NN$ is uniformly bounded into $L^\infty_{\rm loc}(\RR_+,L^q(\RR^2))$ and so into $L^\infty_{\rm loc}(\RR_+,\,B_{q,1}^{2/q})$ which is embedded into $L^2_{\rm loc}(\RR_+,B_{q,1}^{2/q-1})$. This allows us to conclude that 
\begin{equation*}
		\|\,\Div\,\tau^n\,\|_{L^2_{\rm loc}(\RR_+,B_{q,1}^{2/q-2}(K))} \leq C,
\end{equation*}
for a suitable constant $C$ that does not depend on the index $n\in \NN$.  Now, we claim that $\Div\,j^n(\uu^n\otimes \uu^n)$ is uniformly bounded in $L^r(0,T; B_{q,1}^{2/q-2}(K))$, for a suitable positive index $r>1$.  We first remark that
\begin{equation*}
\begin{aligned}
	\|\,\Div\,j^n(\uu^n\otimes \uu^n)\,\|_{B_{q,1}^{2/q-2}(K)}
	\,&\leq\,
	\|\,j^n(\uu^n\otimes \uu^n)\,\|_{B_{q,1}^{2/q-1}(K)}
	\,\leq\,
	\|\,j^n(\uu^n\otimes \uu^n)\,\|_{B_{q,1}^{2/q-1}}\\
	&\lesssim\,
	\|\,\Sd_{-1} j^n(\uu^n\otimes \uu^n)\,\|_{L^q(\RR^2)}
	\,+\,
	\|\,(\Id -\Sd_{-1}) j^n(\uu^n\otimes \uu^n)\,\|_{\BB_{q,1}^{2/q-1}}\\
	&\lesssim\,
	\|\,j^n(\uu^n\otimes \uu^n)\,\|_{L^q(\RR^2)}
	\,+\,
	\|\,(\Id -\Sd_{-1}) j^n(\uu^n\otimes \uu^n)\,\|_{\BB_{q,1}^{2/q-\ee}},
\end{aligned}
\end{equation*}
where $\ee\in(0,1)$ such that $2/q-\ee>0$.  Hence we eventually gather that
\begin{equation*}
	\|\,\Div\,j^n(\uu^n\otimes \uu^n)\,\|_{B_{q,1}^{2/q-2}(K)}
	\,\leq\,
	C
	\left(
	\|\,(\uu^n\otimes \uu^n)\,\|_{L^q(\RR^2)}
	\,+\,
	\|\,(\uu^n\otimes \uu^n)\,\|_{\BB_{q,1}^{2/q-\ee}}
	\right).
\end{equation*}
Now, making use of the continuity of the product between the functional spaces
\begin{equation*}
	L^\frac{2}{1-\ee}_{\rm loc}(\RR_+, \BB_{q,1}^{\frac{2}{q}-\ee})\times L^2_{\rm loc}(\RR_+, \BB_{q,1}^{\frac{2}{q}})
	\rightarrow
	L^\frac{1}{1-\ee}_{\rm loc}(\RR_+, \BB_{q,1}^{\frac{2}{q}-\ee}),
\end{equation*}
the term $\,\uu^n\otimes \uu^n\,$ is uniformly bounded into $L^{1/(1-\ee)}_{\rm loc}(\RR_+, \BB_{q,1}^{\frac{2}{q}-1})$. Furthermore, we can bound the $L^q(\RR^2)$ norm through the following interpolation:
\begin{equation*}
	\|\,\uu^n\otimes \uu^n\,\|_{L^q(\RR^2)}\leq C\|\,\uu^n\,\|_{L^{2q}(\RR^2)}^2\leq C\|\,\uu^n\,\|_{L^2(\RR^2)}^{\frac{2}{q}}\|\,\nabla \uu^n\,\|_{L^{2}(\RR^2)}^{2-\frac{2}{q}} \in L_{\rm loc}^{\frac{q}{q-1}}(\RR_+).
\end{equation*} 
Denoting by $r\,=\,\min\{2,1/(1-\ee), q/(q-1)\}>1$, we gather that the sequence $(\partial_t \uu^n)_\NN$ satisfying
\begin{equation*}
	\partial_t \uu^n\,=\,\Delta\uu^n\,-J^n(\uu^n\cdot \nabla \uu^n)\,+\,\nabla \pre^n\,+\,\Div\,\tau^n 
\end{equation*}
is uniformly bounded in $L^r_{\rm loc}(\RR_+,\,B_{p,1}^{2/p-2}(K))$. Hence, the Aubins-Lion Lemma \ref{Aubin-Lions-Lemma} and the generality of the compact set $K$ allow us to extract a convergent subsequence $(\uu^{n_k})_\NN\subset (\uu^n)_\NN$ such that
\begin{equation*}
	\uu^{n_k}\,\rightarrow\,\uu\quad\quad\text{in}\quad\quad L^\infty\big(0,T;\,(B_{p,1}^{\frac{2}{p}-2})_{\rm loc}\big).
\end{equation*} 
Since $(\uu^n)_\NN$ satisfies estimates \eqref{ineq-uuBp} and \eqref{energy-estimate}, also the limit $\uu$ belongs to the functional space given by \eqref{cond-on-u}.

\smallskip\noindent 
We now claim that $(\partial_t \tau^n)_\NN$ is uniformly bounded in the \textit{non-homogeneous} functional space $L^r_{\rm loc}(\RR_+,\,B_{p,1}^{2/p-1}(K))$. First we recall that the conformation tensor satisfies
\begin{equation*}
	\partial_t\tau^n\,=\,-\uu^n\cdot \nabla \tau^n\,+\,\omega^n\tau^n\,-\,\tau^n\omega^n,
\end{equation*}
therefore
\begin{equation*}
	\|\,\partial_t \tau^n\,\|_{\BB_{p,1}^{\frac 2p -1}}\,\lesssim\,\|\,\uu^n\,\|_{\BB_{p,1}^{\frac 2p }}\|\,\nabla  \tau^n\,\|_{\BB_{p,1}^{\frac{2}{p}-1}}
	\,+\,\|\,\nabla \uu^n\,\|_{\BB_{p,1}^{\frac{2}{p}-1}}\|\,\tau^n\,\|_{\BB_{p,1}^{\frac{2}{p}-1}}
	\,\in\,L^2_{\rm loc}(\RR_+).
\end{equation*}
Furthermore
\begin{equation*}
\begin{aligned}
	\|\,	-\omega^n\tau^n + \tau^n\omega^n	\,\|_{L^p(\RR^2)}
	&\,\lesssim\,
	\|\,\nabla \uu^n\,\|_{L^{2p}(\RR^2)}
	\|\,\tau^n\,\|_{L^{2p}(\RR^2)}\\
	&\,\lesssim\,
	\|\,\nabla \uu^n\,\|_{L^2(\RR^2)}^{\frac 1p}
	\|\,\nabla \uu^n\,\|_{L^\infty(\RR^2)}^{1-\frac 1p}
	\|\,\tau^n\,\|_{L^2(\RR^2)}^{\frac 1p}
	\|\,\tau^n\,\|_{L^\infty(\RR^2)}^{1-\frac 1p}\\
	&\lesssim\,
	\|\,\nabla \uu^n\,\|_{L^2(\RR^2)}^{\frac 1p}
	\|\,\nabla \uu^n\,\|_{\BB_{p,1}^{\frac 2p}}^{1-\frac 1p}
	\|\,\tau^n\,\|_{L^2(\RR^2)}^{\frac 1p}
	\|\,\tau^n\,\|_{\BB_{p,1}^{\frac 2p}}^{1-\frac 1p}
	\in L^{\frac{p}{p-1}}_{\rm loc}(\RR_+).
\end{aligned}
\end{equation*}
Finally, one has
\begin{equation*}
\begin{aligned}
	\|\,\Delta_{-1}\Div(\uu^n\otimes \tau^n)\,\|_{L^p(\RR^2)}
	\,&\lesssim\,
	\|\,\uu^n\otimes \tau^n\,\|_{L^p(\RR^2)}
	\,\lesssim\,
	\|\,\uu^n\,\|_{L^{2p}(\RR^2)}
	\|\,\tau^n\,\|_{L^{2p}(\RR^2)}\\
	\,&\lesssim\,
	\|\, \uu^n\,\|_{L^2(\RR^2)}^{\frac 1p}
	\|\, \uu^n\,\|_{L^\infty(\RR^2)}^{1-\frac 1p}
	\|\,\tau^n\,\|_{L^2(\RR^2)}^{\frac 1p}
	\|\,\tau^n\,\|_{L^\infty(\RR^2)}^{1-\frac 1p}\\
	\,&\lesssim\,
	\|\, \uu^n\,\|_{L^2(\RR^2)}^{\frac 1p}
	\|\, \uu^n\,\|_{\BB_{p,1}^{\frac 2p}}^{1-\frac 1p}
	\|\, \tau^n\,\|_{L^2(\RR^2)}^{\frac 1p}
	\|\, \tau^n\,\|_{\BB_{p,1}^{\frac 2p}}^{1-\frac 1p}
	\,\in\,
	L^2_{\rm loc}(\RR^2),
\end{aligned}
\end{equation*}
which allows us to conclude that $(\partial_t \tau^n)_\NN$ is uniformly bounded in $L^r_{\rm loc}(\RR_+,\,B_{p,1}^{2/p-1})$. Thus, 
the Aubins-Lion lemma allows us to extract a convergent subsequence $(\tau^{n_k})_\NN\subset (\tau^n)_\NN$ such that
\begin{equation*}
	\tau^{n_k}\,\rightarrow\,\tau\quad\quad\text{in}\quad\quad L^\infty\big(0,T;\,(B_{p,1}^{\frac{2}{p}-1})_{\rm loc}\big).
\end{equation*} 
Since $(\tau^n)_\NN$ satisfies estimates \eqref{ineq-tauBp} and \eqref{energy-estimate}, also the limit $\tau$ belongs to the functional space given by \eqref{cond-on-u}.

%\smallskip
%\noindent We now claim that $(\uu,\,\tau)$ is a solution for system \eqref{main_system}. For any smooth function with compact support $\varphi \in \mathfrak{D}%(\RR^2)$, we have that
%\begin{equation*}
%	\int_{\RR^2}
%	\partial_t \uu^n\cdot \varphi \,+\,
%\end{equation*}
These properties allow to pass to the limit to system \eqref{main_system_appx} and thus to show that $(\uu,\,\tau)$ is a global-in-time solution of system \eqref{main_system}. This concludes the proof of the existence part of Theorem \ref{main_thm0}.

\subsection{\bf Uniqueness of classical solutions in dimension two: The case $p\in [2,4)$.}$\,$

\noindent  This section is devoted to the proof of the uniqueness of solutions determined by Theorem \ref{main_thm0}. We consider two solutions $(\uu_1,\,\tau_1)$ and $(\uu_2,\,\tau_2)$ with same initial data and satisfying the condition of Theorem \ref{main_thm0}:
\begin{equation*}
		\begin{aligned}
			(\uu_1,\,\uu_2)\,&\in\, L^\infty(0,T; L^2(\RR^2)\cap \BB_{p,1}^{\frac{2}{p}-1})\cap L^2(0,T;\dot H^1(\RR^2))\cap  L^1(0,T;\BB_{p,1}^{\frac{2}{p}+1}),\\
			(\tau_1,\,\tau_2) \,&\in\, L^\infty(0,T; L^2(\RR^2)\cap \BB_{p,1}^\frac{2}{p}),
		\end{aligned}
\end{equation*}
for any time $T>0$. Defining by $\delta \uu := \uu_1 - \uu_2$ and by  $\delta \tau := \tau_1 - \tau_2$, we aim in controlling $(\delta \uu,\,\delta \tau)$ in the following Chemin-Lerner spaces:
\begin{equation*}
				\tilde L^\infty(0,T; \BB_{p, 1}^{\frac{2}{p}-2})\cap \tilde L^1(0,T; \BB_{p, 1}^{\frac{2}{p}}) \times \tilde L^\infty(0,T; \BB_{p, 1}^{\frac{2}{p}-1}).
\end{equation*}
We begin with introducing the system driving the evolution of $(\delta \uu,\,\delta \tau)$:
\begin{equation*}
\left\{\hspace{0.2cm}
	\begin{alignedat}{2}
		&\,\partial_t\delta \tau\,+\,\uu_1\cdot \nabla \delta \tau 	- \,\omega_1\, \delta \tau \,+\,\delta \tau\,\omega_1 \,=\,-\delta \uu\cdot \nabla \tau_2 \,+\,\delta \omega \tau_2 \,-\, \tau_2 \delta \omega
		\hspace{3cm}
		&&\RR_+\times \RR^\dd, \vspace{0.1cm}	\\		
		&\,\partial_t \delta \uu  \, - \, \nu \Delta \delta \uu  
		\,+\, \nabla \delta \pre \, =\,
		- \delta\uu \cdot \nabla \uu_1\,-\,
		\uu_2\cdot \nabla\delta \uu \,+\,
		\Div\,\delta \tau
									\hspace{3cm}									&& \RR_+\times \RR^\dd, \vspace{0.1cm}\\
		&\,\Div\,\delta \uu\,=\,0			
		&&\RR_+\times \RR^\dd, \vspace{0.1cm}	\\			
		& (\delta \uu,\,\delta \tau)_{|t=0}	\,=\,(0,\,0)			
		&&\hspace{1.02cm} \RR^\dd.\vspace{0.1cm}																							
	\end{alignedat}
	\right.
\end{equation*}
Applying Remark \eqref{rmk:Stokes-bound} to the equation of $\delta \uu$, we gather that
\begin{equation}\label{ineq118}
	\| \,\delta \uu \,\|_{\tilde L^\infty(0,T; \BB_{p, 1}^{\frac{2}{p}-2})}
	\,+\,	
	\| \,\delta \uu \,\|_{\tilde L^1(0,T; \BB_{p, 1}^{\frac{2}{p}})}	
	\,\lesssim\,
	\| \,  \delta\uu \cdot \nabla \uu_1\,+\, \uu_2\cdot \nabla\delta \uu\,\|_{\tilde L^1(0,T; \BB_{p, 1}^{\frac{2}{p}-2})}
	\,+\,\| \,  \Div\,\delta\tau\,\|_{\tilde L^1(0,T; \BB_{p, 1}^{\frac{2}{p}-2})}.
\end{equation}
Hence, we observe first that thanks to the free-divergence condition on $\delta \uu$ and the continuity of the product between
\begin{equation*}
	\BB_{p,1}^{\frac{2}{p}-1}\times \BB_{p,1}^{\frac{2}{p}}\rightarrow \BB_{p,1}^{\frac{2}{p}-1},\quad\quad p\in [1,4),
\end{equation*}
the following inequality holds true:
\begin{equation*}
\begin{aligned}
	\| \,  \delta\uu \cdot \nabla \uu_1\,+\, \uu_2\cdot \nabla\delta \uu\,\|_{\tilde L^1(0,T; \BB_{p, 1}^{\frac{2}{p}-2})}
	\,&\leq\,
	C\| \,  \delta\uu \otimes (\uu_1,\,\uu_2)\,\|_{\tilde L^1(0,T; \BB_{p,1}^{\frac{2}{p}-1})}\\
	\,&\leq\,
	C\| \,  \delta\uu \,\|_{\tilde L^2(0,T; \BB_{p,1}^{\frac{2}{p}-1})} \|\, (\uu_1,\,\uu_2)\,\|_{\tilde L^2(0,T; \BB_{p, 1}^{\frac{2}{p}})}\\
	\,&\leq\,
	C\| \,  \delta\uu \,\|_{\tilde L^\infty(0,T; \BB_{p, 1}^{\frac{2}{p}-2})}^{\frac{1}{2}}\| \,  \nabla \delta\uu \,\|_{\tilde L^1(0,T; \BB_{p, 1}^{\frac{2}{p}-1})}^{\frac{1}{2}}
	 \|\, (\uu,\,\uu_2)\,\|_{L^2(0,T; \BB_{p, 1}^{\frac{2}{p}})}
	\\
	\,&\leq\,
	C	 \|\, (\uu_1,\,\uu_2)\,\|_{L^2(0,T; \BB_{p, 1}^{\frac{2}{p}})}
	\| \,  \delta\uu \,\|_{\tilde L^\infty(0,T; \BB_{p, 1}^{\frac{2}{p}-2})}
	 \,+\,
	 \frac{\nu}{100}
	 \| \,  \nabla \delta\uu \,\|_{\tilde L^1(0,T; \BB_{p,1}^{\frac{2}{p}-1})}
\end{aligned}
\end{equation*}
We assume $T$ sufficiently small satisfying 
\begin{equation*}
	C	 \|\, (\uu_1,\,\uu_2)\,\|_{L^2(0,T; \BB_{p, 1}^{\frac{2}{p}})}\leq \frac{1}{2},
\end{equation*}
hence, we can replace the last inequality into \eqref{ineq118} and absorb any terms by the left-hand, to gather
\begin{equation}\label{ineq120}
	\| \,\delta \uu \,\|_{\tilde L^\infty(0,T; \BB_{p, 1}^{\frac{2}{p}-2})}
	\,+\,	
	\| \,\delta \uu \,\|_{\tilde L^1(0,T; \BB_{p, 1}^{\frac{2}{p}})}	
	\,\lesssim\,
	\| \,  \delta\tau\,\|_{\tilde L^1(0,T; \BB_{p, 1}^{\frac{2}{p}-1})}
\end{equation}
We now apply Lemma \ref{lemma:bound_of_tau-exp_on_nablau} to the $\delta \tau$-equation to gather:
\begin{equation}\label{ineq119}
	\| \,  \delta\tau\,\|_{\tilde L^\infty(0,T; \BB_{p, 1}^{\frac{2}{p}-1})}
	\,\leq\,
	C
	\Big\{
		\, \| \, \delta \uu\cdot \nabla \tau_2\,\|_{\tilde L^1(0,T; \BB_{p,1}^{\frac{2}{p}-1})}\,+\,
		 \| \, \delta \omega \tau_2 \,-\, \tau_2 \delta \omega\,\|_{\tilde L^1(0,T; \BB_{p, 1}^{\frac{2}{p}-1})}
		 \,
	\Big\}
	\exp\left\{ C\int_0^T \| \, \uu_1(s)\,\|_{\BB_{p,1}^{\frac{2}{p}+1}}\dd s\,\right\}.
\end{equation}
First, we have
\begin{equation*}
\begin{aligned}
	 \| \, \delta \uu\cdot \nabla \tau_2\,\|_{\tilde L^1(0,T; \BB_{p, 1}^{\frac{2}{p}-1})}
	  \,&\leq\,
	C
	 \| \, \delta \uu\otimes \tau_2\,\|_{\tilde L^1(0,T; \BB_{p, 1}^{\frac{2}{p}})}\\
	\,&\leq\,
	C
	 \|\,\delta \uu \,\|_{\tilde L^1(0,T; \BB_{p, 1}^{\frac{2}{p}})}
	 \|\, \tau_2\,\|_{L^\infty(0,T; \BB_{p, 1}^{\frac{2}{p}})}.\\
\end{aligned}
\end{equation*}
Furthermore,
\begin{equation*}
	\begin{aligned}
		 \| \, \delta \omega \tau_2 \,-\, \tau_2 \delta \omega\,\|_{\tilde L^1(0,T; \BB_{p, 1}^{\frac{2}{p}-1})}
		 \,&\leq\,
		 C\|\, \delta \omega \|_{\tilde L^1(0,T;\BB_{p, 1}^{\frac{2}{p}-1}) }
		  \|\, \tau_2  \|_{L^\infty(0,T;\BB_{p, 1}^{\frac{2}{p}}) }\\
		  \,&\leq\,
		 C\|\, \delta \uu \|_{\tilde L^1(0,T;\BB_{p, 1}^{\frac{2}{p}}) }
		  \|\, \tau_2  \|_{L^\infty(0,T;\BB_{p, 1}^{\frac{2}{p}}) }.
	\end{aligned}	
\end{equation*}
Replacing the above inequalities into \eqref{ineq119}, we eventually that
\begin{equation}\label{ineq156}
	\| \,  \delta\tau\,\|_{\tilde L^\infty(0,T; \BB_{p, 1}^{\frac{2}{p}-1})}
	\,\leq\,
	C
	\|\, \delta \uu \|_{\tilde L^1(0,T;\BB_{p, 1}^{\frac{2}{p}}) },
\end{equation}
for a suitable positive constant $C$ that depends also on the norm of the initial data $(\uu_0,\,\tau_0)$ in $L^2(\RR^2)\cap \BB_{p,1}^{2/p-1}\times L^2(\RR^2)\cap \BB_{p,1}^{2/p}$. Hence, denoting by 
\begin{equation*}
	\delta \mathcal{U}(T) :=\| \,\delta \uu \,\|_{\tilde L^\infty(0,T; \BB_{p, 1}^{\frac{2}{p}-2})}
	\,+\,	
	\| \,\delta \uu \,\|_{\tilde L^1(0,T; \BB_{p, 1}^{\frac{2}{p}})},
\end{equation*}
and replacing the above inequalities into \eqref{ineq120}, we gather that
\begin{equation*}
	\delta \mathcal{U}(T) \leq C \int_0^T \delta \mathcal{U}(t)\dd t,
\end{equation*}
for a sufficiently small $T>0$. The Gronwall inequality yields that $\delta \mathcal{U}(t) =0$ for any $t\in [0,T]$, for which $\uu_1(t)\equiv \uu_2(t)$. Similarly, making use of \eqref{ineq156} also the conformations tensors coincide $\tau_1(t)\equiv \tau_2(t)$ for any time $t\in [0,T]$. Since the time $T$ depends uniquely on the norm of the initial data $\uu_0$ and $\tau_0$, a standard boodstrap method allow to propagate the uniqueness, globally in time. This concludes the proof of Theorem \ref{main_thm0} in the case $p\in [1,4)$.
\subsection{\bf Uniqueness of classical solutions in dimension two: The case $p=4$.}$\,$
In order to obtain the uniqueness in the critical case $p=4$ we have to control the difference between two solutions $(\delta u, \delta \tau)$ in the space
$$
				\tilde L^\infty(0,T; \BB_{4, \infty}^{-\frac{3}{2}})\cap \tilde L^1(0,T; \BB_{4, \infty}^{\frac{1}{2}}) \times \tilde L^\infty(0,T; \BB_{4, \infty}^{-\frac{1}{2}}). 
$$
Using the system verified by $(\delta u, \delta \tau)$ we get the estimate
\begin{equation}
	\| \,\delta \uu \,\|_{\tilde L^\infty(0,T; \BB_{4, \infty}^{-\frac{3}{2}})}
	\,+\,	
	\| \,\delta \uu \,\|_{\tilde L^1(0,T; \BB_{4, \infty}^{\frac{1}{2}})}	
	\,\lesssim\,
	\| \,  \delta\uu \cdot \nabla \uu_1\,+\, \uu_2\cdot \nabla\delta \uu\,\|_{\tilde L^1(0,T; \BB_{4, \infty}^{-\frac{3}{2}})}
	\,+\,\| \,  \Div\,\delta\tau\,\|_{\tilde L^1(0,T; \BB_{4, \infty}^{-\frac{3}{2}})}.
\end{equation}
Hence, using the product between
$$
	\BB_{4,\infty}^{-\frac{1}{2}}\times \BB_{4,1}^{\frac{1}{2}}\rightarrow \BB_{4,\infty}^{-\frac{1}{2}},
$$
and the same type of estimates as previously, the following inequality holds true:
\begin{equation}\label{ineq120b}
	\| \,\delta \uu \,\|_{\tilde L^\infty(0,T; \BB_{4, \infty}^{-\frac{3}{2}})}
	\,+\,	
	\| \,\delta \uu \,\|_{\tilde L^1(0,T; \BB_{4, \infty}^{\frac{1}{2}})}	
	\,\lesssim\,
	\| \,  \delta\tau\,\|_{\tilde L^1(0,T; \BB_{4, \infty}^{-\frac{1}{2}})}.
\end{equation}
Using the equation on $\delta\tau$ we obtain easily 
\begin{equation}\label{ineq119-}
	\| \,  \delta\tau\,\|_{\tilde L^\infty(0,T; \BB_{4, \infty}^{-\frac{1}{2}})}
	\,\leq\,
	C
	\Big\{
		\, \| \, \delta \uu\cdot \nabla \tau_2\,\|_{\tilde L^1(0,T; \BB_{4,\infty}^{-\frac{1}{2}})}\,+\,
		 \| \, \delta \omega \tau_2 \,-\, \tau_2 \delta \omega\,\|_{\tilde L^1(0,T; \BB_{4, \infty}^{-\frac{1}{2}})}
		 \,
	\Big\}
	\exp\left\{ C\int_0^T \| \, \uu_1(s)\,\|_{\BB_{4,1}^{\frac{3}{2}}}\dd s\,\right\}.
\end{equation}
First, we have
\begin{equation*}
\begin{aligned}
	 \| \, \delta \uu\cdot \nabla \tau_2\,\|_{\tilde L^1(0,T; \BB_{4, 1}^{-\frac{1}{2}})}
	\,&\leq\,
	C
	 \|\,\delta \uu \,\|_{\tilde L^1(0,T; \BB_{4, 1}^{\frac{1}{2}})}
	 \|\, \tau_2\,\|_{L^\infty(0,T; \BB_{4, 1}^{\frac{1}{2}})}.\\
\end{aligned}
\end{equation*}
Furthermore,
\begin{equation*}
	\begin{aligned}
		 \| \, \delta \omega \tau_2 \,-\, \tau_2 \delta \omega\,\|_{\tilde L^1(0,T; \BB_{4, \infty}^{-\frac{1}{2}})}
		 \,&\leq\,
		 C\|\, \delta \uu \|_{\tilde L^1(0,T;\BB_{4, 1}^{\frac{1}{2}}) }
		  \|\, \tau_2  \|_{L^\infty(0,T;\BB_{4, 1}^{\frac{1}{2}}) }.
	\end{aligned}	
\end{equation*}
Inserting  the above inequalities into \eqref{ineq119-}, we eventually deduce that
\begin{equation}\label{ineq156-}
	\| \,  \delta\tau\,\|_{\tilde L^\infty(0,T; \BB_{4, \infty}^{-\frac{1}{2}})}
	\,\leq\,
	C
	\|\, \delta \uu \|_{\tilde L^1(0,T;\BB_{4, 1}^{\frac{1}{2}}) }.
\end{equation}
Now we use the following standard logarithmical estimate
\begin{equation*}
\begin{aligned}	\| \,  \delta u\,\|_{\tilde L^1(0,T; \BB_{4, 1}^{\frac{1}{2}})}\leq 	\| \,  \delta u\,\|_{\tilde L^1(0,T; \BB_{4, \infty}^{\frac{1}{2}})}\ln \bigg(e+\frac{\| \,  \delta u\,\|_{\tilde L^1(0,T; \BB_{4, \infty}^{-\frac{1}{2}})}+\| \,  \delta u\,\|_{\tilde L^1(0,T; \BB_{4, 1}^{\frac{3}{2}})}}{\| \,  \delta u\,\|_{\tilde L^1(0,T; \BB_{4, \infty}^{\frac{1}{2}})}}\bigg)\\
\leq  	\| \,  \delta u\,\|_{\tilde L^1(0,T; \BB_{4, \infty}^{\frac{1}{2}})}\ln \bigg(e+\frac{\| \,  (u^1, u^2)\,\|_{ L^1(0,T; L^2)}+\| \,  (u_1,u_2)\,\|_{\tilde L^1(0,T; B^{\frac 32}_{4,1})}}{\| \, \delta u\,\|_{\tilde L^1(0,T; \BB_{4, \infty}^{\frac{1}{2}})}}\bigg).
\end{aligned}
\end{equation*}
Denoting by 
\begin{equation*}
	\delta \mathcal{U}(T) :=\| \,\delta \uu \,\|_{\tilde L^\infty(0,T; \BB_{4, \infty}^{-\frac{1}{2}})}
	\,+\,	
	\| \,\delta \uu \,\|_{\tilde L^1(0,T; \BB_{4, \infty}^{\frac{1}{4}})},
\end{equation*}
and replacing the above inequalities into \eqref{ineq120b}, we gather that
\begin{equation*}
	\delta \mathcal{U}(T) \leq C \int_0^T \delta \mathcal{U}(t)\ln(e+\frac{C}{\delta\mathcal U(t)})\dd t.
\end{equation*}
The uniquness of our solutions is then an application of the classical Osgood lemma.

\section{Non-corotational setting with positive $\mu$}\label{sec:non-cor}

\noindent This section is devoted to the proof of Theorem \ref{main_thm0b}. The main idea is to show that the solution remains close the one of the corotational setting with $\mu=0$, as long as the main parameters of System \eqref{main_system} are sufficiently small. Indeed, we first denote by $(v,\sigma)$ the solution of 
\begin{equation}
		\,\partial_t \sigma\,+\,v\cdot \nabla \sigma +a \sigma - \,W\, \sigma \,+\,\sigma\,W \,=\,0,
		\hspace{1.5cm}
		\,\partial_t v  + v\cdot \nabla  v\, - \, \nu \Delta  v  
		\,+\, \nabla q  =
		\Div\,\sigma,
\end{equation}
together with $\Div\, v = 0$, $W = (\nabla v -\,^t \nabla v)/2$ and the initial condition $(v,\,\sigma)_{|t=0} = (\uu_0,\,\tau_0)$. The existence and uniqueness of this solution is given by Theorem \ref{main_thm0} we have proven in the previous sections. Furthermore, $(v,\sigma)$ satisfies  
\begin{equation*}
	\begin{alignedat}{16}
		v	\,	&\in	\,&& L^\infty(\RR_+;  \,L^2(\RR^2))&&&&\cap 
		L^2(\RR_+,\dot H^1(\RR^2)),\quad\quad
		&&&&&&&&\sigma	\,	\in	\, L^\infty(\RR_+,\,L^2(\RR^2)),\\
		v		\,	&\in	\,&& L^\infty(\RR_+;  \,\BB_{p,1}^{\frac{2}{p}-1})
		&&&&\cap 
		L^1(\RR_+,\BB^{\frac{2}{p}+1}_{p,1}),\quad\quad
		&&&&&&&&\sigma	\,	\in	\, L^\infty(\RR_+,\,\BB_{p,1}^{\frac{2}{p}}).
	\end{alignedat}
	\end{equation*}
Thanks to the initial condition $\tau_0 \in \BB_{p,1}^\frac{2}{p}\cap \BB_{p,1}^{\frac{2}{p}+1}$ and Proposition \ref{lemma:bound_of_tau-exp_on_nablau}, the following bound for $\sigma$ holds true 
\begin{equation*}
\begin{aligned}
	\sigma \in L^\infty(\mathbb{R}_+,\BB_{p,1}^{\frac{2}{p}+1}) 
	\quad
	\text{with}
	\quad
	e^{at}\| \sigma (t) \|_{\BB_{p,1}^{\frac{2}{p}+1}}
	&\leq 
	C
	\| \tau_0 \|_{\BB_{p,1}^\frac{2}{p}\cap \BB_{p,1}^{\frac{2}{p}+1}} 
	\exp
	\Big\{ 
		C\int_0^t  \| \nabla v \|_{\BB_{\infty,1}^0}
	\Big\}	\\
	&\leq 
	C
	\| \tau_0 \|_{\BB_{p,1}^\frac{2}{p}\cap \BB_{p,1}^{\frac{2}{p}+1}} 
	\exp
	\Big\{ 
		C\nu^{-1}\Upsilon_{\nu,a}^1(T,\, \uu_0,\,\tau_0).
	\Big\}	
	\end{aligned}
\end{equation*}
We recall that the operator $\Upsilon_{\nu,a}^1$ has been introduced in Definition \ref{def-Upsilon} (with $\mu=0$), it depends uniquely on $\| \uu_0 \|_{\BB_{\infty,1}^{-1}}$ and $\| \tau_0 \|_{\BB_{\infty,1}^{0}}$. Additionally, the function
$T\in (0,\infty)\rightarrow \Upsilon_{\nu,a}^1(T,\, \uu_0,\,\tau_0)$ is uniformly bounded, since the damping term $a>0$ is positive.  

\noindent We now introduced the local solution $(\uu,\,\tau)$ of our original problem \eqref{main_system}. Since we are in the local setting, such a solution can be achieved by a rather standard  Friedrichs-type scheme. Furthermore $(\uu,\tau)$ belongs to the same functional space of $(v,\sigma)$, locally in time. Moreover, being $T^*>0$ its lifespan, if $T^*$ is finite then a blow up must occur for the following norm:
\begin{equation*}
	\int_0^{T^*} \| \nabla \uu(s) \|_{\BB_{p,1}^\frac{2}{p}}ds=+\infty.
\end{equation*}
Indeed, the solution would own a Lipschitz regularity as long as the above norm is finite. This would allow to extend the solution beyond the finite lifespan $T^*$.  

\noindent
We consider the difference $(\delta \uu,\delta \tau):= (\uu,\tau)-(v,\sigma)$, which is solution of the following equations  
\begin{equation}
\begin{cases}
		\,\partial_t \delta \tau \,+\,(v+\delta \uu)\cdot \nabla \delta \tau +a \delta \tau  - \,W \, \delta \tau \,+\,\delta \tau\,W 			\,=
		\,\mu \mathbb{D}-\delta u \cdot \nabla \sigma + \delta \omega \, \tau -\tau \,\delta \omega
		-b \big(  \mathbb{D} \tau +   \mathbb{D}\tau  \big)
		\\
		\,\partial_t \delta \uu   -  \nu \Delta  \delta \uu  
		\,+\, \nabla \delta p  =
		\Div\,\delta \tau  - \delta \uu \cdot \nabla \delta \uu -\Div(\delta \uu \otimes v+v\otimes \delta u),
\end{cases}
\end{equation}
together with $\Div\, \delta u = 0$, $\delta \omega = (\nabla \delta \uu -\,^t \nabla \delta \uu)/2$, $D =  (\nabla v +\,^t \nabla v)/2$ and the initial condition $(\delta u,\,\delta \tau)_{|t=0} = (0,\,0)$. Since $v$ is bounded in $L^1(\mathbb{R}_+,\BB_{p,1}^{2/p})$, the following blow up  must occurs if $T^*<\infty$:
\begin{equation*}
	\int_0^{T^*} \| \nabla \delta \uu(s) \|_{\BB_{p,1}^\frac{2}{p}}\dd s=+\infty.
\end{equation*}

\noindent 
The momentum equation leads to the following inequality:
\begin{equation}\label{glob_exist_pos_mu:est3}
\begin{aligned}
	\| \delta \uu \|_{L^\infty(0,t;\BB_{p,1}^{\frac{2}{p}-1})}&+ 
	\nu \int_0^t \| \nabla \delta \uu(s) \|_{\BB_{p,1}^{\frac{2}{p}}}\dd s
	\leq 
	\tilde C \int_0^t \| \delta \tau(s) \|_{\BB_{p,1}^\frac{2}{p}}\dd s + \\ 
	&+
	\tilde C \| \delta \uu \|_{L^\infty(0,t;\BB_{p,1}^{\frac{2}{p}-1})}
	\int_0^t \| \nabla \delta \uu(s) \|_{\BB_{p,1}^{\frac{2}{p}}}\dd s
	+
	\tilde C 
	\int_0^t \| \delta \uu \|_{L^\infty(0,s;\BB_{p,1}^{\frac{2}{p}-1})}\|  v(s) \|_{\BB_{p,1}^{\frac{2}{p}}}^2\dd s.
\end{aligned}
\end{equation}
We thus introduce the maximum time $\tilde T\in (0,T^*)$ such that for any $t\in (0,\tilde T)$ one has  
$$\|\delta \uu \|_{L^\infty(0,t;\BB_{p,1}^{\frac{2}{p}-1})}+\nu \int_0^t \| \nabla \delta \uu(s) \|_{\BB_{p,1}^{\frac{2}{p}}}\dd s \leq \frac{1}{4\tilde C }.$$ 
Absorbing the bilinear term in $\delta \uu$ of \eqref{glob_exist_pos_mu:est3} and applying the Gronwall's lemma
\begin{equation}\label{glob_exist_pos_mu:est2}
	\| \delta \uu \|_{L^\infty(0,t;\BB_{p,1}^{\frac{2}{p}-1})}+ 
	\nu \int_0^t \| \nabla \delta \uu(s) \|_{\BB_{p,1}^{\frac{2}{p}}}\dd s
	\leq 
	\tilde 
	C
	\int_0^t \| \delta \tau(s) \|_{\BB_{p,1}^\frac{2}{p}}\dd s
	\exp
	\Big\{
	\tilde C\int_0^t \| v(s) \|_{\BB_{p,1}^\frac{2}{p}}^2\dd s 
	\Big\}.
\end{equation}
This time value $\tilde T$ exists since $\delta u\in \mathcal{C}([0,T^*), \BB_{p,1}^{2/p-1})$ and $\delta u(0)=0$.
Next, we apply Proposition \ref{lemma:bound_of_tau-exp_on_nablau} together with the Gronwall inequality to gather that
\begin{equation}\label{glob_exist_pos_mu:est1}
\begin{aligned}
	\| \delta \tau(t) \|_{\BB_{p,1}^\frac{2}{p}}
	&\leq 
	C\Big(
	\mu \int_0^te^{a(s-t)} \| \nabla \delta \uu \|_{\BB_{p,1}^\frac{2}{p}}ds+
	\mu \int_0^te^{a(s-t)} \| \nabla v \|_{\BB_{p,1}^\frac{2}{p}}ds
	+
	\int_0^te^{a(s-t)} \| \delta \uu \|_{\BB_{p,1}^\frac{2}{p}}\| \nabla \sigma \|_{\BB_{p,1}^\frac{2}{p}}ds+\\
	&+
	(1+|b|)\int_0^te^{a(s-t)} \| \nabla  \delta \uu \|_{\BB_{p,1}^\frac{2}{p}}\|  \sigma \|_{\BB_{p,1}^\frac{2}{p}}ds
	\Big)
	\exp\Big\{C(1+|b|)\int_0^t \| \nabla v \|_{\BB_{p,1}^\frac{2}{p}} + 
	C(1+|b|)\frac{1}{4\tilde C\nu
	} \Big\}.
\end{aligned}
\end{equation}
We plug estimate \eqref{glob_exist_pos_mu:est1} into the inequality \eqref{glob_exist_pos_mu:est1} and we apply the Gronwall's lemma:
\begin{equation*}
	\begin{aligned}
		\| \delta \uu \|_{L^\infty(0,t;\BB_{p,1}^{\frac{2}{p}-1})}+ 
	\nu \int_0^t \| \nabla \delta \uu \|_{\BB_{p,1}^{\frac{2}{p}}}
	\leq 	
	C
	\Big\{
	\mu \int_0^t \| \nabla \delta \uu \|_{\BB_{p,1}^{\frac{2}{p}}} + 
	\mu \mathcal{G}_{a,\nu}(\uu_0,\tau_0)
	+
	\frac{|b|}{a}
	\mathcal{G}_{a,\nu}(\uu_0,\tau_0)
	\int_0^t \| \nabla \delta \uu \|_{\BB_{p,1}^{\frac{2}{p}}}
	\Big\}\times \\
	\times 
	\exp\Big\{C(1+|b|)\mathcal{G}_{a,\nu}(\uu_0,\tau_0) + 
	C(1+\frac{1}{\nu}+|b|)\frac{1}{4\tilde C\nu }\Big\}
	\end{aligned}
\end{equation*} 
where
\begin{equation*}
\begin{aligned}
		\mathcal{G}_{a,\nu}(\uu_0,\tau_0):=
		\max\bigg\{
		\max_{t\in\mathbb{R}}
		\Big(
			\|\,\uu_0\,\|_{\BB_{p,1}^{\frac{2}{p}-1}}
			\,+\,
			\frac{1-e^{-at}}{a}\|\,\tau_0\,\|_{\BB_{p,1}^{\frac{2}{p}}}
		\Big)
		\exp\bigg\{C\nu^{-1}\Upsilon^1_\nu(t,\,\uu_0,\,\tau_0)\big(\nu^{-1}\Upsilon^2_\nu(t,\,\uu_0,\,\tau_0)+1\big)
		\bigg\},\\	
		\, 
		\|\,\tau_0\,\|_{\BB_{p,1}^{\frac{2}{p}}\cap \BB_{p,1}^{\frac{2}{p}+1}}\exp\bigg\{\,C\nu^{-1}\,\Upsilon_{1,\nu}(t, \,\uu_0,\,\tau_0)\,\bigg\}
		\bigg\}.
\end{aligned}
\end{equation*}
Hence for a sufficiently small $\mu>0$ and a small parameter $|b|>0$ (depending to the initial data $(\uu_0,\tau_0)$), we finally achieve that
\begin{equation*}
	\| \delta \uu \|_{L^\infty(0,t;\BB_{p,1}^{\frac{2}{p}-1})}+ 
	\nu \int_0^t \| \nabla \delta \uu \|_{\BB_{p,1}^{\frac{2}{p}}}
	\leq 	\frac{1}{8\tilde C \nu},
\end{equation*}
for any time $t\in (0,\tilde T)$. This implies that $T^*=\tilde T$, hence $T^*$ can not be finite, since the above norms are not blowing up.

\section{Local-in-time solutions for any initial data}\label{sec:loc-sol}
\noindent
This section is devoted to the proof of the results of local-in-time existence of classical solutions for the Johnson-Segalman model \eqref{main_system} in dimension $\dd\geq 3$, aiming to prove Theorem \ref{main_thm1}. We adopt a standard strategy, which can be  summarized as follows:
\begin{itemize}
	\item construction of global-in-time approximate solutions,
	\item uniform estimates on a suitable fixed (small) interval of time,  
	\item convergence of the sequences to a solution in such an interval,
	\item uniqueness of the solutions.
\end{itemize}

\subsection{\bf Global-in-time approximate solutions}$\,$
 
\noindent We first introduce the solution $\uu_L=\uu_L(t,x)$ of the following linear non-stationary Stokes system:
\begin{equation}\label{Stokes-eq}
	\left\{\hspace{0.2cm}
	\begin{alignedat}{2}
		&\,\partial_t \uu_L - \nu \Delta \uu_L\, +\, \nabla \pre_L\,=\, 0
		\hspace{3cm}
		&&\RR_+\times \RR^\dd, \vspace{0.1cm}\\
		&\,\Div\,\uu_L=\, 0
		\hspace{3cm}
		&&\RR_+\times \RR^\dd, \vspace{0.1cm}\\
		&\,
		\uu_{L\,|t=0}	\,=\,\uu_0			
		&&\hspace{1.02cm} \RR^\dd.\vspace{0.1cm}																							
	\end{alignedat}
	\right.
\end{equation} 
Since $\uu_0$ belongs to $\BB_{p, 1}^{\frac{\dd}{p}-1}$ a standard approach allows us to conclude (cf. \cite{PD}, Remark 3.12) that the solution $\uu^L(t)$ belongs to the functional framework
\begin{equation*}
	\uu_L \in \mathcal{C}_{\rm b}(\RR_+,\BB_{p, 1}^{\frac{\dd}{p}-1})	\cap L^1_{\rm loc}(\RR_+, \BB_{p, 1}^{\frac{\dd}{p}+1})
	\quad\quad
	\text{and}
	\quad\quad
	\nabla \pre_L \in L^1_{\rm loc}(\RR_+, \BB_{p, 1}^{\frac{\dd}{p}-1}),
\end{equation*}
satisfying the inequality
\begin{equation*}
	\|\, \uu_L(t)\,\|_{\BB_{p, 1}^{\frac{\dd}{p}-1}} \,+\, \nu \int_0^t \|\, \uu_L(s)\,\|_{\BB_{p, 1}^{\frac{\dd}{p}+1}}\dd s
	\,\leq\,
	\|\,\uu_0\,\|_{\BB_{p, 1}^{\frac{\dd}{p}-1}}.
\end{equation*}
We now define the functions $(\bar \uu^0(t,x),\, \tau^0(t,x)) \,:=\,(0,\, \tau_0(x))$ and we apply an iterative method to solve the following sequences of linear equations:
\begin{equation}\label{main_system_appx}
\left\{\hspace{0.2cm}
	\begin{alignedat}{2}
		&\,\partial_t \tau^{n+1}\,+\,\uu^n\cdot \nabla \tau^{n+1} 	- \,\omega^n \tau^{n+1} \,+\,\tau^{n+1}\omega^n \,=\,0
		\hspace{3cm}
		&&\RR_+\times \RR^\dd, \vspace{0.1cm}	\\		
		&\,\partial_t\bar  \uu^{n+1}\,   - \, \nu \Delta  \bar \uu^{n+1}  
		\,+\, \nabla \bar \pre^{n+1}  =- \uu^n\cdot \nabla \uu^n
		\,+\,
		\Div\,\tau^{n+1}
									\hspace{3cm}									&& \RR_+\times \RR^\dd, \vspace{0.1cm}\\
		&\,\Div\, \bar  \uu^{n+1}\,=\,0			
		&&\RR_+\times \RR^\dd, \vspace{0.1cm}	\\			
		& (\bar \uu^{n+1},\,\tau)_{|t=0}	\,=\,(0,\,\tau_0)			
		&&\hspace{1.02cm} \RR^\dd,\vspace{0.1cm}																							
	\end{alignedat}
	\right.
\end{equation}
where $\uu^n$ stands for $\uu^L\,+\,\bar \uu^n$. Being a linear system of PDE's, the above equations are globally solvable in time. Furthermore, we claim that an induction method implies the sequence of solutions $(\,\bar \uu^n,\, \tau^n\,)$ to belong to the functional framework
\begin{equation}\label{barUn_ind}
	\bar{\uu}^{n}\,\in\, \mathcal{C}_{\rm b}(\RR_+,\BB_{p, 1}^{\frac{\dd}{p}-1}\,)\cap L^1_{\rm loc}(\RR_+,\BB_{p, 1}^{\frac{\dd}{p}+1}),
	\quad\quad
	\tau^n\,\in\,\mathcal{C}(\RR_+, \BB_{p, 1}^{\frac{\dd}{p}}).
\end{equation}
The basic case of $n=0$ is automatically satisfied by the definition of $(\bar \uu^0,\,\tau^0)$, thus we focus on the induction step. Thanks to Lemma \ref{lemma:bound_of_tau-exp_on_nablau}, the following bound for the conformation tensor $\tau^{n+1}$ holds true
\begin{equation*}
\|\,\tau^{n+1}\,\|_{L^\infty_{\rm loc}(\RR_+,\, \BB_{p, 1}^{\frac{\dd}{p}})}
	\,\leq\, 
	\|\, \tau_0\, \|_{\BB_{p, 1}^{\frac{\dd}{p}}}
	\exp
	\Big\{\,
		C \| \, \uu^n\,\|_{L^1_{\rm loc}(\RR_+,\, \BB_{p, 1}^{\frac{\dd}{p}+1})}
	\,\Big\}.
\end{equation*}
Moreover, applying Remark \ref{rmk:Stokes-bound} to the mild formulation
\begin{equation*}
		\bar \uu^{n+1}(t)\,=\,\int_0^t \Div \Pp{\rm e}^{(t-s)\Delta}\big(-\uu^n(s)\otimes \uu^n(s)\, +\, \tau^{n+1}(s)\big)\dd s,
\end{equation*}
we gather that $\uu^{n+1}$ satisfies 
\begin{equation*}
\begin{aligned}
	\|\,\bar \uu^{n+1}\,\|_{L^\infty_{\rm loc}(\RR_+,\,\BB_{p, 1}^{\frac{\dd}{p}-1})}\,+\,\nu\|\,\uu^{n+1}\,\|_{L^1_{\rm loc}(\RR_+,\,\BB_{p, 1}^{\frac{\dd}{p}+1})},
	\,&\leq\,
	C\Big\{ \,\| \,\uu^n\,\|_{L^2_{\rm loc}(\RR_+,\BB_{p, 1}^{\frac{\dd}{p}})}^2\, +\| \,\tau^{n+1}\,\|_{L^1_{\rm loc}(\RR_+,\BB_{p, 1}^{\frac{\dd}{p}})} \, \Big\},
\end{aligned}
\end{equation*}
thus the condition \eqref{barUn_ind} is satisfied by induction.

\subsection{\bf Uniform estimates on a fixed small interval}\label{sec:uniform-est-fixed-interval}$\,$  

\noindent In this section we deal with some suitable estimates of our approximate solutions $(\uu^n,\,\tau^n)_\NN$. We show that for a sufficiently small time $T>0$, the norms of the solutions $(\uu^n,\,\tau^n)_\NN$ within the functional framework
\begin{equation}\label{def-XT}
	\bar{\uu}^{n}\,\in\, \mathcal{C}([0,T], \BB_{p, 1}^{\frac{\dd}{p}-1})\cap L^1(0,T;\BB_{p, 1}^{\frac{\dd}{p}+1}),
	\quad\quad
	\tau^n\,\in\,\mathcal{C}([0,T], \BB_{p, 1}^{\frac{\dd}{p}})
\end{equation}
are bounded uniformly in $n\in \NN$. 

\noindent 
Thanks to  Lemma \ref{lemma:bound_of_tau-exp_on_nablau}, we have that for any time $T\geq 0$ the following inequality holds true:
\begin{equation*}
	\|\,\tau^{n+1}\,\|_{L^\infty(0,\, T;\,\BB_{p, 1}^{\frac{\dd}{p}})}
	\,\leq \,
	\|\, \tau_0\, \|_{\BB_{p, 1}^{\frac{\dd}{p}}}
	\exp
	\Big\{
		 C\| \,\uu^n\,\|_{L^1(0,\, T;\, \BB_{p, 1}^{\frac{\dd}{p}+1})}
	\Big\}.
\end{equation*}
Next, we define the function $\bar{\mathcal{U}}^n(T)$, depending on time $T\geq 0$, by means of 
\begin{equation*}
	\bar{\mathcal{U}}^n(T)\:=\, 
	\|\,\bar \uu^n\,\|_{L^\infty(0,\,T\,; \BB_{p, 1}^{\frac{\dd}{p}-1})}
	\,+\,
	\nu \|\,\bar \uu^n\,\|_{L^1(0,\,T\,; \BB_{p, 1}^{\frac{\dd}{p}+1})}
	\,+\,
	\|\,\nabla \bar \pre^n\,\|_{L^1(0,\,T\,;\BB_{p, 1}^{\frac{\dd}{p}-1})}.
\end{equation*}
We thus have
\begin{equation*}
\begin{aligned}
	\bar{\mathcal{U}}^{n+1}(T)
	\,&\lesssim\,
	\|\,\uu^L\|_{L^2(0,\, T\,; \BB_{p, 1}^{\frac{\dd}{p}})}^2
	\,+\,
	\|\,\bar \uu^{n}\|_{L^2(0,\, T\,; \BB_{p, 1}^{\frac{\dd}{p}})}^2
	\,+\,
	\|\,\tau^{n+1}\,\|_{L^1(0,\, T;\,\BB_{p, 1}^{\frac{\dd}{p}})}\\
	&\leq
	C\|\,\uu^L\|_{L^2(0,\, T;\BB_{p, 1}^{\frac{\dd}{p}})}^2
	\,+\,
	C\nu^{-1}
	\bar{\mathcal{U}}^{n}(T)^2
	\,+\,
	C
	T 
	\|\, \tau_0\, \|_{\BB_{p, 1}^{\frac{\dd}{p}}}
	\exp
	\Big\{
		 C\,\bar{\mathcal{U}}^{n}(T)
	\Big\}.
\end{aligned}
\end{equation*}
Observing that $\bar{\mathcal{U}}^0\equiv 0$, one can easily show by induction that $\bar{\mathcal{U}}^n(T) \leq \nu\ee /(2C)$, for any integer $n\in \NN$ and a small parameter $\ee>0$, provided that $T>0$ is sufficiently small, for instance satisfying
\begin{equation}\label{bound-for-T}
	C\|\,\uu^L\|_{L^2(0,\, T;\BB_{p, 1}^{\frac{\dd}{p}})}^2
	\,+\, 
	C
	T
	\|\, \tau_0\, \|_{\BB_{p, 1}^{\frac{\dd}{p}}}
	\exp
	\Big\{
		\frac{\nu}{2}
	\Big\}	
	\leq 
	\frac{\nu\ee }{100 C}.
\end{equation}
Denoting by $T$ the supremum of the time $t$ satisfying the above inequality, we can finally deduce that the sequence $(\uu^n,\,\tau^n,\,\nabla \pre^n)$ is uniformly bounded in the functional space given by \eqref{def-XT}. 

\subsection{\bf Convergence in small norms}\label{sec:conv_in_small_norm}$\,$

\noindent Denoting by $\bar \tau^n \,:=\,\tau^n\,-\,\tau_0$, we claim that the sequence $(\bar \uu^n,\,\bar \tau^n,\,\nabla \bar \pre^n)$ is a Cauchy sequence in the functional setting given by
\begin{equation}\label{YT}
		(\uu^n,\,\tau^n,\,\nabla \pre^n)\,\in\,\Y_T\quad\Rightarrow\quad 
		\left\{
			\begin{alignedat}{8}
						&\bar \uu^n	\,	&&\in	\,\tilde  
											L^\infty(0,\, T; 	  \,&&&&\BB_{p, 1}^{\frac{\dd}{p}-2}\,)\cap \tilde L^1(0, T,\BB_{p, 1}^{\frac{\dd}{p}}),\\
						&\bar \tau^n	\,	&&\in	\, \tilde L^\infty(0, T,\,	&&&&\BB_{p, 1}^{\frac{\dd}{p}-1}\,),\\
				\nabla	&\bar \pre^n	\,	&&\in 	\,	\tilde L^1(0, T,				&&&&\BB_{p, 1}^{\frac{\dd}{p}-2}\,).
			\end{alignedat}
		\right.	
\end{equation}
We will prove that this condition holds true, up to sufficiently decreasing the life span $T>0$. It is worth to remark that the considered Besov spaces are homogeneous, hence the fact that $(\bar \uu^n,\,\bar \tau^n,\,\nabla \bar \pre^n)$ belongs to the functional space given by \eqref{def-XT} does not automatically imply that the same functions $(\bar \uu^n,\,\bar \tau^n,\,\nabla \bar \pre^n)$ belong also to $\Y_T$. Nevertheless, system \eqref{main_system_appx} together with Proposition \ref{prop:reminder-und-paraproduct} yields that $\partial_t \bar \uu^n$ and $\partial_t \tau^n$ belongs to $\tilde L^1(0,T; \BB_{p,1}^{\dd/p-2})$ and $\tilde L^1(0,T; \BB_{p,1}^{\dd/p-1})$, respectively. This remark together with the initial condition $\bar \uu^n(0)\,=\,0$ and $\bar \tau^n(0)=0$ justifies the fact that $(\bar \uu^n,\,\bar \tau^n,\,\nabla \bar \pre^n)$ belongs to the functional space given by $\Y_T$.

\smallskip
\noindent We now introduce the notation $\delta \tau^n\,=\,\tau^{n+1}\,-\,\tau^n$, $\delta \uu^{n}\,=\,\uu^{n+1}\,-\,\uu^n$, $\delta \omega^n\,=\,\omega^{n+1}\,-\,\omega^n$ and $\delta \pre^n \,=\,\pre^{n+1}-\pre^n$. 
Hence, we first remark from the equation of the conformation tensor in \eqref{main_system_appx} that $\delta \tau^n$ satisfies
\begin{equation*}
	\partial_t \delta \tau^n \,+\,\uu^{n-1}\cdot \nabla \delta \tau^n\,-\,\omega^{n-1} \delta \tau^n\,+\,\delta \tau^n \omega^{n-1}\,=\,
	-\Div\left(\,\delta \uu^{n-1}\otimes \tau^n\,\right) \,+\,\delta \omega^{n-1}\tau^n \,-\,\tau^n\,\delta \omega^{n-1}.
\end{equation*}
We can then apply Lemma \ref{lemma:bound_of_tau-exp_on_nablau} about the propagation of Besov regularity of index $0$, insuring that
\begin{equation*}
\begin{aligned}
	\|\,\delta \tau^n\,\|_{\tilde L^\infty(0, t; \BB_{p,1}^{\frac{\dd}{p}-1})}\,\leq \,
	C\exp\left\{\,\int_0^t\|\,\nabla \uu^{n-1}(s)\,\|_{\BB_{p,1}^\frac{\dd}{p}}\dd s\right\}
	\bigg(
		& \|\,\delta \uu^{n-1}(s)\otimes \tau^n(s)\,\|_{\tilde L^1(0,t;\,\BB_{p,1}^\frac{\dd}{p})}
		\,+\\&+\|\,\delta \omega^{n-1}(s)\tau^n(s) \,-\,\tau^n(s)\,\delta \omega^{n-1}(s)\,\|_{\tilde L^1(0, t;\, \BB_{p,1}^{\frac{\dd}{p}-1})}
	\bigg),
\end{aligned}
\end{equation*}
for a suitable positive constant $C$. 
Next, from Proposition \ref{prop:reminder-und-paraproduct} we gather
\begin{equation*}
	\|\,\delta \tau^n\,\|_{\tilde L^\infty(0,t; \BB_{p,1}^{\frac{\dd}{p}-1})}\,\leq \,C\exp\left\{\,\int_0^t\|\,\nabla \uu^{n-1}(s)\,\|_{\BB_{\infty,1}^0}\dd s\right\}
		\|\,\delta \uu^{n-1}(s)\,\|_{\tilde L^1(0,t;\BB_{p, 1}^\frac{\dd}{p})}\|\,\tau^n(s)\,\|_{\tilde L^\infty(0,t; \BB_{p,1}^\frac{\dd}{p})}.
\end{equation*}
Hence, there exists a smooth increasing function $\chi\,=\,\chi(T)$ which is null in $T=0$ and such that the following bound holds true:
\begin{equation}\label{Cauchy-converg:est1}
		\|\,\delta \tau^n\,\|_{\tilde L^\infty(0,t;\BB_{p,1}^{\frac{\dd}{p}-1})}\,\leq\,\chi(T) \|\,\delta \uu^{n-1}\,\|_{\tilde L^1(0,T;\BB_{p, 1}^\frac{\dd}{p})}
		\,\leq\,\chi(T)\delta U^{n-1}(t).
\end{equation}
Here, the function $\chi$ depends just on $T$ and not on the index $n$ of the approximate solution. 

\noindent
Next, since $\delta \uu^n$ is a classical solution of the equation
\begin{equation*}
	\partial_t \delta \uu^n\,-\,\nu \Delta \delta \uu^n\,+\,\nabla \delta \pre^n\,=\,-\Div\left(\,\uu^{n-1}\otimes \delta \uu^{n-1}\,\right)
	-\Div\left(\,\delta \uu^{n-1}\otimes \uu^{n}\,\right)\,+\,\Div\,\delta \tau^n,
\end{equation*}
Then, the Remark \ref{rmk:Stokes-bound} concerning suitable bounds for the Stokes operator implies that
\begin{equation}\label{Cauchy-converg:est2}
\begin{aligned}
	\delta \mathcal{U}^n(t)\,&:=\,\|\,\delta \uu^n\,\|_{\tilde L^\infty(0,t;\,\BB_{p, 1}^{\frac{\dd}{p}-2})}\,+\,\nu \|\,\delta \uu^n\,\|_{\tilde L^1(0,t\,;\,\BB_{p, 1}^{\frac{\dd}{p}})}
	\,+\,\|\,\nabla \delta \pre^n\,\|_{\tilde L^1(0,t\,;\,B_{p, 1}^{\frac{\dd}{p}-2})}\\
	&\leq\,
	C\left(
		 		\|\,\uu^{n-1}\otimes \delta \uu^{n-1}\,\|_{\tilde L^1(0,t;\,\BB_{p, 1}^{\frac{\dd}{p}-1})}
				\,+\, \|\,\delta \uu^{n-1}\otimes \uu^{n}\,\|_{\tilde L^1(0,t;\,\BB_{p, 1}^{\frac{\dd}{p}-1})}
				\,+\, \|\,\delta \tau^n \,\|_{\tilde L^1(0,t;\,\BB_{p, 1}^{\frac{\dd}{p}-1})}
	\right).
\end{aligned}
\end{equation}
Hence, recalling that  $p\in [1, 2\dd)$, the product is a continuous function between the functional spaces
\begin{equation*}
	\tilde L^2(0,t;\,\BB_{p, 1}^{\frac{\dd}{p}-1})\times \tilde L^2(0,t;\,\BB_{p, 1}^\frac{\dd}{p})\,\rightarrow 
	\tilde L^1(0,t;\,\BB_{p,1}^{\frac{\dd}{p}-1}),
\end{equation*}
we combine \eqref{Cauchy-converg:est2} together with \eqref{Cauchy-converg:est1}, to get
\begin{equation*}
	\delta \mathcal{U}^n(t)\,\leq\,
	C\left(
		\|\,\uu^{n-1}\,\|_{\tilde L^2(0,t\,;\,\BB_{p, 1}^\frac{\dd}{p})}
		\,+\,
		\|\,\uu^n\,\|_{\tilde L^2(0,t\,;\,\BB_{p, 1}^\frac{\dd}{p})}
	\right)
	\nu^{-1/2}\delta \mathcal{U}^{n-1}(t)
	\,+\,
	\nu^{-1}\chi(t)\delta  \mathcal{U}^{n-1}(t).
\end{equation*}
From \eqref{bound-for-T}, there exists a small parameter $c$, we can assume small enough, such that
\begin{equation*}
		\|\,\uu^{n-1}\,\|_{L^2(0,t\,;\,\BB_{p, 1}^\frac{\dd}{p})}
		\,+\,
		\|\,\uu^n\,\|_{L^2(0,t\,;\,\BB_{p, 1}^\frac{\dd}{p})}
		\,\leq\,
		c\sqrt{\nu}.
\end{equation*}
We hence conclude that for $T$ sufficiently small, we have
\begin{equation}\label{iperiper}
	\delta \mathcal{U}^n(t)\,\leq\,\frac 12 \delta \mathcal{U}^{n-1}(t),
	\quad\quad
	\forall\,t\,\in\,[0,T]
	\quad\quad
	\text{and}
	\quad\quad
	\forall\,
	n\in\NN.
\end{equation}
The considered sequence $(\bar \uu^n,\,\bar \tau^n,\,\nabla\bar  \pre^n)_\NN$ is then a Cauchy sequence in $\Y_T$.

\subsection{\bf End of the proof of the local existence} $\,$

\noindent
On the one hand, we have achieved that the sequence $(\bar \uu^n,\,\bar \tau^n,\,\nabla\bar  \pre^n)_\NN$ converges towards a function $(\bar \uu,\,\bar \tau,\,\nabla\bar  \pre)_\NN$ in the functional space $\Y_T$. On the other hand the uniform estimates of Section \ref{sec:uniform-est-fixed-interval} allow us to state that $(\uu^n,\,\tau^n,\,\nabla \pre^n\,)_\NN$ converges to a solution $(\uu,\,\tau,\,\nabla \pre)$ of the co-rotational Oldroyd-B model \eqref{main_system}:
\begin{equation*}
	\uu\,=\,\uu_L\,+\,\bar{\uu},\quad\quad \tau\,=\,\tau_0\,+\,\bar{\tau},\quad\quad \nabla \pre \,=\, \nabla \pre_L\,+\,\nabla \bar \pre.
\end{equation*}
This solution belongs to the functional space given by
\begin{equation}\label{functional_space_sol}
	L^\infty(0,T;\BB_{p, 1}^{\frac{\dd}{p}-1})\cap L^1(0,T;\BB_{p, 1}^{\frac{\dd}{p}+1})\,\times L^\infty(0,T;\BB_{p, 1}^\frac{\dd}{p})\,\times \, L^1(0,T;\BB_{p, 1}^{\frac{\dd}{p}-1}).
\end{equation}
The continuity in time of the solution is determined by Lemma \ref{lemma:bound_of_tau-exp_on_nablau} and Remark \ref{rmk:Stokes-bound}. Indeed \eqref{functional_space_sol} yields that $\,\uu\cdot \nabla \tau -\omega \tau +\tau \omega $ and $\uu\cdot \nabla \uu\,+\,\Div\,\tau$ belongs to $L^1(0,T;\BB_{p, 1}^\frac{\dd}{p})$ and $L^1(0,T;\BB_{p, 1}^{\frac{\dd}{p}-1})$, respectively.

\subsection{\bf Uniqueness}$\,$

\noindent
The uniqueness of Theorem \ref{main_thm1} can be achieved with similar procedures as the one used in Section \ref{sec:conv_in_small_norm}: considering two solutions $(\uu^1,\,\tau^1)$ and  $(\uu^2,\,\tau^2)$ satisfying the condition of Theorem \ref{main_thm1}, we define the difference $\delta \uu := \uu^1-\uu^2$ and $\delta \tau := \tau^1-\tau^2$. Hence, for $p\in [1,2d)$ we can estimate $(\delta \uu,\,\delta\tau)$ in the functional space defined by $\Y_T$ as in \eqref{YT}: denoting by $\delta \mathcal{U}$ the functional
\begin{equation*}
	\delta \mathcal{U}(T)\,:=\,\|\,\delta \uu\,\|_{\tilde L^\infty(0,T\,\BB_{p,1}^{\frac{\dd}{p}-2})}\,+\,\nu \|\,\delta \uu^n\,\|_{\tilde L^1(0,T,\,\BB_{p,1}^{\frac{\dd}{p}})},
\end{equation*} 
we proceed similarly as for proving \eqref{iperiper}.  For $p=2d$ we control
\begin{equation*}
	\delta \mathcal{U}(T)\,:=\,\|\,\delta \uu\,\|_{\tilde L^\infty(0,T\,\BB_{p,\infty}^{\frac{\dd}{p}-2})}\,+\,\nu \|\,\delta \uu^n\,\|_{\tilde L^1(0,T,\,\BB_{p,\infty}^{\frac{\dd}{p}})},
\end{equation*} 
and we use as in the corresponding section in $2D$, the follwoing logarithmic estimate
\begin{equation*}
\begin{aligned}	\| \,  \delta u\,\|_{\tilde L^1(0,T; \BB_{p, 1}^{\frac{2}{p}})}\leq 	\| \,  \delta u\,\|_{\tilde L^1(0,T; \BB_{p, \infty}^{\frac{d}{p}})}\ln \bigg(e+\frac{\| \,  \delta u\,\|_{\tilde L^1(0,T; \BB_{p, \infty}^{\frac{d}{p}-1})}+\| \,  \delta u\,\|_{\tilde L^1(0,T; \BB_{p, 1}^{\frac{d}{p}+1})}}{\| \,  \delta u\,\|_{\tilde L^1(0,T; \BB_{p, \infty}^{\frac{2}{p}})}}\bigg)\\
\leq  	\| \,  \delta u\,\|_{\tilde L^1(0,T; \BB_{p, \infty}^{\frac{2}{p}})}\ln \bigg(e+\frac{\| \,  (u^1, u^2)\,\|_{ L^1(0,T; L^{d,w})}+\| \,  (u_1,u_2)\,\|_{\tilde L^1(0,T; B^{\frac dp+1}_{p,1})}}{\| \, \delta u\,\|_{\tilde L^1(0,T; \BB_{p, \infty}^{\frac{d}{p}})}}\bigg).
\end{aligned}
\end{equation*}
to obtain that
\begin{equation*}
	\delta \mathcal{U}(T) \leq C \int_0^T \delta \mathcal{U}(t)\ln(e+\frac{C}{\delta\mathcal U(t)})\dd t,
\end{equation*}
and to conclude the uniquness as an application of the classical Osgood lemma.
We hence gather that  $\delta \mathcal{U}(T) =0$, which insures the uniqueness of the solution.

\subsection{\bf Extension criterion}$\,$

\noindent
Since the functions $\tau$ and $\uu$ belongs to $L^\infty(0,T^*;\BB_{p, 1}^{\frac{\dd}{p}})$ and  $L^\infty(0,T^*;\BB_{p, 1}^{\frac{\dd}{p}-1})\cap L^1(0,T^*;\BB_{p, 1}^{\frac{\dd}{p}-1})$.  Thanks to \eqref{bound-for-T}, for any $t\in [0,T^*)$ there exists a local solution with initial data $(\uu(t),\,\tau(t))$ on a time interval $[t,t+T]$, with $T>0$ not depending on $t$. Combining such a property with the previous uniqueness result, we can the extend the solution $(\uu,\,\tau)$ for larger time $t$ than $T^*$.

\smallskip
\noindent This concludes the proof of Theorem \ref{main_thm1}.

\section{Global-in-time solutions for small initial data: the case $p\in [1, 2\dd)$}\label{sec:globl-sol}

\noindent
This section is devoted to the proof of Theorem \ref{main_thm2} assuming that $p\in [1, 2\dd)$. The existence of a global in time solution is based on certain suitable estimates. We will begin considering the local solution given by Theorem \ref{main_thm1}, and we will then show that the additional assumptions of Theorem \ref{main_thm2} allow these solutions to be uniformly-in-time bounded in suitable Lorentz spaces.  Hence, under a smallness condition on such a norms, we will propagate the Lipschitz regularity of the velocity field, globally in time. The flow being Lipschitz, we will then be able to propagate the suitable Besov (positive) regularity of Theorem \ref{main_thm2}.

\subsection{\bf Propagation of Lorentz regularities}$\,$

\noindent
Since the initial data $\uu_0$ and $\tau_0$ belongs to $\BB_{p,1}^{\frac{\dd}{p}-1}$, for a $p\in [1, 2\dd)$, thanks to Theorem \ref{main_thm1} there exists a time $T$ and a unique solution $(\uu,\,\tau)$ of the corotational  Johnson-Segalman system \eqref{main_system} in the functional space
\begin{equation*}
	\uu\,\in\, \mathcal{C}([0,T], \BB_{p, 1}^{\frac{\dd}{p}-1})\cap L^1(0,T;\BB_{p, 1}^{\frac{\dd}{p}+1}),
	\quad\quad
	\tau\,\in\,\mathcal{C}([0,T], \BB_{p, 1}^{\frac{\dd}{p}})
\end{equation*}
We first remark that the velocity field $\uu$ satisfies the mild formulation
\begin{equation}\label{mild-formulationn}
 	\uu(t)\,=\,e^{\nu t\Delta}\uu_0	\,+\,\int_0^t\Div\,\Pp e^{\nu(t-s)\Delta}(\uu(s)\otimes \uu(s))\dd s\,+\,\int_0^t\Div\,\Pp e^{\nu(t-s)\Delta}\tau(s) \dd s
\end{equation}
Thus, thanks to Lemma \ref{lemma_Lorentz_bound} and since $\|\,\uu\otimes \uu\,\|_{L^{\dd/2, \infty}_x}\leq \|\,\uu\,\|_{L^{\dd, \infty}_x}^2$, we gather that
\begin{equation*}
	\|\,\uu\,\|_{L^\infty(0, t; L^{\dd, \infty}_x)}
	\,\leq \,
	C
	\left(
		\|\,\uu_0\,\|_{L^{\dd, \infty}}
		\,+\,
		\nu^{-1}
		\|\,\uu\,\|_{L^\infty(0,t; \,L^{\dd, \infty}_x)}^2
		\,+\,
		\nu^{-1}
		\|\,\tau\,\|_{L^{\dd, \infty}_x}
	\right),
\end{equation*}
for any time $t\in [0, T]$. Furthermore, thanks to Lemma \ref{lemma:L^pbound_of_tau}, $\tau$ satisfies
\begin{equation*}
	\|\,\tau(t)\,\|_{L^{\dd, \infty}}
	\,=\,
	\|\,\tau_0\,\|_{L^{\dd, \infty}}.
\end{equation*}
We thus conclude that the Lorentz norm of $\uu$ is uniformly small in time
\begin{equation}\label{small-cond-lorentz-u}
	\|\,\uu\,\|_{L^\infty(0,t;\, L^{\dd, \infty}_x)}\,\leq\,
	\ee \nu ,
\end{equation}
provided that
\begin{equation}\label{small-cond-proof}
	\nu^{-1}\|\,\tau_0\,\|_{L^{\dd, \infty}}
	\,+\,
	\|\,\uu_0\,\|_{L^{\dd,\,\infty}}
	\,\leq\,
	\ee\nu
\end{equation}
for a suitable small parameter $\ee$.

\subsection{\bf Propagation of Lipschitz regularities}$\,$

\noindent We now deal with the Lipschitz regularity of the flow $\uu$ and provide an uniform estimate. We first remark that since $\BB_{p,1}^{\dd/p-1}$, $\BB_{p,1}^{\dd/p}$ and $\BB_{p,1}^{\dd/p+1}$ are continuously embedded into  $\BB_{\infty,1}^{-1}$,  $\BB_{\infty,1}^{0}$ and $\BB_{\infty,1}^{1}$, respectively, the solution $(\uu,\,\tau)$ of Theorem \ref{main_thm1} belongs to
\begin{equation*}
	\uu\,\in\, \mathcal{C}([0,T], \BB_{\infty, 1}^{-1})\cap L^1(0,T;\BB_{\infty, 1}^{1}),
	\quad\quad
	\tau\,\in\,\mathcal{C}([0,T], \BB_{\infty, 1}^{0}),
\end{equation*}
and we can hence define the continuous functional $\mathcal{U}(t)$, $t\in [0,T]$, by means of
\begin{equation*}
	\mathcal{U}(t)\,:=\, \|\,\uu\,\|_{L^\infty(0,t;\,\BB_{\infty, 1}^{-1})}\,+\,\nu\|\,\uu\,\|_{L^1(0,t;\,\BB_{\infty, 1}^{1})}.
\end{equation*}
Next, recasting the equation for $\uu$ into a non stationary linear Stokes problem, we can apply Remark \ref{rmk:Stokes-bound} to gather
\begin{equation}\label{juve1988}
	\mathcal{U}(t)
	\,\leq\,
	C
	\left(
		\|\,\uu_0\,\|_{\BB_{\infty, 1}^{-1}}
		\,+\,
		\|\,\uu\cdot\nabla \uu \,\|_{L^1(0,t;\,\BB_{\infty, 1}^{-1})}
		\,+\,
		\|\,\tau\,\|_{L^1(0,t;\,\BB_{\infty, 1}^{0})}
	\right).
\end{equation}
We hence analyze the nonlinear term $\uu\cdot\nabla \uu = \Div(\uu\otimes\uu)$. First we recast it through the Bony decomposition
\begin{equation*}
		(\uu\cdot \nabla\uu)_j = \partial_i  \dot{T}_{\uu_i}\uu_j\,+\,\partial_i \dot{T}_{\uu_j}\uu_i \,+\,\partial_i \dot R(\uu_i,\,\uu_j),\quad j=1,\dots \dd.
\end{equation*}
Then, we proceed estimating each term on the right hand side. First, we observe that
\begin{equation*}
\begin{aligned}
	\|\,  \dot{T}_{\uu_i}\uu_j \,\|_{\BB_{\infty, 1}^{0}}\,+\,\|\,  \dot{T}_{\uu_j}\uu_i \,\|_{\BB_{\infty, 1}^{0}}\,&=\,\sum_{q\in \ZZ} \sum_{|q-k|\leq 5} \|\,\Dd_q(\Sd_{k-1}\uu_i\Dd_k \uu_j)\, \|_{L^\infty}\,+\,\|\,\Dd_q(\Sd_{k-1}\uu_j\Dd_k \uu_i)\, \|_{L^\infty}\\
	&\lesssim\, \sum_{k\in \ZZ} 2^{-q}\|\,\Sd_{k-1}\uu \,\|_{L^\infty}2^q\|\,\Dd_k \uu\, \|_{L^\infty}\\
	&\lesssim\, \|\,\uu \,\|_{\BB_{\infty, \infty}^{-1}}\|\, \uu\, \|_{\BB_{\infty, 1}^{1}}
	\,\lesssim\,  \|\,\uu \,\|_{L^{\dd, \infty}}\|\, \uu\, \|_{\BB_{\infty, 1}^{1}}.
\end{aligned}
\end{equation*}
Moreover
\begin{equation*}
\begin{aligned}
	\|\, \partial_i \dot R(\uu_i,\,\uu_j)\,\|_{\BB_{\infty, 1}^{0}}
	\,&\leq\,
	\sum_{q\in \ZZ} \sum_{\substack{k\geq q- 5\\|\eta|\leq 1}}\|\,\Dd_q(\Dd_{k}\uu_i\Dd_{k+\eta} \uu_j)\, \|_{L^\infty}\\
	&\leq
	\sum_{q\in \ZZ} \sum_{\substack{k\geq q- 5\\|\eta|\leq 1}}2^{q}2^{-q}\|\,\Dd_q(\Dd_{k}\uu_i\Dd_{k+\eta} \uu_j)\, \|_{L^\infty}\\
	&\leq
	\sum_{q\in \ZZ} \sum_{\substack{k\geq q- 5\\|\eta|\leq 1}}2^{q}\|\,\Dd_{k}\uu_i\Dd_{k+\eta} \uu_j\,\|_{\BB_{\infty, \infty}^{-1}}.
\end{aligned}
\end{equation*}
Since $L^{\dd, \infty}$ is continuously embedded into $\BB_{\infty, \infty}^{-1}$ we hence gather
\begin{equation*}
\begin{aligned}
		\|\, \partial_i \dot R(\uu_i,\,\uu_j)\,\|_{\BB_{\infty, 1}^{0}}
		\,\leq\,
		C\|\,\uu\,\|_{L^{\dd,\infty}}\sum_{q\in \ZZ} 	\sum_{k\in \ZZ} {\bf 1}_{(-\infty, 5]}(q-k)2^{q-k}2^k\|\,\Dd_{k}\uu\,\|_{L^\infty}
		\,\leq\,
		C\|\,\uu\,\|_{L^{\dd,\infty}}
		\|\,\uu\,\|_{\BB_{\infty, 1}^{1}}.
\end{aligned}
\end{equation*}
Summarizing the previous estimates with \eqref{juve1988}, we eventually get
\begin{equation*}
	\mathcal{U}(t)
	\,\leq\,
	C
	\left(
		\|\,\uu_0\,\|_{\BB_{\infty, 1}^{-1}}
		\,+\,
		\|\,\tau\,\|_{L^1(0,t;\,\BB_{\infty, 1}^{0})}
	\right)
	\,+\,
	C\|\,\uu\,|_{L^\infty(0,T; L^{\dd, \infty})}\mathcal{U}(t),
\end{equation*}
thus, recalling the smallness condition \eqref{small-cond-lorentz-u} of the Lorentz norm of $\uu$, we deduce that
\begin{equation*}
	\mathcal{U}(t)
	\,\leq\,
	C
	\left(
		\|\,\uu_0\,\|_{\BB_{\infty, 1}^{-1}}
		\,+\,
		\|\,\tau\,\|_{L^1(0,t;\,\BB_{\infty, 1}^{0})}
	\right).
\end{equation*}
Next, in virtue of Lemma \ref{lemma:bound_of_tau-linear_on_nablau}
\begin{equation*}
	\|\,\tau(t)\,\|_{\BB_{\infty, 1}^{0}}
	\,\leq\,
	C
	\|\,\tau_0\,\|_{\BB_{\infty, 1}^{0}}
	\left(
		1\,+\,\|\,\uu\,\|_{L^1(0,t;\,\BB_{\infty, 1}^{1})}
	\right)
\end{equation*}
from which we obtain
\begin{equation*}
	\mathcal{U}(t)\,\leq\,C\|\,\uu_0\,\|_{\BB_{\infty, 1}^{-1}}
	\,+\,
	Ct\|\,\tau_0\,\|_{\BB_{\infty, 1}^{0}}
	\,+\,
	C\nu^{-1}
	\|\,\tau_0\,\|_{\BB_{\infty, 1}^{0}}
	\int_0^t
	\mathcal{U}(\tau)\dd \tau,
\end{equation*}
hence thanks to the Gronwall Lemma we deduce that
\begin{equation}\label{iperiperiperiper}
	\mathcal{U}(t)\,\leq\, 
	C\|\,\uu_0\,\|_{\BB_{\infty, 1}^{-1}}
	\exp\left\{	Ct\nu^{-1}\|\,\tau_0\,\|_{\BB_{\infty, 1}^0}	\right\}
	\,+\,
	\nu 
	\left(
		\exp\left\{	Ct\nu^{-1}\|\,\tau_0\,\|_{\BB_{\infty, 1}^0}	\right\}
		\,-\,1
	\right),
\end{equation}
for any time $t\in [0, T]$.
\subsection{\bf Proof of global existence}\label{sec:imp}$\,$

\noindent
Let $T^*$ be the largest time of existence of the solution $(\uu,\,\tau)$ determined by Theorem \ref{main_thm1}. We claim that under the condition of Theorem \ref{main_thm2}, the lifespan $T^*$ satisfies $T^*=+\infty$. We proceed by contradiction, assuming $T^*<+\infty$. For any time $T\in (0, T^*)$, $(\uu,\,\tau)$ belongs to 
\begin{equation*}
	\mathcal{C}([0,T],\,\BB_{p,1}^{\frac{\dd}{p}-1})\cap L^1(0,T;\BB_{p,1}^{\frac{\dd}{p}+1})\times \mathcal{C}([0,T],\BB_{p,1}^{\frac{\dd}{p}}),
\end{equation*}
which is continuously embedded into
\begin{equation*}
	\mathcal{C}([0,T],\,\BB_{\infty,1}^{-1})\cap L^1(0,T;\BB_{\infty,1}^{1})\times \mathcal{C}([0,T],\BB_{\infty,1}^{0}).
\end{equation*}
Furthermore, thanks to \eqref{iperiperiperiper}, the following  estimate of the Lipschitz regularity of $\uu$ holds true for any time $T\in (0,T^*)$:
\begin{equation*}
	\|\,\uu\,\|_{L^\infty(0,T;\BB_{\infty, 1}^{-1})}\,+\,\nu\|\,\uu\,\|_{L^1(0,T;\BB_{\infty, 1}^1)}
	\,\leq\,\Theta_\nu(\uu_0,\,\tau_0,\,T),
\end{equation*}
where $\Theta_\nu$ is a growing continuous function depending on time $T$ by
\begin{equation*}
	\Theta_\nu(\uu_0,\,\tau_0,\,T):=
	C\|\,\uu_0\,\|_{\BB_{\infty, 1}^{-1}}
	\exp\left\{	CT\nu^{-1}\|\,\tau_0\,\|_{\BB_{\infty, 1}^0}	\right\}
	\,+\,
	\nu 
	\left(
		\exp\left\{	CT\nu^{-1}\|\,\tau_0\,\|_{\BB_{\infty, 1}^0}	\right\}
		\,-\,1
	\right).
\end{equation*}
Applying Lemma \ref{lemma:bound_of_tau-exp_on_nablau}, we deduce that for any time $T\in (0,T^*)$
\begin{equation*}
\begin{aligned}
	\|\,\tau\,\|_{L^\infty(0,T;\BB_{p,1}^\frac{\dd}{p})}
	\,&\leq\,
	\|\,\tau_0\,\|_{\BB_{p,1}^{\frac{\dd}{p}}}
	\exp
	\left\{
	C\int_0^T
	\Theta_\nu(\uu_0,\,\tau_0,\,t)\dd t
	\right\}\\
	&\leq\,
	\|\,\tau_0\,\|_{\BB_{p,1}^{\frac{\dd}{p}}}
	\exp
	\Big\{
	CT^*
	\Theta_\nu(\uu_0,\,\tau_0,\,T^*)\dd t
	\Big\}
	<+\infty.
\end{aligned}
\end{equation*} 
We hence deduce that $\tau$ belongs to $L^\infty(0,T^*,\,\BB_{p,1}^{\dd/p})$.

\noindent Next, we take into account the velocity field $\uu$ and we remark that for any $t\in (0,T^*)$
\begin{equation*}
	\|\,\uu(t)\,\|_{\BB_{p,1}^{\frac{\dd}{p}-1}}
	\,+\,
	\nu
	\int_0^t
	\|\,\uu(s)\,\|_{\BB_{p,1}^{\frac{\dd}{p}+1}}
	\dd s
	\,\leq\,
	\|\,\uu_0\,\|_{\BB_{p,1}^{\frac{\dd}{p}-1}}
	\,+\,
	\int_0^t
	\|\,\uu(s)\,\|_{\BB_{\infty,1}^{0}}
	\|\,\uu(s)\,\|_{\BB_{p,1}^{\frac{\dd}{p}}}
	\dd s
	\,+\,
	\int_0^t\|\,\tau(s)\,\|_{\BB_{p,1}^{\frac{\dd}{p}}}
	\dd s
\end{equation*}
from which we deduce that
\begin{equation*}
\begin{aligned}
	\|\,\uu(t)\,\|_{\BB_{p,1}^{\frac{\dd}{p}-1}}
	\,+\,
	\nu
	\int_0^t
	\|\,\uu(s)\,\|_{\BB_{p,1}^{\frac{\dd}{p}+1}}
	\leq\,
	\|\,\uu_0\,\|_{\BB_{p,1}^{\frac{\dd}{p}-1}}
	\,&+\,
	C\nu^{-1}\int_0^t
	\|\,\uu(s)\,\|_{\BB_{\infty,1}^{0}}^2
	\|\,\uu(s)\,\|_{\BB_{p,1}^{\frac{\dd}{p}-1}}
	\dd s
	\,+\\&+\,
	CT^*
	\|\,\tau_0\,\|_{\BB_{p,1}^{\frac{\dd}{p}}}
	\exp
	\Big\{
	CT^*
	\Theta_\nu(\uu_0,\,\tau_0,\,T^*)
	\Big\}.
\end{aligned}
\end{equation*}
Finally, applying the Gronwall inequality we gather that for any time $t\in (0,T^*)$,
\begin{equation*}
	\|\,\uu(t)\,\|_{\BB_{p,1}^{\frac{\dd}{p}-1}}
	\,+\,
	\nu
	\int_0^t
	\|\,\uu(s)\,\|_{\BB_{p,1}^{\frac{\dd}{p}+1}}
	\,\leq\,
	\bigg\{
		\|\,\uu_0\,\|_{\BB_{p,1}^{\frac{\dd}{p}-1}}
		\,+\,
		CT^*
	\|\,\tau_0\,\|_{\BB_{p,1}^{\frac{\dd}{p}}}
	{\rm e}^{
	CT^*
	\Theta_\nu(\uu_0,\,\tau_0,\,T^*)
	}
	\bigg\}
	{\rm e}^{C\nu^{-1}\Theta_\nu(\uu_0,\,\tau_0,\,T^*)}
	<+\infty.
\end{equation*}
The above inequality implies that $\uu$ belongs to $L^\infty(0,T^*;\BB_{p,1}^{\frac{\dd}{p}-1})\cap L^1(0,T^*,\BB_{p,1}^{\frac{\dd}{p}+1})$. The prolongation criterion of Theorem \ref{main_thm1} hence allows to extend in time the solution $(\uu,\,\tau)$ above the lifespan $T^*$, which contradicts the maximality of $T^*$, itself. Thus $T^*=+\infty$ and this concludes the proof of Theorem \ref{main_thm2} for $p\in [1,2\dd)$.

\section{Global-in-time solutions for small initial data: the case $p\in [2\dd, \infty)$}\label{sec:globl-sol2}

\noindent
This section is devoted to conclude the proof of Theorem \ref{main_thm2}, namely showing the existence of global-in-time classical solutions when $p\in [2\dd, \infty)$. In this setting, the uniqueness of these solutions is not determined as in the case of the previous section, since the regularity of the velocity field is below the critical negative value $\dd/p-1<-1/2$. 

\noindent 
We begin with regularizing the initial data $(\uu_0,\,\tau_0)$ as follows:
\begin{equation*}
	\uu_0^n:= J^n \uu_0\quad\quad\quad \tau_0^n:= J^n\tau_0. 
\end{equation*}
The regularized initial data $(\uu_0^n,\,\tau_0^n)$ belongs to $\BB_{\dd, 1}^{0}\times \BB_{\dd, 1}^1$ as well as the smallness condition
	\begin{equation*}
	\begin{alignedat}{8}
		&\left\|\,\uu_0^n	\,\right\|_{L^{\dd, \infty}}\,+\,
		\frac{1}{\nu}
		\left\|\,\tau_0^n	\,\right\|_{L^{\frac{\dd}{2}, \infty}}
		\,&&\leq\,
		\frac{\ee}{\nu}
	\end{alignedat}
	\end{equation*}
is still satisfied. Thanks to Theorem \ref{main_thm2}, with $p=\dd \in [1, 2\dd)$, there exists a unique global-in-time solutions $(\uu^n,\,\tau^n)$ of the system
\begin{equation*}
\left\{\hspace{0.2cm}
	\begin{alignedat}{2}
		&\,\partial_t \tau^{n}\,+\,\uu^n\cdot \nabla \tau^{n}	- \omega^n \tau^{n} \,+\,\tau^{n}\omega^n \,=\,0
		\hspace{3cm}
		&&\RR_+\times \RR^2, \vspace{0.1cm}	\\		
		&\,\partial_t \uu^{n} \,+\,\uu^{n}\cdot \nabla \uu^{n}\,  - \, \nu \Delta \uu^{n}  
		\,+\, \nabla  \pre^{n} \, =\,
		\Div\,\tau^{n}
									\hspace{3cm}									&& \RR_+\times \RR^2,\vspace{0.1cm}\\
		&\,\Div\, \uu^{n}\,=\,0			
		&&\RR_+\times \RR^\dd, \vspace{0.1cm}	\\			
		& ( \uu^{n},\,\tau^{n})_{|t=0}	\,=\,(\uu_0^n,\,\tau_0^n)			
		&&\hspace{1.02cm} \RR^2,\vspace{0.1cm}																							
	\end{alignedat}
	\right.
\end{equation*}
satisfying
\begin{equation*}
	\begin{alignedat}{4}
		\uu^n	\,	&\in	\,&& \mathcal{C}\big(\,\RR_+, \,\BB_{\dd, 1}^{0}\cap  L^{\dd, \infty}(\RR^\dd)\,\big)\cap L^1_{\rm loc}(\RR_+,\BB_{\dd, 1}^{2}),\quad\quad
		\tau^n	\,	\in	\, \mathcal{C}(\,\RR_+, \,\BB_{\dd, 1}^{1}\cap L^{\frac{\dd}{2}, \infty}(\RR^\dd)\,).
	\end{alignedat}
	\end{equation*}
Proceeding as in Section \ref{sec:imp}, we get that $(\uu^n,\,\tau^n)$ is uniformly bounded in the functional space
\begin{equation*}
	\begin{alignedat}{4}
		\uu^n		\,	&\in	\,&& \mathcal{C}\big(\,\RR_+, \,\BB_{p, 1}^{\frac{\dd}{p}-1}\cap  L^{\dd, \infty}(\RR^\dd)\,\big)\cap L^1_{\rm loc}(\RR_+,\BB_{p, 1}^{\frac{\dd}{p}+1}),\quad\quad
		\tau^n	\,	\in	\, \mathcal{C}(\,\RR_+, \,\BB_{p, 1}^{\frac{\dd}{p}}\cap L^{\frac{\dd}{2}, \infty}(\RR^\dd)\,).
	\end{alignedat}
\end{equation*}
In order to conclude, we need to pass to the limit as $n$ goes to $\infty$. We proceed similarly as in Section \ref{sec:limits-pas}. We fix a compact set $K$ in $\RR^2$ and we introduce the functional spaces 
\begin{equation*}
				X_0\,:=\, B_{p, 1}^{\frac{2}{p}-1}(K),\quad\quad X\,=\,X_1\,:=B_{p, 1}^{\frac{\dd}{p}-2}(K),
\end{equation*}
for the Aubin-Lions Lemma \ref{Aubin-Lions-Lemma}. Since $(\uu^n)_\NN$ is uniformly bounded in $L^2_{\rm loc}(\RR_+,\,\BB_{p,1}^{\dd/p})$ then, by embedding it is uniformly bounded in $L^2_{\rm loc}(\RR_+,\,L^\infty(\RR^\dd))$. Interpolating this result with the uniform bound of $(\uu^n)_\NN$ in $L^\infty_{\rm loc}(\RR_+, L^{\dd, \infty}(\RR^\dd))$ allows us to conclude that $(\uu^n)_\NN$ is uniformly bounded in  $L^{{2p}/(p-\dd)}_{\rm loc}(\RR_+,\,L^p(\RR^\dd))$ and thus in $L^2_{\rm loc}(\RR_+,\,B_{p,1}^{2/p})$, where $B_{p,1}^{2/p}$ is a \textit{non-homogeneous} Besov space. 
 Furthermore, since $\BB_{p,1}^{\dd/p}$ is continuously embedded into $L^\infty(\RR^\dd)$, the sequence $(\tau^n)_\NN$ is uniformly bounded into $L^\infty_{\rm loc}(\RR_+,L^p(\RR^2))$ and so into $L^\infty_{\rm loc}(\RR_+,\,B_{p,1}^{\dd/p})$ which is embedded into $L^2_{\rm loc}(\RR_+,B_{p,1}^{\dd/p-1})$. This allows us to conclude that 
\begin{equation*}
		\|\,\Div\,\tau^n\,\|_{L^2_{\rm loc}(\RR_+,B_{p,1}^{\frac{\dd}{p}-2}(K))} \leq C,
\end{equation*}
for a suitable constant $C$ that does not depend on the index $n\in \NN$.  Now, we claim that $\Div\,j^n(\uu^n\otimes \uu^n)$ is uniformly bounded in $L^r(0,T; B_{p,1}^{\dd/p-2}(K))$, for a suitable positive index $r>1$.  We first remark that
\begin{equation*}
\begin{aligned}
	\|\,\Div\,j^n(\uu^n\otimes \uu^n)\,\|_{B_{p,1}^{\frac{\dd}{p}-2}(K)}
	\,&\leq\,
	\|\,j^n(\uu^n\otimes \uu^n)\,\|_{B_{p,1}^{\frac{\dd}{p}-1}(K)}
	\,\leq\,
	\|\,j^n(\uu^n\otimes \uu^n)\,\|_{B_{p,1}^{\frac{\dd}{p}-1}}\\
	&\lesssim\,
	\|\,\Sd_{-1} j^n(\uu^n\otimes \uu^n)\,\|_{L^p(\RR^2)}
	\,+\,
	\|\,(\Id -\Sd_{-1}) j^n(\uu^n\otimes \uu^n)\,\|_{\BB_{p,1}^{\frac{\dd}{p}-1}}\\
	&\lesssim\,
	\|\,j^n(\uu^n\otimes \uu^n)\,\|_{L^p(\RR^2)}
	\,+\,
	\|\,(\Id -\Sd_{-1}) j^n(\uu^n\otimes \uu^n)\,\|_{\BB_{p,1}^{\frac{\dd}{p}-\ee}},
\end{aligned}
\end{equation*}
where $\ee\in(0,1)$ such that $\dd/p-\ee>0$.  Hence we eventually gather that
\begin{equation*}
	\|\,\Div\,j^n(\uu^n\otimes \uu^n)\,\|_{B_{p,1}^{\frac{\dd}{p}-2}(K)}
	\,\leq\,
	C
	\left(
	\|\,(\uu^n\otimes \uu^n)\,\|_{L^p(\RR^2)}
	\,+\,
	\|\,(\uu^n\otimes \uu^n)\,\|_{\BB_{p,1}^{\frac{\dd}{p}-\ee}}
	\right).
\end{equation*}
Now, making use of the continuity of the product between the functional spaces
\begin{equation*}
	L^\frac{2}{1-\ee}_{\rm loc}(\RR_+, \BB_{p,1}^{\frac{\dd}{p}-\ee})\times L^2_{\rm loc}(\RR_+, \BB_{p,1}^{\frac{\dd}{p}})
	\rightarrow
	L^\frac{1}{1-\ee}_{\rm loc}(\RR_+, \BB_{p,1}^{\frac{\dd}{p}-\ee}),
\end{equation*}
the term $\,\uu^n\otimes \uu^n\,$ is uniformly bounded into $L^{1/(1-\ee)}_{\rm loc}(\RR_+, \BB_{p,1}^{\frac{2}{p}-1})$. Furthermore, we can bound the $L^p(\RR^2)$ norm through the following interpolation:
\begin{equation*}
	\|\,\uu^n\otimes \uu^n\,\|_{L^p(\RR^2)}\leq C\|\,\uu^n\,\|_{L^{2p}(\RR^2)}^2\leq C\|\,\uu^n\,\|_{L^p(\RR^2)}\|\,\uu^n\,\|_{L^{\infty}(\RR^2)} \in L_{\rm loc}^{\frac{2p}{2p-\dd}}(\RR_+).
\end{equation*} 
Denoting by $r\,=\,\min\{2,1/(1-\ee), 2p/(2p-\dd)\}>1$, we gather that the sequence $(\partial_t \uu^n)_\NN$ satisfying
\begin{equation*}
	\partial_t \uu^n\,=\,\Delta\uu^n\,-J^n(\uu^n\cdot \nabla \uu^n)\,+\,\nabla \pre^n\,+\,\Div\,\tau^n 
\end{equation*}
is uniformly bounded in $L^r_{\rm loc}(\RR_+,\,B_{p,1}^{\dd/p-2}(K))$. Hence, the Aubins-Lion Lemma \ref{Aubin-Lions-Lemma} and the generality of the compact set $K$ allow us to extract a convergent subsequence $(\uu^{n_k})_\NN\subset (\uu^n)_\NN$ such that
\begin{equation*}
	\uu^{n_k}\,\rightarrow\,\uu\quad\quad\text{in}\quad\quad L^\infty\big(0,T;\,(B_{p,1}^{\frac{\dd}{p}-2})_{\rm loc}\big).
\end{equation*} 

\smallskip\noindent 
We now claim that $(\partial_t \tau^n)_\NN$ is uniformly bounded in the \textit{non-homogeneous} functional space $L^r_{\rm loc}(\RR_+,\,B_{p,1}^{\dd/p-1}(K))$. First we recall that the conformation tensor satisfies
\begin{equation*}
	\partial_t\tau^n\,=\,-\uu^n\cdot \nabla \tau^n\,+\,\omega^n\tau^n\,-\,\tau^n\omega^n,
\end{equation*}
therefore
\begin{equation*}
	\|\,\partial_t \tau^n\,\|_{\BB_{p,1}^{\frac \dd p -1}}\,\lesssim\,\|\,\uu^n\,\|_{\BB_{p,1}^{\frac  \dd p }}\|\,\nabla  \tau^n\,\|_{\BB_{p,1}^{\frac{ \dd }{p}-1}}
	\,+\,\|\,\nabla \uu^n\,\|_{\BB_{p,1}^{\frac{ \dd }{p}-1}}\|\,\tau^n\,\|_{\BB_{p,1}^{\frac{ \dd }{p}-1}}
	\,\in\,L^2_{\rm loc}(\RR_+).
\end{equation*}
Furthermore
\begin{equation*}
\begin{aligned}
	\|\,	-\omega^n\tau^n + \tau^n\omega^n	\,\|_{L^p(\RR^2)}
	&\,\lesssim\,
	\|\,\nabla \uu^n\,\|_{L^{p}(\RR^2)}
	\|\,\tau^n\,\|_{L^{\infty}(\RR^2)}
	\,\lesssim\,
	\|\,\nabla \uu^n\,\|_{\BB_{p,1}^0}
	\|\,\tau^n\,\|_{\BB_{p,1}^\frac{\dd}{p}}\\
	&\lesssim\,
	\|\,\uu^n\,\|_{\BB_{p,1}^1}
	\|\,\tau^n\,\|_{\BB_{p,1}^\frac{\dd}{p}}
	\,\lesssim\,
	\|\,\uu^n\,\|_{\BB_{p,1}^{\frac{\dd}{p}-1}}^\frac{\dd}{2p}
	\|\,\uu^n\,\|_{\BB_{p,1}^{\frac{\dd}{p}+1}}^{1-\frac{\dd}{2p}}
	\|\,\tau^n\,\|_{\BB_{p,1}^\frac{\dd}{p}}
	\in L^{\frac{2p}{2p-\dd}}_{\rm loc}(\RR_+).
\end{aligned}
\end{equation*}
Finally, one has
\begin{equation*}
\begin{aligned}
	\|\,\Delta_{-1}\Div(\uu^n\otimes \tau^n)\,\|_{L^p(\RR^2)}
	\,&\lesssim\,
	\|\,\uu^n\otimes \tau^n\,\|_{L^p(\RR^2)}
	\,\lesssim\,
	\|\,\uu^n\,\|_{L^{p}(\RR^2)}
	\|\,\tau^n\,\|_{L^{\infty}(\RR^2)}\\
	\,&\lesssim\,
	\|\, \uu^n\,\|_{L^p(\RR^2)}
	\|\, \tau^n\,\|_{\BB_{p,1}^\frac{\dd}{p}}
	\,\in\,
	L^\frac{2p}{p-\dd}_{\rm loc}(\RR^2),
\end{aligned}
\end{equation*}
which allows us to conclude that $(\partial_t \tau^n)_\NN$ is uniformly bounded in $L^r_{\rm loc}(\RR_+,\,B_{p,1}^{2/p-1}(K))$. Thus, 
the Aubins-Lion lemma together with the arbitrariness of the compact set $K$ allow us to extract a convergent subsequence $(\tau^{n_k})_\NN\subset (\uu^n)_\NN$ such that
\begin{equation*}
	\tau^{n_k}\,\rightarrow\,\tau\quad\quad\text{in}\quad\quad L^\infty\big(0,T;\,(B_{p,1}^{\frac{\dd}{p}-1})_{\rm loc}\big).
\end{equation*} 
%\smallskip
%\noindent We now claim that $(\uu,\,\tau)$ is a solution for system \eqref{main_system}. For any smooth function with compact support $\varphi \in \mathfrak{D}%(\RR^2)$, we have that
%\begin{equation*}
%	\int_{\RR^2}
%	\partial_t \uu^n\cdot \varphi \,+\,
%\end{equation*}
These properties allow to pass to the limit and thus to show that $(\uu,\,\tau)$ is a global-in-time solution of system \eqref{main_system}. This concludes the proof of Theorem \ref{main_thm2}.

\section*{Acknowledgment} 

\noindent This manuscript benefited from comments and remarks of the reviewer. We are thankful to the reviewer for the helpful feedback.

{

}

\end{document}